\documentclass[12pt]{article}

\usepackage{exscale,relsize}
\usepackage{amsmath}
\usepackage{amsfonts}
\usepackage[colorlinks=true, linkcolor=blue]{hyperref}
\usepackage{amssymb}
\usepackage{calc}
\usepackage{theorem}
\usepackage{pifont}      % needed by dingautolist
\usepackage{array}
\usepackage{color}
\usepackage{enumerate}  % Collide with package enumitem???
\usepackage{bbm}
\usepackage{graphicx} 
\usepackage{subcaption}
\usepackage{caption}

\usepackage{algcompatible, algorithm}
\usepackage{mathtools}
\usepackage{soul}  % HILIGHTER
\usepackage{enumitem} % TIGHTER ITEMIZE, ENUMERATE
    \setlistdepth{20}
    \newlist{ditemize}{itemize}{6}
    \setlist[ditemize]{label={}, leftmargin=1em}

\newtheorem{theorem}{Theorem}[section]
\newtheorem{lemma}[theorem]{Lemma}
\newtheorem{fact}[theorem]{Fact}
\newtheorem{corollary}[theorem]{Corollary}
\newtheorem{proposition}[theorem]{Proposition}
\newtheorem{definition}[theorem]{Definition}

\theoremstyle{plain}{\theorembodyfont{\rmfamily}
\newtheorem{assumption}[theorem]{Assumption}}
\theoremstyle{plain}{\theorembodyfont{\rmfamily}
}
\theoremstyle{plain}{\theorembodyfont{\rmfamily}

\theoremstyle{plain}{\theorembodyfont{\rmfamily}
}
\theoremstyle{plain}{\theorembodyfont{\rmfamily}
\newtheorem{remark}[theorem]{Remark}}
\theoremstyle{plain}{\theorembodyfont{\rmfamily}
}

\numberwithin{equation}{section}

\newcommand{\RR}{\ensuremath{\mathbb R}}

\newcommand{\dom}{\ensuremath{\operatorname{dom}}}

\newcommand{\hprox}{\ensuremath{\operatorname{prox}}}

\newcommand{\inte}{\ensuremath{\operatorname{int}}}

\newcommand{\reli}{\ensuremath{\operatorname{ri}}}

\newcommand{\pluss}{{\hskip1pt \raise1pt\vbox{\hrule width6pt \vskip1pt
\hrule width6pt}\kern-4pt{\lower1pt\hbox{\vrule height6pt \kern1pt\vrule
height6pt}}\hskip5pt}}

\newcommand{\argmin}{\mathop{\rm argmin}\limits}

\newcommand{\N}{\ensuremath{\mathbb N}}
\newcommand{\Z}{\ensuremath{\mathbb Z}}
\DeclareMathOperator{\dist}{\mathop{dist}}
\DeclareMathOperator{\rng}{\mathop{rng}}
\DeclareMathOperator{\diam}{\mathop{diam}}

\title{{
    \fontfamily{ptm}\selectfont 
    A Near-Optimal Total Complexity for the Inexact Accelerated Proximal Gradient Method via Quadratic Growth
    }
}

\author{
    Hongda Li
    \thanks{
        Department of Mathematics, I.K. Barber Faculty of Science, The University of British Columbia, Kelowna, BC Canada V1V 1V7.
        E-mail: \texttt{alto@mail.ubc.ca}.
    } and
    Xianfu Wang
    \thanks{
        Department of Mathematics, I.K. Barber Faculty of Science, The University of British Columbia, Kelowna, BC Canada V1V 1V7.
        E-mail: \texttt{shawn.wang@ubc.ca}.
    }
}

\begin{document}

% TITLE, ABSTRACT ==============================================================
\date{\today}
\maketitle
\begin{abstract} 
    \noindent
    We consider the optimization problem $\min_{x\in\RR^n}{F(x):=f(x)+\omega(Ax)}$, where $f$ is an $L$-Lipschitz smooth function, and $\omega$ is a proper, lower semicontinuous, and convex function.
    We prove in this paper that when $\omega$ is a conic polyhedral function, the inexact accelerated proximal gradient method (IAPG), employed in a double-loop structure, achieves a total complexity of $\mathcal O(\ln(1/\varepsilon)/\sqrt{\varepsilon})$ measured by the total number of calls to the proximal operator of the convex conjugate $\omega^\star$ and the gradient of $f$ to achieve $\varepsilon$-optimality in function value.
    To the best of our knowledge, this improves upon the best-known complexity for IAPG.
    The key theoretical ingredient is a quadratic growth condition on the dual of the inexact proximal problem, which arises from the conic polyhedral structure of $\omega$ and implies linear convergence of the inner proximal gradient loop.
    To validate these findings, we conduct numerical experiments on a robust TV-$\ell_2$ signal recovery problem, demonstrating fast convergence. 
\end{abstract}

\noindent{\bfseries 2020 Mathematics Subject Classification:}
Primary 90C25, 90C60, 49J52; Secondary 90C06, 90C46, 65K05, 49M29, 94A08
\\ 
\noindent{\bfseries Keywords:} Convex Composite Objective, Fenchel Rockafellar Duality, Inexact Proximal Gradient, Numerical Algorithm Complexity, $\epsilon$-subgradient.

\section{Introduction}\label{sec:introduction}
    Nesterov's acceleration \cite{nesterov_method_1983} is a first-order method originally conceived to improve the convergence rate of the gradient descent method for convex functions with Lipschitz-continuous gradient.
    Since then, several major extensions of Nesterov's acceleration have been proposed in the literature; one prominent example is the accelerated proximal gradient (APG) method. 
    It adapts to nonsmooth objective functions, see for example, Beck and Teboulle \cite{beck_fast_2009}.
    APG arises in numerous problems in engineering, finance, imaging and signal processing.
    % \#TODO: Maybe add citations here too? 
    \par
    In the past decade, progress has been made in APG to extend its capabilities to composite optimization problems in which exact evaluation of the proximal operator is not available, necessitating inexact evaluation of the proximal operator.
    As a result, this new variant is referred to as the method of Inexact Accelerated Proximal Gradient (IAPG).
    In this paper, we improve the total complexity results of a double-loop IAPG method by exploiting a mild but favorable condition on the nonsmooth part of the objective.
    We show that if the nonsmooth part of the objective is a conic polyhedral function composed with a linear operator, then a near-optimal convergence rate is achievable.
    To demonstrate our theoretical results, we formulate a robust variant of TV-$\ell^2$.
    We use this formulation as a benchmark, demonstrating fast convergence and a favorable scaling of the inner-loop complexity relative to the outer-loop complexity.
    \par
    Before proceeding further, we clarify the phrase ``near-optimal total complexity" used in the title.
    Nesterov \cite[Theorem 2.1.7, Assumption 2.14]{nesterov_lectures_2018} established that any first-order algorithm satisfying a linear span assumption requires at least $\mathcal O(\varepsilon^{-1/2})$ number of gradient (or proximal gradient) evaluations to achieve $\varepsilon$-optimality, if minimizers exist.
    We show that the total complexity of the Inexact Accelerated Proximal Gradient (IAPG) method, measured by the total number of iterations of the inner and outer loops needed to achieve $\varepsilon$-optimality in function value, is bounded by $\mathcal O\left(\varepsilon^{-1/2}\ln(\varepsilon^{-1})\right)$ when $\omega$ is a conic polyhedral function.
    To the best of our knowledge, our theoretical results improve those of the literature \cite{bello-cruz_inexact_2020-1, schmidt_convergence_2011, villa_accelerated_2013}.
    \subsection{Problem formulation}
        Let $f:\RR^n \rightarrow \RR$ be a convex $L$-Lipschitz smooth function and let $A \in \RR^{m\times n}$ be a matrix.
        Let $\omega: \RR^m \rightarrow \overline \RR$ be a proper, closed and convex function. 
        We are interested in problems of the form:
        \begin{align}\label{expr:major-optimization-problem}
            \min_{x \in \RR^n} F(x) := f(x) + \omega(A x). 
        \end{align}
        We assume that the solution set is nonempty and let $\bar x$ denote a minimizer of $F$.
        Observe that \eqref{expr:major-optimization-problem} is an additively composite optimization problem whose nonsmooth part is $\omega(Ax)$.
        However, the Accelerated Proximal Gradient Method (APG) is not directly applicable to a general $A \in \RR^{m\times n}$, as $\hprox_{\lambda\omega\circ A}$ lacks a closed form in general.
        \par
        A wide array of significant real-world applications can be cast in the form of \eqref{expr:major-optimization-problem}.
        Examples include, but are not limited to, robust imaging applications \cite{ehrhardt_multicontrast_2016, joshi_mri_2009, sawatzky_em-tv_2013, xu_convex_2022, zhang_robust_2022}, most of which can be cast as Total Variation minimization problems, as surveyed in Scherzer et al. \cite[Chapter 3]{scherzer_variational_2009}.
        Recently, other non-standard regularizers such as input convex neural networks have been applied to imaging tasks, for example in Mukherjee et al. \cite{mukherjee_data-driven_2024}.
        Besides imaging tasks, problems in statistical inference \cite{christou_risk_2025,tang_fused_2016} appearing in finance and data science can be formulated into \eqref{expr:major-optimization-problem} as well.
    
    \subsection{Motivations}
        To motivate the use of IAPG on large-scale problems in the form of \eqref{expr:major-optimization-problem}, we consider the following robust TV-$\ell_2$ minimization problem where popular algorithms face a computational bottleneck:
        \begin{align}\label{eqn:robust-tvl2}
            \argmin_{x \in \RR^n} \left\lbrace
                \frac{1}{2}\dist\left(Cx - \tilde x \;|\; [-\lambda, \lambda]^n\right)^2 + \eta \Vert A x\Vert_1
            \right\rbrace. 
        \end{align}
        The above optimization problem fits into our formulation in \eqref{expr:major-optimization-problem} with the following components: $\omega(Ax) = \eta\Vert Ax \Vert_1$ (TV-$\ell_1$ regularization), $f(x) = \frac{1}{2}\dist\left(Cx - \tilde x \;|\; [-\lambda, \lambda]^n\right)^2$ (reconstruction fidelity) where $C$ is a box blur matrix with non-periodic boundary condition, $A$ is a first-order finite difference matrix, $\eta > 0$ is the regularization parameter, and $\tilde x$ is the observed signal.
        The fidelity term $f$ is a relaxation of the hard constraint $Cx - \tilde x \in [-\lambda,\lambda]^n$, obtained by replacing the indicator $\delta_{[-\lambda,\lambda]^n}$ with its Moreau envelope evaluated at the residual $Cx - \tilde x$; this renders $f$ smooth and insensitive to small deviations, imparting robustness to noise. 
        To the best of our knowledge, \eqref{eqn:robust-tvl2} has not yet been explicitly formulated in the literature.
        \par
        Both parts of the objective function are forms of PLQ functions \cite[Example 11.18]{rockafellar_variational_1998} which are well known in the literature.
        Furthermore, it fits naturally into the theoretical framework described in Aravkin et al. \cite{aravkin_sparserobust_2013}. 
        In contrast to the Interior Point approach suggested by their work, we consider a first-order method because in image processing, the matrices $C$ and $A$ are usually sparse and large.
        \footnote{
            A standard 1080p image with colors will present a blurring matrix $C$, and finite difference matrix $A$ of size: $6220800\times 6220800$, a size prohibitively large for second order methods. 
        }
        \par
        In the setting of first-order method, it is still challenging to compute the proximal operator of the fidelity term for $\lambda > 0$ and nontrivial choices of $C$, e.g., when $C$ is non-circulant or non-unitary.
        Furthermore, $\hprox_{\lambda\omega\circ A}(x)$ lacks a closed form when $A$ is nontrivial, e.g., when $A$ is not unitary.
        \par
        Well-known algorithms such as the Chambolle Pock algorithm \cite{chambolle_first-order_2011, chambolle_introduction_2016} (PDHG) solves the standard TV-$\ell_2$ problem.
        Applying their framework to \eqref{eqn:robust-tvl2} requires the exact proximal operator of $f$, which lacks a closed form for any nontrivial $C$.
        Alternatively, practitioners can employ an inexact solver for $\hprox_f$, but doing so risks losing the theoretical convergence guarantees of PDHG.
        \par
        Other methods such as the Bregman Splitting Method of Yin et al. \cite{yin_bregman_2008} could be applied.
        However, this method exhibits a slow theoretical convergence compared to PDHG, making it unsuitable for large-scale applications.
        Consequently, IAPG offers a compelling alternative: by removing the need for exact computation of $\hprox_{\lambda f}$ and $\hprox_{\lambda\omega\circ A}$, it enables efficient solution of large-scale problems where conventional proximal methods are prohibitive.
        This motivates the use of IAPG for optimizing \eqref{expr:major-optimization-problem}.

    \subsection{Literature reviews}
        In this section, we review key developments in the literature for addressing the optimization problem in \eqref{expr:major-optimization-problem} using the IAPG method.
        The study of inexact proximal operators traces back to Rockafellar \cite{rockafellar_monotone_1976}, whose inexactness conditions (A) and (B) remain foundational.
        \par
        More recently, Schmidt et al. \cite{schmidt_convergence_2011} and Villa et al. \cite{villa_accelerated_2013} independently utilized the $\epsilon$-subgradient to quantify the inexactness of the proximal operator within the accelerated proximal gradient algorithm.
        In addition to Schmidt et al. and Villa et al., Bello-Cruz et al. \cite{bello-cruz_inexact_2020-1} and Lin and Xu \cite{lin_reducing_2023} present formulations similar to ours.
        Bello-Cruz et al. employ an $\epsilon$-subgradient criterion for the inexact proximal problem in IAPG, similar to our approach; however, they consider only relative error without line search and provide no total complexity results for either the outer or inner loop.
        Lin and Xu \cite{lin_reducing_2023} study IAPG in a context different from ours, as they consider a different class of objective functions.
        Our work extends the framework of Villa et al. \cite{villa_accelerated_2013} in two significant directions: we accommodate backtracking line search and absolute error criteria, and we establish, for the first time, a total complexity bound of $\mathcal{O}(\varepsilon^{-1/2}\ln(\varepsilon^{-1}))$ that accounts for both the outer and inner loop iterations.
        \par
        Another significant line of research is the ``Catalyst" acceleration framework introduced by Lin et al. \cite{lin_catalyst_2018}.
        Unlike Schmidt et al. and Villa et al., this approach accelerates the proximal point method, building on the work of Güler \cite{guler_new_1992}.
        Instead of using the $\epsilon$-subgradient, Lin et al. quantify inexactness via optimality of the proximal problem and accelerate the proximal point method rather than the proximal gradient operator.
        Consequently, this requires an inexact proximal operator applied to the full objective $F$, together with warm-start conditions, to ensure convergence.
        \par
        Notably, the $\epsilon$-subgradient can also be employed for PDHG.
        See, for example, Rasch and Chambolle \cite{rasch_inexact_2020}.
        Their method applies to more general problem classes because both components of the objective can be nonsmooth with a linear composite structure.
        However, their total complexity is worse than ours because, in their analysis, the evaluation of the inexact proximal operator achieves only a sublinear convergence rate.
        See Rasch and Chambolle \cite[Corollary 3]{rasch_inexact_2020}.
        \par
        In nonconvex settings, new theoretical ideas are required.
        Multiple works employ relative error and the envelope function; see, for example, works by Khanh et al. \cite{khanh_inexact_2025}, and Calatroni and Chambolle \cite{calatroni_backtracking_2019}.
    \subsection{Our contributions}
        Our paper makes three substantial contributions to the theory and practice of the IAPG algorithm.
        \begin{enumerate}[nosep]
            \item We extend the theory of the inexact proximal gradient operator via $\epsilon$-subgradient theory.
            Specifically, our inexact proximal gradient inequality (Theorem \ref{thm:inxt-pg-ineq}) accommodates a backtracking line search and supports both relative and absolute error criteria.
            \item We establish a total complexity of $\mathcal O\left(\varepsilon^{-1/2}\ln(\varepsilon^{-1})\right)$ for IAPG in problems where $\omega$ is conic polyhedral.
            This improves upon all prior complexity results for IAPG \cite{lin_catalyst_2018, schmidt_convergence_2011, villa_accelerated_2013}, and is enabled by a quadratic growth condition for the dual of the inexact proximal problem (Theorem \ref{thm:minimizing-dual-pp}).
            \item We validate our theoretical results with numerical experiments on large-scale signal recovery tasks.
            In addition, we provide an open-source, high-performance Julia \cite{bezanson_julia_2017} implementation of IAPG, optimized for minimal memory overhead and C++/FORTRAN level speed.
        \end{enumerate}
        \textbf{The paper is organized as follows.}
        Section \ref{sec:fund-aux-rslts} establishes the foundations of the inexact proximal operator via $\epsilon$-subgradient theory, culminating in the inexact proximal gradient inequality that underpins the outer loop's $\mathcal{O}(1/k^2)$ convergence rate.
        Section \ref{sec:outerloop} defines the outer loop of IAPG and derives its $\mathcal{O}(1/k^2)$ convergence rate.
        We denote by $\epsilon$ the tolerance used for each inner loop call. 
        Section \ref{sec:innlp-lin-cnvg} establishes the linear convergence rate of the inner loop under a quadratic growth condition, yielding an $\mathcal{O}(\ln(\epsilon^{-1}))$ complexity per inner loop call.
        Section \ref{sec:total-cmplx} combines the outer and inner loop analyses to derive the total complexity bound of $\mathcal{O}(\varepsilon^{-1/2}\ln(\varepsilon^{-1}))$, and also establishes an $\mathcal{O}(\varepsilon^{-1}\ln(\varepsilon^{-1}))$ bound for convergence to stationarity.
        Section \ref{sec:alg-impl} presents concrete implementations of the inner and outer loops and verifies that they satisfy the required assumptions.
        Section \ref{sec:innlp-fxn} establishes that the total complexity results apply when $\omega$ is conic polyhedral.
        Finally, Section \ref{sec:numerics} presents two numerical experiments: the first verifies inner loop linear convergence, and the second applies IAPG to \eqref{eqn:robust-tvl2} to demonstrate efficiency on a large-scale problem.

\section{Preliminaries}\label{sec:fund-aux-rslts}
    The objective of this section is to study the inexact proximal operator via $\epsilon$-subgradient; these serve as the foundation for the theory of the inexact proximal gradient operator.
    \par
    The section begins by preparing the reader for our extensions of results in the literature (Theorem \ref{thm:inxt-pg-ineq} and Theorem \ref{thm:minimizing-dual-pp}) through the concept of $\epsilon$-subgradient (Definition \ref{def:esp-subgrad}) and inexact proximal point (Definition \ref{def:inxt-pp}).
    Their roles will are critical for ensuring globally bounded complexity for the inner loop.
    In Section \ref{ssec:inxt-pg-ineq}, we derive the inexact proximal gradient inequality in Theorem \ref{thm:inxt-pg-ineq}, which will be crucial for the convergence analysis of the outer loop of IAPG.
    In Section \ref{ssec:optz-inxt-pp-problem}, we present the proximal point problem, leading to our extension of Villa et al.'s \cite{villa_accelerated_2013} results in Theorem \ref{thm:minimizing-dual-pp}.
    \subsection{Notations and definitions}
        \textbf{Notations.}
        We denote $\overline \RR := \RR\cup \{-\infty, \infty\}$. 
        Let $g: \RR^n \rightarrow\overline \RR$. 
        We denote the Fenchel conjugate of $g$ by $g^\star$ which is defined as $g^\star(x) := \sup_{z\in \RR^n} \{\langle z, x\rangle - g(z)\}$.
        The domain of $g$ is $\dom (g):= \{x\in \RR^n: g(x) < \infty\}$. 
        For all $Q \subseteq \RR^n$, we define the affine hull of $Q$: 
        $$
        \text{affhull}(Q) :=
        \left\lbrace
            \theta_1x_1 + \theta_2x_2 + \cdots +\theta_Nx_N:
            \sum_{i = 1}^{N}\theta_i = 1, x_i \in Q \; \forall i \in \{1, \ldots, N\}, N \in \mathbb N
        \right\rbrace.
        $$
        With the above, we define the relative interior of a set $Q \subseteq \RR^n$ as: 
        \begin{align*}
            \reli(Q):= 
            \{
                x\in Q \left |  
                \exists\; \epsilon > 0 \text{ s.t. }
                \{z: \Vert z - x \Vert < \epsilon\}\cap \text{affhull}(Q)\subseteq Q
                \right.
            \}.
        \end{align*}
        We let $I: \RR^n \rightarrow \RR^n$ denote the identity operator.
        For a matrix $A \in \RR^{m\times n}$, $A^\dagger$ denotes its pseudoinverse, and $\rng(A) := \{Ax : x \in \RR^n\} \subseteq \RR^m$ denotes the range of $A$.
        Let $S \subseteq \RR^n$.
        We denote the projection onto the set $S$ by $\Pi_S$.
        It is defined by $\Pi_S(x) := \argmin_{z \in S} \Vert x - z \Vert$.
        Denote $\dist(x |S)$ to be the distance from $x$ to the set $S$, which is $\dist(x | S) := \min_{z \in S} \Vert z - x\Vert$.
        We define $\diam S := \sup_{x, y \in S} \Vert x - y\Vert$ to be the diameter. 
        Boldface $\mathbf 0$ denotes a vector of zeros in $\RR^n$.
        Denote $\Z_+ = \{0, 1, 2, \ldots\}$ for the set of indices starting at zero and $\N = \{1, 2, \ldots\}$ for indices excluding $0$. 
        \par
        Let $\RR^m, \RR^n$ be our ambient spaces. 
        We write $\Vert \cdot\Vert$ to be the Euclidean norm in $\RR^n$; we write $\Vert \cdot \Vert_1$ to be the $\ell^1$ norm in $\RR^n$ given by $\Vert x\Vert_1 := \sum_{i = 1}^{n} |x_i|$.
        We write $\Vert \cdot\Vert_\infty$ to be the infinity norm in $\RR^n$ given by $\Vert x\Vert_\infty := \max_{i = 1, \ldots, n} |x_i|$. 
        The proximal operator of a proper, closed and convex function $f:\RR^n \rightarrow \overline \RR$ is defined by: 
        \begin{align*}
            \hprox_{\lambda f}(x) := \argmin_{z \in \RR^n} \left\lbrace
                f(z) + \frac{1}{2\lambda} \Vert x - z\Vert^2
            \right\rbrace. 
        \end{align*}
        The indicator function of a set $C \subseteq \RR^n$ is the function defined by:
        \begin{align*}
            \delta_C(x) := \begin{cases}
                0 & \text{if } x \in C, 
                \\
                \infty & \text{otherwise. }
            \end{cases}
        \end{align*}
        For example, we can write $\delta_{\{x \in \RR^n: \Vert x\Vert_1 \le 1\}}$. 
        The word ``tolerance" represents the numerical value needed to exit a for loop structure in the algorithm. 
        We denote the inner loop tolerance by $\epsilon$, and the tolerance of the entire algorithm including the inner loop and outer loop by $\varepsilon$. 
        For example, $\mathcal O\left(\ln(\epsilon^{-1})\right)$ denotes the complexity of the inner loop and $\mathcal O\left(\varepsilon^{1/2}\ln(\varepsilon^{-1})\right)$ denotes the total complexity of the algorithm. 
        \par
        Finally, when presenting proofs, we use numerical subscripts: $\underset{(1)}\le, \underset{(2)}=$ which indicate that some intermediate results are invoked to justify the inequality or equality.
        These steps will be explained immediately after the chain of equalities/inequalities. 
        \par
        The definition below introduces $\epsilon$-gradient for proper functions. 
        It can be viewed as a perturbation of the usual definition of the Fenchel subgradient. 
        \begin{definition}[$\epsilon$-subgradient {\cite[(2.35)]{zalinescu_convex_2002}}]\;\label{def:esp-subgrad}\\
            Let $g: \RR^n \rightarrow \overline \RR$ be proper. 
            Let $\epsilon \ge 0$.
            Then the $\epsilon$-subgradient of $g$ at some $\bar x \in \dom g$ is given by:
            $$
            \begin{aligned}
                \partial g_\epsilon(\bar  x) :=
                \left\lbrace
                    v \in \RR^n \left| \;
                        \langle v, x - \bar  x\rangle \le
                        g(x) - g(\bar x) + \epsilon \;\forall x \in \RR^n
                    \right.
                \right\rbrace.
            \end{aligned}
            $$
            When $\bar x \not \in \dom g$, we set $\partial g_\epsilon(\bar x) = \emptyset$.
        \end{definition}
        \begin{remark}
            $\partial_\epsilon g$ is a multivalued operator. 
            It is not monotone in general even if $g$ is proper, closed and convex; when $\epsilon = 0$ it reduces to the Fenchel subdifferential $\partial g$ if $g$ is proper, closed, and convex.
        \end{remark}
        Next, we introduce results from the literature on the $\epsilon$-subgradient.
        % \#DONE: The assumption on f is unclear. Check the references and make sure it has convexity involved. 
        \begin{fact}[$\epsilon$-Fenchel inequality, {Zalinascu \cite[Theorem 2.4.2]{zalinescu_convex_2002}}]\label{fact:esp-fenchel-ineq}
            Let $\epsilon \ge 0$, and suppose that $g: \RR^n \rightarrow \overline \RR$ is a proper function. 
            Then:
            \begin{align}\label{fact:esp-fenchel-ineq:expr}
                x^* \in \partial_\epsilon f(\bar x)\iff f^\star(x^*) + f(\bar x) \le \langle x^*, \bar x\rangle + \epsilon \implies \bar x \in \partial_\epsilon f^\star(x^*)
            \end{align}
            The $\implies$ strengthens to $\iff$ when $f^{\star\star}(\bar x) = f(\bar x)$ (i.e., $f$ is proper, closed and convex), making all three conditions equivalent.
        \end{fact}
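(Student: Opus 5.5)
The first equivalence will come from unfolding Definition \ref{def:esp-subgrad} and rewriting the inequality as a statement about a supremum, which is exactly the conjugate. By definition, $x^* \in \partial_\epsilon f(\bar x)$ means $\bar x \in \dom f$ and
\begin{align*}
    \langle x^*, x\rangle - f(x) \le \langle x^*, \bar x\rangle - f(\bar x) + \epsilon \quad \forall x \in \RR^n.
\end{align*}
Taking the supremum over $x$ on the left gives $f^\star(x^*) \le \langle x^*, \bar x\rangle - f(\bar x) + \epsilon$, which rearranges to $f^\star(x^*) + f(\bar x) \le \langle x^*, \bar x\rangle + \epsilon$. Conversely, if this inequality holds, then $f(\bar x) < \infty$ (since $f^\star > -\infty$ because $f$ is proper), so $\bar x \in \dom f$. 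Each term $\langle x^*, x\rangle - f(x)$ is bounded by the supremum $f^\star(x^*)$, which recovers the defining inequality. Throughout I read $g$ and $f$ as the same proper function. The one place needing care is arithmetic with $\pm\infty$: properness gives $f > -\infty$ everywhere and $f^\star > -\infty$, so the sum $f^\star(x^*) + f(\bar x)$ is well defined.

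For the forward implication to $\bar x \in \partial_\epsilon f^\star(x^*)$, I will apply the same characterization to the proper function $f^\star$ at the point $x^*$, with dual variable $\bar x$. First, the inequality forces $f^\star(x^*) < \infty$, so $f^\star$ is proper (it is $>-\infty$ and finite at $x^*$). By the equivalence just proved, applied to $f^\star$, the claim $\bar x \in \partial_\epsilon f^\star(x^*)$ is equivalent to $f^{\star\star}(\bar x) + f^\star(x^*) \le \langle \bar x, x^*\rangle + \epsilon$. The Fenchel–Moreau-type inequality $f^{\star\star} \le f$, which is immediate from Fenchel–Young, gives
\begin{align*}
    f^{\star\star}(\bar x) + f^\star(x^*) \le f(\bar x) + f^\star(x^*) \le \langle x^*, \bar x\rangle + \epsilon,
\end{align*}
which establishes the implication.

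For the converse under $f^{\star\star}(\bar x) = f(\bar x)$, I will run the same chain backwards. If $\bar x \in \partial_\epsilon f^\star(x^*)$, then $x^* \in \dom f^\star$ and the equivalence for $f^\star$ yields $f^{\star\star}(\bar x) + f^\star(x^*) \le \langle \bar x, x^*\rangle + \epsilon$. Substituting $f^{\star\star}(\bar x) = f(\bar x)$ gives the middle condition, so all three conditions are equivalent. When $f$ is proper, closed and convex, Fenchel–Moreau gives $f^{\star\star} = f$, and properness of $f^\star$ also follows.

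I expect the main obstacle to be the bookkeeping for the extended-valued cases: ensuring that the first equivalence for $f^\star$ is applied only when $f^\star$ is proper, and that no $\infty - \infty$ expression appears. I will handle both points using the finiteness extracted from the inequality itself, as described above.
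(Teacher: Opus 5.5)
Your proof is correct and complete, including the extended-value bookkeeping: you obtain properness of $f^\star$ from $f^\star(x^*)<\infty$ in the forward direction and from $x^*\in\dom f^\star$ in the converse. The paper gives no proof of its own and simply cites Zalinescu's Theorem 2.4.2; your route is the standard argument behind that reference, namely rewriting the $\epsilon$-subgradient inequality as a bound on the supremum defining $f^\star$, then applying that equivalence to the proper function $f^\star$ and using $f^{\star\star}\le f$.
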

        The definition that follows defines the inexact evaluation of a proximal operator by $\epsilon$-subgradient of a proper, closed and convex function.
        \begin{definition}[The Inexact proximal operator]\label{def:inxt-pp}
            Let $x \in \RR^n$, $\epsilon \ge 0$, $\lambda > 0$.
            $\tilde x$ is an inexact evaluation of the proximal operator at $x$ if and only if:
            \begin{align*}
                \lambda^{-1}(x - \tilde x) \in \partial_{\epsilon} g(\tilde x).
            \end{align*}
            We denote this by $\tilde x \approx_\epsilon \hprox_{\lambda g}(x)$.
        \end{definition}
        \begin{remark}
            This definition is not new; see, e.g., Villa et al. \cite[Definition 2.1]{villa_accelerated_2013}.
            However, our $\epsilon$ differs from that of Villa et al.: our $\epsilon$ corresponds to their $\varepsilon^2/(2\lambda)$, so the two definitions are not directly comparable despite sharing the same conceptual form.
        \end{remark}
        Next, we introduce the resolvent identity.
        It still holds for $\epsilon$-subgradient, and is crucial for developing numerical algorithms that evaluate the proximal operator inexactly. 
        \begin{fact}[the resolvent identity, Rockafellar and Wets {\cite[Lemma 12.14]{rockafellar_variational_1998}}]\;\label{fact:resv-identity}\\
            Let $T: \RR^n \rightarrow 2^{\RR^n}$. Then:
            \begin{align}\label{fact:resv-identity:eqn}
                (I + T)^{-1} = (I - (I + T^{-1})^{-1}).
            \end{align}
        \end{fact}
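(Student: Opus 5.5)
The resolvent identity is stated for an arbitrary multivalued operator $T$, with both sides read as set-valued maps $\RR^n \rightarrow 2^{\RR^n}$. So I will prove it by pure graph manipulation, with no monotonicity or convexity assumption. The plan is to fix $x \in \RR^n$ and show, for every $z \in \RR^n$,
\begin{align*}
    z \in (I+T)^{-1}(x) \iff z \in x - (I+T^{-1})^{-1}(x),
\end{align*}
where the right-hand side means $\{x - w : w \in (I+T^{-1})^{-1}(x)\}$. The only tools are the definition of the inverse of a set-valued map, $y \in S^{-1}(x) \iff x \in S(y)$, and the convention $(I+T)(z) = \{z + v : v \in T(z)\}$.

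For the forward inclusion, take $z \in (I+T)^{-1}(x)$. By definition $x \in z + T(z)$, so $v := x - z \in T(z)$, which is the same as $z \in T^{-1}(v)$. Then $x = v + z \in v + T^{-1}(v) = (I+T^{-1})(v)$, so $v \in (I+T^{-1})^{-1}(x)$. Hence $z = x - v$ lies in $x - (I+T^{-1})^{-1}(x)$.

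For the reverse inclusion, take $w \in (I+T^{-1})^{-1}(x)$ and put $z := x - w$. From $x \in w + T^{-1}(w)$ we get $z = x - w \in T^{-1}(w)$, that is, $w \in T(z)$. Therefore $x = z + w \in z + T(z) = (I+T)(z)$, which gives $z \in (I+T)^{-1}(x)$. Together the two inclusions establish \eqref{fact:resv-identity:eqn} as an identity of set-valued maps.

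The argument is just the substitution $v = x - z$, run in both directions. The main thing to be careful about is not the algebra but the meaning of \eqref{fact:resv-identity:eqn}: the subtraction $I - (\cdot)$ must be read pointwise on sets, and the domains need no separate argument. Indeed, if one side is empty at $x$, the equivalence above forces the other side to be empty as well. I would also remark that the paper will apply this with $T = \lambda\partial_\epsilon g$, which is not monotone. That is exactly why the proof must avoid any single-valuedness or monotonicity, and the graph argument above satisfies this.
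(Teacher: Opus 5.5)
Your proof is correct and complete: the substitution $v = x - z$, run in both directions through the definition of the inverse of a set-valued map, establishes the identity for an arbitrary $T$, with $I - (\cdot)$ read pointwise on sets. The paper gives no proof of its own and simply cites Rockafellar--Wets, so your argument supplies the elementary verification that the paper outsources. Your care to avoid monotonicity is well placed, though the paper actually applies the identity (in the inexact Moreau decomposition) with $T = \lambda^{-1}\partial_\epsilon g^\star$ rather than $\lambda\partial_\epsilon g$; that operator likewise need not be monotone when $\epsilon > 0$.
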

        \begin{lemma}[inexact Moreau decomposition]\;\label{lemma:inxt-moreau-decomp}\hspace{-0.75em}
            Let $g: \RR^n \rightarrow \overline \RR$ be a closed, convex and proper function. 
            It has the equivalence
            \begin{align*}
                \tilde y \approx_\epsilon \hprox_{\lambda^{-1}g^\star}(\lambda^{-1}y)
                \iff 
                y - \lambda \tilde y \approx_\epsilon \hprox_{\lambda g}(y). 
            \end{align*}
        \end{lemma}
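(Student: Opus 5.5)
The plan is to unfold both sides of the equivalence using Definition \ref{def:inxt-pp} and then connect them with the $\epsilon$-Fenchel inequality (Fact \ref{fact:esp-fenchel-ineq}). The resolvent identity (Fact \ref{fact:resv-identity}) is the exact-case motivation, but it is not needed: the $\epsilon$-subgradient symmetry already supplies the whole argument. Throughout, $g^\star$ is again proper, closed and convex because $g$ is. Hence $g^{\star\star}=g$, and the strengthened ``$\iff$'' in Fact \ref{fact:esp-fenchel-ineq} is available both for $g$ and for $g^\star$.

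\textbf{Step 1 (left side).} Apply Definition \ref{def:inxt-pp} to the function $g^\star$, with parameter $\lambda^{-1}$ and base point $\lambda^{-1}y$. The statement $\tilde y \approx_\epsilon \hprox_{\lambda^{-1}g^\star}(\lambda^{-1}y)$ means
$$\lambda\left(\lambda^{-1}y - \tilde y\right) = y - \lambda\tilde y \in \partial_\epsilon g^\star(\tilde y).$$

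\textbf{Step 2 (right side).} Set $\tilde x := y - \lambda\tilde y$ and apply Definition \ref{def:inxt-pp} to $g$ with parameter $\lambda$ and base point $y$. The statement $\tilde x \approx_\epsilon \hprox_{\lambda g}(y)$ means
$$\lambda^{-1}(y-\tilde x) = \tilde y \in \partial_\epsilon g(\tilde x).$$

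\textbf{Step 3 (link).} Fact \ref{fact:esp-fenchel-ineq}, applied with $g$ proper, closed and convex, gives
$$\tilde y\in\partial_\epsilon g(\tilde x)\iff g^\star(\tilde y)+g(\tilde x)\le\langle \tilde y,\tilde x\rangle+\epsilon\iff \tilde x\in\partial_\epsilon g^\star(\tilde y).$$
Combining this chain with Steps 1 and 2 yields the claimed equivalence. Note that the tolerance $\epsilon$ is the same on both sides, since the Fenchel–Young gap is symmetric in $(g,g^\star)$.

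\textbf{Main obstacle.} The only delicate point is the degenerate case where a point lies outside a domain. By Definition \ref{def:esp-subgrad}, $\partial_\epsilon$ is empty off the domain, so we must check that the middle inequality in Step 3 forces $\tilde x\in\dom g$ and $\tilde y\in\dom g^\star$. This holds because both $g$ and $g^\star$ are proper, so they never take the value $-\infty$. Consequently a finite right-hand side $\langle \tilde y,\tilde x\rangle+\epsilon$ forces both $g(\tilde x)$ and $g^\star(\tilde y)$ to be finite. Conversely, if either point is outside its domain, all three conditions fail simultaneously, so the equivalence still holds.
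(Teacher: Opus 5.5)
Your proof is correct and uses the same key ingredient as the paper's: the inversion $\tilde x \in \partial_\epsilon g^\star(\tilde y) \iff \tilde y \in \partial_\epsilon g(\tilde x)$ from Fact \ref{fact:esp-fenchel-ineq}. The paper routes this through the resolvent identity (Fact \ref{fact:resv-identity}) with $T = \lambda^{-1}\partial_\epsilon g^\star$ and $T^{-1} = \partial_\epsilon g\circ(\lambda I)$. You instead observe that, once Definition \ref{def:inxt-pp} is unfolded, the two sides are already $y - \lambda\tilde y \in \partial_\epsilon g^\star(\tilde y)$ and $\tilde y \in \partial_\epsilon g(y - \lambda \tilde y)$, so the resolvent step is unnecessary. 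Your explicit domain check, which uses properness of $g$ and $g^\star$, is a small addition that the paper leaves implicit.
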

        \begin{proof}
            Consider $\tilde y \approx_\epsilon \hprox_{\lambda^{-1}g^\star}(\lambda^{-1}y)$: 
            \begin{align*}
                & \lambda^{-1} y - \tilde y \in \lambda^{-1} \partial_\epsilon g^\star(\tilde y)
                \\
                \iff & 
                \lambda^{-1} y \in \lambda^{-1} \partial_\epsilon g^\star(\tilde y) + \tilde y 
                = (I + \lambda^{-1}\partial_\epsilon g^\star)(\tilde y)
                \\
                \iff & 
                \tilde y \in (I + \lambda^{-1}\partial_\epsilon g^\star)^{-1}(\lambda^{-1}y)
                \underset{(1)}= \left(
                    I - (I + \partial_\epsilon g\circ(\lambda I))^{-1}
                \right)(\lambda^{-1}y)
                \\
                \iff & \lambda^{-1}y - \tilde y 
                \in (I + \partial_\epsilon g \circ(\lambda I))^{-1}(\lambda^{-1}y)
                \\
                \iff &
                \lambda^{-1}y \in 
                (I + \partial_\epsilon g \circ(\lambda I))(\lambda^{-1}y - \tilde y)
                = (\lambda^{-1}I + \partial_\epsilon g)(y - \lambda\tilde y)
                \\
                \iff & 
                \lambda^{-1}y - (\lambda^{-1}y - \tilde y) = \lambda^{-1}(y - (y - \lambda \tilde y))
                \in \partial_\epsilon g(y - \lambda \tilde y)
                \\
                \underset{\text{Def \ref{def:inxt-pp}}}\iff & 
                y - \lambda \tilde y \approx_\epsilon \hprox_{\lambda g}(y). 
            \end{align*}
            At (1) we apply Fact \ref{fact:resv-identity} with $T = \lambda^{-1}\partial_\epsilon g^\star$, giving $T^{-1} = (\lambda^{-1}\partial_\epsilon g^\star)^{-1} = \partial_\epsilon g\circ(\lambda I)$ by Fact \ref{fact:esp-fenchel-ineq}, which states that $(\partial_\epsilon g^\star)^{-1} = \partial_\epsilon g$ since $g$ is closed, convex and proper.
        \end{proof}
        \begin{definition}[Bregman Divergence of a differentiable function]\;\label{def:breg-div}
            Let $f: \RR^n \rightarrow \RR$ be a differentiable function.
            We define the Bregman divergence of $f$ by:
            \begin{align*}
                D_f: \RR^n \times \RR^n \rightarrow \RR & :
                (x, y) \mapsto f(x) - f(y) - \langle \nabla f(y), x - y\rangle.
            \end{align*}
        \end{definition}
        \begin{remark}
            By our definition here, $f$ is not necessarily a Legendre function, and it need not be in the scope of our paper.
        \end{remark}
        \begin{definition}[Lipschitz smoothness]\label{def:lipz-smooth}
            A convex, differentiable function $f: \RR^n \rightarrow \RR$ is $L$-Lipschitz smooth if there exists $L$ such that: 
            \begin{align*}
                (\forall x \in \RR^n)(\forall y \in \RR^n)\; 
                D_f(x, y) &\le \frac{L}{2}\Vert x - y\Vert^2. 
            \end{align*}
        \end{definition}
        \begin{remark}
            This is also known by the name ``Descent Lemma" in the literature, see for example Beck \cite[Lemma 5.7]{beck_first-order_2017}. 
            % \#DONE: Put a reference for the name "Descent Lemma" here. 
        \end{remark}
        \begin{fact}[Lipschitz smoothness equivalence {\cite[Theorem 18.15]{bauschke_convex_2017}}]\label{fact:lipz-smth-equiv}
            Let $f:\RR^n \rightarrow \RR$ be a convex, differentiable function. 
            The following are equivalent. 
            \begin{enumerate}[nosep]
                \item $f$ is $L$-Lipschitz smooth. 
                \item $\nabla f$ is an $L$-Lipschitz continuous mapping, i.e., $\Vert \nabla f(x) - \nabla f(y)\Vert \le L \Vert x - y\Vert$ for all $x, y \in \RR^n$.
            \end{enumerate}
        \end{fact}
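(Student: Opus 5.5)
The plan is to prove the two implications of the equivalence separately, with both resting on the Descent Lemma form of Definition \ref{def:lipz-smooth} and on convexity of $f$.

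\textbf{(2) $\Rightarrow$ (1).} Fix $x, y \in \RR^n$ and write $f(x) - f(y)$ as an integral along the segment:
\[
f(x) - f(y) = \int_0^1 \langle \nabla f(y + t(x-y)), x - y\rangle\, dt .
\]
This is legitimate because $f$ is differentiable on $\RR^n$, so $t \mapsto f(y + t(x-y))$ is differentiable. Continuity of $\nabla f$, implied by (2), makes the derivative continuous, so the fundamental theorem of calculus applies. Subtracting $\langle \nabla f(y), x-y\rangle$ gives
\[
D_f(x,y) = \int_0^1 \langle \nabla f(y+t(x-y)) - \nabla f(y), x-y\rangle\,dt .
\]
Cauchy--Schwarz together with the $L$-Lipschitz property bounds the integrand by $tL\Vert x-y\Vert^2$. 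Integrating then yields $D_f(x,y) \le \frac{L}{2}\Vert x-y\Vert^2$, which is (1).

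\textbf{(1) $\Rightarrow$ (2).} This direction needs convexity, and it is the main step. Fix $y$ and define the auxiliary function $\phi(x) := f(x) - \langle \nabla f(y), x\rangle$.
\begin{itemize}
\item $\phi$ is convex and differentiable, and $\nabla\phi(y) = 0$, so $y$ is a global minimizer of $\phi$.
\item $D_\phi = D_f$, so $\phi$ also satisfies (1).
\end{itemize}
Applying (1) to $\phi$ at the point $x - \frac1L \nabla\phi(x)$ gives
\[
\phi(y) \le \phi\bigl(x - \tfrac1L\nabla\phi(x)\bigr) \le \phi(x) - \tfrac{1}{2L}\Vert \nabla\phi(x)\Vert^2 .
\]
Substituting back for $\phi$, this reads
\[
D_f(x,y) \ge \tfrac{1}{2L}\Vert \nabla f(x) - \nabla f(y)\Vert^2 .
\]
Swapping the roles of $x$ and $y$ and adding the two inequalities produces co-coercivity:
\[
\langle \nabla f(x) - \nabla f(y), x-y\rangle \ge \tfrac1L \Vert \nabla f(x)-\nabla f(y)\Vert^2 .
\]
Cauchy--Schwarz applied to the left-hand side then gives $\Vert \nabla f(x)-\nabla f(y)\Vert \le L\Vert x-y\Vert$, which is (2).

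The main obstacle is this second direction. Without convexity, (1) alone gives only a one-sided upper curvature bound and does not control $\nabla f$. The device that resolves it is the tilted function $\phi$, whose minimizer is known to be $y$, combined with one gradient step. These are the steps I would write out carefully.
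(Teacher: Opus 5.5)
Your argument is correct and proves the stated equivalence. It differs from the paper only in that the paper gives no proof at all: it cites Bauschke--Combettes, Theorem 18.15. There the equivalence appears as one item in a longer list of characterizations for convex functions on a Hilbert space, including cocoercivity of $\nabla f$, convexity of $\frac{L}{2}\Vert\cdot\Vert^2 - f$, and strong convexity of $f^\star$.

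You instead give the direct, self-contained route, essentially Nesterov's:
\begin{itemize}
\item For (2)$\Rightarrow$(1), you use the fundamental theorem of calculus and Cauchy--Schwarz.
\item For (1)$\Rightarrow$(2), you tilt $f$ so that $y$ minimizes $\phi$, and take one step of length $1/L$ to get $D_f(x,y)\ge \frac{1}{2L}\Vert\nabla f(x)-\nabla f(y)\Vert^2$. Symmetrizing gives cocoercivity, and Cauchy--Schwarz finishes.
\end{itemize}

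The citation buys brevity and the extra equivalences, such as the Baillon--Haddad cocoercivity statement, in infinite dimensions. Your proof buys transparency about where the hypotheses enter. Convexity is used only in (1)$\Rightarrow$(2), and only to know that the stationary point $y$ globally minimizes $\phi$. The descent inequality obtained in (2)$\Rightarrow$(1) holds for any differentiable $f$ with $L$-Lipschitz gradient.

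One small technicality: the step to $x - \frac{1}{L}\nabla\phi(x)$ requires $L>0$, and Bauschke--Combettes likewise take $\beta>0$. If $L=0$ is permitted in Definition \ref{def:lipz-smooth}, handle it separately. Either note that convexity then forces $D_f\equiv 0$, so $f$ is affine and $\nabla f$ is constant, or run your argument with every $L'>0$ and let $L'\downarrow 0$.
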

        \begin{remark}
            This fact is from Bauschke and Combettes \cite{bauschke_convex_2017}, page 323. 
            Here, we consider Euclidean space $\RR^n$. 
        \end{remark}

    \subsection{Inexact proximal gradient inequality}\label{ssec:inxt-pg-ineq}
        In this section, we present the definition (Definition \ref{def:inxt-pg}) and characterizations (Lemma \ref{lemma:other-repr-inxt-pg}) of inexact proximal gradient operator along with their assumptions (Assumption \ref{ass:for-inxt-pg-ineq}) leading to the inexact proximal gradient inequality (Theorem \ref{thm:inxt-pg-ineq}).
        \begin{assumption}[for inexact proximal gradient]\;\label{ass:for-inxt-pg-ineq}\\
            Assume $(F, f, g, L)$ satisfy the following.
            \begin{enumerate}[nosep]
                \item $f: \RR^n \rightarrow \RR$ is a convex, $L$-Lipschitz smooth function (Definition \ref{def:lipz-smooth}) which we can evaluate $\nabla f$ exactly and efficiently. 
                \item $g: \RR^n \rightarrow \overline\RR$ is a proper, closed, and convex function whose exact proximal operator is unavailable.
                \item The overall objective is $F = f + g$.
            \end{enumerate}
        \end{assumption}
        \begin{definition}[exact proximal gradient]\;\label{def:exact-pg}\\
            Let $(F, f, g, L)$ satisfy Assumption \ref{ass:for-inxt-pg-ineq}.
            For all $\rho > 0$, $x^+ = T_\rho(x)$ is the exact proximal gradient operator if and only if
            \begin{align*}
                \mathbf 0 \in \nabla f(x) - \rho(x - x^+) + \partial g(x^+). 
            \end{align*}
        \end{definition}
        The following definition extends the proximal gradient operator to the inexact setting by applying the $\epsilon$-subgradient (Definition \ref{def:esp-subgrad}); it is crucial for algorithms in the outer loop of IAPG.
        \begin{definition}[inexact proximal gradient]\label{def:inxt-pg}
            Let $(F, f, g, L)$ satisfy Assumption \ref{ass:for-inxt-pg-ineq}.
            Let $\epsilon \ge 0, \rho > 0$.
            Then, $\tilde x \approx_\epsilon T_\rho(x)$ is an inexact proximal gradient if it satisfies the variational inequality: 
            \begin{align*}
                \mathbf 0 \in \nabla f(x) - \rho(x - \tilde x) + \partial_{\epsilon} g(\tilde x). 
            \end{align*}
        \end{definition}
        \begin{remark}
            The evaluation of $\nabla f$ at any points $x \in \RR^n$ is exact. 
        \end{remark}
        % We introduce the definition of the exact proximal gradient operator for algorithms in the inner loop of IAPG. 
        Note that setting $\epsilon = 0$ in Definition \ref{def:inxt-pg} recovers Definition \ref{def:exact-pg}.
        \par
        The next lemma shows that, the above definition of inexact proximal gradient using $\epsilon$-subgradient is equivalent to the composite of an inexact proximal point of the nonsmooth part $g$ on the gradient of the smooth part $f$, linking it back to Definition \ref{def:inxt-pp} in the previous section. 
        \begin{lemma}[other representations of inexact proximal gradient]\;\label{lemma:other-repr-inxt-pg}\\
            Let $(F, f, g, L)$ satisfy Assumption \ref{ass:for-inxt-pg-ineq}, $\epsilon \ge 0, \rho > 0$. Then for all $\tilde x \approx_\epsilon T_\rho(x)$, the following equivalent representations hold:
            \begin{align*}
                & (x - \rho^{-1}\nabla f(x)) - \tilde x 
                \in \rho^{-1} \partial_\epsilon g(\tilde x)
                \\
                \iff 
                & \tilde x \in (I + \rho^{-1}\partial_\epsilon g)^{-1}
                (x - \rho^{-1}\nabla f(x))
                \\
                \iff 
                & \tilde x \approx_\epsilon \hprox_{\rho^{-1} g}
                \left(x - \rho^{-1}\nabla f(x)\right)
            \end{align*}
        \end{lemma}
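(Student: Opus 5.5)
The plan is to begin from Definition \ref{def:inxt-pg} and rewrite it by elementary set manipulations until it reaches the form of Definition \ref{def:inxt-pp}. Throughout, $\partial_\epsilon g$ is a set-valued map. The only facts needed are that positive scaling and translation commute with Minkowski sums and set membership, and that inverting a multivalued operator means $z \in M^{-1}(w) \iff w \in M(z)$.

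\textbf{Step 1.} Since $\tilde x \approx_\epsilon T_\rho(x)$, we have
\[
\mathbf 0 \in \nabla f(x) - \rho(x-\tilde x) + \partial_\epsilon g(\tilde x).
\]
This is equivalent to $\rho(x - \tilde x) - \nabla f(x) \in \partial_\epsilon g(\tilde x)$. Multiplying by $\rho^{-1} > 0$, which is a bijection on $\RR^n$ and so preserves the equivalence, gives
\[
(x - \rho^{-1}\nabla f(x)) - \tilde x \in \rho^{-1}\partial_\epsilon g(\tilde x).
\]
This is the first representation.

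\textbf{Step 2.} Set $u := x - \rho^{-1}\nabla f(x)$. Adding $\tilde x$ to both sides, the first representation becomes $u \in \tilde x + \rho^{-1}\partial_\epsilon g(\tilde x) = (I + \rho^{-1}\partial_\epsilon g)(\tilde x)$. By the definition of the inverse of a set-valued operator, this holds if and only if $\tilde x \in (I + \rho^{-1}\partial_\epsilon g)^{-1}(u)$. This is the second representation. The same manipulation was used in the proof of Lemma \ref{lemma:inxt-moreau-decomp}.

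\textbf{Step 3.} By Definition \ref{def:inxt-pp} with $\lambda = \rho^{-1}$ and base point $u$, the statement $\tilde x \approx_\epsilon \hprox_{\rho^{-1}g}(u)$ means exactly $\rho(u - \tilde x) \in \partial_\epsilon g(\tilde x)$. Scaling by $\rho^{-1}$, this is the first representation again, which closes the chain of equivalences.

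There is no real obstacle here. The one point requiring care is that $\partial_\epsilon g(\tilde x)$ may be empty, namely when $\tilde x \notin \dom g$. In that case every statement in the chain is false simultaneously, so the equivalences still hold. Since each step only rescales by a nonzero constant or translates, all steps are reversible, and no convexity or Lipschitz property beyond Assumption \ref{ass:for-inxt-pg-ineq} is used.
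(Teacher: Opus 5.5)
Your proof is correct and follows the same route as the paper, which simply states that the first equivalence is algebra of set-valued maps and the second is the resolvent form of Definition \ref{def:inxt-pp}. You spell these steps out explicitly, and you also derive the first representation from Definition \ref{def:inxt-pg}.
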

        \begin{proof}
            This is immediate. The first $\iff$ uses algebra commonly used for multivalued mappings, and the second $\iff$ takes the resolvent of $\partial_\epsilon g$, which by Definition \ref{def:inxt-pp} is the inexact proximal operator.
        \end{proof}
        \begin{lemma}[$\epsilon$-subgradient basic sum rule]\;\label{lemma:eps-subgrad-wsum-rule}\\
            Let $(F, f, g, L)$ satisfy Assumption \ref{ass:for-inxt-pg-ineq}, $\epsilon \ge 0$.
            Then:
            \begin{align*}
                (\forall x \in \RR^n)\; \partial_\epsilon g(x) + \nabla f(x) \subseteq \partial_\epsilon F(x). 
            \end{align*}
        \end{lemma}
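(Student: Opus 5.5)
The plan is to verify the inclusion directly from Definition \ref{def:esp-subgrad}, using the convexity of $f$ through its gradient inequality. Fix $x \in \RR^n$. If $x \notin \dom g$, then $\partial_\epsilon g(x) = \emptyset$, so the left-hand side is empty and the inclusion holds trivially. Hence we may assume $x \in \dom g$. Since $f$ is finite everywhere, we then have $x \in \dom F$ as well.

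Now take any $v \in \partial_\epsilon g(x)$. We want to show $v + \nabla f(x) \in \partial_\epsilon F(x)$. By Definition \ref{def:esp-subgrad}, for every $z \in \RR^n$,
\begin{align*}
    \langle v, z - x\rangle \le g(z) - g(x) + \epsilon .
\end{align*}
Since $f$ is convex and differentiable (Assumption \ref{ass:for-inxt-pg-ineq}), the Bregman divergence of Definition \ref{def:breg-div} is nonnegative, that is $D_f(z,x) \ge 0$. Equivalently,
\begin{align*}
    \langle \nabla f(x), z - x\rangle \le f(z) - f(x)
    \quad \text{for all } z \in \RR^n .
\end{align*}

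Adding the two inequalities gives, for all $z$,
\begin{align*}
    \langle v + \nabla f(x), z - x\rangle \le F(z) - F(x) + \epsilon ,
\end{align*}
which is exactly the statement $v + \nabla f(x) \in \partial_\epsilon F(x)$.

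There is no genuine obstacle here. The only points requiring care are the following:
\begin{enumerate}[nosep]
    \item Handling the case $x \notin \dom g$ via the empty-set convention.
    \item Noting that when $z \notin \dom g$ the right-hand side is $+\infty$, so the added inequality still holds in extended arithmetic; this is well defined because $f(z)$ is finite.
    \item Observing that convexity of $f$ is what allows the $\epsilon$ not to grow. Lipschitz smoothness is not needed.
\end{enumerate}
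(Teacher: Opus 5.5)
Your proposal is correct and follows the paper's proof: you add the $\epsilon$-subgradient inequality for $g$ to the gradient inequality $D_f(z,x)\ge 0$ for the convex function $f$, which yields $v+\nabla f(x)\in\partial_\epsilon F(x)$. Your extra remarks on the empty-set convention for $x\notin\dom g$ and the extended-valued arithmetic when $z\notin\dom g$ are valid details that the paper leaves implicit.
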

        \begin{proof}
            Fix any $x \in \RR^n$, by Definition \ref{def:esp-subgrad} $\forall v \in \partial_\epsilon g(x)$ if and only if $\forall z \in \RR^n$: 
            \begin{align*}
                - &\epsilon \le g(z) - g(x) - \langle v, z - x\rangle, 
                \\
                &0 \le f(z) - f(x) - \langle \nabla f(x), z - x\rangle. 
            \end{align*}
            Adding the above two expressions yields $-\epsilon \le F(z) - F(x) - \langle \nabla f(x) + v, z - x\rangle$ which is $\nabla f(x) + v \in \partial_\epsilon F(x)$. 
        \end{proof}
        \par
        The following lemma states the fact that the $\epsilon$-subgradient of the objective function can be bounded by the residual of the inexact proximal gradient operator. 
        \begin{lemma}[The proximal gradient residual]\;\label{lemma:pg-residual}\\
            Let $(F, f, g, L)$ satisfy Assumption \ref{ass:for-inxt-pg-ineq}, $\epsilon \ge 0$.
            Let $\tilde x\approx_\epsilon T_\rho(x)$.
            Then:
            \begin{align*}
                \Vert x - \tilde x\Vert \ge (L + \rho)^{-1}\dist(\mathbf 0 | \partial_\epsilon F(\tilde x)). 
            \end{align*}
        \end{lemma}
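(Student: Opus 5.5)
The plan is to exhibit one explicit element of $\partial_\epsilon F(\tilde x)$ and bound its norm. By Definition \ref{def:inxt-pg}, the inclusion $\tilde x \approx_\epsilon T_\rho(x)$ gives some $v \in \partial_\epsilon g(\tilde x)$ with
\begin{align*}
    v = \rho(x - \tilde x) - \nabla f(x).
\end{align*}
Next we apply Lemma \ref{lemma:eps-subgrad-wsum-rule} at the point $\tilde x$. This gives $v + \nabla f(\tilde x) \in \partial_\epsilon F(\tilde x)$, and hence
\begin{align*}
    \dist(\mathbf 0 \,|\, \partial_\epsilon F(\tilde x))
    \le \Vert v + \nabla f(\tilde x)\Vert
    = \Vert \rho(x - \tilde x) + \nabla f(\tilde x) - \nabla f(x)\Vert .
\end{align*}

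The remaining step is the triangle inequality together with Fact \ref{fact:lipz-smth-equiv}. Since $f$ is $L$-Lipschitz smooth in the sense of Definition \ref{def:lipz-smooth}, $\nabla f$ is $L$-Lipschitz, so $\Vert \nabla f(\tilde x) - \nabla f(x)\Vert \le L\Vert x - \tilde x\Vert$. Combining,
\begin{align*}
    \dist(\mathbf 0 \,|\, \partial_\epsilon F(\tilde x)) \le (\rho + L)\Vert x - \tilde x\Vert .
\end{align*}
Dividing by $L + \rho > 0$ gives the claim.

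The argument is short. The only point needing care is that the sum rule must be applied at $\tilde x$ rather than at $x$, since the $\epsilon$-subgradient of $g$ lives at $\tilde x$. This is exactly what creates the gradient mismatch $\nabla f(\tilde x) - \nabla f(x)$, which the Lipschitz continuity of $\nabla f$ then controls.

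We should also check that $\partial_\epsilon F(\tilde x)$ is nonempty, so the distance is finite. This follows from the fact that $\partial_\epsilon g(\tilde x) \neq \emptyset$, which forces $\tilde x \in \dom g$ by Definition \ref{def:esp-subgrad}.
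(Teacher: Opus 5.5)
Your proposal is correct and follows the paper's proof essentially step for step. Both arguments extract the element $\rho(x-\tilde x)-\nabla f(x)\in\partial_\epsilon g(\tilde x)$ and apply Lemma \ref{lemma:eps-subgrad-wsum-rule} at $\tilde x$. Both then bound the norm using the triangle inequality and the $L$-Lipschitz continuity of $\nabla f$. You also write the gradient mismatch with the correct sign, $\nabla f(\tilde x)-\nabla f(x)$, where the paper's displayed chain has a typo ($+$).
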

        \begin{proof}
            Consider any $x \in \dom F, \epsilon\ge 0, \rho > 0$. 
            Let $\tilde x \approx_\epsilon T_\rho(x)$ (Definition \ref{def:inxt-pg}) so by definition it has: 
            \begin{align*}
                & \rho(x - \tilde x) - \nabla f(x) \in \partial_\epsilon g(\tilde x)
                \\
                \iff & 
                \rho(x - \tilde x) - \nabla f(x) + \nabla f(\tilde x) 
                \in \partial_\epsilon g(\tilde x) + \nabla f(\tilde x) \underset{(1)}\subseteq \partial_\epsilon F(\tilde x). 
            \end{align*}
            At (1), we applied Lemma \ref{lemma:eps-subgrad-wsum-rule}.
            Therefore:
            \begin{align*}
                \dist(\mathbf 0 | \partial_\epsilon F(\tilde x)) 
                &\le \Vert \rho(x - \tilde x) - \nabla f(x) + \nabla f(\tilde x)\Vert
                \\
                &\underset{(2)}\le \rho \Vert x - \tilde x\Vert + \Vert \nabla f(x) + \nabla f(\tilde x)\Vert 
                \\
                &\le (L + \rho)\Vert x - \tilde x\Vert. 
            \end{align*}
            At (2), we invoked Fact \ref{fact:lipz-smth-equiv}, which states $\nabla f$ is $L$-Lipschitz continuous, giving $\Vert \nabla f(x) - \nabla f(\tilde x)\Vert \le L \Vert x - \tilde x\Vert$.
        \end{proof}
        \par
        One of our main results of this section now follows. 
        The theorem below is an inexact variant of the proximal gradient inequality accommodating relative error, absolute error, and dynamic line search and backtracking. 
        By introducing a new relaxation parameter $\rho$, we accommodate the inexactness of the $\epsilon$-subgradient relative to $\Vert \tilde x - x\Vert^2$, where $\tilde x \approx_\epsilon T_{B+\rho}(x)$, and $B$ is the line search constant. 
        \begin{theorem}[inexact over-regularized proximal gradient inequality]\;\label{thm:inxt-pg-ineq}\\
            Let $(F, f, g, L)$ satisfy Assumption \ref{ass:for-inxt-pg-ineq} and denote $F = f + g$.
            Let $\approx_\epsilon T_\rho$ be given by Definition \ref{def:inxt-pg}.
            For all $\epsilon \ge 0, B \ge 0, \rho \ge 0$, consider any $\tilde x \approx_\epsilon T_{B + \rho}(x)$ such that
            $\tilde x, B$ satisfy the line search condition $D_f(\tilde x, x) \le \frac{B}{2}\Vert x - \tilde x\Vert^2$ ($D_f$ is given by Definition \ref{def:breg-div}).
            Then $\forall z \in \RR^n$: 
            \begin{align*}
                - \epsilon &\le 
                F(z) - F(\tilde x)
                + \frac{B + \rho}{2}\Vert x - z\Vert^2
                - \frac{B + \rho}{2}\Vert z - \tilde x\Vert^2
                - \frac{\rho}{2}\Vert \tilde x - x\Vert^2. 
            \end{align*}
        \end{theorem}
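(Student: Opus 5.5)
The plan is to unfold Definition \ref{def:inxt-pg} into an $\epsilon$-subgradient inequality for $g$ at $\tilde x$. Then I add the two inequalities available for $f$: convexity at $x$, and the line search condition between $\tilde x$ and $x$. The last step is the three-point identity, which turns the inner product into squared norms.

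\textbf{Step 1 (the $g$ part).} From $\tilde x \approx_\epsilon T_{B+\rho}(x)$ we have $v := (B+\rho)(x - \tilde x) - \nabla f(x) \in \partial_\epsilon g(\tilde x)$. By Definition \ref{def:esp-subgrad}, for all $z$,
\begin{align*}
    -\epsilon \le g(z) - g(\tilde x) - \langle (B+\rho)(x-\tilde x) - \nabla f(x), z - \tilde x\rangle .
\end{align*}

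\textbf{Step 2 (the $f$ part).} Convexity of $f$ gives $0 \le f(z) - f(x) - \langle \nabla f(x), z - x\rangle$. The line search condition $D_f(\tilde x, x)\le \frac{B}{2}\Vert x-\tilde x\Vert^2$ rearranges to
\begin{align*}
    0 \le f(x) - f(\tilde x) + \langle \nabla f(x), \tilde x - x\rangle + \tfrac{B}{2}\Vert \tilde x - x\Vert^2 .
\end{align*}
Adding these two yields $0 \le f(z) - f(\tilde x) - \langle \nabla f(x), z - \tilde x\rangle + \frac{B}{2}\Vert\tilde x - x\Vert^2$.

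\textbf{Step 3 (combine).} Summing with Step 1, the $\langle \nabla f(x), z-\tilde x\rangle$ terms cancel, leaving
\begin{align*}
    -\epsilon \le F(z) - F(\tilde x) - (B+\rho)\langle x - \tilde x, z - \tilde x\rangle + \tfrac{B}{2}\Vert \tilde x - x\Vert^2 .
\end{align*}
Now apply the identity $2\langle x - \tilde x, z - \tilde x\rangle = \Vert x-\tilde x\Vert^2 + \Vert z - \tilde x\Vert^2 - \Vert x - z\Vert^2$. This rewrites $-(B+\rho)\langle x-\tilde x, z-\tilde x\rangle$ as
\begin{align*}
    \tfrac{B+\rho}{2}\Vert x-z\Vert^2 - \tfrac{B+\rho}{2}\Vert z-\tilde x\Vert^2 - \tfrac{B+\rho}{2}\Vert x - \tilde x\Vert^2 .
\end{align*}
Combined with $+\frac{B}{2}\Vert\tilde x - x\Vert^2$, the net coefficient on $\Vert \tilde x - x\Vert^2$ is $-\frac{\rho}{2}$. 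This is exactly the claimed inequality.

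There is no real obstacle here. The only care needed is in the bookkeeping: sign conventions in the three-point identity, and checking that the $B$ from the line search cancels against the $B$ in $T_{B+\rho}$. The case $B+\rho = 0$ is degenerate for $T_{B+\rho}$, so I tacitly assume $B+\rho>0$, as Definition \ref{def:inxt-pg} requires.
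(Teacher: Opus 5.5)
Your proof is correct and follows essentially the same route as the paper: unfold the $\epsilon$-subgradient inequality, control the $f$-terms by convexity ($-D_f(z,x)\le 0$) and the line search bound on $D_f(\tilde x,x)$, then apply the three-point identity. The only cosmetic difference is that the paper packages your two $f$-inequalities through the Bregman identity $\langle \nabla f(x), z-\tilde x\rangle = -D_f(z,x)+f(z)+D_f(\tilde x,x)-f(\tilde x)$, whereas you add them directly.
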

        \begin{proof}
            By Definition \ref{def:inxt-pg} write the variational inequality that describes $\tilde x \approx_\epsilon T_{B + \rho}(x)$ which is $\mathbf 0 \in \nabla f(x) - (B + \rho)(x - \tilde x) + \partial_{\epsilon} g(\tilde x)$.
            Applying Definition \ref{def:esp-subgrad}, so for all $z \in \RR^n$:
            \begin{align*}
                - \epsilon &\le 
                g(z) - g(\tilde x) - \langle - (B + \rho)(\tilde x - x) - \nabla f(x), z - \tilde x\rangle
                \\
                &= 
                g(z) - g(\tilde x) 
                + (B + \rho)\langle \tilde x - x, z - \tilde x\rangle
                + \langle \nabla f(x), z - \tilde x\rangle
                \\
                &\underset{(1)}{=} 
                g(z) + f(z) - g(\tilde x) - f(\tilde x)
                + (B + \rho)\langle \tilde x - x, z - \tilde x\rangle
                - D_f(z, x) + D_f(\tilde x, x)
                \\
                &\underset{(2)}{\le} 
                F(z) - F(\tilde x)
                + (B + \rho)\langle \tilde x - x, z - \tilde x\rangle
                + \frac{B}{2}\Vert \tilde x - x\Vert^2
                \\
                &=
                F(z) - F(\tilde x) + \frac{B + \rho}{2}\left(
                    \Vert x - z\Vert^2
                    - \Vert \tilde x - x\Vert^2
                    - \Vert z - \tilde x\Vert^2
                \right)
                + \frac{B}{2}\Vert \tilde x - x\Vert^2
                \\
                &= 
                F(z) - F(\tilde x)
                + \frac{B + \rho}{2}\Vert x - z\Vert^2
                - \frac{B + \rho}{2}\Vert z - \tilde x\Vert^2
                - \frac{\rho}{2}\Vert \tilde x - x\Vert^2. 
            \end{align*}
            At (1), we used the following: 
            \begin{align*}
                \langle \nabla f(x), z - \tilde x\rangle &= \langle \nabla f(x), z - x + x - \tilde x\rangle
                \\
                &= \langle \nabla f(x), z - x\rangle + \langle \nabla f(x), x - \tilde x\rangle
                \\
                &= -D_f(z, x) + f(z) - f(x) + D_f(\tilde x, x) - f(\tilde x) + f(x)
                \\
                &= -D_f(z, x) + f(z) + D_f(\tilde x, x) - f(\tilde x). 
            \end{align*}
            At (2), we used the fact that $f$ is convex hence $- D_f(z, x) \le 0$ always, and in the statement hypothesis we assumed that $B$ has $D_f(\tilde x, x) \le \frac{B}{2}\Vert \tilde x - x\Vert^2$.
            We also used $F = f + g$. 
        \end{proof}
        \begin{remark}
            When $\epsilon = 0, \rho = 0$, this reduces to the proximal gradient inequality exactly.
            The total perturbation admitted by the inequality is $\epsilon + \frac{\rho}{2}\Vert \tilde x - x\Vert^2$, decomposing into an absolute component $\epsilon$ and a relative component $\frac{\rho}{2}\Vert \tilde x - x\Vert^2$, where $\Vert \tilde x - x\Vert$ is a quantity that is large when $x$ is far from stationarity and vanishes at a fixed point of $\approx_\epsilon T_{B + \rho}$.
            This mixed error criterion automatically grants more tolerance for inexactness when $x$ is far from a stationary point, enabling faster convergence of the outer loop.
            \par
            The inequality differs from Schmidt et al.\ \cite[Lemma 2]{schmidt_convergence_2011} in that the gradient evaluation $\nabla f$ is exact and there is the additional over-relaxation parameter $\rho$.
            Compared to Villa et al.\ \cite{villa_accelerated_2013}, no equivalent result appears in their work, as they prefer Nesterov's estimating sequence, a preference we do not adopt.
        \end{remark}
        The following corollary is central to the convergence analysis of the inner loop of IAPG. 
        \begin{corollary}[the exact proximal gradient inequality]\label{corollary:exact-pgineq}
            Let $(F, f, g, L)$ satisfy Assumption \ref{ass:for-inxt-pg-ineq} and denote $F = f + g$.
            Let $\tau > 0$, $T_\tau$ be given by Definition \ref{def:exact-pg}.
            Consider any $x^+ = T_{\tau}(x)$ such that $x^+, \tau$ satisfy the line search condition $D_f(x^+, x) \le \frac{\tau}{2}\Vert x - x^+\Vert^2$ ($D_f$ is given by Definition \ref{def:breg-div}).
            Then $\forall z \in \RR^n$: 
            \begin{align*}
                0&\le 
                F(z) - F(x^+)
                + \frac{\tau}{2}\Vert x - z\Vert^2
                - \frac{\tau}{2}\Vert z - x^+\Vert^2. 
            \end{align*}
        \end{corollary}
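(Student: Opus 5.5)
This corollary follows by specializing Theorem \ref{thm:inxt-pg-ineq}. I would set $\epsilon = 0$, $\rho = 0$ and $B = \tau$. The first step is to check that the hypotheses of the theorem hold under this choice.

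By Definition \ref{def:inxt-pg} with $\epsilon = 0$, the relation $\tilde x \approx_0 T_{B+\rho}(x)$ reads
\[
\mathbf 0 \in \nabla f(x) - (B+\rho)(x - \tilde x) + \partial_0 g(\tilde x).
\]
By the remark after Definition \ref{def:esp-subgrad}, $\partial_0 g = \partial g$ because $g$ is proper, closed and convex. Hence this is exactly the inclusion of Definition \ref{def:exact-pg}. So $x^+ = T_\tau(x)$ means precisely that $x^+ \approx_0 T_{\tau + 0}(x)$. The line search hypothesis $D_f(x^+, x) \le \frac{\tau}{2}\Vert x - x^+\Vert^2$ is the theorem's condition with $B = \tau$. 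The restrictions $\epsilon \ge 0$, $B \ge 0$, $\rho \ge 0$ in the theorem are met, since $\tau > 0$.

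Next I apply Theorem \ref{thm:inxt-pg-ineq} with $\tilde x = x^+$. For every $z \in \RR^n$ it gives
\[
-0 \le F(z) - F(x^+) + \frac{\tau}{2}\Vert x - z\Vert^2 - \frac{\tau}{2}\Vert z - x^+\Vert^2 - \frac{0}{2}\Vert x^+ - x\Vert^2 .
\]
This is exactly the claimed inequality.

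The only point needing care is the identification $\partial_0 g = \partial g$, so that the exact and inexact operators coincide at $\epsilon = 0$; this is immediate from Definition \ref{def:esp-subgrad}. No real obstacle is expected.
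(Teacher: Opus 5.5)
Your proof is correct and follows the paper's approach: the paper presents this corollary as the special case $\epsilon = \rho = 0$ (with $B = \tau$) of Theorem \ref{thm:inxt-pg-ineq}. Your check that $\partial_0 g = \partial g$, so that $x^+ = T_\tau(x)$ is the same as $x^+ \approx_0 T_\tau(x)$, spells out the one identification the paper leaves implicit.
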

        \begin{remark}
            When $\tau \ge L$, the line search condition $D_f(x^+, x) \le \frac{\tau}{2}\Vert x - x^+\Vert^2$ holds trivially by $L$-Lipschitz smoothness (Definition \ref{def:lipz-smooth}) of $f$.
        \end{remark}
        The above corollary is a special case of Theorem \ref{thm:inxt-pg-ineq} where $\rho = \epsilon = 0$. 

    \subsection{Primal-dual formulation of the inexact proximal point problem}\label{ssec:optz-inxt-pp-problem}
        In this section we discuss the consequence of assuming $g$ in Assumption \ref{ass:for-inxt-pg-ineq} satisfies $g(x) = \omega(Ax)$ where $\omega$ is globally Lipschitz convex function with an available proximal operator.
        Under this assumption, we formulate a proximal point problem in \eqref{eqn:primal-pp} leading to the major result (Theorem \ref{thm:minimizing-dual-pp}) which states that any sequence minimizing the Fenchel Rockafellar dual of the proximal point problem also minimizes the primal. 
        \begin{assumption}[linear composite of convex nonsmooth function]\;\label{ass:for-inxt-prox}\\
            Let $m, n\in \N$. 
            Assume  $(g, \omega, A, K_\omega)$ satisfy the following. 
            \begin{enumerate}[nosep]
                \item $A\in \RR^{m \times n}$ is a matrix. 
                \item\label{ass:for-inxt-prox:item2} $\omega: \RR^m \rightarrow \RR$ is proper, closed, and convex with an exact proximal operator $\hprox_{\lambda\omega^\star}$ for all $\lambda > 0$, known conjugate $\omega^\star$, and $\dom \omega = \RR^m$. 
                \item\label{ass:for-inxt-prox:item3} $g(x):= \omega(Ax)$ satisfying the constraint qualification $\rng A \cap \reli\dom \omega \neq\emptyset$. 
                \item\label{ass:for-inxt-prox:item4} $\omega$ is globally $K_\omega$-Lipschitz continuous. 
            \end{enumerate}
        \end{assumption}
        \begin{remark}
            Assumption \ref{ass:for-inxt-prox}\ref{ass:for-inxt-prox:item4} is equivalent to $\dom \omega^\star$ being a bounded set. 
            The item is also equivalent to $\partial \omega$ having a bounded range (Lemma \ref{lemma:lipz-cnvx-fxn}), i.e.: $\sup_{x \in \RR^m}\max_{v \in \partial \omega(x)} \Vert v\Vert = K_\omega < \infty$. 
            Assumption \ref{ass:for-inxt-prox}\ref{ass:for-inxt-prox:item3} follows from \ref{ass:for-inxt-prox:item2} since $\dom g = \RR^n$. 
        \end{remark}
        Let $(g, \omega, A, K_\omega)$ be given by Assumption \ref{ass:for-inxt-prox}. 
        Fix $y \in \RR^n, \lambda > 0$, to choose $\tilde x$ such that $\tilde x \approx_\epsilon \hprox_{\lambda g}(x)$ we first quantify the function inside the proximal operator: 
        \begin{align}\label{eqn:primal-pp}
            \Phi_\lambda(u) &:= \omega(Au) + \frac{1}{2\lambda} \Vert u - y\Vert^2. 
        \end{align}
        Observe that $\rng A \cap \reli\dom \omega \neq\emptyset$ since Assumption \ref{ass:for-inxt-prox} requires $\omega$ full domain.
        Therefore, we can use subgradient calculus for a minimizer $\bar u$ of $\Phi_\lambda$, which has: 
        \begin{align}\label{expr:primal-pp-sb}
            \mathbf 0 \in \partial \Phi_\lambda(\bar u) \iff 
            \lambda^{-1}(\bar u - y) + A^\top \partial \omega(A\bar u). 
        \end{align}
        The function $\Phi_\lambda$ is $\lambda^{-1}$-strongly convex due to its quadratic term and hence it must admit a unique minimizer.
        A well known result in the convex programming literature now follows. 
        \begin{fact}[Fenchel Rockafellar Duality {\cite[Proposition 15.22]{bauschke_convex_2017}}]\;\label{fact:fn-rck-duality}\\
            Let $f:\RR^n \rightarrow \overline \RR$, $g: \RR^m \rightarrow \overline \RR$ be closed convex and proper, $A \in \RR^{m \times n}$. 
            If ${\mathbf 0 \in \inte(\dom g - A \dom f)}$, then 
            \begin{align*}
                \inf_{u\in \RR^n}\left\lbrace
                    f(u) + g(Au)
                \right\rbrace 
                + \min_{v \in \RR^m}\left\lbrace
                    f^\star\circ(-A^\top)(v) + g^\star(v)
                \right\rbrace = 0. 
            \end{align*}
        \end{fact}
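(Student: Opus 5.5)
We prove Fact \ref{fact:fn-rck-duality} by the standard perturbation (value-function) argument, in the order below. The plan is to combine weak duality, which gives ``$\ge 0$'', with strong duality obtained from a perturbation function.

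First, weak duality. For any $u\in\RR^n$ and $v\in\RR^m$, the Fenchel--Young inequality applied to $f$ at $(u,-A^\top v)$ and to $g$ at $(Au,v)$ gives
\begin{align*}
    f(u) + f^\star(-A^\top v) + g(Au) + g^\star(v) \ge \langle u, -A^\top v\rangle + \langle Au, v\rangle = 0.
\end{align*}
Hence $p + d \ge 0$, where $p := \inf_u\{f(u)+g(Au)\}$ and $d := \inf_v\{f^\star(-A^\top v)+g^\star(v)\}$. In particular, $d \ge -p$. If $p=-\infty$, then $d=+\infty$ would make ``$\min$'' meaningless. However, the qualification condition together with properness should still force $p$ to be finite or at least make the argument go through. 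I would handle the case $p = -\infty$ separately: then every $v$ has an infinite dual value, and the identity holds in the extended sense.

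Second, strong duality. Define the value function
\begin{align*}
    \varphi(w) := \inf_{u}\{f(u) + g(Au + w)\}.
\end{align*}
It is convex, since it is the infimal projection of the jointly convex function $(u,w)\mapsto f(u)+g(Au+w)$. Its domain is $\dom\varphi = \dom g - A\dom f$, so the hypothesis says exactly that $\mathbf 0\in\inte\dom\varphi$, and $\varphi(\mathbf 0) = p$. Assume $p$ is finite. A convex function that is finite at an interior point of its domain is continuous there and nowhere $-\infty$, so $\partial\varphi(\mathbf 0)\neq\emptyset$. Pick $\bar v\in\partial\varphi(\mathbf 0)$. Then $\varphi^\star(\bar v) = \langle \bar v,\mathbf 0\rangle - \varphi(\mathbf 0) = -p$. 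Compute $\varphi^\star$ directly: substituting $z = Au + w$,
\begin{align*}
    \varphi^\star(v) = \sup_{u,w}\{\langle v,w\rangle - f(u) - g(Au+w)\} = \sup_{u,z}\{\langle v, z\rangle - g(z) - \langle A^\top v, u\rangle - f(u)\} = g^\star(v) + f^\star(-A^\top v).
\end{align*}
Thus the dual objective at $\bar v$ equals $-p$. Combined with weak duality, the infimum $d$ is attained at $\bar v$ and $p + d = 0$.

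The main obstacle is the step from $\mathbf 0\in\inte\dom\varphi$ to $\partial\varphi(\mathbf 0)\ne\emptyset$. It needs $\varphi$ to be proper, meaning it never takes the value $-\infty$ near $\mathbf 0$. This follows from $\varphi(\mathbf 0)=p>-\infty$ together with convexity: if $\varphi$ took the value $-\infty$ at some point of its domain, then by convexity it would be $-\infty$ on the whole relative interior of the domain, in particular at the interior point $\mathbf 0$. Once properness is in hand, the standard result that a proper convex function is continuous, hence subdifferentiable, at interior points of its domain finishes the argument.
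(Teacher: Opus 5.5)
Your proof is correct, and it takes a different route from the paper. The paper does not prove the statement: it imports the Hilbert-space theorem of Bauschke--Combettes for closed proper convex $f,g$ and cites their Proposition 6.12 to specialize it to $\RR^n$. Your value-function argument is self-contained: Fenchel--Young gives weak duality, the conjugate of the perturbation function is $\varphi^\star(v)=f^\star(-A^\top v)+g^\star(v)$, and any $\bar v\in\partial\varphi(\mathbf 0)$ is a dual minimizer. This makes visible two things the citation hides.

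First, closedness of $f$ and $g$ is never used. The finite-dimensional facts you rely on hold for arbitrary proper convex functions: an improper convex function equals $-\infty$ throughout the relative interior of its domain, and a proper convex function is subdifferentiable at every relative interior point of its domain. This matters, since $\varphi$ itself need not be lower semicontinuous.

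Second, the argument runs verbatim with $\reli$ in place of $\inte$. With $\dom f=\RR^n$, the condition $\mathbf 0\in\reli(\dom g-\rng A)$ is exactly $\rng A\cap\reli\dom\omega\neq\emptyset$, which is the qualification actually imposed in Assumption \ref{ass:for-inxt-prox}. What the citation buys in exchange is the infinite-dimensional version, which the paper does not need.

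One aside in your first step is false and should be removed: the qualification condition does not force $p$ to be finite. Take $A=I$, $f\equiv 0$ and $g=\langle c,\cdot\rangle$ with $c\neq\mathbf 0$. Then $\dom g-A\dom f=\RR^n$ and $p=-\infty$, while $f^\star(-v)+g^\star(v)=\delta_{\{\mathbf 0\}}(-v)+\delta_{\{c\}}(v)\equiv+\infty$. The hypothesis only gives $p<+\infty$ for free, since $\mathbf 0\in\dom\varphi$.

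In that case the displayed identity reads $-\infty+\infty$, so it must be understood as $\inf=-\min$, as in Bauschke--Combettes. Your fallback proves exactly this: weak duality makes the dual objective identically $+\infty$, so the minimum is attained at every $v$. The proof is complete once you state the case split as ``$p$ finite'' versus ``$p=-\infty$'' rather than suggesting the second case cannot occur.
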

        \begin{remark}
            The theorem is not exactly the same as what is claimed in the original text by Bauschke and Combettes, because we are in a finite dimensional setting.
            To adapt the original theorem to finite dimension, we set $\mathcal H = \RR^n$ and used \cite[Proposition 6.12]{bauschke_convex_2017}. 
        \end{remark}
        Here, we are interested in the dual of the proximal problem written in the form $\Phi_\lambda = f + g\circ A$ where $f = u \mapsto \frac{1}{2\lambda}\Vert u - y\Vert^2$, and $g = \omega$.
        It has $f^\star(v) = \frac{1}{2\lambda}\Vert \lambda v + y\Vert^2 - \frac{1}{2\lambda}\Vert y\Vert^2$ (see Appendix \ref{lemma:chore1}). 
        Consequently, $f^\star \circ (- A^\top) = v\mapsto \frac{1}{2\lambda}\Vert -\lambda A^\top v  + y\Vert^2 - \frac{1}{2\lambda}\Vert y\Vert^2$. 
        And therefore by Fact \ref{fact:fn-rck-duality}, $\Phi_\lambda$ admits Fenchel Rockafellar dual (or simply the dual) in $\RR^m$: 
        \begin{align}\label{eqn:dual-pp}
            \Psi_\lambda(v) &:= f^\star\circ(-A^\top)(v) + g^\star(v) 
            = \frac{1}{2\lambda}\Vert \lambda A^\top v - y\Vert^2
            + \omega^\star(v) - \frac{1}{2\lambda}\Vert y\Vert^2. 
        \end{align}
        We define the duality gap
        \begin{align}\label{eqn:duality-gap-pp}
            \mathbf G_\lambda(u, v) &:= \Phi_\lambda(u) + \Psi_\lambda(v). 
        \end{align}
        Note that in this case the smooth part is quadratic and $\dom f = \RR^n$. 
        It follows that ${\mathbf 0 \in \inte(\dom g - A \dom f) = \inte(\dom g - \rng A)}$. 
        It holds because of $\rng A \cap \reli\dom g \neq\emptyset$ in Assumption \ref{ass:for-inxt-prox}.
        Therefore, strong duality holds and there exists $(\hat u, \hat v)$ such that $\mathbf G_\lambda(\hat u, \hat v) = 0 = \min_{u} \Phi_\lambda(u) + \min_v \Psi_\lambda(v)$. 
        \par
        The following result, taken from Villa et al. \cite{villa_accelerated_2013}, gives a sufficient condition for $\tilde x \approx_\epsilon \hprox_{\lambda g}(x)$.
        \begin{fact}[primal translate to dual {\cite[Proposition 2.2]{villa_accelerated_2013}}]\label{fact:primal-dual-trans}
            Let $(g, \omega, A)$ satisfy Assumption \ref{ass:for-inxt-prox}, $\epsilon \ge 0$, then 
            \begin{align*}
                \left(
                    \forall z \approx_\epsilon \hprox_{\lambda g}(y) 
                \right)(\exists v \in \dom \omega^\star): z = y - \lambda A^\top v. 
            \end{align*}
        \end{fact}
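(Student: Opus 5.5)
Unfolding Definition \ref{def:inxt-pp} with $g=\omega\circ A$, the hypothesis $z \approx_\epsilon \hprox_{\lambda g}(y)$ says that
\[
w := \lambda^{-1}(y - z) \in \partial_\epsilon(\omega\circ A)(z).
\]
The statement therefore reduces to a structural claim about $\epsilon$-subgradients of a linear composite: every element of $\partial_\epsilon(\omega\circ A)(z)$ can be written as $A^\top v$ for some $v \in \dom\omega^\star$. Granting this, we set $z = y - \lambda A^\top v$ and the fact follows. So the plan is to prove
\[
\partial_\epsilon(\omega\circ A)(z) \subseteq A^\top\{v \in \dom \omega^\star : \omega^\star(v) + \omega(Az) \le \langle v, Az\rangle + \epsilon\}.
\]

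\textbf{Step 1: pass to the conjugate.} By Fact \ref{fact:esp-fenchel-ineq}, applied to the proper closed convex function $g = \omega\circ A$, the condition $w \in \partial_\epsilon g(z)$ is equivalent to
\[
g^\star(w) + g(z) \le \langle w, z\rangle + \epsilon .
\]
In particular $g^\star(w) < \infty$.

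\textbf{Step 2: compute $g^\star$ by Fenchel--Rockafellar duality.} Because $\dom\omega = \RR^m$, the qualification condition $\mathbf 0 \in \inte(\dom\omega - \rng A)$ holds. Fact \ref{fact:fn-rck-duality}, applied with $f = \delta_{\{\cdot\}}$-free linear term $u\mapsto -\langle w,u\rangle$ and $g=\omega$, then gives the standard conjugate composition formula with attainment:
\[
(\omega\circ A)^\star(w) = \min\{\omega^\star(v) : A^\top v = w\}.
\]
Concretely, $g^\star(w) = -\inf_u\{-\langle w,u\rangle + \omega(Au)\}$. The conjugate of $u\mapsto-\langle w,u\rangle$ is $\delta_{\{-w\}}$, so the dual reads $\min_v\{\delta_{\{-w\}}(-A^\top v) + \omega^\star(v)\}$, and the minimum is attained. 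Since $g^\star(w)<\infty$, there exists $v$ with $A^\top v = w$ and $\omega^\star(v) = g^\star(w) < \infty$, so $v \in \dom\omega^\star$.

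\textbf{Step 3: conclude.} From $A^\top v = w = \lambda^{-1}(y-z)$ we get $z = y - \lambda A^\top v$ with $v\in\dom\omega^\star$. Moreover, substituting $\omega^\star(v) = g^\star(w)$ into Step 1 gives
\[
\omega^\star(v) + \omega(Az) \le \langle v, Az\rangle + \epsilon,
\]
so $v \in \partial_\epsilon \omega(Az)$ by Fact \ref{fact:esp-fenchel-ineq}. This bonus is useful later.

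The main obstacle is Step 2, specifically the \emph{attainment} of the dual minimum. Without it, $w$ would only lie in the closure of $A^\top\dom\omega^\star$, and no exact representation $z = y - \lambda A^\top v$ would follow. Attainment is exactly what the ``$\min$'' in Fact \ref{fact:fn-rck-duality} provides, and its interior condition holds because $\omega$ has full domain. Alternatively, one can argue directly: $\dom\omega^\star$ is bounded (the remark after Assumption \ref{ass:for-inxt-prox}), and $\omega^\star$ is closed, so $\{v : A^\top v = w\}\cap\dom\omega^\star$ minimizes a lower semicontinuous function over a compact set. This gives attainment independently.
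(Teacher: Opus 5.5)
Your proof is correct. The paper gives no argument for this statement; it imports it as a Fact from Villa et al.\ \cite[Proposition 2.2]{villa_accelerated_2013}, so there is no internal proof to compare against. What you have written is a self-contained proof of the inclusion $\partial_\epsilon(\omega\circ A)(z)\subseteq A^\top\partial_\epsilon\omega(Az)$, which is the nontrivial half of the standard $\epsilon$-subdifferential chain rule. It uses only tools the paper already states: the $\epsilon$-Fenchel inequality (Fact \ref{fact:esp-fenchel-ineq}) and the attained-minimum form of Fenchel--Rockafellar duality (Fact \ref{fact:fn-rck-duality}). You apply the latter to $f(u)=-\langle w,u\rangle$, whose conjugate $\delta_{\{-w\}}$ turns the dual into $\min\{\omega^\star(v): A^\top v=w\}$. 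You correctly identify attainment as the crux. You also correctly use $g^\star(w)<\infty$ to exclude the degenerate case where the dual value is $+\infty$ and attainment would be vacuous.

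Compared with the bare citation, your route gives two extra things:
\begin{itemize}
\item Only $\dom\omega=\RR^m$ (indeed only $\mathbf 0\in\inte(\dom\omega-\rng A)$) is used, not the Lipschitz bound on $\omega$.
\item Your $v$ satisfies $\omega^\star(v)=g^\star(A^\top v)$, which is exactly the extra hypothesis in Lemma \ref{lemma:dlty-gap-inxt-pp}. For $z=y-\lambda A^\top v$ one has $\mathbf G_\lambda(z,v)=\omega(Az)+\omega^\star(v)-\langle v,Az\rangle$. So your bonus $v\in\partial_\epsilon\omega(Az)$ says that every $z\approx_\epsilon\hprox_{\lambda g}(y)$ admits a dual certificate with gap at most $\epsilon$. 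In other words, the gap test $\mathbf G_\lambda\le\epsilon$ is not stricter than the inexactness criterion it certifies.
\end{itemize}

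Two small corrections:
\begin{itemize}
\item In Step 2, the phrase ``$\delta_{\{\cdot\}}$-free linear term'' is garbled; simply take $f(u)=-\langle w,u\rangle$.
\item In your closing alternative, $\{v:A^\top v=w\}\cap\dom\omega^\star$ need not be compact, because $\dom\omega^\star$ is bounded but need not be closed. The correct statement is that $\omega^\star+\delta_{\{v:A^\top v=w\}}$ is lower semicontinuous with closed, bounded sublevel sets, so it attains its infimum \emph{provided} that infimum is finite. Finiteness, and the identity $g^\star(w)=\inf\{\omega^\star(v):A^\top v=w\}$ itself, still have to come from the duality Fact. So this remark can replace only the attainment half of Step 2, not the no-duality-gap half.
\end{itemize}
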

        \begin{lemma}[duality gap of inexact proximal problem {\cite[Proposition 2.3]{villa_accelerated_2013}}]\;\label{lemma:dlty-gap-inxt-pp}\\
            Let $(g, \omega, A)$ satisfy Assumption \ref{ass:for-inxt-prox}, for all $\epsilon \ge 0$, $v \in \RR^n$ consider the following conditions: 
            \begin{enumerate}[nosep]
                \item $\mathbf G_\lambda(y - \lambda A^\top v, v) \le \epsilon$. 
                \item $A^\top v \approx_\epsilon \hprox_{\lambda^{-1}g^\star}(\lambda^{-1}y)$. 
                \item $y - \lambda A^\top v \approx_{\epsilon} \hprox_{\lambda g}(y)$. 
            \end{enumerate}
            They have (i) $\implies$ (ii) $\iff$ (iii). 
            If in addition $\omega^\star(v) = g^\star\left(A^\top v\right)$, then all three conditions are equivalent. 
        \end{lemma}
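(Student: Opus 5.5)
Set $g = \omega\circ A$ and $z := y - \lambda A^\top v$. Two facts drive the argument. First, the Fenchel--Young inequality (Fact \ref{fact:esp-fenchel-ineq} with $\epsilon=0$) gives
\[
g^\star(A^\top v) \le \omega^\star(v)
\]
for every $v$. This holds because $g^\star(A^\top v) = \sup_u\{\langle v, Au\rangle - \omega(Au)\} \le \sup_w\{\langle v,w\rangle - \omega(w)\} = \omega^\star(v)$. Second, everything reduces to the $\epsilon$-Fenchel inequality characterization of $\lambda^{-1}(y - z) = A^\top v \in \partial_\epsilon g(z)$.

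\textbf{(ii)$\iff$(iii).} Apply Lemma \ref{lemma:inxt-moreau-decomp} with $\tilde y = A^\top v$. It says $A^\top v \approx_\epsilon \hprox_{\lambda^{-1}g^\star}(\lambda^{-1}y)$ if and only if $y - \lambda A^\top v \approx_\epsilon \hprox_{\lambda g}(y)$. This is exactly (ii)$\iff$(iii), and it is legitimate because $g$ is closed, convex and proper: $\omega$ is finite and convex, hence continuous, so $g = \omega\circ A$ is too.

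\textbf{(i)$\implies$(iii).} By Definition \ref{def:inxt-pp}, (iii) means $A^\top v \in \partial_\epsilon g(z)$. By Fact \ref{fact:esp-fenchel-ineq} this is equivalent to
\[
g(z) + g^\star(A^\top v) - \langle A^\top v, z\rangle \le \epsilon .
\]
Now expand $\mathbf G_\lambda(z,v)$ using \eqref{eqn:primal-pp} and \eqref{eqn:dual-pp}. We have $\Phi_\lambda(z) = \omega(Az) + \frac{\lambda}{2}\Vert A^\top v\Vert^2$, and
\[
\Psi_\lambda(v) = \frac{1}{2\lambda}\Vert \lambda A^\top v - y\Vert^2 - \frac{1}{2\lambda}\Vert y\Vert^2 + \omega^\star(v) = \frac{\lambda}{2}\Vert A^\top v\Vert^2 - \langle A^\top v, y\rangle + \omega^\star(v).
\]
Since $\lambda\Vert A^\top v\Vert^2 - \langle A^\top v, y\rangle = -\langle A^\top v, z\rangle$, the quadratic terms combine to give
\[
\mathbf G_\lambda(z,v) = g(z) + \omega^\star(v) - \langle A^\top v, z\rangle .
\]
Using $g^\star(A^\top v) \le \omega^\star(v)$, the Fenchel gap for $g$ at the pair $(z, A^\top v)$ is at most $\mathbf G_\lambda(z,v) \le \epsilon$, which gives (iii).

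\textbf{Equivalence under $\omega^\star(v) = g^\star(A^\top v)$.} In this case the displayed identity shows that $\mathbf G_\lambda(z,v)$ equals the Fenchel gap for $g$ exactly. The Fenchel characterization is an $\iff$, so (iii)$\implies$(i) and all three conditions are equivalent.

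The only real obstacle is the algebraic identity collapsing $\mathbf G_\lambda$ to a Fenchel gap, together with the inequality $g^\star\circ A^\top \le \omega^\star$. Both are short computations, so no step should be delicate.
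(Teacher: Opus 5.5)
Your proof is correct. The step (ii)$\iff$(iii) is exactly the paper's argument: Lemma \ref{lemma:inxt-moreau-decomp} with $\tilde y = A^\top v$. The difference lies in (i)$\implies$(iii) and in the converse under $\omega^\star(v) = g^\star(A^\top v)$. For these the paper gives no argument of its own and defers to Villa et al.\ \cite[Proposition 2.3]{villa_accelerated_2013}, whereas you prove them directly. Your key observation is the exact decomposition
\[
\mathbf G_\lambda(z, v) = \bigl[g(z) + g^\star(A^\top v) - \langle A^\top v, z\rangle\bigr] + \bigl[\omega^\star(v) - g^\star(A^\top v)\bigr], \qquad z = y - \lambda A^\top v .
\]
The first bracket is the $\epsilon$-Fenchel gap characterizing $A^\top v \in \partial_\epsilon g(z)$ (Fact \ref{fact:esp-fenchel-ineq}). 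The second is nonnegative by Fenchel--Young for $\omega$ at the points $Au$, followed by a supremum over $u$.

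Your route buys two things over the citation. First, it shows why the extra hypothesis is exactly what the converse needs: it makes the second bracket vanish. Second, it works natively in the paper's tolerance convention, whereas invoking Villa et al.\ requires translating their tolerance $\varepsilon^2/(2\lambda)$ into the paper's $\epsilon$, as the remark after Definition \ref{def:inxt-pp} notes. The citation's only advantage is brevity.
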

        \begin{proof}
            We refer readers tof Villa et al. \cite[ Proposition 2.3]{villa_accelerated_2013} for the proof of (i) $\implies$ (iii), and the case when (i) $\iff$ (ii). 
            To show (ii) $\iff$ (iii) use Lemma \ref{lemma:inxt-moreau-decomp}. 
        \end{proof}
        \par
        The following theorem is enhanced from Villa et al. \cite[Theorem 5.1]{villa_accelerated_2013} and, it is our first major result. 
        It states that any sequence $v_j$ minimizing $\Psi_\lambda$ also minimizes the primal optimality gap. 
        This is crucial for showing the convergence results of the inner loop later on. 
        \begin{theorem}[minimizing the dual of the proximal problem]\;\label{thm:minimizing-dual-pp}\\
            Assume that we have $(g, \omega, A)$ given by Assumption \ref{ass:for-inxt-prox}. 
            Let the $\Phi_\lambda$ be given by \eqref{eqn:primal-pp}, and dual $\Psi_\lambda$ by \eqref{eqn:dual-pp}. 
            Let $\bar v$ be a minimizer of $\Psi_\lambda$. 
            Suppose that sequence $(v_j)_{j \in \Z_+}$ minimizes the dual $\Psi_\lambda$, i.e., $\lim_{j \rightarrow \infty} \Psi_\lambda(v_j) = \Psi_\lambda(\bar v)$. 
            Let $z_j = y - \lambda A^\top v_j$ for all $j \in \Z_+$. 
            Then, the following hold:
            \begin{enumerate}[nosep]
                \item\label{thm:minimizing-dual-pp:result1} If $\bar v$ is a minimizer of dual $\Psi_\lambda$, then $\bar z = y - \lambda A^\top \bar v$ is a minimizer of primal $\Phi_\lambda$. 
                \item\label{thm:minimizing-dual-pp:result2} It has $\Psi_\lambda(v_j) - \Psi_\lambda(\bar v) \ge \frac{1}{2 \lambda} \Vert z_j - \bar z\Vert^2$, and consequently $z_j \rightarrow \bar z$. 
                \item\label{thm:minimizing-dual-pp:result3} The primal optimality gap is bounded by dual by: 
                \begin{align*}
                    & \Phi_\lambda(z_j) - \Phi_\lambda(\bar z) 
                    \\
                    &\le 
                    \sqrt{\Psi_\lambda(v_j) - \Psi_\lambda(\bar v)}
                    \left(
                        2\sqrt{2\lambda}K_\omega\Vert A\Vert
                        + \sqrt{\Psi_\lambda(v_j) - \Psi_\lambda(\bar v)}
                    \right).
                \end{align*}
            \end{enumerate}
        \end{theorem}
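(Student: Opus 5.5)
The plan is to work directly with the structure of $\Psi_\lambda$ in \eqref{eqn:dual-pp}. Write $h(v) := \frac{1}{2\lambda}\Vert \lambda A^\top v - y\Vert^2$, so that $\Psi_\lambda = h + \omega^\star - \frac{1}{2\lambda}\Vert y\Vert^2$. Note that $h(v) = \frac{1}{2\lambda}\Vert z(v)\Vert^2$ with $z(v) := y - \lambda A^\top v$, and $\nabla h(v) = -A z(v)$.

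\textbf{Item 1.} I would write the optimality condition for $\bar v$: $\mathbf 0 \in -A\bar z + \partial\omega^\star(\bar v)$. Hence $A\bar z \in \partial \omega^\star(\bar v)$, and by Fact \ref{fact:esp-fenchel-ineq} with $\epsilon = 0$ this gives $\bar v \in \partial\omega(A\bar z)$. Since $\lambda^{-1}(\bar z - y) = -A^\top \bar v$, we get $\mathbf 0 \in \lambda^{-1}(\bar z - y) + A^\top\partial\omega(A\bar z)$. This is exactly \eqref{expr:primal-pp-sb}, so $\bar z$ minimizes the strongly convex $\Phi_\lambda$.

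\textbf{Item 2.} The quadratic $h$ satisfies the exact identity
\[
h(v) - h(\bar v) - \langle \nabla h(\bar v), v - \bar v\rangle = \tfrac{\lambda}{2}\Vert A^\top(v-\bar v)\Vert^2 = \tfrac{1}{2\lambda}\Vert z(v) - \bar z\Vert^2 .
\]
Next I would use the subgradient inequality for $\omega^\star$ at $\bar v$ with the subgradient $A\bar z = -\nabla h(\bar v)$: $\omega^\star(v) - \omega^\star(\bar v) \ge \langle A\bar z, v-\bar v\rangle$. Adding the two gives $\Psi_\lambda(v) - \Psi_\lambda(\bar v) \ge \frac{1}{2\lambda}\Vert z(v)-\bar z\Vert^2$. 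Applying this at $v = v_j$ and letting $j \to \infty$ yields $z_j \to \bar z$.

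\textbf{Item 3.} Set $\delta_j := \Psi_\lambda(v_j) - \Psi_\lambda(\bar v)$. Strong duality (the discussion after \eqref{eqn:duality-gap-pp}) gives $\Phi_\lambda(\bar z) = -\Psi_\lambda(\bar v)$. I would then estimate the difference directly:
\[
\Phi_\lambda(z_j) - \Phi_\lambda(\bar z) = \big[\omega(Az_j) - \omega(A\bar z)\big] + \tfrac{1}{2\lambda}\big(\Vert z_j - y\Vert^2 - \Vert \bar z - y\Vert^2\big).
\]
\begin{itemize}
\item The first bracket is at most $K_\omega\Vert A\Vert\Vert z_j - \bar z\Vert \le K_\omega \Vert A\Vert\sqrt{2\lambda\delta_j}$, using Assumption \ref{ass:for-inxt-prox}\ref{ass:for-inxt-prox:item4} and item 2.
\item For the second term, expand $\Vert z_j - y\Vert^2 - \Vert\bar z - y\Vert^2 = \Vert z_j - \bar z\Vert^2 + 2\langle z_j - \bar z, \bar z - y\rangle$. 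The cross term equals $-2\lambda\langle z_j-\bar z, A^\top\bar v\rangle = -2\lambda\langle A(z_j-\bar z), \bar v\rangle$. Since $\bar v \in \dom\omega^\star$, which lies in the ball of radius $K_\omega$ (remark after Assumption \ref{ass:for-inxt-prox}), we get $\Vert\bar v\Vert \le K_\omega$.
\end{itemize}
The cross term is therefore bounded by $K_\omega\Vert A\Vert\Vert z_j - \bar z\Vert \le K_\omega\Vert A\Vert\sqrt{2\lambda\delta_j}$ after dividing by $2\lambda$, and the square term contributes $\frac{1}{2\lambda}\Vert z_j-\bar z\Vert^2 \le \delta_j$. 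Summing the pieces gives $\Phi_\lambda(z_j) - \Phi_\lambda(\bar z) \le 2\sqrt{2\lambda}K_\omega\Vert A\Vert\sqrt{\delta_j} + \delta_j$, which is the claimed bound.

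\textbf{Expected obstacle.} The delicate point is the bound $\Vert \bar v\Vert \le K_\omega$. It relies on $\dom\omega^\star \subseteq \{\Vert v\Vert\le K_\omega\}$ for a $K_\omega$-Lipschitz $\omega$, a standard fact cited in the remark via Lemma \ref{lemma:lipz-cnvx-fxn}. Alternatively, I could use $\bar v \in \partial\omega(A\bar z)$ from item 1 together with the bounded range of $\partial\omega$, which avoids reasoning about the whole domain.
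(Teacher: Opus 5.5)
Your proposal is correct and follows the paper's proof essentially step for step. It uses the same optimality-condition argument for item 1, the same Bregman identity for the quadratic part combined with the subgradient inequality for $\omega^\star$ at $\bar v$ for item 2, and the same split into a Lipschitz term plus a quadratic term controlled by item 2 for item 3. The one cosmetic difference is in item 3: you expand $\Vert z_j - y\Vert^2 - \Vert \bar z - y\Vert^2$ exactly and bound the cross term via $\bar z - y = -\lambda A^\top \bar v$ with $\Vert \bar v\Vert \le K_\omega$. The paper instead factors the difference of squares with the reverse triangle inequality and bounds $\lambda^{-1}\Vert \bar z - y\Vert \le K_\omega \Vert A\Vert$ through the bounded subdifferential of $\omega\circ A$ at $\bar z$. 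Both routes give the same constant.
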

        \begin{proof}    
            In preparations, we establish the following two intermediate results for the proof.  
            For all $v \in \RR^m$, it has the following identity holds:
            \begin{align}\label{thm:minimizing-dual-pp:pitem1}
                \begin{split}
                    & \frac{1}{2\lambda}\left\Vert \lambda A^\top v - y\right\Vert^2
                    - \frac{1}{2\lambda}\left\Vert \lambda A^\top \bar v - y\right\Vert^2 
                    + \langle A \bar z, v - \bar v\rangle 
                    \\
                    &= 
                    \frac{1}{2\lambda}
                    \left\Vert 
                        \lambda A^\top v - \lambda A^\top \bar v  + \lambda A^\top \bar v - y
                    \right\Vert^2
                    - \frac{1}{2\lambda}\left\Vert \lambda A^\top \bar v - y\right\Vert^2 
                    + \langle A \bar z, v - \bar v\rangle 
                    \\
                    &= 
                    \frac{1}{2\lambda} \left\Vert \lambda A^\top(v - \bar v)\right\Vert^2
                    + \frac{1}{\lambda}\left\langle \lambda A^\top(v - \bar v), \lambda A^\top \bar v - y \right\rangle
                    + \left\langle \bar z, A^\top(v - \bar v)\right\rangle 
                    \\
                    &\underset{\text{(1)}}{=} 
                    \frac{1}{2\lambda} \left\Vert \lambda A^\top(v - \bar v)\right\Vert^2
                    - \frac{1}{\lambda}\left\langle \lambda A^\top(v - \bar v), \bar z \right\rangle
                    + \left\langle \bar z, A^\top(v - \bar v)\right\rangle 
                    \\
                    &= \frac{1}{2\lambda} \left\Vert \lambda A^\top(v - \bar v)\right\Vert^2. 
                \end{split}
            \end{align}
            At (1), we substituted $\bar z = y - \lambda A^\top \bar v$. 
            To introduce our second intermediate result, consider that $\bar v$ is the minimizer on dual problem $\Psi_\lambda$. 
            Then, by Fenchel subgradient calculus, and definition of $\Psi_\lambda$ in \eqref{eqn:dual-pp}, we have the following sequence of equivalences: 
            \begin{align}\label{thm:minimizing-dual-pp:pitem2}
                \begin{split}
                    & \mathbf 0 \in \partial \Psi_\lambda(\bar v)
                    \\
                    \iff & \mathbf 0 \in 
                    A \left(\lambda A^\top \bar v - y\right) + \partial \omega^\star(\bar v)
                    \\
                    \iff &
                    A\bar z \in \partial \omega^\star(\bar v)
                    \\
                    \iff &
                    (\forall v \in \RR^m)\; \omega^\star(v) - \omega^\star(\bar v) 
                    \ge \langle A \bar z, v - \bar v\rangle. 
                \end{split}
            \end{align}
            \par
            We are now ready to prove \ref{thm:minimizing-dual-pp:result1}.
            From \eqref{thm:minimizing-dual-pp:pitem2}, by Fenchel identity that: 
            $$
                A \bar z \in \partial \omega^\star (\bar v) \iff \bar v \in \partial \omega(A \bar z).
            $$
            Multiplying $\lambda A^\top$ on both sides of $\partial \omega(A \bar z) \ni \bar v$ yields: 
            $$
                y - \bar z = \lambda A^\top \bar v \in \lambda A^\top\partial \omega (A \bar z). 
            $$
            Recall the optimality condition of $\Phi_\lambda$ from \eqref{expr:primal-pp-sb}. 
            With that in mind, re-arranging the above yields: $\mathbf 0 \in \bar z - y + \lambda A^\top \partial \omega (A \bar z) = \lambda \partial \Phi_\lambda(\bar z)$. 
            Therefore, by Fenchel subgradient calculus $\bar z = y - \lambda A^\top \bar v$ is a minimizer of $\Phi_\lambda$. 
            \par
            We are now prepared to prove \ref{thm:minimizing-dual-pp:result2}. 
            The definition of $\Psi_\lambda$ in \eqref{eqn:dual-pp} shows: 
            \begin{align*}
                \Psi_\lambda(v_j) - \Psi_\lambda(\bar v)
                &= \frac{1}{2\lambda}\left\Vert
                    \lambda A^\top v_j - y
                \right\Vert^2 
                - \frac{1}{2\lambda}\left\Vert
                    \lambda A^\top \bar v - y
                \right\Vert^2
                + \omega^\star(v_j) - \omega^\star(\bar v)
                \\
                &\underset{(2)}{\ge}
                \frac{1}{2\lambda}\left\Vert
                    \lambda A^\top v_j - y
                \right\Vert^2 
                - \frac{1}{2\lambda}\left\Vert
                    \lambda A^\top \bar v - y
                \right\Vert^2
                + \langle A\bar z, v_j - \bar v\rangle
                \\
                &\underset{(3)}{=} \frac{1}{2\lambda}\Vert \lambda A^\top(v_j - \bar v)\Vert^2
                \\
                &\underset{(4)}{=} \frac{1}{2\lambda}\Vert z_j - \bar z\Vert^2. 
            \end{align*}
            At (2) we applied $(\forall v \in \RR^m)\; \omega^\star(v) - \omega^\star(\bar v) \ge \langle A \bar z, v - \bar v\rangle$ from \eqref{thm:minimizing-dual-pp:pitem2}.
            At (3) we used the result from \eqref{thm:minimizing-dual-pp:pitem1}. 
            At (4), we substituted $z_j - \bar z = y - \lambda A^\top v_j - \left(y - \lambda A^\top\bar v\right) = \lambda A^\top(\bar v - v_j)$. 
            We assumed that $(v_j)_{j \in \Z_+}$ is a minimizing sequence of $\Psi_\lambda$, therefore the above result we derived implies: 
            \begin{align*}
                0 = \lim_{j \rightarrow \infty} \Psi_\lambda(v_j) - \Psi_{\lambda} (\bar v) \ge \lim_{j \rightarrow \infty}\Vert z_j - \bar z \Vert^2. 
            \end{align*}
            \par
            We now have everything we need to prove \ref{thm:minimizing-dual-pp:result3}. 
            Recall from Assumption \ref{ass:for-inxt-prox} the function $\omega$ is $K_\omega$-Lipschitz continuous.
            This fact will be useful throughout the derivations that follow.
            By definition of $\Phi_\lambda$ from \eqref{eqn:primal-pp}, for all $j \in \Z_+$: 
            {\allowdisplaybreaks
            \begin{align*}
                & \Phi_\lambda(z_j) - \Phi_\lambda(\bar z) 
                \\
                &= \omega(Az_j) - \omega(A \bar z)
                + \frac{1}{2\lambda}(\Vert z_j - y \Vert^2 - \Vert \bar z - y\Vert^2)
                \\
                &\le 
                K_\omega\Vert A\Vert \Vert z_j - \bar z\Vert 
                + \frac{1}{2\lambda}\left(
                    \Vert z_j - y\Vert + \Vert \bar z - y\Vert
                \right)\left(
                    \Vert z_j - y\Vert - \Vert \bar z - y\Vert
                \right)
                \\
                &\le
                K_\omega\Vert A\Vert \Vert z_j - \bar z\Vert 
                + \frac{1}{2\lambda}\left(
                    \Vert z_j - y\Vert + \Vert \bar z - y\Vert
                \right)\Vert z_j - \bar z\Vert
                \\
                &\le 
                K_\omega\Vert A\Vert \Vert z_j - \bar z\Vert 
                + \frac{1}{2\lambda}\left(
                    \Vert z_j - \bar z\Vert + 2\Vert \bar z - y\Vert
                \right)\Vert z_j - \bar z\Vert
                \\
                &= 
                \Vert z_j - \bar z\Vert
                \left(
                    K_\omega\Vert A\Vert
                    + \lambda^{-1}\Vert \bar z - y\Vert
                    + \frac{\Vert z_j - \bar z\Vert}{2\lambda}
                \right)
                \\ &\underset{\text{\ref{thm:minimizing-dual-pp:result2}}}\le 
                \sqrt{2\lambda\left(\Psi_\lambda(v_j) - \Psi_\lambda(\bar v)\right)}
                \left(
                    K_\omega\Vert A\Vert
                    + \lambda^{-1}\Vert \bar z - y\Vert
                    + \frac{\sqrt{2\lambda}}{2\lambda}\sqrt{\Psi_\lambda(v_j) - \Psi_\lambda(\bar v)}
                \right)
                \\
                &\underset{(5)}= 
                \sqrt{2\lambda\left(\Psi_\lambda(v_j) - \Psi_\lambda(\bar v)\right)}
                \left(
                    K_\omega \Vert A\Vert
                    + K_\omega \Vert A \Vert
                    + \frac{\sqrt{2\lambda}}{2\lambda}\sqrt{\Psi_\lambda(v_j) - \Psi_\lambda(\bar v)}
                \right)
                \\ 
                &= 
                \sqrt{\Psi_\lambda(v_j) - \Psi_\lambda(\bar v)}
                \left(
                    2\sqrt{2\lambda}K_\omega\Vert A\Vert
                    + \sqrt{\Psi_\lambda(v_j) - \Psi_\lambda(\bar v)}
                \right).
            \end{align*}
            }\noindent
            The first three chains of inequalities used the triangle inequality. 
            At (5), we used the fact that $\bar z$ is the minimizer of $\Phi_\lambda$. 
            Then, from \eqref{expr:primal-pp-sb} it has ${\mathbf 0 \in \partial \Phi_\lambda(\bar z) \iff \lambda^{-1}(y - \bar z) \in \partial (\omega\circ A)(\bar z)}$ which implies that $\lambda^{-1}\Vert y - \bar z \Vert \le \sup_{v \in \partial (\omega\circ A)(\bar z)}\Vert v\Vert$. 
            Then, we used the assumption that $\omega$ is $K_\omega$-Lipschitz continuous (Assumption \ref{ass:for-inxt-prox}): 
            \begin{align*}
                (\forall u_1, u_2 \in \RR^n)\; |\omega(A u_1) - \omega(A u_2)| 
                \le K_\omega\Vert Au_1 - Au_2 \Vert \le K_\omega \Vert A\Vert\Vert u_1 - u_2\Vert. 
            \end{align*}
            Therefore, $\omega \circ A$ is Lipschitz continuous with constant $K_\omega \Vert A\Vert$, combining the above with results from Appendix \ref{lemma:lipz-cnvx-fxn} produces: 
            \begin{align*}
                \lambda^{-1}\Vert \bar z - y\Vert \le \sup_{v \in \partial (\omega\circ A)(\bar z)}\Vert v\Vert \le K_\omega \left\Vert A\right\Vert. 
            \end{align*}
        \end{proof}
        \begin{remark}
            There are multiple ways to bound $\Vert \bar z - y\Vert$; the approach taken here integrates most naturally with the overall complexity analysis.
        \end{remark}

\section{Convergence, complexity of IAPG outer loop with line search}\label{sec:outerloop}

    This section derives the convergence rate of the outer loop.
    To start, Lemma \ref{lemma:inxt-apg-cnvg-prep1} establishes an essential inequality used throughout this section, and Definition \ref{def:inxt-apg} defines the algorithm of the outer loop.
    It is organized into four sections. 
    The first three subsections will prepare for IAPG outer loop convergence and the final subsection will present the IAPG convergence results and iteration complexity, covering the optimality gap, stationarity, and a termination criterion implying stationarity.
    \par
    Section \ref{ssec:rslt-valid-err} states the convergence rate of the outer loop under the weakest assumptions (Assumption \ref{ass:valid-err-schedule}) on the momentum sequence $(\alpha_k)_{k \in \Z_+}$, and the error sequence $(\epsilon_k)_{k \in \Z_+}$ which yields an upper bound on the optimality gap.
    These results underpin everything in the next three subsections.
    Following that, Section \ref{ssec:opt-momentum-seq} strengthens the assumptions of the error sequence and momentum sequence, forming the bedrock to derive the $\mathcal O(1/k^2)$ convergence rate of the IAPG outer loop.
    Section \ref{ssec:outerloop-err-shrink} addresses a remaining gap that is imperative for the analysis of the total complexity of IAPG.
    It establishes the fastest admissible rate of decay of $\epsilon_k$ to zero.
    This is vital for characterizing the total complexity of the algorithm in later sections because it links the iteration complexity of the IAPG outer loop with its inner loop.
    \par
    Finally, Section \ref{ssec:outerloop-cnvg} presents the major results. 
    It will show that if a minimizer exists for the objective function, then the function value converges to the minimum at a rate of $\mathcal O(1/k^2)$. 
    It also presents a termination criterion implying stationarity which converges at a rate of $\mathcal O (1/k)$.
    \begin{definition}[our inexact accelerated proximal gradient]\;\label{def:inxt-apg}\\
        Suppose that $(F, f, g, L)$ and sequences $(\alpha_k, B_k, \rho_k, \epsilon_k)_{k \in \Z_+}$ satisfy the following
        \begin{enumerate}[nosep]
            \item $(\alpha_k)_{k \in \Z_+}$ is a sequence such that $\alpha_k \in (0, 1]$ for all $k \in \Z_+$. 
            \item $(B_k)_{k \in \Z_+}$ has $B_k > 0\; \forall k\in \Z_+$, and it characterizes any potential line search, backtracking routine.
            \item $(\rho_k)_{k \in \Z_+}$ is a sequence such that $\rho_k \ge 0$, characterizing the over-relaxation of the proximal gradient operator. 
            \item $(\epsilon_k)_{k \in \Z_+}$ has $\epsilon_k > 0$ for all $k \in \Z_+$, and it characterizes the errors of inexact proximal evaluation.
            \item $(F, f, g, L)$ satisfy Assumption \ref{ass:for-inxt-pg-ineq}.
        \end{enumerate}
        Denote $L_k = B_k + \rho_k$ for short. 
        Let the inexact proximal gradient operator $\approx_\epsilon T_{L_k}$ be given by Definition \ref{def:inxt-pg}. 
        Given any initial condition $x_{-1}^\circ, x_{-1} \in \RR^n$, the algorithm generates the sequences $(y_k, x_k, x_k^\circ)_{k \in \Z_+}$ satisfying for all $k \in \Z_+$:
        \begin{align}
            & y_k = \alpha_k x_{k - 1}^\circ + (1 - \alpha_k)x_{k - 1}, \label{def:inxt-apg:yk}
            \\
            & x_k \approx_{\epsilon_k} T_{L_k}(y_k), \label{def:inxt-apg:xk}
            \\
            & D_f(x_k, y_k) \le \frac{B_k}{2}\Vert x_k - y_k\Vert^2, \label{def:inxt-apg:ls}
            \\
            & x_k^\circ = x_{k - 1} + \alpha_k^{-1}(x_k - x_{k - 1}). \label{def:inxt-apg:vk}
        \end{align}
    \end{definition}
    \begin{remark}
        The sequence $(B_k)_{k \ge 0}$ accomodates dynamic line search routines.
        For example, it can accommodate Calatroni and Chambolle's backtracking technique \cite{calatroni_backtracking_2019}.
    \end{remark}
    The following lemma is stated on its own to simplify the convergence proof later on in the section. 
    \begin{lemma}[APG convergence preparation]\; \label{lemma:inxt-apg-cnvg-prep1}\\
        Let $(F, f, g, L)$, $(y_k, x_k, x_k^\circ)_{k \in \Z_+}$, and $(\alpha_k, B_k, \rho_k, \epsilon_k)_{k \in \Z_+}$ be given by Definition \ref{def:inxt-apg}. 
        Denote $L_k := B_k + \rho_k$.
        Then, for any $\bar x \in \RR^n$ and initial guesses $x_{-1}, x_{-1}^\circ \in \RR^n$, the sequences satisfy for all $k \in \Z_+$ the inequality: 
        \begin{align*}
            & \frac{\rho_k}{2}\Vert x_k - y_k\Vert^2 - \epsilon_k
            \\
            &\le 
            (1 - \alpha_k)(F(x_{k - 1}) - F(\bar x)) + F(\bar x) - F(x_k) 
            + \frac{\alpha_k^2L_k}{2}\Vert \bar x - x_{k - 1}^\circ\Vert^2 
            - \frac{\alpha_k^2L_k}{2}\Vert \bar x - x_k^\circ\Vert^2.
        \end{align*}
    \end{lemma}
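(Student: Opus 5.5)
The plan is to apply Theorem \ref{thm:inxt-pg-ineq} once per iteration $k$ and then exploit convexity of $F$ at the specific comparison point
\[
z = \alpha_k \bar x + (1-\alpha_k)x_{k-1}.
\]

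\textbf{Step 1: one application of the inequality.} By \eqref{def:inxt-apg:xk} and \eqref{def:inxt-apg:ls}, the pair $x_k, B_k$ satisfies the hypotheses of Theorem \ref{thm:inxt-pg-ineq} with $x = y_k$, $\tilde x = x_k$, $B = B_k$, $\rho = \rho_k$, $\epsilon = \epsilon_k$. Hence for every $z\in\RR^n$,
\[
\frac{\rho_k}{2}\Vert x_k - y_k\Vert^2 - \epsilon_k \le F(z) - F(x_k) + \frac{L_k}{2}\Vert y_k - z\Vert^2 - \frac{L_k}{2}\Vert z - x_k\Vert^2.
\]

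\textbf{Step 2: substitute $z$ and use convexity.} With the choice of $z$ above, convexity of $F$ gives
\[
F(z) \le \alpha_k F(\bar x) + (1-\alpha_k)F(x_{k-1}) = (1-\alpha_k)\bigl(F(x_{k-1}) - F(\bar x)\bigr) + F(\bar x).
\]
This yields the function-value terms claimed in Lemma \ref{lemma:inxt-apg-cnvg-prep1}. If $\alpha_k = 1$, then $z = \bar x$ and the bound is trivial.

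\textbf{Step 3: the two distance terms.} By \eqref{def:inxt-apg:yk},
\[
y_k - z = \alpha_k x_{k-1}^\circ + (1-\alpha_k)x_{k-1} - \alpha_k\bar x - (1-\alpha_k)x_{k-1} = \alpha_k\bigl(x_{k-1}^\circ - \bar x\bigr),
\]
so $\frac{L_k}{2}\Vert y_k - z\Vert^2 = \frac{\alpha_k^2 L_k}{2}\Vert \bar x - x_{k-1}^\circ\Vert^2$. By \eqref{def:inxt-apg:vk}, $x_k = (1-\alpha_k)x_{k-1} + \alpha_k x_k^\circ$, hence
\[
z - x_k = \alpha_k\bigl(\bar x - x_k^\circ\bigr),
\]
so $\frac{L_k}{2}\Vert z - x_k\Vert^2 = \frac{\alpha_k^2 L_k}{2}\Vert \bar x - x_k^\circ\Vert^2$. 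Substituting these identities together with the convexity bound of Step 2 into the inequality of Step 1 gives exactly the claimed inequality.

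\textbf{Main obstacle.} There is no serious obstacle; the whole argument is bookkeeping. The only point needing care is choosing $z$ as the convex combination of $\bar x$ and $x_{k-1}$ with the weights $\alpha_k$ and $1-\alpha_k$ dictated by $y_k$. That choice is what makes both distance terms factor cleanly as $\alpha_k$ times the corresponding $x^\circ$-differences.
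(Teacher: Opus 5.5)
Your proposal is correct and matches the paper's proof: the paper also applies Theorem~\ref{thm:inxt-pg-ineq} with $x = y_k$, $\tilde x = x_k$ and $z = \hat x_k := \alpha_k \bar x + (1-\alpha_k)x_{k-1}$, bounds $F(\hat x_k)$ by convexity, and uses the identities $\hat x_k - y_k = \alpha_k(\bar x - x_{k-1}^\circ)$ and $\hat x_k - x_k = \alpha_k(\bar x - x_k^\circ)$ from \eqref{def:inxt-apg:yk} and \eqref{def:inxt-apg:vk}. These are exactly your Steps 1--3.
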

    \begin{proof}
        Two intermediate results are in order before we can prove the inequality. 
        Define $(\forall k \in \Z_+)\; \hat x_k := \alpha_k \bar x + (1 - \alpha_k)x_{k - 1}$. 
        The following equality holds for all $k \in \Z_+$:
        \begin{align}\label{eqn:inxt-apg-cnvg-prep1-a}\begin{split}
            \hat x_k - x_k &= 
            \alpha_k \bar x + (1 - \alpha_k)x_{k - 1} - x_k
            \\
            &= \alpha_k\bar x + (x_{k - 1} - x_k) - \alpha_kx_{k - 1}
            \\
            &\hspace{-0.35em}\underset{\text{\eqref{def:inxt-apg:vk}}}=\hspace{-0.3em} 
            \alpha_k \bar x - \alpha_k x_k^\circ. 
        \end{split}\end{align}
        The following equality also holds: 
        \begin{align}\label{eqn:inxt-apg-cnvg-prep1-b}\begin{split}
            \hat x_k - y_k &= 
            \alpha_k \bar x + (1 - \alpha_k)x_{k - 1} - y_k
            \\
            &\hspace{-0.35em}\underset{\text{\eqref{def:inxt-apg:yk}}}=\hspace{-0.3em} 
            \alpha_k \bar x - \alpha_k x_{k - 1}^\circ. 
        \end{split}\end{align}
        Recall that $L_k = B_k + \rho_k$. 
        Since $(f, g, L)$ satisfy Assumption \ref{ass:for-inxt-pg-ineq}, choosing $x = y_k$, $\tilde x = x_k \approx_{\epsilon_k} T_{L_k}(y_k)$, and $z = \hat x_k, \epsilon = \epsilon_k$, Theorem \ref{thm:inxt-pg-ineq} gives for all: 
        \begin{align*}
            & \frac{\rho_k}{2}\Vert x_k - y_k\Vert^2 - \epsilon_k
            \\
            &\le
            F(\hat x_k) - F(x_k) + \frac{L_k}{2}\Vert y_k - \hat x_k\Vert^2 - \frac{L_k}{2}\Vert \hat x_k - x_k\Vert^2
            \\
            &\underset{(1)}{\le} \alpha_k F(\bar x) + (1 - \alpha_k)F(x_{k - 1}) - F(x_k)
            + \frac{L_k}{2}\Vert y_k - \hat x_k\Vert^2 - \frac{L_k}{2}\Vert \hat x_k - x_k\Vert^2
            \\
            &\underset{(2)}{=} 
            (1 - \alpha_k)(F(x_{k - 1}) - F(\bar x)) + F(\bar x) - F(x_k) 
            + \frac{\alpha_k^2L_k}{2}\Vert \bar x - x_{k - 1}^\circ\Vert^2 
            - \frac{\alpha_k^2L_k}{2}\Vert \bar x - x_k^\circ\Vert^2.
        \end{align*}
        At (1) we used the fact that $F = f + g$, and hence $F$ is convex (Jensen's inequality).
        At (2) we used \eqref{eqn:inxt-apg-cnvg-prep1-a}, \eqref{eqn:inxt-apg-cnvg-prep1-b}. 
    \end{proof}
    \subsection{Results under a valid error schedule}\label{ssec:rslt-valid-err}
        This section establishes the groundwork for the convergence rate of the outer loop of IAPG, and it derives two intermediate results based on the weakest possible assumption (Assumption \ref{ass:valid-err-schedule}) on parameters in Definition \ref{def:inxt-apg} such that an upper bound exists (Proposition \ref{prop:inxt-apg-cnvg-generic}) for the optimality gap $F(x_k) - F(\bar x)$, and the termination criterion $\Vert x_k - y_k\Vert$ (Proposition \ref{prop:vk-gm}).
        \begin{assumption}[valid error schedule]\;\label{ass:valid-err-schedule}\\
            Let $(F, f, g, L)$, $(\alpha_k, B_k, \rho_k, \epsilon_k)_{k \in \Z_+}$, $(y_k, x_k, x_k^\circ)_{k \in \Z_+}$ satisfy Definition \ref{def:inxt-apg}.
            Let $(L_k)_{k \in \Z_+}$ be defined as $L_k := \rho_k + B_k$ for all $k \in \Z_+$. 
            Define $(\beta_k)_{k \in \Z_+}$ such that $\beta_0 = 1$ and for all $k \in \N$: 
            \begin{align}\label{ass:valid-err-schedule:betak} 
                \beta_k := \prod_{i = 1}^{k} \max\left(1 - \alpha_i, \frac{\alpha_i^2L_i}{\alpha_{i - 1}^2L_{i - 1}}\right). 
            \end{align}
            Fix the constants $\mathcal E_0 > 0, p > 0$. 
            % Let constants $\mathcal E_0, p$, and $(\beta_k)_{k \in \Z_+}, (\mathcal R_k(p))_{k \in \Z_+}$ satisfy the following. 
            Define the sequence $(\mathcal R_k)_{k \in \Z_+}$ with base case $\mathcal R_0(p) = \mathcal E_0$, and for all $k \in \Z_+$: 
            \begin{align}\label{ass:valid-err-schedule:rkp}
                \mathcal R_k(p) :=\mathcal E_0\left(1 + \sum_{l = 1}^{k}\frac{1}{l^p}\right). 
            \end{align}
            % \item\label{ass:valid-err-schedule:item1} Parameters $(\alpha_k, B_k, \rho_k, \epsilon_k)_{k \in \Z_+}, (F, f, g, L)$, iterates $(y_k, x_k, x_k^\circ)_{k \in \Z_+}$, and line search constants $(L_k)_{k \in \Z_+}$ all satisfy Definition \ref{def:inxt-apg}. 
            Let $(\epsilon_k)_{k \in \Z_+}, (\rho_k)_{k \in \Z_+}$ satisfy the base case ${\epsilon_0 \le \frac{\rho_0}{2}\Vert x_0 - y_0\Vert^2 + \mathcal E_0}$. 
            Assume inductively that it holds:
            \begin{align}\label{ass:valid-err-schedule:item2}
                \left(
                    \forall k \in \N
                \right):\;
                \frac{- \mathcal E_0\beta_k}{k^p} \le \frac{\rho_k}{2}\Vert x_k - y_k\Vert^2 - \epsilon_k. 
            \end{align}
        \end{assumption}
        \par
        The following proposition establishes $\forall \bar x \in \RR^n, \forall k \in \Z_+,\; \forall \alpha_k\in [0, 1)$, an upper bound on $F(x_k) - F(\bar x)$ in terms of $(\beta_k)_{k \in \Z_+}$ and $(\mathcal R_k(p))_{k \in Z_+}$.
        \begin{proposition}[convergence with valid error schedule]\;\label{prop:inxt-apg-cnvg-generic}\\
            Let $(F, f, g, L)$, $(\alpha_k, B_k, \rho_k, \epsilon_k)_{k \in \Z_+}$, $(y_k, x_k, x_k^\circ)_{k \in \Z_+}$, $\mathcal E_0, p$, and $(\beta_k)_{k \in \Z_+}, (\mathcal R_k)_{k \in \Z_+}$ be given by Assumption \ref{ass:valid-err-schedule}. 
            Fix any $\bar x \in \RR^n$, assume that $\alpha_0 = 1$, and for all $k \in \N:\alpha_k \in (0, 1)$.
            Then, for any initial guesses $x_{-1}, x_{-1}^\circ\in \RR^n$, the iterates $(y_k, x_k, x_k^\circ)_{k \in \Z_+}$ generated by an algorithm satisfying Definition \ref{def:inxt-apg} satisfy for all $k \in \Z_+$: 
            \begin{align*}
                F(x_k) - F(\bar x) + \frac{\alpha_k^2L_k}{2}\Vert \bar x - x_k^\circ\Vert^2
                \le 
                \beta_k \left(
                    \frac{L_0}{2}\Vert \bar x - x_{-1}^\circ\Vert^2
                    + \mathcal R_k(p)
                \right). 
            \end{align*}
        \end{proposition}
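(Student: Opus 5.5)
Define the Lyapunov quantity
\[
\Lambda_k := F(x_k) - F(\bar x) + \frac{\alpha_k^2L_k}{2}\Vert \bar x - x_k^\circ\Vert^2 .
\]
The plan is to prove by induction on $k$ that $\Lambda_k \le \beta_k\left(\frac{L_0}{2}\Vert \bar x - x_{-1}^\circ\Vert^2 + \mathcal R_k(p)\right)$. The only tool needed is Lemma \ref{lemma:inxt-apg-cnvg-prep1}, rearranged to isolate $\Lambda_k$:
\begin{align*}
\Lambda_k \le (1-\alpha_k)\bigl(F(x_{k-1}) - F(\bar x)\bigr) + \frac{\alpha_k^2 L_k}{2}\Vert \bar x - x_{k-1}^\circ\Vert^2 + \epsilon_k - \frac{\rho_k}{2}\Vert x_k - y_k\Vert^2 .
\end{align*}

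\textbf{Base case $k=0$.} Since $\alpha_0 = 1$, the first term on the right vanishes. The quadratic term becomes $\frac{L_0}{2}\Vert \bar x - x_{-1}^\circ\Vert^2$. The error term $\epsilon_0 - \frac{\rho_0}{2}\Vert x_0-y_0\Vert^2$ is at most $\mathcal E_0 = \mathcal R_0(p)$ by the base-case hypothesis of Assumption \ref{ass:valid-err-schedule}. Since $\beta_0=1$, the claim holds at $k=0$.

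\textbf{Inductive step $k\ge 1$.} Set $m_k := \max\bigl(1-\alpha_k, \alpha_k^2L_k/(\alpha_{k-1}^2L_{k-1})\bigr)$, so that $\beta_k = m_k\beta_{k-1}$. Rewrite the quadratic term as
\[
\frac{\alpha_k^2L_k}{\alpha_{k-1}^2L_{k-1}}\cdot\frac{\alpha_{k-1}^2L_{k-1}}{2}\Vert\bar x - x_{k-1}^\circ\Vert^2 .
\]
Bounding both coefficients by $m_k$ then gives $\Lambda_k \le m_k \Lambda_{k-1} + \epsilon_k - \frac{\rho_k}{2}\Vert x_k-y_k\Vert^2$.

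This step is the main obstacle, because it uses $1-\alpha_k \le m_k$ on the term $F(x_{k-1})-F(\bar x)$. That replacement is only valid if this term is nonnegative, and $\bar x$ is an arbitrary point, so its sign is not guaranteed. I would handle it by splitting into two cases.
\begin{itemize}
\item If $F(x_{k-1}) - F(\bar x)\ge 0$, the bound above holds directly.
\item If $F(x_{k-1}) - F(\bar x) < 0$, I would instead bound $(1-\alpha_k)(F(x_{k-1}) - F(\bar x)) \le 0$. This gives $\Lambda_k \le m_k\cdot\frac{\alpha_{k-1}^2L_{k-1}}{2}\Vert\bar x-x_{k-1}^\circ\Vert^2 + (\text{error})$. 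The first term is at most $m_k\beta_{k-1}(\cdots)$ by the inductive hypothesis, because $\Lambda_{k-1}$ dominates its quadratic part. Strictly, $\Lambda_{k-1}$ bounds the quadratic part plus a negative number, so I would carry the induction on the weaker statement that the target bound holds for $\max(\Lambda_{k-1}, \text{quadratic part})$. Both pieces propagate in the same way.
\end{itemize}
If this becomes messy, I would note that in the intended application $\bar x$ is a minimizer, where nonnegativity is automatic, and state that as the clean argument.

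\textbf{Closing the induction.} Hypothesis \eqref{ass:valid-err-schedule:item2} gives $\epsilon_k - \frac{\rho_k}{2}\Vert x_k-y_k\Vert^2 \le \mathcal E_0\beta_k/k^p$. Combining this with the inductive hypothesis,
\[
\Lambda_k \le \beta_k\left(\frac{L_0}{2}\Vert\bar x - x_{-1}^\circ\Vert^2 + \mathcal R_{k-1}(p)\right) + \beta_k\frac{\mathcal E_0}{k^p} = \beta_k\left(\frac{L_0}{2}\Vert\bar x - x_{-1}^\circ\Vert^2 + \mathcal R_k(p)\right),
\]
where the last equality uses $\mathcal R_k(p) = \mathcal R_{k-1}(p) + \mathcal E_0/k^p$ from \eqref{ass:valid-err-schedule:rkp}. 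This completes the induction.
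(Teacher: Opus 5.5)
\textbf{Gap in Case~2, and the statement is false as posed.} Your Case~1 computation, including the base case, is the paper's argument. The paper telescopes $\beta_l^{-1}\Lambda_l$ rather than inducting, which amounts to the same thing. You are also right that enlarging $1-\alpha_k$ to $m_k$ in front of $F(x_{k-1})-F(\bar x)$ requires that difference to be nonnegative whenever $m_k>1-\alpha_k$. The paper's proof takes this step without comment.

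The gap is your Case~2 repair. Set $Q_k:=\frac{\alpha_k^2L_k}{2}\Vert\bar x-x_k^\circ\Vert^2$. The invariant $\max(\Lambda_k,Q_k)\le\beta_k\bigl(\frac{L_0}{2}\Vert\bar x-x_{-1}^\circ\Vert^2+\mathcal R_k(p)\bigr)$ is \emph{stronger} than the claim, not weaker. It therefore needs its own base case and its own propagation, and it has neither:
\begin{itemize}
\item At $k=0$ (where $x_0^\circ=x_0$) it demands $\frac{L_0}{2}\Vert\bar x-x_0\Vert^2\le\frac{L_0}{2}\Vert\bar x-x_{-1}^\circ\Vert^2+\mathcal E_0$. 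This fails for $\bar x=x_{-1}^\circ$ as soon as the first step is longer than $\sqrt{2\mathcal E_0/L_0}$.
\item Isolating $Q_k$ in Lemma~\ref{lemma:inxt-apg-cnvg-prep1} leaves $F(\bar x)-F(x_k)$ on the right-hand side. That term is positive in exactly the case you are trying to handle.
\end{itemize}

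In fact no repair exists, because for arbitrary $\bar x$ the proposition is false. Take $n=1$, $f(x)=x$, $g\equiv 0$, $B_k=1$, $\rho_k=0$ (so $L_k=1$), $x_{-1}=x_{-1}^\circ=0$, $\alpha_1=\frac{9}{10}$, $\mathcal E_0=1$. Use exact steps $x_k=y_k-1$; these satisfy \eqref{def:inxt-apg:xk} for every $\epsilon_k>0$ because $\partial_\epsilon g\equiv\{\mathbf 0\}$. The line search holds since $D_f\equiv 0$, and Assumption~\ref{ass:valid-err-schedule} holds with $\epsilon_0=1$ and $\epsilon_k\le\beta_k/k^p$. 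Then $x_0=x_0^\circ=y_1=-1$, $x_1=-2$, $x_1^\circ=-\frac{19}{9}$, and $\beta_1=\max\bigl(\frac{1}{10},\frac{81}{100}\bigr)=\frac{81}{100}$. For $\bar x=10$ the claim at $k=1$ reads
\[
-12+\frac{81}{200}\Bigl(\frac{109}{9}\Bigr)^2=47.405\;\le\;\frac{81}{100}\,(50+2)=42.12,
\]
which is false. The same data give $Q_0=60.5>51$, so your invariant already fails at $k=0$.

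This is not an artifact of the chosen constants. For this family with $\alpha_1>(\sqrt5-1)/2$, the excess at $k=1$ is $(\alpha_1^2+\alpha_1-1)\bar x+\frac{(1+\alpha_1)^2}{2}-2-2\alpha_1^2\mathcal E_0$, which grows linearly in $\bar x$, so no choice of $\mathcal E_0,p$ absorbs it. Replacing $g$ by $\delta_{[-100,\infty)}$ gives $F$ a minimizer without changing the computation.

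\textbf{What the statement needs.} It requires one extra hypothesis:
\begin{itemize}
\item either $F(x_k)\ge F(\bar x)$ for all $k$, e.g.\ $\bar x$ a minimizer;
\item or $1-\alpha_k\ge\alpha_k^2L_k/(\alpha_{k-1}^2L_{k-1})$ for all $k\in\N$, so that $m_k=1-\alpha_k$ and only the nonnegative quadratic term is enlarged.
\end{itemize}
The paper only invokes the result with $\bar x$ a minimizer (Proposition~\ref{prop:vk-gm}, Theorem~\ref{thm:opt-cnvg-outr-loop}). Under either hypothesis your Case~1 argument is a complete proof and coincides with the paper's. You should state that restricted version and drop the case split.
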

        \begin{proof} 
            The proof consists of two parts.
            The first part verifies recursively that the inequality is true for $k \in \N$. 
            The second part verifies the inequality is true for $k = 0$. 
            Apply Lemma \ref{lemma:inxt-apg-cnvg-prep1} with $k \in \N$:
            \begin{align}\label{prop:inxt-apg-cnvg-generic:pitem0}\begin{split}
                & \frac{\rho_k}{2}\Vert x_k - y_k\Vert^2 - \epsilon_k
                \\
                &\le (1 - \alpha_k)(F(x_{k - 1}) - F(\bar x)) + F(\bar x) - F(x_k) 
                \\&\quad
                    + \frac{\alpha_k^2L_k}{2}\Vert \bar x - x_{k - 1}^\circ\Vert^2 
                    - \frac{\alpha_k^2L_k}{2}\Vert \bar x - x_k^\circ\Vert^2 
                \\
                &\le 
                (1 - \alpha_k)(F(x_{k - 1}) - F(\bar x)) + F(\bar x) - F(x_k) 
                \\&\quad
                + \max\left(
                        1 - \alpha_k, \frac{\alpha_k^2L_k}{\alpha_{k - 1}^2L_{k - 1}}
                    \right)\frac{\alpha_{k - 1}^2L_{k - 1}}{2}\Vert \bar x - x_{k - 1}^\circ\Vert^2 
                - \frac{\alpha_k^2L_k}{2}\Vert \bar x - x_k^\circ\Vert^2. 
                \\
                &\le \max\left(
                    1 - \alpha_k, \frac{\alpha_k^2L_k}{\alpha_{k - 1}^2L_{k - 1}}
                \right)\left(
                    F(x_{k - 1}) - F(\bar x)
                    + \frac{\alpha_{k - 1}^2L_{k - 1}}{2}\Vert \bar x - x_{k - 1}^\circ\Vert^2 
                \right)
                \\&\quad 
                    + F(\bar x) - F(x_k) - \frac{\alpha_k^2L_k}{2}\Vert \bar x - x_k^\circ\Vert^2.
            \end{split}\end{align}
            Recall that we introduced $\beta_k$ in \eqref{ass:valid-err-schedule:betak} which had $\beta_k := \prod_{i = 1}^{k} \max\left(1 - \alpha_i, \alpha_i^2L_i\alpha_{i - 1}^{-2}L_{i - 1}^{-1}\right)$ for all $k \in \N$, and $\beta_0 = 1$.
            To simplify the notation we denote
            \begin{align*}
                \Lambda_k:= - F(\bar x) + F(x_k) + \frac{\alpha_k^2L_k}{2}\Vert \bar x - x_k^\circ\Vert^2. 
            \end{align*}
            Therefore, write $k$ in \eqref{ass:valid-err-schedule:item2} as $l$, and apply it to \eqref{prop:inxt-apg-cnvg-generic:pitem0}: 
            \begin{align}\label{prop:inxt-apg-cnvg-generic:pitem1}\begin{split}
                (\forall l \in \N)\quad 
                - \frac{\mathcal E_0 \beta_l}{l^p}
                &\le
                \frac{\rho_k}{2}\Vert x_l - y_l\Vert^2 - \epsilon_l 
                \le \frac{\beta_l}{\beta_{l - 1}} \Lambda_{l - 1} - \Lambda_l
                \\
                \underset{(1)}\implies  
                0 &\le 
                \frac{\mathcal E_0}{l^p} + \beta_{l - 1}^{-1}\Lambda_{l - 1} - \beta_l^{-1} \Lambda_l. 
            \end{split}\end{align}
            Note that at (1) we moved $\frac{\mathcal E_0 \beta_l}{l^p}$ to the RHS.
            We also divided by $\beta_l$ for all $l \in \N$ which is permissible because our assumption this proposition states that $\alpha_l \in (0, 1)$ for all $l \in \N$, meaning $\beta_l > 0$.
            For all $k \in \N$, telescoping the series in \eqref{prop:inxt-apg-cnvg-generic:pitem1} for $l = 1, 2, \ldots, k$ yields: 
            \begin{align*}
                0 \le 
                \beta_0^{-1}\Lambda_0 
                - \beta_k^{-1} \Lambda_k
                + \sum_{l = 1}^{k} \frac{\mathcal E_0}{l^p}
                \iff 
                \Lambda_k \le 
                \beta_k\left(
                    \beta_0^{-1}\Lambda_0 + \sum_{l = 1}^{k} \frac{\mathcal E_0}{l^p}
                \right). 
            \end{align*}
            Since $\beta_0 = 1$ (defined in Assumption \ref{ass:valid-err-schedule}), the above expression gives:
            \begin{align}\label{prop:inxt-apg-cnvg-generic:pitem3}\begin{split}
                (\forall k \in \N)\quad 
                & F(x_k) - F(\bar x) + \frac{\alpha_k^2L_k}{2}\Vert \bar x - x_k^\circ\Vert^2
                \\
                &\le 
                \beta_k \left(
                    F(x_0) - F(\bar x) + \frac{\alpha_0^2L_0}{2}\Vert \bar x - x_0^\circ\Vert^2
                    + \mathcal E_0 \sum_{l = 1}^{k} \frac{1}{l^p}
                \right). 
            \end{split}\end{align}
            We can upper bound $F(x_0) - F(\bar x) + \frac{\alpha_0^2L_0}{2}\Vert \bar x - x_0^\circ\Vert^2$ by considering results in Lemma \ref{lemma:inxt-apg-cnvg-prep1} with $k = 0$ and $\alpha_0 = 1$: 
            \begin{align}\label{prop:inxt-apg-cnvg-generic:pitem4}\begin{split}
                - & \mathcal E_0 
                \underset{\text{(2)}}\le 
                \frac{\rho_0}{2}\Vert x_0 - y_0\Vert^2 - \epsilon_0
                \le 
                F(\bar x) - F(x_0)
                + \frac{L_0}{2}\Vert \bar x - x_{-1}^\circ\Vert^2 - \frac{L_0}{2}\Vert \bar x - x_0^\circ\Vert^2
                \\
                \implies & 
                \mathcal E_0 
                \ge
                F(x_0) - F(\bar x) - \frac{L_0}{2}\Vert \bar x - x_{-1}^\circ\Vert^2
                + \frac{L_0}{2} \Vert \bar x - x_0^\circ\Vert^2
                \\
                \iff 
                & \mathcal E_0 + \frac{L_0}{2}\Vert \bar x - x_{-1}^\circ\Vert^2 
                \ge F(x_0) - F(\bar x) + \frac{L_0}{2} \Vert \bar x - x_0^\circ\Vert^2. 
            \end{split}\end{align}
            The inequality at (2) comes from \eqref{ass:valid-err-schedule:item2} in Assumption \ref{ass:valid-err-schedule}. 
            Substituting \eqref{prop:inxt-apg-cnvg-generic:pitem4} into RHS of \eqref{prop:inxt-apg-cnvg-generic:pitem3} yields:
            \begin{align*}
                (\forall k \in \N)\quad 
                & F(x_k) - F(\bar x) + \frac{\alpha_k^2L_k}{2}\Vert \bar x - x_k^\circ\Vert^2
                \le 
                \beta_k \left(
                    \frac{L_0}{2}\Vert \bar x - x_{-1}^\circ\Vert^2
                    + \mathcal E_0 + \sum_{l = 1}^{k} \frac{\mathcal E_0}{l^p}
                \right). 
            \end{align*}
            Finally, when $k = 0$, from \eqref{prop:inxt-apg-cnvg-generic:pitem4} we have: 
            \begin{align*}
                F(x_0) - F(\bar x)
                + \frac{L_0}{2} \Vert \bar x - x_0^\circ\Vert^2
                &\le
                \mathcal E_0 + \frac{L_0}{2}\Vert \bar x - x_{-1}^\circ\Vert^2.
            \end{align*}
            Combining cases when $k \in \N$ and $k = 0$, and recalling that $\alpha_0 = \beta_0 = 1$, we can use $(\mathcal R_k(p))_{k \in \Z_+}$ introduced in \eqref{ass:valid-err-schedule:rkp} for the RHS and write it as:
            \begin{align*}
                (\forall k \in \Z_+)\quad 
                & F(x_k) - F(\bar x) + \frac{\alpha_k^2L_k}{2}\Vert \bar x - x_k^\circ\Vert^2
                \le 
                \beta_k \left(
                    \frac{L_0}{2}\Vert \bar x - x_{-1}^\circ\Vert^2
                    + \mathcal R_k(p)
                \right). 
            \end{align*}
        \end{proof}
        \par
        The following proposition states a relation between the termination criterion $\Vert x_k - y_k \Vert$ and the sequence $\alpha_k$.
        It is crucial to derive the convergence to stationarity in later sections. 
        \begin{proposition}[the termination criterion]\;\label{prop:vk-gm}\\ 
            Let $(F, f, g, L)$, $(\alpha_k, B_k, \rho_k, \epsilon_k)_{k \in \Z_+}$, $(\beta_k)_{k \in \Z_+}$, $\mathcal E_0, p$ and $\mathcal R_k(p)$ be given by Assumption \ref{ass:valid-err-schedule}. 
            Assume in addition there exists $\bar x \in \RR^n$ which is a minimizer of $F$, $\alpha_0 = 1$ and $\alpha_k \in (0, 1)$ for all $k \in \N$. 
            Then, for the iterates $(y_k, x_k, x_k^\circ)_{k \in \Z_+}$ generated by an algorithm satisfying Definition \ref{def:inxt-apg}, the following hold for all $k \in \Z_+$:
            \begin{enumerate}[nosep]
                \item\label{prop:vk-gm:result1} $x_k^\circ - x_{k - 1}^\circ = \alpha_k^{-1}(x_k - y_k)$. 
                \item\label{prop:vk-gm:result2} $\Vert \bar x - x_k^\circ\Vert \le \Vert \bar x - x_{-1}^\circ\Vert + \sqrt{\frac{2\mathcal R_k(p)}{L_0}}$.
                \item\label{prop:vk-gm:result3} $\Vert x_k - y_k\Vert \le 2\alpha_k \left(\Vert \bar x - x_{-1}^\circ\Vert+ \sqrt{\frac{2\mathcal R_k(p)}{L_0}}\right)$. 
            \end{enumerate}
        \end{proposition}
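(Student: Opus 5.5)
Item (i) is pure algebra from Definition \ref{def:inxt-apg}. I will solve \eqref{def:inxt-apg:yk} for the previous auxiliary iterate, $x_{k-1}^\circ = \alpha_k^{-1}\bigl(y_k - (1-\alpha_k)x_{k-1}\bigr)$, and subtract it from \eqref{def:inxt-apg:vk}, $x_k^\circ = x_{k-1} + \alpha_k^{-1}(x_k - x_{k-1})$. The terms in $x_{k-1}$ cancel: the coefficients are $1 - \alpha_k^{-1} + \alpha_k^{-1} - 1 = 0$. What remains is $\alpha_k^{-1}(x_k - y_k)$. This holds for every $k \in \Z_+$, including $k=0$ with the given initial data.

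For item (ii) I will start from Proposition \ref{prop:inxt-apg-cnvg-generic} with $\bar x$ a minimizer of $F$. Since $F(x_k) - F(\bar x) \ge 0$, that term can be dropped, giving
\begin{align*}
\frac{\alpha_k^2 L_k}{2}\Vert \bar x - x_k^\circ\Vert^2 \le \beta_k\Bigl(\frac{L_0}{2}\Vert \bar x - x_{-1}^\circ\Vert^2 + \mathcal R_k(p)\Bigr).
\end{align*}
To conclude I need $\beta_k \le \alpha_k^2 L_k/(\alpha_0^2 L_0) = \alpha_k^2 L_k / L_0$. The ratios $\alpha_i^2 L_i/(\alpha_{i-1}^2 L_{i-1})$ telescope exactly to this quantity. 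So the bound holds precisely when in each factor of \eqref{ass:valid-err-schedule:betak} the maximum is attained by the ratio term, that is, when $(1-\alpha_i)\alpha_{i-1}^2 L_{i-1} \le \alpha_i^2 L_i$. This is the standard Nesterov-type momentum rule, and in that case $\beta_k = \alpha_k^2 L_k/L_0$. Dividing by $\alpha_k^2 L_k/2 > 0$ then gives $\Vert \bar x - x_k^\circ\Vert^2 \le \Vert \bar x - x_{-1}^\circ\Vert^2 + 2\mathcal R_k(p)/L_0$. Subadditivity of the square root, $\sqrt{a+b}\le\sqrt a+\sqrt b$, yields (ii).

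This step is the main obstacle. The inequality $\beta_k \le \alpha_k^2L_k/L_0$ is not literally among the listed hypotheses: in general $\beta_k \ge \alpha_k^2 L_k/L_0$, which is the wrong direction. I would therefore make explicit that the momentum sequence obeys the rule above, and so that the maximum collapses to the ratio. I expect the intended $(\alpha_k)$, developed in Section \ref{ssec:opt-momentum-seq}, to satisfy this with equality. If one wants to avoid that rule, the fallback is to carry the factor $\beta_k L_0/(\alpha_k^2 L_k)$ through the bound.

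For item (iii) I use (i) to write $\Vert x_k - y_k\Vert = \alpha_k\Vert x_k^\circ - x_{k-1}^\circ\Vert \le \alpha_k\bigl(\Vert \bar x - x_k^\circ\Vert + \Vert \bar x - x_{k-1}^\circ\Vert\bigr)$, and bound both terms by (ii). The term at index $k-1$ involves $\mathcal R_{k-1}(p)$, which is at most $\mathcal R_k(p)$ because $\mathcal R_k(p)$ in \eqref{ass:valid-err-schedule:rkp} is nondecreasing in $k$. For $k=0$ the second term is $\Vert \bar x - x_{-1}^\circ\Vert$ itself, which is trivially below the bound. Summing the two bounds gives the factor $2\alpha_k$.
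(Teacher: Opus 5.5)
Your argument is essentially the paper's: (i) by the same cancellation of the $x_{k-1}$ terms, (ii) by dropping $F(x_k)-F(\bar x)\ge 0$ in Proposition \ref{prop:inxt-apg-cnvg-generic}, using $\beta_k=\alpha_k^2L_k/L_0$ and then taking square roots, and (iii) by the triangle inequality together with the monotonicity of $\mathcal R_k(p)$. The hypothesis gap you flag is real, and the paper's proof closes it exactly as you propose, by invoking the identity \eqref{lemma:opt-mmntm-seq-result2} of Lemma \ref{lemma:opt-mmntm-seq}, which rests on the Nesterov rule of Assumption \ref{ass:opt-mmntm-seq} rather than on Assumption \ref{ass:valid-err-schedule} alone (without such a rule, item (ii) can actually fail, e.g.\ for a very small $\alpha_1$).
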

        \begin{proof}
            We now show \ref{prop:vk-gm:result1}.
            From $y_k = \alpha_k x_{k - 1}^\circ + (1 - \alpha_k)x_{k - 1}$ which is from \eqref{def:inxt-apg:yk}, and $x_k^\circ = x_{k - 1} + \alpha_k^{-1}(x_k - x_{k - 1})$ which is from \eqref{def:inxt-apg:vk}; it has $\forall k \in \Z_+$: 
            {\allowdisplaybreaks
            \begin{align*}
                x_k^\circ - x_{k - 1}^\circ &=
                x_{k - 1} + \alpha_k^{-1}(x_k - x_{k - 1}) 
                - \alpha_k^{-1}(y_k - (1 - \alpha_k)x_{k - 1})
                \\
                &= (1 - \alpha_k^{-1})x_{k - 1} + \alpha_k^{-1}x_k - \alpha_k^{-1}y_k + (\alpha_k^{-1} - 1)x_{k - 1}
                \\
                &= \alpha_k^{-1}(x_k - y_k). 
            \end{align*}
            }
            We now show \ref{prop:vk-gm:result2}. 
            The hypotheses of Proposition \ref{prop:inxt-apg-cnvg-generic} hold, so $\forall k \in \Z_+$:
            \begin{align*}
                0 &\le \beta_k\left(
                    \frac{L_0}{2}\Vert \bar x - x_{-1}^\circ\Vert^2
                    + \mathcal R_k(p)
                \right) 
                - \frac{\alpha_k^2L_k}{2}\Vert \bar x - x_k^\circ\Vert^2
                - (F(x_k) - F(\bar x))
                \\
                \underset{(1)}\implies
                0 &\le 
                \frac{L_0}{2}\Vert \bar x - x_{-1}^\circ\Vert^2
                + \mathcal R_k(p)
                - \frac{\alpha_k^2L_k}{2\beta_k}\Vert \bar x - x_k^\circ\Vert^2
                \\
                &\underset{(2)}= 
                \frac{L_0}{2}\Vert \bar x - x_{-1}^\circ\Vert^2
                + \mathcal R_k(p)
                - \frac{L_0}{2}\Vert \bar x - x_k^\circ\Vert^2
                \\
                \iff
                \Vert \bar x - x_k^\circ\Vert 
                &\le \left( \Vert \bar x - x_{-1}^\circ\Vert^2 +
                    \frac{2\mathcal R_k(p)}{L_0}
                \right)^{1/2}
            \end{align*}
            We did two things at (1).
            Firstly, assumed $\bar x$ is the minimizer, so $- F(x_k) + F(\bar x) \le 0$. 
            Next, we have $\beta_k > 0$ always, so we divide both sides of the inequality by $\beta_k$.
            At (2), we used $\beta_k = \frac{\alpha_k^2 L_k}{\alpha_0^2 L_0}$ from \eqref{lemma:opt-mmntm-seq-result2}. 
            Since we assumed $\alpha_0 = 1$, it has $\beta_k = \alpha_k^2L_k/L_0$ too and the coefficient $\frac{\alpha_k^2L_k}{2 \beta_k} = \frac{L_0}{2}$. 
            Therefore, it follows that: $\Vert \bar x - x_k^\circ\Vert \le \Vert \bar x - x_{-1}^\circ\Vert + \sqrt{\frac{2\mathcal R_k(p)}{L_0}}$. 
            \par
            We now show \ref{prop:vk-gm:result3}. 
            Using \ref{prop:vk-gm:result1} we have equality $\forall k \in \Z_+$: 
            {\allowdisplaybreaks
            \begin{align*}
                \Vert x_k - y_k\Vert
                &= \alpha_k\Vert x_k^\circ - x_{k - 1}^\circ\Vert
                \\
                &\le \alpha_k \left(\Vert x_k^\circ - \bar x\Vert + \Vert \bar x - x_{k - 1}^\circ\Vert\right)
                \\
                &\underset{\text{\ref{prop:vk-gm:result2}}}\le 
                \alpha_k \left(
                    2\Vert \bar x - x_{-1}^\circ\Vert
                    + \sqrt{\frac{2\mathcal R_k(p)}{L_0}} + \sqrt{\frac{2\mathcal R_{k - 1}(p)}{L_0}}
                \right)
                \\
                &\le 
                2\alpha_k \left(
                    \Vert \bar x - x_{-1}^\circ\Vert
                    + \sqrt{\frac{2\mathcal R_k(p)}{L_0}}
                \right). 
            \end{align*}
            }
        \end{proof}
        \par 
        Now, to derive a convergence rate in terms of iteration $k$ of the outer loop, it remains to determine a specific sequence $\alpha_k$. 
        This will be the goal of the next section. 
    
    \subsection{Auxiliary results under an optimal momentum schedule}\label{ssec:opt-momentum-seq}
        In the remainder of the paper we shall assume the following.  
        \begin{assumption}[the optimal momentum sequence]\;\label{ass:opt-mmntm-seq}\\
            Let $(\alpha_k, B_k, \rho_k, \epsilon_k)_{k \in \Z_+}$, $(F, f, g, L)$, $(y_k, x_k, x_k^\circ)_{k \in \Z_+}$, $\mathcal E_0, p$, and $(L_k)_{k \in \Z_+}$, $(\beta_k)_{k \in \Z_+}, (\mathcal R_k(p))_{k \in \Z_+}$ be given by Assumption \ref{ass:valid-err-schedule}. 
            In addition, we assume:
            \begin{enumerate}[nosep]
                \item\label{ass:opt-mmntm-seq:item1} $(\alpha_k)_{k \in \Z_+}$ satisfies $\alpha_0 = 1$, and $(1 - \alpha_k) = \alpha_{k}^2L_k\alpha_{k - 1}^{-2}L_{k - 1}^{-1}$ for all $k \in \Z_+$, and $p > 1$.
                \item\label{ass:opt-mmntm-seq:item2} 
                $(L_k)_{k \in \Z_+}$ is bounded, i.e., there exists constants $L_{\max} \ge L_{\min} > 0$ such that $\{L_k\}_{k\in \Z_+} \subseteq [L_{\min}, L_{\max}]$. 
                \item\label{ass:opt-mmntm-seq:item3} For all $k \in \N$, $\epsilon_k$ satisfies $\epsilon_k = \frac{\mathcal E_0 \beta_k}{k^p} + \rho_k \frac{\Vert x_k - y_k\Vert^2}{2}$ with the base case $\epsilon_0 = \mathcal E_0$.
                Each $\epsilon_k$ is chosen to be the largest possible value. 
            \end{enumerate}
        \end{assumption}
        \begin{remark}
            There are two more observations which holds. 
            Firstly, item \ref{ass:opt-mmntm-seq:item2} states that the sequence $(L_k)_{k \in \Z_+}$ is bounded which is definitely true under Lipschitz smoothness for reasonable implementations of algorithmic line search.
            To illustrate, under the assumption $f$ is $L$-Lipschitz smooth (Assumption \ref{ass:for-inxt-pg-ineq}), if the Armijo line search produces $B_k \ge L$ after some point, then the sequence $(B_k)_{k \in \Z_+}$ will cease to increase afterwards.
            In this case, any bounded sequence of $\rho_k$ chosen by the practitioners will allow $\sup_{i \in \Z_+} L_i$ to be bounded.
            Otherwise, if $B_k \searrow 0$ for some fictitious line search and backtracking techniques (to the best of our knowledge, there doesn't exist any line search satisfying this property), then practitioner have the freedom to assign $\rho_k$ in a way such that $L_k$ is bounded below.
            \par
            Secondly, observe that item \ref{ass:opt-mmntm-seq:item1} in the above assumption confines the choice of $\alpha_k$ to be a unique sequence, and it also restricts $(\epsilon_k)_{k \in \Z_+}$ to be as large as possible.
            These two details are worth emphasizing because the former will inform our result in Lemma \ref{lemma:opt-mmntm-seq} which comes immediately after; and the latter will inform our results in Lemma \ref{lemma:err-schedule-lbnd}.
            The first result states that the specific choice of $(\alpha_k)_{k \in \Z_+}$ satisfying Nesterov's rule restricts $\alpha_k \in (0, 1)$ for all $k \in \Z_+$, enabling an upper bound of $\mathcal O(1/k^2)$ for $(\beta_k)_{k \in \Z_+}$.
            The latter result will leverage $\beta_k$ to inform a lower bound for the sequence $\epsilon_k$ which will be vital for the derivation of the total complexity of the algorithm.
        \end{remark}
        Under Assumption \ref{ass:opt-mmntm-seq}, the momentum sequence $(\alpha_k)_{k \in \Z_+}$ follows Nesterov's update rule. 
        This assumption is stronger than Assumption \ref{ass:valid-err-schedule} and enables two intermediate results.
        The first ensures that such a sequence still satisfies Assumption \ref{ass:valid-err-schedule} and hence results from the previous section are applicable.
        The second result is the $\mathcal O(1/k^2)$ upper bound (and lower bound) on the sequence $(\beta_k)_{k \in \Z_+}$. 
        Both of them are proved in Lemma \ref{lemma:opt-mmntm-seq}. 
        
        \begin{lemma}[the optimal momentum sequence is indeed valid and optimal]\;\label{lemma:opt-mmntm-seq}\\
            Let $(\alpha_k)_{k \in \Z_+}, (\beta_k)_{k \in \Z_+}$ be given by Assumption \ref{ass:opt-mmntm-seq}. 
            If we choose $\alpha_0 = 1$, then for all $k \in \N$:
            \begin{align}\label{lemma:opt-mmntm-seq-result1}
                \alpha_k &=
                \frac{L_{k - 1}}{2L_k}\left(
                    - \alpha_{k - 1}^2 + \left(
                        \alpha_{k - 1}^4 + 4 \alpha_{k - 1}^2\frac{L_k}{L_{k - 1}}
                    \right)^{1/2}
                \right) \in (0, 1)
            \end{align}
            By the base case $\beta_0 = 1$, the sequence $(\beta_k)_{k \in \Z_+}$ has $\forall k \in \N$:
            \begin{align}\label{lemma:opt-mmntm-seq-result2}
                \left(
                    1 + \alpha_0\sqrt{L_0}\sum_{i = 1}^{k}\sqrt{L_i^{-1}}
                \right)^{-2}
                \hspace{-1em}\le 
                \beta_k = \frac{\alpha_k^2L_k}{\alpha_0^2L_0}
                \le 
                \left(
                    1 + \frac{\alpha_0\sqrt{L_0}}{2}\sum_{i = 1}^{k}\sqrt{L_i^{-1}}
                \right)^{-2}. 
            \end{align}
        \end{lemma}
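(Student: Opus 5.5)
The plan has three parts: solve the momentum recurrence, telescope it to get the product formula for $\beta_k$, and then bound $\beta_k$ via a telescoping argument on $\beta_i^{-1/2}$.

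\emph{Step 1: the closed form and $\alpha_k\in(0,1)$.} Put $q_k := L_k/L_{k-1}>0$. The rule $1-\alpha_k = \alpha_k^2 L_k/(\alpha_{k-1}^2 L_{k-1})$ of Assumption \ref{ass:opt-mmntm-seq}\ref{ass:opt-mmntm-seq:item1} becomes the quadratic $q_k\alpha_k^2 + \alpha_{k-1}^2\alpha_k - \alpha_{k-1}^2 = 0$. Taking its positive root gives exactly \eqref{lemma:opt-mmntm-seq-result1}. We argue by induction, assuming $\alpha_{k-1}\in(0,1]$. The discriminant $\alpha_{k-1}^4+4q_k\alpha_{k-1}^2$ exceeds $\alpha_{k-1}^4$, so the root is positive. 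The quadratic has value $-\alpha_{k-1}^2<0$ at $0$ and value $q_k>0$ at $1$, so the positive root lies in $(0,1)$. The base case $\alpha_0=1$ starts the induction.

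\emph{Step 2: the product formula for $\beta_k$.} Since the recurrence forces $1-\alpha_i = \alpha_i^2L_i/(\alpha_{i-1}^2L_{i-1})$, both entries of the max in \eqref{ass:valid-err-schedule:betak} coincide. The product therefore telescopes to $\beta_k = \alpha_k^2L_k/(\alpha_0^2L_0)$.

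\emph{Step 3: the two-sided bound.} We work with $\beta_i^{-1/2}$. From $\beta_i = (1-\alpha_i)\beta_{i-1}$ and $\alpha_i = \alpha_0\sqrt{L_0\beta_i/L_i}$ (which is Step 2 rearranged),
\begin{align*}
    \beta_i^{-1/2}-\beta_{i-1}^{-1/2}
    = \frac{\beta_{i-1}-\beta_i}{\sqrt{\beta_i\beta_{i-1}}\,(\sqrt{\beta_{i-1}}+\sqrt{\beta_i})}
    = \frac{\alpha_i\beta_{i-1}}{\sqrt{\beta_i\beta_{i-1}}\,(\sqrt{\beta_{i-1}}+\sqrt{\beta_i})}.
\end{align*}
Because $\beta_i\le\beta_{i-1}$, the denominator satisfies $\sqrt{\beta_i}\,\beta_{i-1}\le \sqrt{\beta_i\beta_{i-1}}\,(\sqrt{\beta_{i-1}}+\sqrt{\beta_i})\le 2\sqrt{\beta_i}\,\beta_{i-1}$. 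This squeezes the difference between $\alpha_i/(2\sqrt{\beta_i})$ and $\alpha_i/\sqrt{\beta_i}$, and by Step 2 we have $\alpha_i/\sqrt{\beta_i} = \alpha_0\sqrt{L_0/L_i}$.

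Summing over $i=1,\dots,k$ with $\beta_0=1$ gives
\begin{align*}
    1+\frac{\alpha_0\sqrt{L_0}}{2}\sum_{i=1}^{k} L_i^{-1/2}\;\le\;\beta_k^{-1/2}\;\le\;1+\alpha_0\sqrt{L_0}\sum_{i=1}^{k} L_i^{-1/2}.
\end{align*}
Inverting and squaring yields \eqref{lemma:opt-mmntm-seq-result2}.

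The main obstacle is Step 3, specifically keeping the direction of the monotonicity estimates consistent so that the factors $1$ and $\tfrac12$ land on the correct sides. Everything else is algebra once Step 2 expresses $\alpha_i$ in terms of $\beta_i$.
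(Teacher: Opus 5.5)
Your proof is correct and follows the paper's argument. You solve the quadratic for $\alpha_k$ and telescope the product (where the two entries of the max coincide) to get $\beta_k=\alpha_k^2L_k/(\alpha_0^2L_0)$; you then squeeze $\beta_i^{-1/2}-\beta_{i-1}^{-1/2}$ between $\tfrac12\alpha_0\sqrt{L_0/L_i}$ and $\alpha_0\sqrt{L_0/L_i}$ using $\beta_i\le\beta_{i-1}$ before telescoping. The differences are cosmetic: you place the root in $(0,1)$ by evaluating the quadratic at $0$ and $1$ rather than via $\sqrt{a^2+b}<a+b/(2a)$, and your unsquared rationalization is the same relation as the paper's identity $L_0\alpha_0^2L_i^{-1}=(\beta_i^{-1/2}-\beta_{i-1}^{-1/2})^2(1+\beta_i^{1/2}\beta_{i-1}^{-1/2})^2$.
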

        \begin{proof}
            Firstly, we show \eqref{lemma:opt-mmntm-seq-result1}.
            We proceed by induction.
            Fix any $k \in \N$. 
            Assume inductively that $\alpha_{k - 1} \in (0, 1]$. 
            Obviously, the base case is satisfied with $\alpha_0 = 1 \in (0, 1]$. 
            \par
            We can solve for $\alpha_k$ in the recursive equality $(1 - \alpha_k) = \alpha_{k}^2L_k\alpha_{k - 1}^{-2}L_{k - 1}^{-1}$ from Assumption \ref{ass:opt-mmntm-seq}.
            To simplify notation, we write $\alpha_{k - 1}$ as $\alpha$, and $L_k/L_{k - 1}$ as $q$.
            Solving for $\alpha_{k}$, the quadratic equation admits one root that is strictly positive for all $k \in \N$: 
            \begin{align*}
                \alpha_k &= \frac{1}{2}\left(
                    - \frac{\alpha^2}{q} + \sqrt{
                        \frac{\alpha^4}{q^2} + \frac{4\alpha^2}{q}
                    }
                \right)
                \\
                &= \frac{\alpha^2}{2q}\left(
                    - 1 + \sqrt{1 + \frac{4q}{\alpha^2}}
                \right)
                \\
                &\underset{(1)}{<}
                \frac{\alpha^2}{2q}\left(
                    -1 + 1 + \frac{2q}{\alpha^2}
                \right)
                \\
                &= 1
            \end{align*}
            At (1) we bounded the radical using $\sqrt{a^2 + b} < a + b/(2a)$.
            We used the assumption that $\alpha_k > 0$, $L_k > 0, L_{k - 1} > 0$.
            This is true because we have $B_k > 0, \rho_k \ge 0$.
            % \begin{align*}
            %     1 + \frac{4q}{\alpha^2} &= 
            %     1 + \frac{4q}{\alpha^2} + \frac{4q^2}{\alpha^4} - \frac{4\alpha^2}{\alpha^2}
            %     \\
            %     &= \left(
            %         1 + \frac{2q}{\alpha^2}
            %     \right)^2 - \frac{4q^2}{\alpha^4} 
            %     \\
            %     &< \left(
            %         1 + \frac{2q}{\alpha^2}  
            %     \right)^2. 
            % \end{align*}
            Next, to see that $\alpha_k > 0$, recall the same fact that $L_k > 0$, and the inductive hypothesis $\alpha_{k - 1} \in (0, 1]$. 
            Therefore, $4q/\alpha^2 > 0$. 
            It follows that $\alpha_k = \frac{\alpha^2}{2q}\left(-1 + \sqrt{1 + 4q/\alpha^2}\right) > 0$ because the quantity inside the radical is strictly larger than $1$.
            Therefore, inductively it holds that $\alpha_k \in (0, 1)$ too. 
            \par
            We now show \eqref{lemma:opt-mmntm-seq-result2}.
            Using the assumption that $(\alpha_k)_{k \in \Z_+}$ satisfying $(1 - \alpha_k) = \alpha_k^2L_k \alpha_{k - 1}^{-2}L_{k - 1}^{-1}$ for all $k \in \Z_+$, we can simplify the definition of $\beta_k$ from \eqref{ass:valid-err-schedule:betak}, yielding:
            \begin{align*}
                \beta_k = \prod_{i = 1}^k \max\left(
                    1 - \alpha_i, \frac{\alpha_i^2L}{\alpha_{i - 1}^2L_{i - 1}}
                \right) 
                = \prod_{i = 1}^k(1 - \alpha_i) 
                = \prod_{i = 1}^k \frac{\alpha_i^2L_{i}}{\alpha_{i - 1}^2L_{i - 1}}
                = \frac{\alpha_k^{2}L_k}{\alpha_0^2L_0}. 
            \end{align*}
            The above equalities imply for all $k \in \N$: 
            \begin{enumerate}[nosep]
                \item[(a)] $\beta_k$ is monotone decreasing and $\beta_k > 0$ for all $k \in \Z_+$ because $\beta_k = \prod_{i = 1}^{k} (1 - \alpha_i)$ and, $\alpha_k \in (0, 1]$. 
                \item[(b)] The equalities $\frac{\beta_k}{\beta_{k - 1}} = (1 - \alpha_k) = \frac{\alpha_k^2L_k}{\alpha_{k - 1}^2 L_{k - 1}}$ hold for all $k \in \N$.
            \end{enumerate}
            Using the above observations, we can show the chain of equalities $\alpha_k^{2} = (1 - \beta_k/\beta_{k - 1})^2 = \beta_kL_0\alpha_0^2L_k^{-1}$ for all $k \in \Z_+$. 
            This is true because (b) has: 
            \begin{align}\label{eqn:opt-mmntm-seq-pitem1}\begin{split}
                (1 - \alpha_k) &= \beta_k/\beta_{k - 1}
                \\
                \iff 
                \alpha_k &= 1 - \beta_k / \beta_{k - 1}
                \\
                \implies 
                \alpha_k^2 &= (1 - \beta_k / \beta_{k - 1})^2. 
            \end{split}\end{align}
            Next, the recursive relation of $(\alpha_k)_{k \in \Z_+}$ gives
            \begin{align}\label{eqn:opt-mmntm-seq-pitem2}\begin{split}
                \alpha_k^2&= (1 - \alpha_k)\alpha_{k - 1}^2L_{k - 1}L_k^{-1}
                \\
                &= 
                (1 - \alpha_k)
                \left(
                    \frac{\alpha_{k - 1}^2L_{k - 1}}{\alpha_0^2L_0}
                \right)
                \frac{\alpha_0^2L_0}{L_k}
                \\
                &= 
                (\beta_k\beta_{k - 1}^{-1})\left(
                    \beta_{k - 1}
                \right)L_0\alpha_0^2L_k^{-1}
                \\
                &= \beta_kL_0\alpha_0^2L_k^{-1}.         
            \end{split}\end{align}
            Combining \eqref{eqn:opt-mmntm-seq-pitem1}, \eqref{eqn:opt-mmntm-seq-pitem2} and the fact that $\forall k \in \Z_+: \beta_k > 0$, it follows that $\forall \; i \ge 1$: 
            \begin{align*}
                L_0 \alpha_0^2 L_i^{-1} &= 
                \beta_i^{-1}\left(
                    1 - \frac{\beta_i}{\beta_{i - 1}}
                \right)^2
                \\
                &= 
                \beta_i \left(
                    \beta_i^{-1} - \beta_{i - 1}^{-1}
                \right)^2
                \\
                &=
                \beta_i \left(
                    \beta_i^{-1/2} - \beta_{i - 1}^{-1/2}
                \right)^2
                \left(
                    \beta_i^{-1/2} + \beta_{i - 1}^{-1/2}
                \right)^2
                \\
                &= \left(
                    \beta_i^{-1/2} - \beta_{i - 1}^{-1/2}
                \right)^2\left(
                    1 + \beta_i^{1/2}\beta_{i - 1}^{-1/2}
                \right)^2. 
            \end{align*}
            Since $\beta_i$ is monotone decreasing, $0 < \beta_i^{1/2}\beta_{i - 1}^{-1/2} \le 1$, giving:
            \begin{align*}
                \beta_i^{-1/2} - \beta_{i - 1}^{-1/2} 
                &\le \alpha_0\sqrt{\frac{L_0}{L_i}} 
                \le 2 \left(
                    \beta_i^{-1/2} - \beta_{i - 1}^{-1/2}
                \right).
            \end{align*}
            Telescope it for $i = 1, 2, \ldots, k$, using the fact $\beta_0 = 1$ yields the desired results: \eqref{lemma:opt-mmntm-seq-result2}.
        \end{proof}
        \begin{remark}
            This result is not entirely new; it has appeared in Güler \cite[Lemma 2.2]{guler_new_1992}.
            The difference here is the context. 
            We consider accelerated proximal gradient with line search instead of accelerated proximal point. 
            Nonetheless, the parameter $L_k$ is analogous to $\lambda_k$ in Güler's work.
        \end{remark}
    \subsection{One standalone auxiliary result for the total complexity}\label{ssec:outerloop-err-shrink}
        This section derives one key result which will facilitate the derivations of the total complexity of IAPG because it relates the complexity of the inner loop and the outer loop together by the sequence $(\beta_k)_{k \in \Z_+}$. 
        We will show in Lemma \ref{lemma:err-schedule-lbnd} that $\epsilon_k \searrow 0$ in Assumption \ref{ass:opt-mmntm-seq} shrinks no faster than $\mathcal O(k^{-2-p})$. 
        This is imperative because if the error sequence in Assumption \ref{ass:opt-mmntm-seq} approaches zero from above at a rate that cannot be characterized, then it is impossible to bound the complexity of the inner loop in relation to the outer loop.
        \begin{lemma}[error schedule lower bound]\;\label{lemma:err-schedule-lbnd}\\
            Let $(\alpha_k, B_k, \rho_k, \epsilon_k)_{k \in \Z_+}$, $\mathcal E_0, p$, and $(\beta_k)_{k \in \Z_+}$ be given by Assumption \ref{ass:opt-mmntm-seq}, and let $L_k := \rho_k + B_k$.
            Then, $\epsilon_k^{-1}$ has for all $k \in \Z_+$ the upper bound: 
            \begin{align*}
                \epsilon_k^{-1} \le \max\left(
                    \mathcal E_0^{-1}, \frac{4k^{2 + p}}{\mathcal E_0}
                \right)
                = \mathcal O(k^{2 + p}). 
            \end{align*}
            And when $k = 0$, it has naturally $\epsilon_0 \ge \mathcal E_0$. 
        \end{lemma}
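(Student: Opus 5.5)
Since $\rho_k \ge 0$, item \ref{ass:opt-mmntm-seq:item3} of Assumption \ref{ass:opt-mmntm-seq} immediately gives, for every $k \in \N$, the lower bound $\epsilon_k \ge \mathcal E_0 \beta_k k^{-p}$. The base case is settled directly: $\epsilon_0 = \mathcal E_0$, so $\epsilon_0^{-1} = \mathcal E_0^{-1}$, which is at most the stated maximum. For $k \in \N$ it therefore suffices to show $\beta_k \ge \frac{1}{4k^2}$ (up to the constant absorbed in the statement). Then $\epsilon_k^{-1} \le k^p/(\mathcal E_0 \beta_k) \le 4k^{2+p}/\mathcal E_0$.

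To lower bound $\beta_k$, we use the left inequality in \eqref{lemma:opt-mmntm-seq-result2} of Lemma \ref{lemma:opt-mmntm-seq}, with $\alpha_0 = 1$:
\begin{align*}
    \beta_k \ge \left(1 + \sqrt{L_0}\sum_{i=1}^k L_i^{-1/2}\right)^{-2}.
\end{align*}
Bounding the sum requires comparing $L_0$ with the $L_i$. Item \ref{ass:opt-mmntm-seq:item2} gives $L_i \ge L_{\min}$ and $L_0 \le L_{\max}$, so the sum is at most $k\sqrt{L_{\max}/L_{\min}}$. This yields
\begin{align*}
    \beta_k \ge \left(1 + k\sqrt{L_{\max}/L_{\min}}\right)^{-2} \ge \left(2k\sqrt{L_{\max}/L_{\min}}\right)^{-2},
\end{align*}
using $1 \le k \le k\sqrt{L_{\max}/L_{\min}}$. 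Hence $\epsilon_k^{-1} \le 4 (L_{\max}/L_{\min})\, k^{2+p}/\mathcal E_0$.

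The main obstacle is this ratio $L_{\max}/L_{\min}$. The stated constant $4$ corresponds exactly to the case $L_i \ge L_0$ for all $i$, for instance when backtracking is monotone nondecreasing or the $L_k$ are constant. In that case $\sqrt{L_0/L_i} \le 1$, so $1 + \sum_{i=1}^k \sqrt{L_0/L_i} \le 1 + k \le 2k$, and $\beta_k \ge 1/(4k^2)$ exactly. I would present the argument under that reading, remarking that in general the factor $L_{\max}/L_{\min}$ enters the constant. Either way the asymptotic claim $\epsilon_k^{-1} = \mathcal O(k^{2+p})$ holds, and taking the maximum with the $k = 0$ case completes the proof.
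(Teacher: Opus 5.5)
Your argument follows the paper's own chain: drop the relative term to get $\epsilon_k \ge \mathcal E_0\beta_k k^{-p}$, use the left inequality of \eqref{lemma:opt-mmntm-seq-result2} with $\alpha_0 = 1$, bound $\sqrt{L_0/L_i}$ through $L_{\min}$, and finish with $1 + k \le 2k$. Your concern about the constant is justified, and you should commit to it rather than hedge. The paper reaches the constant $4$ only through an inverted ratio. After the $L_{\min}$ step the correct identity is
\begin{align*}
    \left(\sqrt{\tfrac{L_0}{L_{\min}}} + k\sqrt{\tfrac{L_0}{L_{\min}}}\right)^{-2}
    = \frac{1}{(1+k)^2}\,\frac{L_{\min}}{L_0},
\end{align*}
but the paper writes $\frac{L_0}{L_{\min}}$ and then discards it using $L_0 \ge L_{\min}$. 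The true factor $L_{\min}/L_0 \le 1$ cannot be discarded. Done correctly, the chain gives $\epsilon_k^{-1} \le \max\bigl(\mathcal E_0^{-1}, 4L_0k^{2+p}/(L_{\min}\mathcal E_0)\bigr)$. This is slightly sharper than your $L_{\max}/L_{\min}$ version, because you only need $L_i \ge L_{\min}$ and can keep $L_0$.

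In fact the statement with constant $4$ is false, so no argument can deliver it. By Lemma \ref{lemma:opt-mmntm-seq}, $\beta_1 = \alpha_1^2 L_1/L_0 < L_1/L_0$ since $\alpha_1 \in (0,1)$. Take $f \equiv 0$, so \eqref{def:inxt-apg:ls} holds for every $B_k > 0$. Take $\rho_k = 0$, $B_0 = 4B_1$, and $B_k = B_1$ for $k \ge 1$. Every item of Assumption \ref{ass:opt-mmntm-seq} holds. Yet $\alpha_1 = 2\sqrt 2 - 2$ and $\beta_1 = 3 - 2\sqrt 2 < \tfrac14$, so $\epsilon_1 = \mathcal E_0\beta_1 < \mathcal E_0/4$, that is, $\epsilon_1^{-1} > 4/\mathcal E_0$.

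So either state the lemma with the factor $L_0/L_{\min}$ in the constant, or add your hypothesis $L_i \ge L_0$, which is sufficient though not necessary. As you note, $\epsilon_k^{-1} = \mathcal O(k^{2+p})$ is unaffected. Downstream, Proposition \ref{prop:inner-lp-cmplx} only needs $C_5$ multiplied by $L_0/L_{\min}$, so the complexity results survive.
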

        \begin{proof}
            From Assumption \ref{ass:opt-mmntm-seq} the largest valid error schedule is $\epsilon_k = \frac{\mathcal E_0 \beta_k}{k^p} + \rho_k \frac{\Vert x_k - y_k\Vert^2}{2}$ and therefore $\forall k \in \N$: 
            \begin{align*}
                \epsilon_k &\ge \frac{\mathcal E_0 \beta_k}{k^p}
                \\
                &\underset{(1)}{\ge} 
                \left(
                    1 + \alpha_0\sqrt{L_0}\sum_{i = 1}^{k}\sqrt{L_i^{-1}}
                \right)^{-2} 
                \frac{\mathcal E_0}{k^p}
                \\
                &\underset{(2)}{\ge}
                \frac{\mathcal E_0}{k^p}
                \left(
                    1 + k\sqrt{L_0}\sqrt{L_{\min}^{-1}}
                \right)^{-2} 
                \\
                &\ge \frac{\mathcal E_0}{k^p}\left(
                    \sqrt{\frac{L_0}{L_{\min}}} + k \sqrt{\frac{L_0}{L_{\min}}}
                \right)^{-2}
                \\
                &\ge \frac{\mathcal E_0}{k^p(1 + k)^2} \frac{L_0}{L_{\min}}
                \\
                &\ge \frac{\mathcal E_0}{4k^{2 + p}}\frac{L_0}{L_{\min}}
                \\
                &\ge \frac{\mathcal E_0}{4k^{2 + p}}
                \\
                &= \mathcal O(k^{-2-p}). 
            \end{align*}
            At (1), we used Lemma \ref{lemma:opt-mmntm-seq} and $\alpha_0 = 1$ (Assumption \ref{ass:opt-mmntm-seq}\ref{ass:opt-mmntm-seq:item1}). 
            At (2), we used that $L_{\min} \le L_i$ for all $i \in  \Z_+$. 
            Finally, for the case when $k = 0$, it had $\epsilon_0 = \mathcal E_0$ from Assumption \ref{ass:opt-mmntm-seq}\ref{ass:opt-mmntm-seq:item3} meaning that $\epsilon_0 \ge \mathcal E_0$.
            From here, we can obtain an upper bound on $\epsilon_k^{-1}$ for all $k \in \Z_+$: 
            \begin{align*}
                \epsilon_k^{-1} \le \max\left(
                    \mathcal E_0, \frac{4k^{2 + p}}{\mathcal E_0}
                \right). 
            \end{align*}
        \end{proof}

    \subsection{Convergence results of the outer loop}\label{ssec:outerloop-cnvg}
        This section contains three major results regarding the IAPG outer loop convergence. 
        Theorem \ref{thm:opt-cnvg-outr-loop} establishes that under Assumption \ref{ass:opt-mmntm-seq}, the sequence $(F(x_k))_{k \in \Z_+}$ converges to the minimum $F(\bar x)$ at a rate of $\mathcal O(1/k^2)$ where $\bar x$ is a minimizer of $F$.
        This is the optimal convergence rate of the optimality gap of the IAPG outer loop.
        Theorem \ref{thm:pg-rsdul-cnvg} provides the second result that $\Vert x_k - y_k\Vert$ converges to $0$ at a rate of $\mathcal O(1/k)$, establishing $\Vert x_k - y_k\Vert$ as a termination criterion, implying convergence to stationarity.
        Finally, our third result (Theorem \ref{thm:outerlp-itr-cmplx}) states the iteration complexity required to achieve $\varepsilon$ gap for: optimality gap, i.e., $F(x_k) - F(\bar x) \le \varepsilon$; the termination criteria, i.e., $\Vert x_k - y_k\Vert \le \varepsilon$; and the stationarity conditions, i.e., $\dist(\mathbf 0 | \partial_{\epsilon_k}F(x_k)) \le \varepsilon$.
        \begin{theorem}[$\mathcal O(1/k^2)$ outer loop function value convergence]\;\label{thm:opt-cnvg-outr-loop}\\
            Let $(f, g, L)$, $(\alpha_k, B_k, \rho_k, \epsilon_k)_{k \in \Z_+}$, $(y_k, x_k, x_k^\circ)_{k \in \Z_+}$, $\mathcal E_0, p$, and $(\beta_k)_{k \in \Z_+}, (\mathcal R_k(p))_{k \in \Z_+}$ be given by Assumption \ref{ass:opt-mmntm-seq}.
            For all $k \in \Z_+$ define $L_k = B_k + \rho_k$. 
            We consider sequence $(y_k, x_k, x_k^\circ)_{k \in \Z_+}$ generated by an algorithm given by Definition \ref{def:inxt-apg} for any initial guess $x_{-1}, x_{-1}^\circ \in \RR^n$. 
            Assume in addition that there exists $\bar x \in \RR^n$ that is a minimizer of $F = f + g$. 
            Then, for all $k \in \Z_+$:
            \begin{align*}
                & F(x_k) - F(\bar x) + \frac{\alpha_k^2L_k}{2}\Vert \bar x - x_k^\circ\Vert^2
                \le 
                \left(
                    1 + \frac{k\sqrt{L_0}}{2\sqrt{L_{\max}}}
                \right)^{-2}\left(
                    \frac{L_0}{2}\Vert \bar x - x_{-1}^\circ\Vert^2
                    + \mathcal R_k(p)
                \right). 
            \end{align*}
            In addition, $\mathcal R_k(p)$ evaluates to a convergent series;  hence, the above inequality establishes a convergence rate $\mathcal O(1/k^2)$ of the optimality gap.
        \end{theorem}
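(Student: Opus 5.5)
The plan is to specialize Proposition \ref{prop:inxt-apg-cnvg-generic} to the optimal momentum schedule and then bound $\beta_k$ with Lemma \ref{lemma:opt-mmntm-seq}.

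\textbf{Step 1: check the hypotheses of Proposition \ref{prop:inxt-apg-cnvg-generic}.}
\begin{itemize}[nosep]
\item By Assumption \ref{ass:opt-mmntm-seq}\ref{ass:opt-mmntm-seq:item1}, $\alpha_0 = 1$.
\item By \eqref{lemma:opt-mmntm-seq-result1} in Lemma \ref{lemma:opt-mmntm-seq}, $\alpha_k \in (0,1)$ for all $k \in \N$.
\item The error schedule is valid. The choice $\epsilon_k = \mathcal E_0\beta_k k^{-p} + \frac{\rho_k}{2}\Vert x_k - y_k\Vert^2$ in Assumption \ref{ass:opt-mmntm-seq}\ref{ass:opt-mmntm-seq:item3} makes \eqref{ass:valid-err-schedule:item2} hold with equality for $k \in \N$.
\item The base case also holds. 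Since $\epsilon_0 = \mathcal E_0$, we have $\epsilon_0 \le \frac{\rho_0}{2}\Vert x_0 - y_0\Vert^2 + \mathcal E_0$ because $\rho_0 \ge 0$.
\end{itemize}
Applying the proposition with $\bar x$ a minimizer then gives, for all $k \in \Z_+$,
\[
F(x_k) - F(\bar x) + \frac{\alpha_k^2L_k}{2}\Vert \bar x - x_k^\circ\Vert^2 \le \beta_k\left(\frac{L_0}{2}\Vert \bar x - x_{-1}^\circ\Vert^2 + \mathcal R_k(p)\right).
\]

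\textbf{Step 2: bound $\beta_k$.} Use the upper bound in \eqref{lemma:opt-mmntm-seq-result2} with $\alpha_0 = 1$:
\[
\beta_k \le \Bigl(1 + \tfrac{\sqrt{L_0}}{2}\textstyle\sum_{i=1}^k L_i^{-1/2}\Bigr)^{-2}.
\]
By Assumption \ref{ass:opt-mmntm-seq}\ref{ass:opt-mmntm-seq:item2}, $L_i \le L_{\max}$, so $L_i^{-1/2} \ge L_{\max}^{-1/2}$ and the sum is at least $k/\sqrt{L_{\max}}$. Because $t \mapsto (1+t)^{-2}$ is decreasing on $t \ge 0$, this gives $\beta_k \le (1 + k\sqrt{L_0}/(2\sqrt{L_{\max}}))^{-2}$. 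The case $k = 0$ is trivial since $\beta_0 = 1$. Substituting into Step 1 yields the displayed inequality.

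\textbf{Step 3: convergence of $\mathcal R_k(p)$ and the rate.} Since $p > 1$ by Assumption \ref{ass:opt-mmntm-seq}\ref{ass:opt-mmntm-seq:item1}, the $p$-series converges. Hence $\mathcal R_k(p) \le \mathcal E_0(1 + \zeta(p))$ uniformly in $k$, where $\zeta$ denotes the Riemann zeta function. The right-hand side is therefore a constant times $(1 + ck)^{-2}$ with $c = \sqrt{L_0}/(2\sqrt{L_{\max}}) > 0$, which is $\mathcal O(1/k^2)$.

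Finally, drop the nonnegative term $\frac{\alpha_k^2L_k}{2}\Vert \bar x - x_k^\circ\Vert^2$ to get the same rate for the optimality gap alone.

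The only delicate point is verifying that the chosen schedule meets the valid-error hypotheses. In particular, the base case requires $\rho_0 \ge 0$, which holds by Definition \ref{def:inxt-apg}. The monotonicity step that replaces $L_i$ by $L_{\max}$ is routine.
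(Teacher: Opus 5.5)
Your proof is correct and follows the paper's argument. Both apply Proposition~\ref{prop:inxt-apg-cnvg-generic} (with $\alpha_0=1$ and $\alpha_k\in(0,1)$ from Lemma~\ref{lemma:opt-mmntm-seq}), bound $\beta_k$ using the upper estimate in \eqref{lemma:opt-mmntm-seq-result2} together with $L_i\le L_{\max}$, and use $p>1$ to bound $\mathcal R_k(p)$ uniformly. The only difference is that the paper simply asserts that Assumption~\ref{ass:opt-mmntm-seq} contains Assumption~\ref{ass:valid-err-schedule}, whereas you explicitly check that the chosen $\epsilon_k$ makes \eqref{ass:valid-err-schedule:item2} an equality and that the base case $\epsilon_0\le\frac{\rho_0}{2}\Vert x_0-y_0\Vert^2+\mathcal E_0$ holds.
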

        \begin{proof} 
            Here, we operate under Assumption \ref{ass:opt-mmntm-seq}.
            Therefore, results from Lemma \ref{lemma:opt-mmntm-seq} apply because of the same set of assumptions.
            In addition, we have $\alpha_0 = 1$, and $L_k$ is bounded (Assumption \ref{ass:opt-mmntm-seq}\ref{ass:opt-mmntm-seq:item2}, \ref{ass:opt-mmntm-seq:item1}) and therefore:
            \begin{align*}
                \beta_k &\le 
                \left(
                    1 + \frac{\sqrt{L_0}}{2}\sum_{i = 1}^{k} \sqrt{L_i^{-1}}
                \right)^{-2}
                \hspace{-0.7em}\le 
                \left(
                    1 + \frac{k\sqrt{L_0}}{2\sqrt{L_{\max}}}
                \right)^{-2}. 
            \end{align*}
            Then, we apply Proposition \ref{prop:inxt-apg-cnvg-generic} because of three reasons.
            Firstly, Assumption \ref{ass:opt-mmntm-seq} includes everything in Assumption \ref{ass:valid-err-schedule}. 
            Secondly, we also assumed here $\alpha_0 = 1$ (Assumption \ref{ass:opt-mmntm-seq}\ref{ass:opt-mmntm-seq:item1}).
            Thirdly, we have $\alpha_k \in (0, 1)$ for all $k \in \N$ from Lemma \ref{lemma:opt-mmntm-seq}.
            Therefore, the result in Proposition \ref{prop:inxt-apg-cnvg-generic} applies, and the inequality strengthens into:
            \begin{align*}
                F(x_k) - F(\bar x) + \frac{\alpha_k^2L_k}{2}\Vert \bar x - x_k^\circ\Vert^2
                \le 
                \left(
                    1 + \frac{k\sqrt{L_0}}{2\sqrt{L_{\max}}}
                \right)^{-2}
                \left(
                    \frac{L_0}{2}\Vert \bar x - x_{-1}^\circ\Vert^2
                    + \mathcal R_k(p)
                \right). 
            \end{align*}
            Since $\bar x$ is the minimizer and $p > 1$ by Assumption \ref{ass:opt-mmntm-seq}\ref{ass:opt-mmntm-seq:item1}, we have $\mathcal R_k(p) < \mathcal R_\infty(p) < \infty$.
            Therefore, the above establishes that the function value converges to the minimum at a rate of $\mathcal O(1/k^2)$.
        \end{proof}
        \begin{remark}
            The seminal works of Villa et al. \cite{villa_accelerated_2013}, Schmidt et al. \cite{schmidt_convergence_2011} showed the convergence rate of IAPG a decade ago; however our results here differ in context, and they also extend these results found in the literature.
            Two aspects of our work are distinct from previous works.
            Firstly, we include a line search and backtracking, and secondly we include both relative and absolute error sequence for $\epsilon_k$. 
            \par
            In contrast to seminal works by Villa et al. \cite{villa_accelerated_2013}, Schmidt et al. \cite{schmidt_convergence_2011}, our outer loop convergence results include line search and the stepsize is chosen by the descent lemma. 
            Therefore, our results are applicable for algorithm that implements line search and back tracking, for example the technique proposed by Calatroni, Chambolle \cite{calatroni_backtracking_2019}. 
            In addition, Assumption \ref{ass:opt-mmntm-seq}\ref{ass:opt-mmntm-seq:item3} introduced $\epsilon_k$ which is a combination of relative error, and absolute errors.
            Therefore, our convergence result is more flexible. 
            \par
            Finally, our result here still remains new compared to more recent works concerning the IAPG method.
            Results by Bello-Cruz et al. \cite{bello-cruz_inexact_2020-1-1} differ from ours because they did not incorporate line search and absolute error.
            Furthermore, their work differs from ours because their theoretical focus differs, and our result here complements their result showing that both types of errors can be presented at the same time.
            In both cases, our outer loop result here differs from existing work in the literature, and it complements and extends prior works.
            \par
        \end{remark}
        Under the assumption that the objective function has a minimizer $\bar x$, our next theorem states that $\Vert x_k - y_k\Vert$ is a sufficient termination criterion for stationarity. 
        To be specific, it shows that $\Vert x_k - y_k\Vert \rightarrow 0$ at an $\mathcal O(1/k)$ rate, which implies that $\dist(\mathbf 0 | \partial_{\epsilon_k}F(x_k))$ converges at the same rate.
        \begin{theorem}[$\mathcal O(1/k)$ convergence to stationarity]\;\label{thm:pg-rsdul-cnvg}\\
            Let $(f, g, L)$, $(\alpha_k, B_k, \rho_k, \epsilon_k)_{k \in \Z_+}$, $\mathcal E_0, p$, $(\beta_k)_{k \in \Z_+}$ and $\mathcal R_k(p)$ be given by Assumption \ref{ass:opt-mmntm-seq}. 
            For all $k \in \Z_+$ define $L_k := B_k + \rho_k$. 
            Fix $\bar x \in \RR^n$ to be a minimizer of $F$. 
            Let initial guess $x_{-1}, x_{-1}^\circ \in \RR^n$ be arbitrary.
            An algorithm which satisfies Definition \ref{def:inxt-apg} generates iterates $(y_k, x_k, x_k^\circ)_{k \in \Z_+}$ such that $k \in \Z_+$:
            \begin{align}\label{thm:pg-rsdul-cnvg:rslt1}
                \begin{split}
                    & (L + L_k)^{-1}\dist(\mathbf 0 | \partial_{\epsilon_k}F(x_k))
                    \\
                    &\le \Vert x_k - y_k\Vert 
                    \\
                    &\le 
                    2\sqrt{\frac{L_0}{L_{\max}}}\left(
                        1 + \frac{k\sqrt{L_0}}{2\sqrt{L_{\max}}}
                    \right)^{-1}\left(\Vert \bar x - x_{- 1}^\circ\Vert+ \sqrt{\frac{2\mathcal R_k(p)}{L_0}}\right). 
                \end{split}
            \end{align}
            Hence, $\Vert x_k - y_k\Vert$ and $\dist (\mathbf 0 | \partial_{\epsilon_k}F(x_k))$ converge to zero at a rate of $\mathcal O(1/k)$.
        \end{theorem}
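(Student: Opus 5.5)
The plan is to prove the two inequalities in \eqref{thm:pg-rsdul-cnvg:rslt1} separately. The left one is a direct consequence of Lemma \ref{lemma:pg-residual}. The right one combines Proposition \ref{prop:vk-gm}\ref{prop:vk-gm:result3} with the explicit form of $\alpha_k$ from Lemma \ref{lemma:opt-mmntm-seq}.

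\textbf{Left inequality.} By \eqref{def:inxt-apg:xk} we have $x_k \approx_{\epsilon_k} T_{L_k}(y_k)$. Apply Lemma \ref{lemma:pg-residual} with $x = y_k$, $\tilde x = x_k$, $\rho = L_k$ and $\epsilon = \epsilon_k$. This gives $\Vert y_k - x_k\Vert \ge (L + L_k)^{-1}\dist(\mathbf 0 | \partial_{\epsilon_k}F(x_k))$ for every $k \in \Z_+$, which is the first inequality.

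\textbf{Right inequality.} First check the hypotheses of Proposition \ref{prop:vk-gm}. Assumption \ref{ass:opt-mmntm-seq} contains Assumption \ref{ass:valid-err-schedule}. It also gives $\alpha_0 = 1$, and Lemma \ref{lemma:opt-mmntm-seq} gives $\alpha_k \in (0,1)$ for $k \in \N$. Proposition \ref{prop:vk-gm}\ref{prop:vk-gm:result3} then yields
\begin{align*}
\Vert x_k - y_k\Vert \le 2\alpha_k\left(\Vert \bar x - x_{-1}^\circ\Vert + \sqrt{\frac{2\mathcal R_k(p)}{L_0}}\right).
\end{align*}
Next I bound $\alpha_k$ through $\beta_k$. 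By \eqref{lemma:opt-mmntm-seq-result2} with $\alpha_0 = 1$, we have $\alpha_k^2 = \beta_k L_0/L_k$. The upper bound in \eqref{lemma:opt-mmntm-seq-result2}, together with $L_i \le L_{\max}$, gives $\beta_k \le \left(1 + \frac{k\sqrt{L_0}}{2\sqrt{L_{\max}}}\right)^{-2}$, exactly as in the proof of Theorem \ref{thm:opt-cnvg-outr-loop}. Taking square roots,
\begin{align*}
\alpha_k \le \sqrt{\frac{L_0}{L_k}}\left(1 + \frac{k\sqrt{L_0}}{2\sqrt{L_{\max}}}\right)^{-1}.
\end{align*}
Substituting this into the previous display gives the claimed bound, up to the prefactor.

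\textbf{Main obstacle: the prefactor $\sqrt{L_0/L_k}$.} This factor must be replaced by a $k$-independent constant. The honest bound comes from $L_k \ge L_{\min}$ and is $\sqrt{L_0/L_k} \le \sqrt{L_0/L_{\min}}$. The displayed constant $\sqrt{L_0/L_{\max}}$ would instead require $L_k \ge L_{\max}$, which holds only if the $L_k$ are constant. So at this step I would check carefully which constant is actually attainable. If the $L_{\max}$ version cannot be justified, I would state the bound with $\sqrt{L_0/L_{\min}}$; this is still a $k$-independent constant and does not affect the rate.

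\textbf{Rate.} In either case the constant is independent of $k$. Since $p > 1$, $\mathcal R_k(p) \le \mathcal R_\infty(p) < \infty$. Hence $\Vert x_k - y_k\Vert = \mathcal O(1/k)$. Because $L + L_k \le L + L_{\max}$, the left inequality then gives $\dist(\mathbf 0 | \partial_{\epsilon_k}F(x_k)) = \mathcal O(1/k)$ as well.
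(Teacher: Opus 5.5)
Your proof follows the paper's proof step for step. The left inequality comes from Lemma \ref{lemma:pg-residual} with $x=y_k$, $\tilde x=x_k$, $\rho=L_k$, $\epsilon=\epsilon_k$. The right one comes from Proposition \ref{prop:vk-gm}\ref{prop:vk-gm:result3}, the identity $\alpha_k=\sqrt{\beta_kL_0/L_k}$, and the upper bound in \eqref{lemma:opt-mmntm-seq-result2}. The step you flag is exactly where the paper's argument breaks. Its step (3) replaces $\sqrt{L_0/L_k}$ by $\sqrt{L_0/L_{\max}}$ citing $L_k\le L_{\max}$, which is the wrong direction. Already at $k=0$ it would assert $1=\alpha_0\le\sqrt{L_0/L_{\max}}$.

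You can drop the hedge: the displayed constant is not just unproved but false.

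\textbf{Arbitrary upper bound.} Assumption \ref{ass:opt-mmntm-seq}\ref{ass:opt-mmntm-seq:item2} only requires $L_{\max}$ to be some upper bound. Sending $L_{\max}\to\infty$ with $k$ fixed would then force $x_k=y_k$ for every $k$, which is absurd.

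\textbf{Tight upper bound.} The claim fails even with $L_{\max}=\sup_kL_k$. Take $g\equiv0$, so $\partial_\epsilon g\equiv\{\mathbf 0\}$ and every inexact step is exact. Take $f=\frac L2\Vert\cdot\Vert^2$, $\bar x=\mathbf 0$, $B_k=L$ for all $k$, $\rho_1=15L$, and $\rho_k=0$ otherwise. Then $y_0=x_{-1}^\circ$ and $x_0=\mathbf 0$, so $\Vert x_0-y_0\Vert=\Vert x_{-1}^\circ\Vert$. Meanwhile $L_{\max}=16L_0$ makes the right-hand side at $k=0$ equal to $\frac12\bigl(\Vert x_{-1}^\circ\Vert+\sqrt{2\mathcal E_0/L}\bigr)$. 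This is strictly smaller than $\Vert x_{-1}^\circ\Vert$ once $\mathcal E_0<\frac L2\Vert x_{-1}^\circ\Vert^2$.

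So the correct statement is yours, with prefactor $2\sqrt{L_0/L_{\min}}$. It holds for every $k\in\Z_+$; at $k=0$ this is because $\beta_0=1$ and $L_0\ge L_{\min}$. The $\mathcal O(1/k)$ conclusions, and the rates derived from them later, are unaffected; only the constants change.
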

        \begin{proof}
            We have the following: 
            \begin{align*}
                \alpha_k
                &\underset{(1)}= \sqrt{\frac{\beta_kL_0}{L_k}}
                \underset{(2)}\le \sqrt{\frac{L_0}{L_k}}\left(
                    1 + \frac{\sqrt{L_0}}{2}\sum_{i = 1}^{k} \sqrt{L_i^{-1}}
                \right)^{-1} 
                \hspace{-1em}\underset{(3)}
                \le 
                \sqrt{\frac{L_0}{L_{\max}}}\left(
                    1 + \frac{k\sqrt{L_0}}{2\sqrt{L_{\max}}}
                \right)^{-1}. 
            \end{align*}
            At (1), by Assumption \ref{ass:opt-mmntm-seq} the result \eqref{lemma:opt-mmntm-seq-result2} in Lemma \ref{lemma:opt-mmntm-seq} applies, so $\beta_k = \frac{\alpha_k^2L_k}{\alpha^2_0L_0}$. 
            Since $\alpha_0 = 1$ (Assumption \ref{ass:opt-mmntm-seq}\ref{ass:opt-mmntm-seq:item1}), re-arranging gives $\alpha_k = \sqrt{\frac{\beta_kL_0}{L_k}}$. 
            At (2), we replace $\beta_k$ by its upper bound in \eqref{lemma:opt-mmntm-seq-result2}.
            At (3), we apply Assumption \ref{ass:opt-mmntm-seq}\ref{ass:opt-mmntm-seq:item2} which states that $(L_k)_{k \in \Z_+}$ is bounded above by $L_{\max}$.
            \par
            Next, we apply the result from Proposition \ref{prop:vk-gm}\ref{prop:vk-gm:result3} for three reasons.
            Firstly, here we assumed $\bar x \in \RR^n$ is a minimizer of $F$. 
            Secondly, here we assumed Assumption \ref{ass:opt-mmntm-seq} which included Assumption \ref{ass:valid-err-schedule}. 
            Thirdly, we have $\alpha_k \in (0, 1)$ for all $k \in \N$ by Lemma \ref{lemma:opt-mmntm-seq}. 
            Therefore, invoking this result and combining it with the previously derived inequality for $\alpha_k$ yields:
            \begin{align*}
                \Vert x_k - y_k\Vert &\le 
                2\alpha_k \left(\Vert \bar x - x_{-1}^\circ\Vert+ \sqrt{\frac{2\mathcal R_k(p)}{L_0}}\right)
                \\
                &\le 
                2\sqrt{\frac{L_0}{L_{\max}}}\left(
                    1 + \frac{k\sqrt{L_0}}{2\sqrt{L_{\max}}}
                \right)^{-1}
                \left(\Vert \bar x - x_{-1}^\circ\Vert+ \sqrt{\frac{2\mathcal R_k(p)}{L_0}}\right). 
            \end{align*}
            We have now justified the second inequality in \eqref{thm:pg-rsdul-cnvg:rslt1}. 
            To justify the first inequality, recall that $x_k \approx_{\epsilon_k} T_{L_k}(y_k)$ from \eqref{def:inxt-apg:xk}.
            Therefore, we can invoke Lemma \ref{lemma:pg-residual} with $\epsilon = \epsilon_k, x = y_k, \tilde x = x_k, \rho = L_k$ which yields: $\Vert x_k - y_k\Vert \ge (L + L_k)^{-1}\dist\left(\mathbf 0 | \partial_{\epsilon_k} F(x_k)\right)$. 
        \end{proof}
        \begin{remark}
            The convergence of $x_k$ to stationarity has been established for Accelerated Proximal Gradient in the literature.
            However, to the best of our knowledge, it is new for IAPG.
        \end{remark}
        \begin{theorem}[iterative complexity of the outer loop]\;\label{thm:outerlp-itr-cmplx}\\
            Let $(f, g, L)$, $(\alpha_k, B_k, \rho_k, \epsilon_k)_{k \in \Z_+}$, $\mathcal E_0, p$, $(\beta_k)_{k \in \Z_+}$ and $\mathcal R_k(p)$ be given by Assumption \ref{ass:opt-mmntm-seq}. 
            For all $k \in \Z_+$ define $L_k := B_k + \rho_k$. 
            Fix any $\bar x \in \RR^n$ to be a minimizer of $F$. 
            Consider iterates $(y_k, x_k, x_k^\circ)_{k \in \Z_+}$ generated by an algorithm satisfying Definition \ref{def:inxt-apg} for an arbitrary initial guess $x_{-1}, x_{-1}^\circ \in \RR^n$. 
            Define the following constants: 
            \begin{enumerate}[nosep,label=(\Roman*),font=\upshape]
                \item\label{thm:outerlp-itr-cmplx:cnst1} $C_1 := \frac{1}{2}\sqrt{\frac{L_0}{L_{\max}}}$,
                \item\label{thm:outerlp-itr-cmplx:cnst2} $C_2 := \frac{L_0}{2}\Vert \bar x - x_{-1}^\circ\Vert^2$,
                \item\label{thm:outerlp-itr-cmplx:cnst3} $C_3 := \Vert \bar x - x_{-1}^\circ\Vert + \sqrt{\frac{2 \mathcal R_\infty(p)}{L_0}}$. 
            \end{enumerate}
            Then, for all $\varepsilon > 0$ the following hold: 
            \begin{enumerate}[nosep]
                \item\label{thm:outerlp-itr-cmplx:rsult1} If $k\ge \sqrt{\frac{C_2}{\varepsilon C_1^2}}$, then $F(x_k) - F(\bar x) \le \varepsilon$.
                \item\label{thm:outerlp-itr-cmplx:rsult2} If $k \ge \frac{4C_1C_2C_3}{\varepsilon}$, then $\Vert x_k - y_k\Vert \le \varepsilon$. 
                \item\label{thm:outerlp-itr-cmplx:rsult3} If $k \ge \frac{4C_1C_3(L + L_{\max})}{\varepsilon C_2}$, then $\dist(\mathbf 0 | \partial_{\epsilon_k}F(x_k)) \le \varepsilon$. 
            \end{enumerate}
        \end{theorem}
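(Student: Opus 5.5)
The plan is to read each item directly off the explicit rate bounds already proved, Theorem \ref{thm:opt-cnvg-outr-loop} and Theorem \ref{thm:pg-rsdul-cnvg}, after rewriting them in terms of $C_1, C_2, C_3$. The first step is to observe that
$$\frac{k\sqrt{L_0}}{2\sqrt{L_{\max}}} = kC_1,$$
so both theorems involve the factor $(1+kC_1)^{-1}$. We then bound it crudely by $(1+kC_1)^{-1}\le (kC_1)^{-1}$ for $k\ge 1$. For $k=0$ each threshold is positive, so the hypothesis already forces $k\ge 1$. We also use that $\mathcal R_k(p)\le \mathcal R_\infty(p)<\infty$ because $p>1$ (Assumption \ref{ass:opt-mmntm-seq}\ref{ass:opt-mmntm-seq:item1}). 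Hence $\Vert \bar x - x_{-1}^\circ\Vert + \sqrt{2\mathcal R_k(p)/L_0}\le C_3$ for every $k$.

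For item \ref{thm:outerlp-itr-cmplx:rsult1}, drop the nonnegative term $\frac{\alpha_k^2L_k}{2}\Vert \bar x-x_k^\circ\Vert^2$ in Theorem \ref{thm:opt-cnvg-outr-loop}. This gives
$$F(x_k)-F(\bar x)\le (kC_1)^{-2}\bigl(C_2+\mathcal R_k(p)\bigr).$$
The part coming from $C_2$ is at most $\varepsilon$ exactly when $k\ge \sqrt{C_2/(\varepsilon C_1^2)}$, which is the stated threshold.

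For item \ref{thm:outerlp-itr-cmplx:rsult2}, Theorem \ref{thm:pg-rsdul-cnvg} gives
$$\Vert x_k-y_k\Vert\le 4C_1(1+kC_1)^{-1}C_3\le 4C_1C_3(1+kC_1)^{-1}.$$
We then check that $k\ge 4C_1C_2C_3/\varepsilon$ makes the right-hand side at most $\varepsilon$ by comparing $1+kC_1$ with $4C_1C_3/\varepsilon$.

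For item \ref{thm:outerlp-itr-cmplx:rsult3}, we combine the first inequality in \eqref{thm:pg-rsdul-cnvg:rslt1} (that is, Lemma \ref{lemma:pg-residual}) with $L_k\le L_{\max}$. This gives
$$\dist(\mathbf 0\mid\partial_{\epsilon_k}F(x_k))\le (L+L_{\max})\,\Vert x_k-y_k\Vert\le 4C_1C_3(L+L_{\max})(1+kC_1)^{-1},$$
and we compare this with the threshold $4C_1C_3(L+L_{\max})/(\varepsilon C_2)$ in the same way as in item \ref{thm:outerlp-itr-cmplx:rsult2}.

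The main obstacle is bookkeeping the constants so that they reproduce exactly the thresholds as worded, and I expect it to be genuinely delicate.

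In item \ref{thm:outerlp-itr-cmplx:rsult1} the bound also contains the term $\mathcal R_k(p)\le\mathcal R_\infty(p)$, and $C_2$ alone does not control it. To handle this I would first try to absorb $\mathcal R_\infty(p)$ into $C_2$ by an admissible choice of $\mathcal E_0$. Concretely, since $\mathcal R_\infty(p)=\mathcal E_0(1+\sum_l l^{-p})$ is proportional to $\mathcal E_0$, which is free in Assumption \ref{ass:valid-err-schedule}, one can choose it small relative to $C_2$. Failing that, the threshold should be read up to the factor $(C_2+\mathcal R_\infty(p))/C_2$.

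In items \ref{thm:outerlp-itr-cmplx:rsult2} and \ref{thm:outerlp-itr-cmplx:rsult3}, the stated thresholds carry the factors $C_2$ and $C_2^{-1}$ respectively. The chain $(1+kC_1)^{-1}\le 1/(kC_1)$ naturally produces the threshold $4C_3/\varepsilon$ (times $(L+L_{\max})$ for item \ref{thm:outerlp-itr-cmplx:rsult3}). Matching the stated form therefore needs an explicit comparison involving $C_1C_2$ and $C_2$ against $1$. I would verify that comparison carefully, and if it cannot be guaranteed, state the threshold in the form the estimate actually yields.
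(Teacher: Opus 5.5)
Your route is exactly the paper's: rewrite Theorems \ref{thm:opt-cnvg-outr-loop} and \ref{thm:pg-rsdul-cnvg} using $kC_1=k\sqrt{L_0}/(2\sqrt{L_{\max}})$, then apply $(1+kC_1)^{-1}\le(kC_1)^{-1}$. Your doubts about the constants are justified, but they have to be resolved negatively, not by ``verifying the comparison.'' The paper reaches the stated thresholds only through two slips. In item 1 it writes $F(x_k)-F(\bar x)\le(1+C_1k)^{-2}C_2$, silently dropping $\mathcal R_k(p)$. In items 2--3 it writes $4C_1(1+kC_2)^{-1}C_3$, with $C_2$ in place of $C_1$.

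As stated, the thresholds are false. Take $n=1$, $f=\tfrac12x^2$, $g=|\cdot|$, $\bar x=0$ and $x_{-1}=x_{-1}^\circ=0$. Then $C_2=0$, so the thresholds in items 1--2 are $0$; this contradicts your claim that every threshold is positive. Here $y_0=0$, and $\partial_{\epsilon}|\cdot|(t)=[\max(-1,1-\epsilon/t),1]$ for $t>0$. Hence any $t\in(0,1/L_0]$ with $t+L_0t^2\le\mathcal E_0$ gives an admissible $x_0=t\approx_{\mathcal E_0}T_{L_0}(0)$, and the line search holds for $B_0\ge1$. Yet $F(x_0)-F(\bar x)>t$ and $\Vert x_0-y_0\Vert=t$, so items 1 and 2 fail at $k=0$ for $\varepsilon<t$.

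The failure is not an artifact of $C_2=0$. Take $g=0$, $B_k\equiv2$, $\rho_k\equiv0$, which gives exact steps $x_k=y_k/2$, $L_k\equiv2$, $C_1=\tfrac12$ and $C_2=D^2$. With $x_{-1}=x_{-1}^\circ=D>0$ one gets $y_1=D/2$ and $x_1=D/4$.
Item 2 admits $k=1$ at $\varepsilon=2D^2C_3$, which is below $\Vert x_1-y_1\Vert=D/4$ for small $D$.
Item 3 admits $k=1$ at $\varepsilon=6C_3/D^2=\mathcal O(1/D)$, whereas for large $D$ one has $\dist(\mathbf 0\mid\partial_{\epsilon_1}F(x_1))=D/4-\sqrt{2\mathcal E_0\beta_1}$, which grows with $D$.

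Your first repair for item 1, choosing $\mathcal E_0$ small, does not work. $\mathcal E_0$ is fixed data in Assumption \ref{ass:opt-mmntm-seq}, and for every $\mathcal E_0>0$ one has $\mathcal R_\infty(p)>\mathcal E_0>0$. So $(C_2+\mathcal R_\infty(p))/C_2$ is never $1$, and it is infinite when $C_2=0$. Your fallbacks are the right fix. What your estimates actually prove, and what the statement should say, is
\begin{align*}
    k\ge C_1^{-1}\sqrt{(C_2+\mathcal R_\infty(p))/\varepsilon}
    &\implies F(x_k)-F(\bar x)\le\varepsilon,
    \\
    k\ge 4C_3/\varepsilon
    &\implies \Vert x_k-y_k\Vert\le\varepsilon,
    \\
    k\ge 4C_3(L+L_{\max})/\varepsilon
    &\implies \dist(\mathbf 0\mid\partial_{\epsilon_k}F(x_k))\le\varepsilon.
\end{align*}
Each of these thresholds is positive because $\mathcal R_\infty(p)>0$, so $k\ge1$ and the bound $(1+kC_1)^{-1}\le(kC_1)^{-1}$ is legitimate. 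The $\mathcal O(\varepsilon^{-1/2})$ and $\mathcal O(\varepsilon^{-1})$ outer counts used in Theorem \ref{thm:inn-lp-overall-cmplx} survive with changed constants.

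One further caution if you cite Theorem \ref{thm:pg-rsdul-cnvg} for the last two thresholds. Its step (3) replaces $\sqrt{L_0/L_k}$ by $\sqrt{L_0/L_{\max}}$, which goes the wrong way since $L_k\le L_{\max}$. The justified prefactor is $2\sqrt{L_0/L_{\min}}$, which multiplies those two thresholds by $\sqrt{L_{\max}/L_{\min}}$.
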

        \begin{proof}
            To verify \ref{thm:outerlp-itr-cmplx:rsult1}, we apply Theorem \ref{thm:opt-cnvg-outr-loop} because it shares the same set of assumptions. 
            With $C_1, C_2$ in the theorem statement, the inequalities reads:
            \begin{align*}
                F(x_k) - F(\bar x) 
                \le (1 + C_1 k)^{-2}C_2 
                \le k^{-2}C_1^{-2}C_2 &\underset{(1)}
                \le \varepsilon.
            \end{align*}
            At (1), note that $k \ge \sqrt{\frac{C_2}{\varepsilon C_1^2}}$. 
            Therefore, it implies $k^{-2} \le \frac{\varepsilon C_1^2}{C_2}$, and substituting it validates the inequality. 
            \par
            Next, to show \ref{thm:outerlp-itr-cmplx:rsult2}, we invoke results from Theorem \ref{thm:pg-rsdul-cnvg} because it shares the same set of assumptions. 
            By the definitions of $C_1, C_2, C_3$ in the theorem statement, its results can be written as:
            \begin{align*}
                \Vert x_k - y_k\Vert 
                \le 
                4C_1(1 + kC_2)^{-1}C_3 
                \le 
                4C_1(kC_2)^{-1}C_3 \underset{(2)}\le \varepsilon.
            \end{align*}
            At (2), we used $k \ge \frac{4C_1C_2C_3}{\varepsilon}$ which implies $k^{-1} \le \frac{\varepsilon}{4C_1C_2C_3}$. 
            Therefore, substituting it verifies the inequality. 
            \par
            Finally, to show \ref{thm:outerlp-itr-cmplx:rsult3}, recall from \eqref{def:inxt-apg:xk} that $x_k \approx_{\epsilon_k} T_{L_k}(y_k)$.
            Therefore, we invoke \eqref{thm:pg-rsdul-cnvg:rslt1} from Theorem \ref{thm:pg-rsdul-cnvg} with $\rho = L_k, \epsilon = \epsilon_k, x = y_k$, and $\tilde x = x_k$ which yields:
            \begin{align*}
                \dist(\mathbf 0 | \partial_{\epsilon_k} F(x_k))
                &\le (L + L_k)\Vert x_k - y_k\Vert
                \\
                &\le 
                (L + L_k)4C_1(1 + kC_2)^{-1}C_3 
                \\
                &\le 4(L + L_{\max})C_1(kC_2)^{-1}C_3 
                \\
                &\underset{(3)}\le \varepsilon. 
            \end{align*}
            At (3), recall that $k \ge \frac{4C_1C_3(L + L_{\max})}{\varepsilon C_2}$ which implies $C_2k \ge \frac{4C_1C_3(L + L_{\max})}{\varepsilon}$, yielding $(kC_2)^{-1} \le \frac{\varepsilon}{4C_1C_3(L + L_{\max})}$. 
            Substituting it verifies the inequality. 
        \end{proof}

\section{Linear convergence rate of the inner loop}\label{sec:innlp-lin-cnvg}

    Continuing from Section \ref{ssec:optz-inxt-pp-problem}, our goal in this section is to show that for a fixed value of $y, \lambda$ we can find an element of $\approx_\epsilon \hprox_{\lambda\omega\circ A}(y)$ in complexity $\mathcal O(\ln(1/\epsilon))$ by incorporating the condition called ``quadratic growth" (Definition \ref{def:q-grwth}) into $\Psi_{\lambda}$.
    To achieve that, we divide it into three subsections.
    \par
    In Section \ref{ssec:lin-pgd} we establish, in general, the linear convergence rate of PGD.
    More specifically, we show that Proximal Gradient Descent (PGD) converges linearly when the objective function satisfies the quadratic growth condition.
    To prepare us for the complexity results of the inner loop, we introduce the algorithm needed to conduct the inner loop, and also introduce the assumptions on $\Phi_\lambda, \Psi_\lambda$ in Section \ref{ssec:conds-lin-cnvg-pp}.
    Finally, in Section \ref{ssec:lin-cnvg-inner-lp}, we derive our main results, which states that the total number of inner-loop iterations $j$ needed to obtain $z_j \approx_\epsilon \hprox_{\lambda \omega\circ A}(y)$ is bounded by $\mathcal O(\ln(\epsilon^{-1}))$.
    \subsection{Linear convergence of PGD}\label{ssec:lin-pgd}
        This subsection provides a set of conditions (Assumption \ref{ass:pgd-qg}) such that the PGD with line search (Definition \ref{def:pgd-with-qg}) has linear convergence rate. 
        Then, we will prove the major result (Theorem \ref{def:pgd-with-qg}) which states that the distance of the iterates generated by PGD to the set of minimizer converges linearly, and simultaneously the function value converges linearly to the minimum as well. 
        One of the key condition essential for our major result is called Quadratic Growth (Definition \ref{def:q-grwth}).
        The auxiliary result contributing directly to linear convergence in function value is in Lemma \ref{lemma:pg-env-upbnd}. 
        \par
        The following definition states the quadratic growth condition. 
        It states that the distance square to the set of minimizer is bounded by the optimality gap of the function by a Lipschitz relation. 
        \begin{definition}[quadratic growth condition]\;\label{def:q-grwth}\\
            Let $F: \RR^n \rightarrow \overline \RR$ be proper closed and convex. 
            Assume $S := \argmin_{x\in \RR^n}F(x)\neq \emptyset$. 
            Denote $F_{\min} = \min_{x \in \RR^n} F(x)$. 
            Then $F$ satisfies quadratic growth with constant $\kappa$ if $\exists \kappa > 0$: 
            \begin{align*}
                (\forall x \in \RR^n)\; F(x) \ge F_{\min} + \frac{\kappa}{2}\dist^2(x | S).
            \end{align*}
        \end{definition}
        \begin{assumption}[conditions for linear convergence of PGD]\;\label{ass:pgd-qg}\\
            The following assumption is about $(F, f, g, L, S, \kappa)$. 
            \begin{enumerate}[nosep]
                \item $F = f + g$ where $f$ is a convex differentiable $L$-Lipschitz smooth function (Definition \ref{def:lipz-smooth}), $g: \RR^n \rightarrow \overline \RR$ is a proper closed, and convex function. 
                \item Assume we can evaluate the exact proximal gradient operator $T_{\tau}$ of $f + g$ (Definition \ref{def:exact-pg}). 
                \item \label{ass:pgd-qg:item3} Assume that $S = \argmin_{x \in \RR^n}{F(x)} \neq \emptyset$, and hence $F$ admits minimum $F_{\min} = \min_{x \in \RR^n}F(x)$. 
                \item \label{ass:pgd-qg:item4} $F$ satisfies quadratic growth (Definition \ref{def:q-grwth}) for some $\kappa > 0$. 
            \end{enumerate} 
        \end{assumption}
        Under the above assumption, we derive the linear convergence of the Proximal Gradient Descent method. 
        This is crucial because PGD is a key part of optimizing the inexact proximal operator of the inner loop. 
        \begin{remark}
            The acronym PGD stands for Proximal Gradient Descent. 
        \end{remark}
        We state the following lemma which is useful when we derive the linear convergence rate of function values of PGD. 
        \begin{lemma}[proximal gradient envelope upper bound]\label{lemma:pg-env-upbnd}
            Let $F, f, g, L$ be given by Assumption \ref{ass:pgd-qg}. 
            Choose any $x \in \RR^n, \tau > 0$, and consider $x^+ = T_{\tau^{-1}}(x)$. 
            Then, it has: 
            \begin{align*}
                & g(x^+) + f(x) 
                + \langle \nabla f(x), x^+ - x\rangle
                + \frac{\tau}{2}\Vert x^+ - x\Vert^2
                \le 
                \min_{z \in \RR^n}\left\lbrace F(z) + \frac{\tau}{2}\Vert z - x\Vert^2 \right\rbrace. 
            \end{align*}
        \end{lemma}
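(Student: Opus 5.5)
The plan is to recognize the left-hand side as the optimal value of the standard proximal-gradient model, and then compare that model with $F$ using only the convexity of $f$. I read $x^+$ as the point characterized by the inclusion in Definition \ref{def:exact-pg} with proximal weight $\tau$, that is, $\mathbf 0 \in \nabla f(x) + \tau(x^+ - x) + \partial g(x^+)$. This is the reading under which the quadratic coefficient $\frac{\tau}{2}$ in the statement is consistent (stepsize $\tau^{-1}$). Under this reading the proof has two steps.

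\emph{First step (optimality of $x^+$ for the model).} Define
\begin{align*}
    m_x(z) := g(z) + f(x) + \langle \nabla f(x), z - x\rangle + \frac{\tau}{2}\Vert z - x\Vert^2 .
\end{align*}
This function is proper, closed and $\tau$-strongly convex, because $g$ is proper, closed and convex and the remaining terms are a strongly convex quadratic. Hence $m_x$ has a unique minimizer. The quadratic part is differentiable everywhere, so the subdifferential sum rule applies and gives
\begin{align*}
    \partial m_x(z) = \nabla f(x) + \tau(z - x) + \partial g(z).
\end{align*}
The defining inclusion of $x^+$ therefore says exactly that $\mathbf 0 \in \partial m_x(x^+)$. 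By Fermat's rule, $x^+ = \argmin_z m_x(z)$, so the left-hand side of the claimed inequality equals $m_x(x^+) = \min_z m_x(z)$.

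\emph{Second step (the model lies below the envelope).} Since $f$ is convex and differentiable, $f(x) + \langle \nabla f(x), z - x\rangle \le f(z)$ for every $z$; equivalently, $D_f(z,x) \ge 0$ in the notation of Definition \ref{def:breg-div}. Adding $g(z) + \frac{\tau}{2}\Vert z - x\Vert^2$ to both sides gives
\begin{align*}
    m_x(z) \le F(z) + \frac{\tau}{2}\Vert z - x\Vert^2 \qquad \text{for all } z \in \RR^n .
\end{align*}
Taking the infimum over $z$ on both sides, and using the first step, yields $m_x(x^+) \le \inf_z \{F(z) + \frac{\tau}{2}\Vert z - x\Vert^2\}$. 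This infimum is attained because $F + \frac{\tau}{2}\Vert \cdot - x\Vert^2$ is proper, closed and strongly convex, so writing $\min$ as in the statement is justified.

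The only real obstacle is bookkeeping rather than analysis. I must make sure the scaling convention for $T$ is matched to the $\frac{\tau}{2}$ in the statement, so that the inclusion defining $x^+$ is precisely the first-order optimality condition of $m_x$. Once that identification is fixed, the argument needs no Lipschitz smoothness: it uses only convexity of $f$ and Fermat's rule for a strongly convex function.
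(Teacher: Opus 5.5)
Your proof is correct and follows the paper's argument essentially step for step. The paper likewise uses the sum rule and Fermat's rule to identify $x^+$ as a minimizer of the linearized model $g(z)+\langle\nabla f(x),z-x\rangle+\frac{\tau}{2}\Vert z-x\Vert^2$, then bounds this model by $F(z)+\frac{\tau}{2}\Vert z-x\Vert^2$ using only convexity of $f$. Reading $x^+$ as defined by the inclusion with weight $\tau$ is also exactly what the paper's proof uses, and what its later invocation with $v_{j+1}=T_{\tau_j}(v_j)$ requires, so the subscript $T_{\tau^{-1}}$ in the statement is a notational slip in the paper and not a gap in your argument.
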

        \begin{proof}
            Let $x^+ = T_{\tau^{-1}}(x)$.
            Then, by Definition \ref{def:exact-pg} it has: 
            \begin{align*}
                &\mathbf 0 \in \nabla f(x) - \tau(x - x^+) + \partial g(x^+)
                \\
                \underset{(1)}\iff &
                \mathbf 0 \in \partial\left(
                    z \mapsto g(z) + \langle \nabla f(x), z - x\rangle
                    + \frac{\tau}{2}\Vert x - z\Vert^2
                \right)(x^+)
                \\
                \underset{(2)}\iff &
                x^+ \in 
                \argmin_{x\in \RR^n}\left\lbrace
                    g(z) + \langle \nabla f(x), z - x\rangle + \frac{\tau}{2}\Vert z - x\Vert^2
                \right\rbrace. 
            \end{align*}
            At (1), we used the sum rule of subgradient; at (2) we used the subgradient calculus to deduce that $x^+$ is the minimizer of convex function $h(z) := g(z) + \langle \nabla f(x), z - x\rangle + \frac{\tau}{2}\Vert x - z\Vert^2$. 
            Therefore, substituting $x^+$ into $h(z) + f(x)$ yields: 
            \begin{align*}
                & g(x^+) + f(x) 
                + \langle \nabla f(x), x^+ - x\rangle
                + \frac{\tau}{2}\Vert x^+ - x\Vert^2
                \\
                &=
                \min_{z \in \RR^n}\left\lbrace
                    g(z) + f(x) 
                    + \langle \nabla f(x), z - x\rangle
                    + \frac{\tau}{2}\Vert z - x\Vert^2
                \right\rbrace
                \\
                &\underset{(3)}\le \min_{z \in \RR^n}\left\lbrace
                    g(z) + f(z) + \frac{\tau}{2}\Vert z - x\Vert^2
                \right\rbrace. 
            \end{align*}
            At (3), we used the fact that $f$ is convex which gives, for all $z\in \RR^n$ that $f(z) \ge f(x) + \langle \nabla f(x), z - x\rangle$.
        \end{proof}
        \par
        We define the proximal gradient descent method as follows.
        \begin{definition}[the proximal gradient descent]\;\label{def:pgd-with-qg}\\
            Let $(F, f, g, L, S, \kappa)$ satisfy Assumption \ref{ass:pgd-qg}. 
            Choose any initial guess $v_0 \in \RR^n$. 
            An algorithm is a proximal descent method if it generates iterates $(v_j)_{j \in \Z_+}$ satisfying for all $j \in \Z_+$:
            \begin{align*}
                & v_{j + 1} = \hprox_{\tau_j^{-1} g}(v_j - \tau_j^{-1}\nabla f(v_j)), 
                \\
                & D_f(v_{j + 1}, v_j) \le \frac{\tau_j}{2}\Vert v_{j + 1} - v_j\Vert^2.
            \end{align*}
            In addition, we assume that $(\tau_j)_{j \in \Z_+}$ is a bounded sequence, i.e., there exists $\bar \tau = \sup_{j \in \Z_+} \tau_j < \infty$.
        \end{definition}
        We now arrive at the first main result from the PGD introduced in Definition \ref{def:pgd-with-qg}.
        The following theorem shows that the iterates and the function value converge linearly. 
        In addition, they are bounded by the distance between the first initial guess to the set of minimizers. 
        \begin{theorem}[PGD converges linearly under quadratic growth]\;\label{thm:pgd-lin-cnvg}\\
            Let $(F, f, g, L, S, \kappa)$ satisfy Assumption \ref{ass:pgd-qg}.
            Suppose that iterates $(v_j)_{j \in \Z_+}$ and line search parameters $(\tau_j)_{j \in \Z_+}$ are given by Definition \ref{def:pgd-with-qg}. 
            Then the following are true: 
            \begin{enumerate}[nosep]
                \item \label{thm:pgd-lin-vng:item1} The iterates converge linearly: for all $j \in \Z_+$ it has
                \begin{align*}
                    \dist(v_{j} | S)^2 
                    \le
                    \left(
                        \prod_{n = 0}^{j - 1}\frac{1}{1 + \kappa/\tau_n}
                    \right)\dist(v_0 | S)^2
                    \le \left(
                        \frac{1}{1 + \kappa/\bar \tau}
                    \right)^{j}\dist(v_0 | S)^2. 
                \end{align*}
                Here, $\bar \tau = \sup_{j \in \Z_+} \tau_j < \infty$. 
                \item \label{thm:pgd-lin-vng:item2} The function value converges linearly: for all $j \in \Z_+$ it has
                \begin{align*}
                    F(v_{j + 1}) - F_{\min} &\le 
                    \frac{\tau_j}{2}\dist(v_j | S)^2 \le 
                    \frac{\tau_j}{2}\left(
                        \frac{1}{1 + \kappa/\bar \tau}
                    \right)^{j}\dist(v_0 | S)^2. 
                \end{align*}
            \end{enumerate}
        \end{theorem}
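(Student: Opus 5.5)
The plan is to apply Corollary \ref{corollary:exact-pgineq} at each PGD step and choose the comparison point $z$ to be the projection of the current iterate onto $S$. Fix $j \in \Z_+$ and let $\bar v_j := \Pi_S(v_j)$; this projection exists and is unique because $S$ is nonempty, closed and convex as the argmin set of a proper closed convex function. By Definition \ref{def:pgd-with-qg}, $v_{j+1} = T_{\tau_j}(v_j)$ and the line search condition holds with constant $\tau_j$. Corollary \ref{corollary:exact-pgineq} with $x = v_j$, $x^+ = v_{j+1}$ and $z = \bar v_j$ then gives
\begin{align*}
    F(v_{j+1}) - F_{\min} + \frac{\tau_j}{2}\Vert \bar v_j - v_{j+1}\Vert^2 \le \frac{\tau_j}{2}\dist(v_j | S)^2 .
\end{align*}
Since $\bar v_j \in S$, we have $\Vert \bar v_j - v_{j+1}\Vert \ge \dist(v_{j+1}|S)$. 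Dropping the nonnegative distance term immediately yields the first inequality in \ref{thm:pgd-lin-vng:item2}, namely $F(v_{j+1}) - F_{\min} \le \frac{\tau_j}{2}\dist(v_j|S)^2$.

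For \ref{thm:pgd-lin-vng:item1}, keep both terms and lower bound the function gap using quadratic growth (Definition \ref{def:q-grwth}): $F(v_{j+1}) - F_{\min} \ge \frac{\kappa}{2}\dist(v_{j+1}|S)^2$. This gives
\begin{align*}
    \frac{\kappa + \tau_j}{2}\dist(v_{j+1}|S)^2 \le \frac{\tau_j}{2}\dist(v_j|S)^2,
\end{align*}
which is equivalent to $\dist(v_{j+1}|S)^2 \le (1+\kappa/\tau_j)^{-1}\dist(v_j|S)^2$. Inducting over $j$ produces the product bound, with the empty product equal to $1$ when $j=0$. Since $\tau_n \le \bar\tau$, each factor satisfies $(1+\kappa/\tau_n)^{-1} \le (1+\kappa/\bar\tau)^{-1}$, which gives the uniform geometric bound. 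The second inequality in \ref{thm:pgd-lin-vng:item2} follows by substituting this bound for $\dist(v_j|S)^2$.

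I see no genuine obstacle; the argument is a one-step contraction. The only points needing care are the following. First, the corollary has to be applied with step parameter $\tau_j$, not $\tau_j^{-1}$: the prox with parameter $\tau_j^{-1}$ in Definition \ref{def:pgd-with-qg} is exactly $T_{\tau_j}$ in the sense of Definition \ref{def:exact-pg}, as Lemma \ref{lemma:other-repr-inxt-pg} with $\epsilon = 0$ confirms. Second, we need $\tau_j > 0$, which is implicit in the definition of the prox. Lemma \ref{lemma:pg-env-upbnd} is not needed for this route, although it would give the same function-value bound by taking $z = \bar v_j$ inside the minimum.
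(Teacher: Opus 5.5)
Your proof is correct and follows the paper's argument for item~1 essentially verbatim: Corollary~\ref{corollary:exact-pgineq} with $z=\Pi_S v_j$, then $\Vert \Pi_S v_j - v_{j+1}\Vert \ge \dist(v_{j+1}|S)$ and quadratic growth, then unrolling. For the first inequality of item~2 the paper instead combines the line search condition with Lemma~\ref{lemma:pg-env-upbnd} to bound $F(v_{j+1})$ by $\min_z\{F(z)+\frac{\tau_j}{2}\Vert z-v_j\Vert^2\}$, whereas you read it off the same corollary inequality by dropping the nonnegative term $\frac{\tau_j}{2}\Vert \Pi_S v_j - v_{j+1}\Vert^2$; this is equally valid, slightly more economical, and sidesteps the $T_{\tau^{-1}}$ versus $T_\tau$ slip in the statement of Lemma~\ref{lemma:pg-env-upbnd}.
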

        \begin{proof}
            We now prove item \ref{thm:pgd-lin-vng:item1}.
            For all $j \in \Z_+$ by Definition \ref{def:pgd-with-qg}, we have $v_{j + 1} = \hprox_{\tau_j^{-1} g}(v_j - \tau_j^{-1}\nabla f(v_j))$.
            Therefore, using subgradient calculus it follows that: 
            \begin{align*}
                & v_{j + 1} \in \argmin_{z \in \RR^n}\left\lbrace
                    g(z) + \frac{\tau_j}{2}\Vert z - v_j + \tau_j^{-1}\nabla f(v_j)\Vert^2
                \right\rbrace
                \\
                \iff & 
                \mathbf 0 \in 
                \partial g(v_{j + 1}) + \tau_j(v_{j + 1} - v_j + \tau_j^{-1}\nabla f(v_j))
                \\
                \iff & \tau_jv_j - \nabla f(v_j) - \tau_jv_{j + 1} \in \partial g(v_{j + 1})
                \\
                \iff & \mathbf 0 \in \nabla f(v_j) - \tau_j(v_j - v_{j + 1}) + \partial g(v_{j + 1})
                \\
                \underset{\text{Def \ref{def:exact-pg}}}\iff\hspace{-0.1em} & 
                v_{j + 1} = T_{\tau_j}(v_j). 
            \end{align*}
            The above shows that $v_{j + 1} = T_{\tau_j}(v_j)$ where $T_{\tau_j}$ is from Definition \ref{def:exact-pg}. 
            Recall $S$ is the set of minimizer of $F$ (Assumption \ref{ass:pgd-qg}\ref{ass:pgd-qg:item3}).
            Therefore, we invoke Corollary \ref{corollary:exact-pgineq} with $x^+ = v_{j + 1}, x = v_j$ and $z = \Pi_S v_j$ which yields:
            \begin{align*}
                0 &\le F(\Pi_S(v_j)) - F(v_{j + 1}) + \frac{\tau_j}{2}\Vert v_j - \Pi_S v_j\Vert^2
                - \frac{\tau_j}{2}\Vert \Pi_S v_j - v_{j + 1}\Vert^2
                \\
                &\le F(\Pi_S(v_j)) - F(v_{j + 1}) 
                + \frac{\tau_j}{2}\Vert v_j - \Pi_S v_j\Vert^2
                - \frac{\tau_j}{2}\dist(v_{j + 1} | S)^2
                \\
                &= 
                F_{\min} - F(v_{j + 1}) 
                + \frac{\tau_j}{2}\dist(v_j | S)^2
                - \frac{\tau_j}{2}\dist(v_{j + 1} | S)^2
                \\
                &\underset{(1)}\le 
                - \frac{\kappa}{2}\dist(v_{j + 1} | S)^2
                + \frac{\tau_j}{2}\dist(v_j | S)^2
                - \frac{\tau_j}{2}\dist(v_{j + 1} | S)^2
                \\
                &\le 
                - \frac{\kappa + \tau_j}{2}\dist(v_{j + 1} | S)^2
                + \frac{\tau_j}{2}\dist(v_j | S)^2. 
            \end{align*}
            At (1), we invoked quadratic growth condition assumed in Assumption \ref{ass:pgd-qg}\ref{ass:pgd-qg:item4}. 
            Rearranging the above and algebraic simplifications yields: $\dist(v_{j + 1} | S)^2 \le \left(\frac{1}{1 + \kappa/\tau_j}\right)\dist(v_j | S)^2$.
            Unrolling it recursively, and using the fact $\bar \tau = \sup_{j \in \Z_+} \tau_j$ from Definition \ref{def:pgd-with-qg} yields:
            \begin{align*}
                \dist(v_{j + 1} | S)^2 &\le \left(
                    \prod_{n = 0}^j\frac{1}{1 + \kappa/\tau_n}
                \right)\dist(v_0 | S)^2 
                \le 
                \left(
                    \frac{1}{1 + \kappa/\bar \tau}
                \right)^{j + 1}\dist(v_0 | S)^2. 
            \end{align*}
            We note that the base case is when $j = -1$, and it is satisfied with $\dist(v_0 | S)^2 \le \dist(v_0 | S)^2$.
            \par
            We now prove item \ref{thm:pgd-lin-vng:item2}.
            To see the convergence of function value, consider for all $ j \in \Z_+$:
            \begin{align*}
                F(v_{j + 1}) &= f(v_{j + 1}) + g(v_{j + 1})
                \\
                &= g(v_{j + 1}) + f(v_{j}) + \langle \nabla f(v_j), v_{j + 1} - v_j\rangle
                + D_f(v_{j + 1}, v_j)
                \\
                &\underset{(2)}\le 
                g(v_{j + 1}) + f(v_{j}) + \langle \nabla f(v_j), v_{j + 1} - v_j\rangle
                + \frac{\tau_j}{2}\Vert v_{j + 1} - v_j\Vert^2
                \\
                &\underset{(3)} = \min_{z \in \RR^n}\left\lbrace
                    F(z) + \frac{\tau_j}{2}\Vert z - v_j\Vert^2
                \right\rbrace
                \\
                &\le F(\Pi_S v_j) + \frac{\tau_j}{2}\Vert \Pi_S v_j - v_j\Vert^2
                \\
                &= F_{\min} + \frac{\tau_j}{2}\dist(v_j | S)^2.
            \end{align*}
            At (2), we used Definition \ref{def:pgd-with-qg}, which states the line search conditions that state $D_f(v_{j + 1}, v_j) \le \frac{\tau_j}{2}\Vert v_{j + 1} - v_j\Vert^2$.
            At (3), we invoked Lemma \ref{lemma:pg-env-upbnd} because $v_{j + 1} = T_{\tau_j}(v_j)$ for all $j \in \Z_+$. 
            Using item \ref{thm:pgd-lin-vng:item1}, we obtain: 
            \begin{align*}
                F(v_{j + 1}) - F_{\min} &\le 
                \frac{\tau_j}{2}\dist(v_j | S)^2 \le 
                \frac{\tau_j}{2}\left(
                    \frac{1}{1 + \kappa/\bar \tau}
                \right)^{j}\dist(v_0 | S)^2. 
            \end{align*}
        \end{proof}
        \begin{remark}
            This is not a new result, and it has been established. 
            See for example, Necoara et al. \cite[Theorem 12]{necoara_linear_2019}. 
            The difference here is that the proof has been adapted into our context and assumptions for better exposition.
        \end{remark}

    \subsection{In preparations for linear convergence of the inner loop}\label{ssec:conds-lin-cnvg-pp}
        Continuing from Section \ref{ssec:optz-inxt-pp-problem}, in this subsection we define the algorithms used in the inner loop (Definition \ref{def:pgd-inner-lp}). 
        We characterize sufficient conditions that attain linear convergence for the algorithm in the inner loop in Assumption \ref{ass:lin-cnvg-for-pp} below. 
        \begin{assumption}[conditions for linear convergence of proximal problem]\;\label{ass:lin-cnvg-for-pp}\\
            Fix $y \in \RR^n$, $\lambda > 0$.
            Let $h_\lambda(x) := \frac{1}{2\lambda}\Vert \lambda x - y\Vert^2 - \frac{1}{2\lambda}\Vert y\Vert^2$. 
            Recall the dual objective $\Psi_\lambda(v) = h_\lambda(A^\top v) + \omega^\star(v)$, see \eqref{eqn:dual-pp}, and the primal objective $\Phi_\lambda(z) = \omega(A z) + \frac{1}{2\lambda}\Vert z - y\Vert^2$, see \eqref{eqn:primal-pp}.
            We assume that $(\omega, A, y, \lambda, h_\lambda, \Phi_\lambda, \Psi_\lambda, \kappa_\lambda)$ satisfy the following.
            \begin{enumerate}[nosep]
                \item\label{ass:lin-cnvg-for-pp:item2} $\omega, A$ satisfy Assumption \ref{ass:for-inxt-prox}, 
                meaing that $\omega$ is $L_\omega$-Lipschitz continuous. Equivalently $\omega^\star$ has bounded domain.
                \item\label{ass:lin-cnvg-for-pp:item3} There exists $\emptyset \neq S \subseteq \RR^m$ such that $(\Psi_\lambda, h_\lambda, \omega^\star, \lambda\Vert A^\top A\Vert, S, \kappa_\lambda)$ satisfy Assumption \ref{ass:pgd-qg}.     
            \end{enumerate}
        \end{assumption}
        \begin{remark}
            Recall from previous section and take note that Assumption \ref{ass:lin-cnvg-for-pp}\ref{ass:lin-cnvg-for-pp:item3} is saying that $\Psi_\lambda$ is a composite optimization problem with $\lambda \Vert A^\top A\Vert$-smooth part $h_\lambda\circ A^\top$, and nonsmooth part $\omega^\star$ satisfying the quadratic growth condition with $\kappa_\lambda$. 
        \end{remark}
        \par
        The following definition specifies the algorithm that achieves a linear convergence rate with the assumptions above.
        \begin{definition}[proximal gradient descent inner loop]\;\label{def:pgd-inner-lp}\\
            Let $\lambda > 0, \epsilon > 0$, and $(\omega, A, y, \lambda, h_\lambda, \Phi_\lambda, \Psi_\lambda, \kappa_\lambda)$ satisfy Assumption \ref{ass:lin-cnvg-for-pp}.
            Let initial guess $v_0\in \dom \omega^\star$ be feasible, let $z_0 = y - \lambda A^\top v_0$. 
            In Assumption \ref{ass:lin-cnvg-for-pp}, note that $\Psi_\lambda = h_\lambda\circ A^\top  + \omega^\star$ where $h_\lambda = x \mapsto \frac{1}{2\lambda}\Vert \lambda x - y\Vert^2$.
            We define an algorithm to be the inner loop algorithm if it generates iterates $(z_j, v_j)_{j \in \Z_+}$, and line search constants $(\tau_j)_{j \in \Z_+}$ such that for all $j \in \Z_+$: 
            \begin{align}
                & v_{j + 1} = \hprox_{\tau_j^{-1} \omega^\star}\left(
                    v_{j} - \tau_j^{-1}A\nabla h_\lambda(A^\top v_j)
                \right), \label{def:pgd-inner-lp-vj}
                \\
                & D_{h_\lambda}(v_{j + 1}, v_j) \le \frac{\tau_j}{2}\Vert v_{j + 1} - v_j\Vert^2, 
                \label{def:pgd-inner-lp-ls}
                \\
                & z_{j + 1} = y - \lambda A^\top v_{j + 1}. 
                \label{def:pgd-inner-lp-zj}
            \end{align}
            In addition, assume $(\tau_j)_{j\in \Z_+}$ is bounded by $\bar \tau_\lambda$.
            Recall $\mathbf G_\lambda$ from \eqref{eqn:duality-gap-pp}. 
            Finally, the algorithm outputs $z_j$ such that $j := \min\{t \in \Z_+ : \mathbf G_\lambda(z_t, v_t) \le \epsilon\}$. 
        \end{definition}
        \begin{remark}\label{remark:pgd-inner-lp}
            The value of $\mathbf G_\lambda(z_j, v_j)$ is easy to compute because it only requires access to matrix $A, A^\top$, and the function $\omega$. 
            In case when the proximal operator for $\omega^\star$ is nontrivial, we can apply the Moreau identity and calculate instead the proximal operator of $\omega$. 
            The gradient for $h_\lambda\circ A^\top$ is easy to compute, and it is: $\lambda AA^\top v - Ay$.
            The Bregman divergence for descent lemma is easy to compute too, and it is given by $D_{h_\lambda}(v_{j + 1}, v_j) = (\lambda/2)\Vert A^\top(v_{j + 1} - v_j)\Vert^2$.
        \end{remark}

    \subsection{Linear convergence of the inner loop}\label{ssec:lin-cnvg-inner-lp}
        Continuing from the previous subsections, in this subsection we are ready to present our major result
        which is the following theorem.
        The theorem states that a complexity of $\mathcal O(\ln(\epsilon^{-1}))$ is possible under Assumption \ref{ass:lin-cnvg-for-pp} for algorithm that satisfies Definition \ref{def:pgd-inner-lp}. 
        \begin{theorem}[linear convergence of the inner loop]\;\label{thm:inn-loop-lin-cnvg}\\
            Let the parameters $(\omega, A, y, \lambda, h_\lambda, \Phi_\lambda, \Psi_\lambda, \kappa_\lambda)$ of a proximal problem satisfy Assumption \ref{ass:lin-cnvg-for-pp}. 
            Let iterates $(z_j, v_j)_{j \in \Z_+}$, line search sequence $(\tau_j)_{j \in \Z_+}$, and its upper bound $\bar \tau_\lambda$ be given by Definition \ref{def:pgd-inner-lp}.
            Let $\bar v$ be a minimizer of $\Psi_\lambda$. 
            We denote the following quantities for short: 
            \begin{enumerate}[nosep,label=(\Roman*),font=\upshape]
                \item\label{thm:inn-loop-lin-cnvg:cnst-psi} $C_\Psi :=\diam(\dom \omega^\star)$, 
                \item $\Delta_{\lambda, j} = \Psi_\lambda(v_j) - \Psi_\lambda(\bar v)$, 
                \item\label{thm:inn-loop-lin-cnvg:cnst-lambda} 
                $
                    C_\lambda = C_\Psi\left(
                        2\sqrt{\lambda\bar \tau_\lambda}
                        K_\omega\Vert A\Vert
                        + \bar\tau_\lambda C_\Psi/2
                    \right). 
                $
            \end{enumerate}
            Then the following are true:
            \begin{enumerate}[nosep]
                \item\label{thm:inn-loop-lin-cnvg:item1} We have for all $j \in \Z+$, and $C_\Psi < \infty$, $\Delta_{\lambda, j}$ converges linearly to zero: 
                \begin{align*}
                    \Delta_{\lambda, j + 1}
                    &\le \frac{\bar \tau_\lambda}{2}\left(
                        \frac{1}{1 + \kappa_\lambda/\bar \tau_\lambda}
                    \right)^{j}C_\Psi^2. 
                \end{align*}
                \item\label{thm:inn-loop-lin-cnvg:item2} The duality gap $\mathbf G_\lambda$ from \eqref{eqn:duality-gap-pp} converges to zero linearly. 
                The following holds for all $j \in \N$:
                \begin{align*}
                    \mathbf G_\lambda(z_j, v_j)
                    \le
                    \left(
                    \frac{1}{1 + \kappa_\lambda/\bar \tau_\lambda}
                    \right)^{\frac{j - 1}{2}} C_\lambda. 
                \end{align*}
                \item\label{thm:inn-loop-lin-cnvg:item3} 
                For all $\epsilon > 0$, if $(j - 1)/2 \ge \max(0, \ln(C_\lambda/\epsilon)/\ln(1 + \kappa_\lambda/\bar \tau_\lambda))$, then $\mathbf G_\lambda(z_j, v_j) \le \epsilon$, and $z_j \approx_\epsilon \hprox_{\lambda \omega\circ A}(y)$.
            \end{enumerate}
        \end{theorem}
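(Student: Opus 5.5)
The plan is to reduce everything to Theorem \ref{thm:pgd-lin-cnvg} applied to the dual $\Psi_\lambda$, and then transfer dual suboptimality to the duality gap via Theorem \ref{thm:minimizing-dual-pp}. First, by Assumption \ref{ass:lin-cnvg-for-pp}\ref{ass:lin-cnvg-for-pp:item3}, the tuple $(\Psi_\lambda, h_\lambda\circ A^\top, \omega^\star, \lambda\Vert A^\top A\Vert, S, \kappa_\lambda)$ satisfies Assumption \ref{ass:pgd-qg}. The iteration \eqref{def:pgd-inner-lp-vj}--\eqref{def:pgd-inner-lp-ls} is exactly Definition \ref{def:pgd-with-qg} with $\nabla(h_\lambda\circ A^\top)(v) = A\nabla h_\lambda(A^\top v)$, and the line search is on the Bregman divergence of $h_\lambda\circ A^\top$ (cf. Remark \ref{remark:pgd-inner-lp}). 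So Theorem \ref{thm:pgd-lin-cnvg}\ref{thm:pgd-lin-vng:item2} gives
\[
\Delta_{\lambda,j+1}\le \frac{\tau_j}{2}\Big(\frac{1}{1+\kappa_\lambda/\bar\tau_\lambda}\Big)^{j}\dist(v_0|S)^2 .
\]
Since $v_0\in\dom\omega^\star$ and $S\subseteq\dom\omega^\star$ (minimizers have finite value), we get $\dist(v_0|S)\le\diam(\dom\omega^\star)=C_\Psi$. Using $\tau_j\le\bar\tau_\lambda$ then yields item \ref{thm:inn-loop-lin-cnvg:item1}.

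For item \ref{thm:inn-loop-lin-cnvg:item2}, I will use strong duality from Section \ref{ssec:optz-inxt-pp-problem}, $\Phi_\lambda(\bar z)+\Psi_\lambda(\bar v)=0$, to write
\[
\mathbf G_\lambda(z_j,v_j)=\big(\Phi_\lambda(z_j)-\Phi_\lambda(\bar z)\big)+\Delta_{\lambda,j}.
\]
Theorem \ref{thm:minimizing-dual-pp}\ref{thm:minimizing-dual-pp:result3} bounds the first term by $\sqrt{\Delta_{\lambda,j}}\,(2\sqrt{2\lambda}K_\omega\Vert A\Vert+\sqrt{\Delta_{\lambda,j}})$. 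That result is stated for a minimizing sequence, but its derivation is pointwise in $v_j$. Hence
\[
\mathbf G_\lambda \le 2\sqrt{2\lambda}K_\omega\Vert A\Vert\sqrt{\Delta_{\lambda,j}}+2\Delta_{\lambda,j}.
\]
Write $r:=1/(1+\kappa_\lambda/\bar\tau_\lambda)<1$. For $j\ge1$, item \ref{thm:inn-loop-lin-cnvg:item1} gives $\sqrt{\Delta_{\lambda,j}}\le\sqrt{\bar\tau_\lambda/2}\,r^{(j-1)/2}C_\Psi$ and $\Delta_{\lambda,j}\le(\bar\tau_\lambda/2)r^{j-1}C_\Psi^2\le(\bar\tau_\lambda/2)r^{(j-1)/2}C_\Psi^2$. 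Substituting, the linear term becomes $2\sqrt{\lambda\bar\tau_\lambda}K_\omega\Vert A\Vert C_\Psi r^{(j-1)/2}$. The quadratic term becomes $\bar\tau_\lambda C_\Psi^2 r^{(j-1)/2}$, which is twice the $\bar\tau_\lambda C_\Psi^2/2$ appearing in $C_\lambda$.

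The main obstacle I expect is exactly this constant bookkeeping. The stated $C_\lambda$ seems to absorb the $\Delta$ term with coefficient $1$, not $2$. I would first check whether the $\Delta_{\lambda,j}$ in the gap and the $\Delta_{\lambda,j}$ inside Theorem \ref{thm:minimizing-dual-pp}\ref{thm:minimizing-dual-pp:result3} can be merged more sharply. One route is to redo the estimate directly: use $\Phi_\lambda(z_j)-\Phi_\lambda(\bar z)\le \Vert z_j-\bar z\Vert(K_\omega\Vert A\Vert+\lambda^{-1}\Vert\bar z-y\Vert+\Vert z_j-\bar z\Vert/(2\lambda))$ together with $\Vert z_j-\bar z\Vert^2\le 2\lambda\Delta_{\lambda,j}$. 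If the constant cannot be matched, I would note that the argument proves the bound with $C_\lambda$ enlarged by a factor of at most $2$, and that this does not affect linear convergence.

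Item \ref{thm:inn-loop-lin-cnvg:item3} is then routine. The condition $(j-1)/2\ge \ln(C_\lambda/\epsilon)/\ln(1/r)$ gives $r^{(j-1)/2}C_\lambda\le\epsilon$, so $\mathbf G_\lambda(z_j,v_j)\le\epsilon$. Lemma \ref{lemma:dlty-gap-inxt-pp} (i)$\Rightarrow$(iii), with $z_j=y-\lambda A^\top v_j$, then gives $z_j\approx_\epsilon\hprox_{\lambda\omega\circ A}(y)$. The $\max(0,\cdot)$ in the condition handles the case $C_\lambda\le\epsilon$, where $j=1$ already suffices.
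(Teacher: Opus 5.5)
Your plan matches the paper's proof step for step: Theorem \ref{thm:pgd-lin-cnvg} on the dual, $\dist(v_0|S)\le C_\Psi$, strong duality to split $\mathbf G_\lambda$, Theorem \ref{thm:minimizing-dual-pp}\ref{thm:minimizing-dual-pp:result3} applied pointwise, and then Lemma \ref{lemma:dlty-gap-inxt-pp}. Your bookkeeping is correct. This route gives
\[
\mathbf G_\lambda(z_j,v_j)\le 2\sqrt{2\lambda}K_\omega\Vert A\Vert\sqrt{\Delta_{\lambda,j}}+2\Delta_{\lambda,j},
\]
and hence only the constant $C_\Psi(2\sqrt{\lambda\bar\tau_\lambda}K_\omega\Vert A\Vert+\bar\tau_\lambda C_\Psi)$. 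The paper reaches $\bar\tau_\lambda C_\Psi/2$ only through a slip at the step marked (3) in its proof of item 2. There it turns $\Delta_{\lambda,j}\le\bar\tau_\lambda C_\Psi^2/2$ into $2\sqrt{\Delta_{\lambda,j}}\le\sqrt{2\bar\tau_\lambda}\,C_\Psi/2$, whereas the correct consequence is $2\sqrt{\Delta_{\lambda,j}}\le\sqrt{2\bar\tau_\lambda}\,C_\Psi$. Your proposed re-estimate via the triangle-inequality bound does not help. It again bounds $\Vert z_j-\bar z\Vert^2/(2\lambda)$ by $\Delta_{\lambda,j}$ and reproduces the second copy of $\Delta_{\lambda,j}$. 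As written, your argument proves items 2 and 3 only with an enlarged constant, which is weaker than the statement.

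The missing idea is to treat the quadratic term as a linear term and merge it with the first-order term. Use $\bar z-y=-\lambda A^\top\bar v$ and $z_j-\bar z=\lambda A^\top(\bar v-v_j)$. Then
\[
\Phi_\lambda(z_j)-\Phi_\lambda(\bar z)=\omega(Az_j)-\omega(A\bar z)-\langle\bar v,A(z_j-\bar z)\rangle+\tfrac12\langle\bar v-v_j,A(z_j-\bar z)\rangle .
\]
Take $w\in\partial\omega(Az_j)$. The subgradient inequality gives
\[
\Phi_\lambda(z_j)-\Phi_\lambda(\bar z)\le\left\langle w-\tfrac12(\bar v+v_j),\,A(z_j-\bar z)\right\rangle\le C_\Psi\Vert A\Vert\,\Vert z_j-\bar z\Vert\le C_\Psi\Vert A\Vert\sqrt{2\lambda\Delta_{\lambda,j}} .
\]
The middle inequality holds because $w$, $\bar v$ and $v_j$ all lie in the convex set $\dom\omega^\star$, and hence so does the midpoint $\tfrac12(\bar v+v_j)$. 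This yields $\mathbf G_\lambda(z_j,v_j)\le C_\Psi\Vert A\Vert\sqrt{2\lambda\Delta_{\lambda,j}}+\Delta_{\lambda,j}$, with a single copy of $\Delta_{\lambda,j}$. Inserting item 1 gives the bound $r^{(j-1)/2}C_\Psi(\sqrt{\lambda\bar\tau_\lambda}\Vert A\Vert C_\Psi+\bar\tau_\lambda C_\Psi/2)$. Finally, $C_\Psi\le 2K_\omega$, since a $K_\omega$-Lipschitz $\omega$ has $\dom\omega^\star$ inside the ball of radius $K_\omega$. This turns the bound into exactly $r^{(j-1)/2}C_\lambda$, and item 3 then goes through as you describe.
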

        \begin{proof}
            We now show item \ref{thm:inn-loop-lin-cnvg:item1}. 
            By Definition \ref{def:pgd-inner-lp}, for all $j \in \Z_+$, $(v_j)_{j \in \Z_+}$ satisfies $v_{j + 1} = \hprox_{\tau_j^{-1}\omega^\star}(v_j - \tau_j^{-1} A\nabla h_\lambda(A^\top v_j))$, and Definition \ref{def:pgd-with-qg}. 
            Here, it minimizes the smooth and nonsmooth composite objective $\Psi_\lambda(v) = h_\lambda(A^\top v) + \omega^\star(v)$ where $h_\lambda = v \mapsto \frac{1}{2\lambda}\left\Vert \lambda v - y\right\Vert^2$. 
            In Assumption \ref{ass:lin-cnvg-for-pp}\ref{ass:lin-cnvg-for-pp:item3}, we stated that $\Psi_\lambda$ satisfies Assumption \ref{ass:pgd-qg} which means two things. 
            First, it means $\Psi_\lambda$ admits a set of minimizers which we denote by $S$, and set $\bar v \in S$.
            Secondly, it means the results of convergence of PGD in Theorem \ref{thm:pgd-lin-cnvg}\ref{thm:pgd-lin-vng:item2} apply, and therefore for all $j \in \Z_+$:
            \begin{align*}
                \Psi_\lambda(v_{j + 1}) - \Psi_\lambda(\bar v) &\le 
                \frac{\tau_j}{2}\left(
                    \frac{1}{1 + \kappa_\lambda/\bar \tau_\lambda}
                \right)^{j}\dist(v_0 | S)^2
                \underset{(1)}\le 
                \frac{\bar \tau_\lambda}{2}\left(
                    \frac{1}{1 + \kappa_\lambda/\bar \tau_\lambda}
                \right)^{j}C_\Psi^2. 
            \end{align*}
            At (1), we used two results. 
            Firstly, we have $S \subseteq \dom \Psi_\lambda = \dom \omega^\star$.
            Then, from Definition \ref{def:pgd-inner-lp} we have $v_0 \in \dom\Psi_\lambda$.
            Therefore, $\dist(v_0 | S) \le C_\Psi = \diam(\dom \omega^\star)$.
            From Assumption \ref{ass:for-inxt-prox}\ref{ass:for-inxt-prox:item4}, it states that $\dom \omega^\star$ is bounded.
            Therefore, we have $\dist(v_0 | S) \le C_\Psi = \diam(\dom \omega^\star) < \infty$.
            Secondly, $\bar \tau_\lambda$ in Definition \ref{def:pgd-inner-lp} produces $\tau_j \le \sup_{j \in \Z_+} \tau_j = \bar \tau_\lambda$. 
            \par
            Now, we prove item \ref{thm:inn-loop-lin-cnvg:item2}. 
            For better notation, recall that $\Delta_{\lambda, j} = \Psi_\lambda(v_j) - \Psi_\lambda(\bar v)$ for all $j \in \Z_+$.
            From Theorem \ref{thm:minimizing-dual-pp}\ref{thm:minimizing-dual-pp:result1}, the primal $\Psi_\lambda$ admits the minimizer $\bar z = y - \lambda A^\top \bar v$.
            Recall duality gap $\mathbf G_\lambda$ from \eqref{eqn:duality-gap-pp} attains zero, i.e., $\mathbf G_\lambda(\bar z, \bar v) = 0 = \Phi_\lambda(\bar z) + \Psi_\lambda(\bar v)$.
            Therefore, we have for all $j \in \N$ the duality gap $\mathbf G_\lambda$: 
            {\allowdisplaybreaks 
            \begin{align*}
                \mathbf G_\lambda(z_j, v_j) &= \Phi_\lambda(z_j) + \Psi_\lambda(v_j) + 0
                \\
                &= \Phi_\lambda(z_j) - \Phi_\lambda(\bar z) + \Psi_\lambda(v_j) - \Psi_\lambda(\bar v)
                \\
                &= \Phi_\lambda(z_j) - \Phi_\lambda(\bar z) + \Delta_{\lambda, j}
                \\
                &\underset{(2)}\le 
                \sqrt{\Delta_{\lambda, j}}\left(
                    2\sqrt{2\lambda}K_\omega\Vert A\Vert
                    + \sqrt{\Delta_{\lambda, j}} 
                \right)
                + \Delta_{\lambda, j}
                \\
                &= 
                \sqrt{\Delta_{\lambda, j}}\left(
                    2\sqrt{2\lambda}K_\omega\Vert A\Vert
                    + 2\sqrt{\Delta_{\lambda, j}} 
                \right)
                \\
                &\underset{\text{\ref{thm:inn-loop-lin-cnvg:item1}}}\le
                \sqrt{\frac{\bar \tau_\lambda}{2}}\left(
                    \frac{1}{1 + \kappa_\lambda/\bar \tau_\lambda}
                \right)^{\frac{j - 1}{2}}C_\Psi\left(
                    2\sqrt{2\lambda}K_\omega\Vert A\Vert
                    + 2\sqrt{\Delta_{\lambda, j}} 
                \right)
                \\
                &\underset{(3)}\le
                \sqrt{\frac{\bar \tau_\lambda}{2}}
                \left(
                    \frac{1}{1 + \kappa_\lambda/\bar \tau_\lambda}
                \right)^{\frac{j - 1}{2}}C_\Psi\left(
                    2\sqrt{2\lambda}K_\omega\Vert A\Vert
                    + \frac{\sqrt{2\bar \tau_\lambda}C_\Psi}{2}
                \right)
                \\
                &= 
                \left(
                    \frac{1}{1 + \kappa_\lambda/\bar \tau_\lambda}
                \right)^{\frac{j - 1}{2}}
                C_\Psi\left(
                    2\sqrt{\lambda\bar \tau_\lambda}K_\omega\Vert A\Vert
                    + \frac{\bar\tau_\lambda C_\Psi}{2}
                \right). 
            \end{align*}
            }\noindent
            At (2), we invoked Theorem \ref{thm:minimizing-dual-pp}\ref{thm:minimizing-dual-pp:result3} on $\Phi_\lambda(z_j) - \Phi_\lambda(\bar z)$. 
            At (3) we invoked previous item \ref{thm:inn-loop-lin-cnvg:item1} to bound $2\sqrt{\Delta_{\lambda, j}}$ giving, for all $j\in \N$, the inequality:
            \begin{align*}
                \Delta_{\lambda, j} \le \frac{\overline \tau_\lambda}{2}\left(
                    \frac{1}{1 + \kappa_\lambda/\bar\tau_\lambda}
                \right)^{j - 1}C_\Psi^2 
                \le \frac{\bar \tau_\lambda}{2}C_\Psi^2. 
            \end{align*}
            Therefore, the above gives $2\sqrt{\Delta_{\lambda, j}}\le \frac{\sqrt{2\bar \tau_\lambda}C_\Psi}{2}$.
            \par 
            We now show \ref{thm:inn-loop-lin-cnvg:item3}.
            To argue $\mathbf G_\lambda(z_j, v_j) \le \epsilon$, we use item \ref{thm:inn-loop-lin-cnvg:item2}.
            Then, it suffices to show that $(j - 1)/2 \ge \max(0, \ln(C_\lambda/\epsilon)/\ln(1 + \kappa_\lambda/\bar \tau_\lambda))$ implies $C_\lambda \left(\frac{1}{1 + \kappa_\lambda/\bar \tau_\lambda}\right)^{\frac{j - 1}{2}} \le \epsilon$.
            Consider
            {\allowdisplaybreaks
            \begin{align*}
                C_\lambda\left(
                    \frac{1}{1 + \kappa_\lambda/\bar \tau_\lambda}
                \right)^{\frac{j - 1}{2}}
                &= C_\lambda \exp\left(
                    \frac{j - 1}{2}\ln \left(
                        \frac{1}{1 + \kappa_\lambda/\bar \tau_\lambda}
                    \right)
                \right)
                \\
                &= 
                C_\lambda \exp\left(
                    \frac{1 - j}{2}\ln \left(
                        1 + \kappa_\lambda/\bar \tau_\lambda
                    \right)
                \right)
                \\
                &\underset{(4)}\le
                C_\lambda \exp\left(
                    \min\left(
                        0, 
                        \frac{-\ln(C_\lambda/\epsilon)}{\ln(1 + \kappa_\lambda/\bar \tau_\lambda)}
                    \right)
                    \ln \left(
                        1 + \kappa_\lambda/\bar \tau_\lambda
                    \right)
                \right)
                \\
                &= 
                C_\lambda \exp\left(
                    \min\left(
                        0, -\ln(C_\lambda/\epsilon)
                    \right)
                \right)
                \\
                &= C_\lambda\min(1, \epsilon/C_\lambda) \le \epsilon. 
            \end{align*}
            }
            At (4), we used: 
            \begin{align*}
                \frac{j - 1}{2} \ge \max\left(0, 
                    \frac{\ln(C_\lambda/\epsilon)}{\ln(1 + \kappa_\lambda/\bar \tau_\lambda)}
                \right) \iff 
                \frac{1 - j}{2} \le 
                \min\left(
                    0, - \frac{\ln(C_\lambda/\epsilon)}{\ln(1 + \kappa_\lambda/\bar \tau_\lambda)}
                \right). 
            \end{align*}
            Then, we substitute the above upper bound for $(1 - j)/2$. 
            Since $\mathbf G_\lambda(z_j, v_j) \le \epsilon$, by Lemma \ref{lemma:dlty-gap-inxt-pp}, we have $z_j \approx_\epsilon \hprox_{\lambda \omega\circ A}(y)$.
        \end{proof}
        \begin{remark}
            To choose a feasible $v_0$ in practice, one may apply the $\hprox_{\omega^\star}$ operator.
        \end{remark}

\section{Total oracle complexity of the algorithm}\label{sec:total-cmplx}
    In this section, we present the main result of our paper.
    It states that the total number of iterations of the inner loop needed to achieve $F(x_k) - F(\bar x) \le \varepsilon$ is bounded by $\mathcal O(\varepsilon^{-1/2}\ln (\varepsilon^{-1}))$.
    Similarly, we also show that the total number of iterations of the inner loop needed to achieve $\dist(\mathbf 0 | \partial_{\epsilon_k}F(x_k)) \le \varepsilon$ is bounded by $\mathcal O(\varepsilon^{-1}\ln(\varepsilon^{-1}))$.
    \par
    To this end, Section \ref{ssec:g-bnded-ilp-cmplx} is dedicated to showing that the complexity of the inner loop is bounded globally for all iterations of the outer loop.
    Then, in Section \ref{ssec:innlp-total-cmplx} we present Theorem \ref{thm:inn-lp-overall-cmplx} which is the major result. 
    \subsection{Globally bounded inner loop complexity}\label{ssec:g-bnded-ilp-cmplx}
        Note that the inner loop solves a different optimization problem at each iteration of the outer loop.
        Therefore, even if the inner loop has a linear convergence rate, it does not mean that it converges at the same rate on each iteration of the outer loop.
        More precisely, observe that $C_\lambda$ in Theorem \ref{thm:inn-loop-lin-cnvg}\ref{thm:inn-loop-lin-cnvg:cnst-lambda} depends on $\lambda$ which changes depending on $L_k$ (Assumption \ref{ass:opt-mmntm-seq}) from the outer loop.
        \par
        In this section we address the concern and show that under two mild assumptions on the dual problem $\Psi_\lambda$, and the inner loop algorithm. 
        We derive inner loop linear convergence independent of parameter $\lambda$ from the outer loop.
        We refer to this property of the complexity of the inner loop as ``globally bounded". 
        \par
        Let $(F, f, g, L)$, $(\alpha_k, B_k, \rho_k,\epsilon_k)_{k \in \Z_+}$ satisfy Definition \ref{def:inxt-apg}. 
        Fix any $k \in \Z_+$ to be the iteration counter of the outer loop. 
        Let $(g, \omega, A)$ satisfy Assumption \ref{ass:for-inxt-prox}.
        Take $L_k = B_k + \rho_k$ as given by Assumption \ref{ass:opt-mmntm-seq}. 
        In this case, the inner loop finds $x \approx_{\epsilon_k} T_{L_k}(y_k)$ by evaluating the equivalent inexact proximal point problem (Lemma \ref{lemma:other-repr-inxt-pg}): 
        \begin{align*}
            x_k \approx_{\epsilon_k} \hprox_{L_k^{-1} g}(y_k - L_k^{-1}\nabla f(y_k)).
        \end{align*}
        Let $\lambda^{(k)} := L_k^{-1}, \tilde y_k := y_k - L_k^{-1}\nabla f(y_k)$.
        Then, the proximal problem $\Phi_{\lambda^{(k)}}$ from \eqref{eqn:primal-pp} is: 
        \begin{align}\label{eqn:primal-pp-k}
            \Phi_{\lambda^{(k)}}(u) &:= \frac{1}{2\lambda^{(k)}} \Vert u - \tilde y_k \Vert^2 + \omega(A u). 
        \end{align}
        From \eqref{eqn:dual-pp}, the dual becomes: 
        \begin{align}\label{eqn:dual-pp-k}
            \Psi_{\lambda^{(k)}}(v) &:=
            \frac{1}{2\lambda^{(k)}}
            \left\Vert \lambda^{(k)}A^\top v - \tilde y_k
            \right\Vert^2
            + \omega^\star(v) 
            - \frac{1}{2\lambda^{(k)}}\Vert \tilde y_k\Vert^2. 
        \end{align}
        Finally, the duality gap is: $\mathbf G_{\lambda^{(k)}}(u, v) = \Phi_{\lambda^{(k)}}(u) + \Psi_{\lambda^{(k)}}(v)$. 
        All the above forms the primal and dual of the proximal problem \eqref{eqn:primal-pp} defined in Section \ref{ssec:optz-inxt-pp-problem}.
        However, the parameters $\lambda, y$ are instead $\lambda^{(k)}, \tilde y_k$ which changes depending on $k \in \Z_+$. 
        \par
        To show that the complexity is bounded globally across all iterations, we assume the following. 
        \begin{assumption}[globally bounded inner loop complexity]\;\label{ass:inn-cmplx}\\ 
            Let $\rho_k, B_k, L_k$ be given by Definition \ref{def:inxt-apg}.
            Define $\lambda^{(k)} = L^{-1}_k$. \\
            For all $k \in \Z_+$, let iterates $\left(z_j^{(k)}, v_j^{(k)}\right)_{j \in \Z_+}$, and $\bar \tau_{\lambda^{(k)}}$ be given by Definition \ref{def:pgd-inner-lp}.
            We assume the parameters of the outer loop $\left(\epsilon_k, \lambda^{(k)}\right)$, and the parameters of the inner loop $\left(\omega, A, \tilde y_k, \lambda^{(k)}, h_{\lambda^{(k)}}, \Phi_{\lambda^{(k)}}, \Psi_{\lambda^{(k)}}, \kappa_{\lambda^{(k)}}, \bar \tau_{\lambda^{(k)}}\right)$ satisfy the following. 
            \begin{enumerate}[nosep]
                \item\label{ass:inn-cmplx:item1} $(\epsilon_k)_{k \in \Z_+}$ satisfies Assumption \ref{ass:opt-mmntm-seq}\ref{ass:opt-mmntm-seq:item3}, and $(L_k)_{k \in \Z_+}$ satisfies Assumption \ref{ass:opt-mmntm-seq}\ref{ass:opt-mmntm-seq:item2}.
                \item\label{ass:inn-cmplx:item2} For all $k \in \Z_+$, the set of parameters $\left(\omega, A, \tilde y_k, \lambda^{(k)}, h_{\lambda^{(k)}}, \Phi_{\lambda^{(k)}}, \Psi_{\lambda^{(k)}}, \kappa_{\lambda^{(k)}}\right)$ satisfies Assumption \ref{ass:lin-cnvg-for-pp}. 
                \item\label{ass:inn-cmplx:item3} There exists $\kappa_{\min} > 0$ such that $\inf_{k \in \Z_+}\kappa_{\lambda^{(k)}} > \kappa_{\min}$. 
                \item\label{ass:inn-cmplx:item4} There exists $\bar \tau_{\max} \in \RR$ such that $\sup_{k \in \Z_+}\bar \tau_{\lambda^{(k)}} \le \bar \tau_{\max}$. 
            \end{enumerate}
        \end{assumption}
        \begin{remark}
            In the above assumption, item \ref{ass:inn-cmplx:item1} implies $\{\lambda^{(k)}\}_{k\in \Z_+}\subseteq [L_{\max}^{-1}, L_{\min}^{-1}]$ where $0 < L_{\min} \le L_{\max}$;
            Item \ref{ass:inn-cmplx:item3} says that there exists $\kappa_{\min} > 0$ such that for all $\lambda \in \left[L_{\max}^{-1}, L_{\min}^{-1}\right]$, $\Psi_\lambda$ satisfies quadratic growth condition with $\kappa_{\lambda}$ such that $\kappa_\lambda \ge \kappa_{\min} > 0$;
            Finally, item \ref{ass:inn-cmplx:item4} says that the line search constant from Definition \ref{def:pgd-inner-lp} is bounded above.  
            Finally, $\bar \tau_{\max}$ exists by virtue of the fact that $h_\lambda$ is a Lipschitz-smooth function.
            However, the existence of $\kappa_{\min}$ is harder to verify in general. 
            Nonetheless, we show that $\kappa_{\min}$ exists for conic polyhedral $\omega$, see Section \ref{ssec:example-fxn-lin-cnvg-lp}.
        \end{remark}
        
        \begin{proposition}[inner loop complexity can be bounded globally]\;\label{prop:inner-lp-cmplx}\\
            Let $(\epsilon_k, \lambda_k, \kappa_{\lambda^{(k)}}, \bar \tau_{\lambda^{(k)}})_{k\in\Z_+}$, $\left(z_j^{(k)}, v_j^{(k)}\right)_{j \in \Z_+, k \in \Z_+}$ and $\left(\lambda^{(k)}, \epsilon_k\right)_{k \in \Z_+}$ satisfy Assumption \ref{ass:inn-cmplx}. 
            Denote $J_k \in \Z_+$ be the smallest integer such that $\mathbf G_{\lambda^{(k)}}\left(z_{J_k}^{(k)}, v_{J_k}^{(k)}\right) \le \epsilon_k$. 
            Take $C_{\Psi}$ from Theorem \ref{thm:inn-loop-lin-cnvg}\ref{thm:inn-loop-lin-cnvg:cnst-psi}, $K_\omega$ in Assumption \ref{ass:for-inxt-prox}\ref{ass:for-inxt-prox:item4}.
            Let $\kappa_{\min}, \bar \tau_{\max}$ be given by Assumption \ref{ass:inn-cmplx}\ref{ass:inn-cmplx:item3}, \ref{ass:inn-cmplx:item4}. 
            Define: 
            \begin{enumerate}[nosep, label=(\Roman*), font=\upshape]
                \item\label{prop:inner-lp-cmplx:cnst1} 
                $C_\lambda^{\max} := C_{\Psi}\left(2 \Vert A\Vert K_\omega\sqrt{\frac{\bar \tau_{\max}}{L_{\min}}} + (1/2)\bar \tau_{\max} C_{\Psi}\right)$,
                \item\label{prop:inner-lp-cmplx:cnst2} $C_4 := 1 + \frac{\kappa_{\min}}{\bar \tau_{\max}}$, 
                \item\label{prop:inner-lp-cmplx:cnst3} $C_5 := C_\lambda^{\max}C_4^{1/2}\mathcal E_0^{-1}$. 
            \end{enumerate}
            Then, the following holds for all $k \in \Z_+$:
            \begin{align}\label{prop:inner-lp-cmplx:rslt1}
                \bar J_k := \max_{i = 0, \ldots, k}J_i 
                &\le \max\left(
                    1, \frac{2\ln(C_5)}{\ln(C_4)}, \frac{
                        2(2 + p)\ln\left(k(4C_5)^{\frac{1}{2 + p}}\right)
                    }{\ln(C_4)}
                \right). 
            \end{align}
        \end{proposition}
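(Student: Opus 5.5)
The plan is to apply Theorem \ref{thm:inn-loop-lin-cnvg}\ref{thm:inn-loop-lin-cnvg:item3} at every outer iteration $i\le k$, with $\lambda=\lambda^{(i)}$, $y=\tilde y_i$, $\epsilon=\epsilon_i$. We then replace every $i$-dependent constant by a uniform one using Assumption \ref{ass:inn-cmplx}, and control $\epsilon_i^{-1}$ with Lemma \ref{lemma:err-schedule-lbnd}.

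\textbf{Step 1: uniform constants.} Since $\lambda^{(i)}=L_i^{-1}\le L_{\min}^{-1}$ and $\bar\tau_{\lambda^{(i)}}\le\bar\tau_{\max}$, the constant $C_{\lambda^{(i)}}$ of Theorem \ref{thm:inn-loop-lin-cnvg}\ref{thm:inn-loop-lin-cnvg:cnst-lambda} satisfies $C_{\lambda^{(i)}}\le C_\lambda^{\max}$. This holds because $C_{\lambda}$ is monotone increasing in both $\lambda$ and $\bar\tau_\lambda$, and $C_\Psi$ does not depend on $i$. Next, $\kappa/\bar\tau$ decreases as $\kappa$ decreases and as $\bar\tau$ increases. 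Using $\kappa_{\lambda^{(i)}}\ge\kappa_{\min}$, we get $1+\kappa_{\lambda^{(i)}}/\bar\tau_{\lambda^{(i)}}\ge C_4>1$. Therefore, by Theorem \ref{thm:inn-loop-lin-cnvg}\ref{thm:inn-loop-lin-cnvg:item2}, for $j\ge1$,
\begin{align*}
\mathbf G_{\lambda^{(i)}}(z_j^{(i)},v_j^{(i)})\le C_\lambda^{\max}C_4^{-(j-1)/2}.
\end{align*}
It follows that $J_i\le \max\left(1,\,1+2\ln(C_\lambda^{\max}/\epsilon_i)/\ln C_4\right)$. Here $J_i$ is the first index at which the gap falls below $\epsilon_i$, so it is at most any $j$ for which this bound is below $\epsilon_i$.

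\textbf{Step 2: insert the error lower bound.} We write $1+2\ln(C_\lambda^{\max}/\epsilon_i)/\ln C_4 = 2\ln\!\left(C_\lambda^{\max}C_4^{1/2}\epsilon_i^{-1}\right)/\ln C_4$, which absorbs the additive $1$ into the factor $C_4^{1/2}$. By Lemma \ref{lemma:err-schedule-lbnd}, $\epsilon_i^{-1}\le\max(\mathcal E_0^{-1},4i^{2+p}\mathcal E_0^{-1})$. This gives the argument bound $C_\lambda^{\max}C_4^{1/2}\epsilon_i^{-1}\le \max(C_5,\,4C_5 i^{2+p})$. We then use $\ln(4C_5i^{2+p})=(2+p)\ln\!\left(i(4C_5)^{1/(2+p)}\right)$. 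Hence each $J_i$ is bounded by the three-term maximum in \eqref{prop:inner-lp-cmplx:rslt1} evaluated at $i$.

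\textbf{Step 3: take the maximum over $i$.} The right-hand side is nondecreasing in $i$, because only the third term depends on $i$ and it is increasing. Taking the maximum over $i=0,\dots,k$ therefore yields the bound for $\bar J_k$.

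The main obstacle is bookkeeping rather than ideas. In Step 1, the substitution $\sqrt{\lambda\bar\tau}\le\sqrt{\bar\tau_{\max}/L_{\min}}$ must be matched to the paper's $C_\lambda$; note also that Lemma \ref{lemma:err-schedule-lbnd} has a typo in its max. Step 2 needs a little care at $i=0$, where $\epsilon_0=\mathcal E_0$ and the logarithm of $i$ is degenerate. That case is covered by the first two terms of the maximum, so the $0\cdot\ln$ issue disappears. Finally, if $C_5<1$ the logarithms may be negative, and the outer $\max(1,\cdot)$ handles this.
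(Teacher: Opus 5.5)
Your proposal is correct and follows essentially the same route as the paper's proof: apply Theorem~\ref{thm:inn-loop-lin-cnvg} at each outer iteration, replace $C_{\lambda^{(i)}}$ and $1+\kappa_{\lambda^{(i)}}/\bar\tau_{\lambda^{(i)}}$ by $C_\lambda^{\max}$ and $C_4$, absorb the additive $1$ into $C_4^{1/2}$, and invoke Lemma~\ref{lemma:err-schedule-lbnd}; bounding each $J_i$ and using monotonicity in $i$, as you do, is equivalent to the paper moving $\max_i$ inside the logarithm. Two caveats come from the paper rather than from you: the argument only gives $J_i\le\bigl\lceil\max\bigl(1,1+2\ln(C_\lambda^{\max}/\epsilon_i)/\ln C_4\bigr)\bigr\rceil$, and in the proof of Lemma~\ref{lemma:err-schedule-lbnd} the quantity $\bigl(\sqrt{L_0/L_{\min}}\,(1+k)\bigr)^{-2}$ equals $\frac{L_{\min}}{L_0}(1+k)^{-2}$, not $\frac{L_0}{L_{\min}}(1+k)^{-2}$, so strictly $\epsilon_i^{-1}\le 4i^{2+p}L_0/(\mathcal E_0L_{\min})$ for $i\ge1$ and the third term of the bound needs $4C_5$ replaced by $4C_5L_0/L_{\min}$.
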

        \begin{proof}
            Assumption \ref{ass:inn-cmplx} includes Assumption \ref{ass:lin-cnvg-for-pp}. 
            Therefore, we apply Theorem \ref{thm:inn-loop-lin-cnvg}\ref{thm:inn-loop-lin-cnvg:item3}, which means for all $k \in \Z_+$:
            \begin{align}\label{prop:inner-lp-cmplx-pitem1}\begin{split}
                \bar J_k &=
                \max_{i=0, \ldots, k}\max\left(
                    1, \frac{
                        2\ln\left(
                            \epsilon_i^{-1}C_{\lambda^{(i)}}
                        \right)
                        }{
                            \ln\left(
                                1 + \frac{\kappa_{\lambda^{(i)}}}{\bar \tau_{\lambda^{(i)}}}
                            \right)
                        }
                        + 1
                \right)
                \\
                &\underset{(1)}\le
                \max\left(
                    1, \max_{i=0, \ldots, k}
                    \left(
                        \frac{
                        2\ln\left(
                            \epsilon_i^{-1}
                            C_{\lambda^{(i)}}
                        \right)
                        }{\ln\left(C_4\right)}
                    \right)
                    + 1
                \right)
                \\
                &\underset{(2)}\le 
                \max\left(
                    1, \max_{i=0, \ldots, k}
                    \left(
                        \frac{
                        2\ln\left(
                            \epsilon_i^{-1}
                            C_{\lambda}^{\max}
                        \right)
                        }{\ln\left(C_4\right)}
                    \right) + 1
                \right)
                \\
                &= \max\left(
                    1, 
                    \frac{2\ln\left(\left(
                                \max_{i=0, \ldots, k}\epsilon_i^{-1}
                            \right)
                            C_{\lambda}^{\max}
                        \right) + 2\ln(\sqrt{C_4})
                        }{\ln\left(C_4\right)}
                    \right)
                \\
                &= 
                \max\left(
                    1, \frac{
                    2\ln\left(
                        C_{\lambda}^{\max}
                        \sqrt{C_4}\left(
                            \max_{i=0, \ldots, k}\epsilon_i^{-1}
                        \right)
                    \right)
                    }{\ln\left(C_4\right)}
                \right). 
            \end{split}\end{align} 
            At (1), we used Assumption \ref{ass:inn-cmplx}\ref{ass:inn-cmplx:item3}, \ref{ass:inn-cmplx:item4}. 
            This gives $C_4 = 1 + \frac{\kappa_{\min}}{\bar \tau_{\max}} \le \inf_{i \in \Z_+}1 + \kappa_{\lambda^{(i)}}/\bar \tau_{\lambda^{(i)}}$.
            At (2), recall $C_\lambda$ from Theorem \ref{thm:inn-loop-lin-cnvg}, $\lambda^{(i)} = L_i^{-1}$ from Assumption \ref{ass:inn-cmplx}.
            Then, it follows from Assumption \ref{ass:inn-cmplx}\ref{ass:inn-cmplx:item1} that $C_{\lambda^{(i)}}$ admits:
            \begin{align*}
                \sup_{i \in \Z_+}C_{\lambda^{(i)}} &= 
                \sup_{i \in \Z_+}\left\lbrace
                    C_{\Psi}\left(
                        2\sqrt{\lambda^{(i)}\bar \tau_{\lambda^{(i)}}}
                        K_\omega \Vert A\Vert + \bar \tau_{\lambda^{(i)}} C_{\Psi}/2
                    \right)
                \right\rbrace
                \\
                &\le 
                C_{\Psi}\left(
                    2\sqrt{\sup_{i \in \Z_+}L_i^{-1}\bar \tau_{\lambda^{(i)}}}
                    K_\omega \Vert A\Vert + \sup_{i \in \Z_+}\bar \tau_{\lambda^{(i)}} C_{\Psi}/2
                \right)
                \\
                &\le 
                C_{\Psi}\left(
                    2\sqrt{
                        L_{\min}^{-1}\bar \tau_{\max}
                    }K_\omega \Vert A\Vert 
                    + \frac{\bar \tau_{\max} C_{\Psi}}{2}
                \right)
                \\
                &= 
                C_{\Psi}\left(
                    2K_\omega \Vert A\Vert\sqrt{
                        \frac{\bar \tau_{\max}}{L_{\min}}
                    } + \frac{\bar \tau_{\max} C_{\Psi}}{2}
                \right) = C_{\lambda}^{\max}. 
            \end{align*}
            Next, we continue simplifying \eqref{prop:inner-lp-cmplx-pitem1} by bounding one of its terms, which gives::
            \begin{align}\label{prop:inner-lp-cmplx-pitem2}\begin{split}
                &2\ln\left(
                    C_\lambda^{\max}\sqrt{C_4}\left(
                        \max_{i=0, \ldots, k}\epsilon_i^{-1}
                    \right)
                \right)
                \\
                &\underset{(3)}\le 2\ln \left(
                    C_\lambda^{\max} \sqrt{C_4} \mathcal E_0^{-1}\max\left(1, 4k^{2+p}\right)
                \right)
                \\
                &\underset{(4)}= 2\max\left(
                    \ln\left(
                        C_\lambda^{\max} \sqrt{C_4} \mathcal E_0^{-1}
                    \right), 
                    \ln\left(
                        C_\lambda^{\max} \sqrt{C_4} \mathcal E_0^{-1}4k^{2+p}
                    \right)
                \right)
                \\
                &\underset{(5)}= 
                2\max\left(
                    \ln\left(C_5\right), \ln\left(C_54k^{2+p}\right)
                \right)
                \\
                &= 2\max\left(
                    \ln\left(
                        C_5\right), (2 + p)\ln\left(k(4C_5)^{\frac{1}{2 + p}}\right)
                \right). 
            \end{split}
            \end{align}
            At (3), we used results from Lemma \ref{lemma:err-schedule-lbnd} which states: $\epsilon_i^{-1} \le \mathcal E_0^{-1}\max(1, 4i^{2+p})$ for all $i \in \Z_+$, implying that $\max_{i=0, \ldots, k}\le \mathcal E_0^{-1}\max(1, 4k^{2+p})$.
            At (4), we take the $\max$ out of $\ln$, observe that when $k = 0$, $\max(1, 4k^{2 + p}) = 1$; for all $k \in \N$, $\max(1, 4k^{2 + p}) = 4k^{2 + p}$.
            At (5), we made the substitution $C_5 = C_\lambda^{\max}C_4^{1/2}\mathcal E_0^{-1}$. 
            Finally, substituting the result from \eqref{prop:inner-lp-cmplx-pitem2} into \eqref{prop:inner-lp-cmplx-pitem1}, yields the desired result:
            \begin{align*}
                \bar J_k &\le \max\left(
                    1, \frac{2\ln(C_5)}{\ln(C_4)}, 2(2 + p)\frac{
                        \ln\left(k(4C_5)^{\frac{1}{2 + p}}\right)
                    }{\ln(C_4)}
                \right). 
            \end{align*}
        \end{proof}
        \begin{remark}
            Here, we point out the fact that the upper bound of iterative complexity on the inner loop for the first $k$ iterations of the outer loop depend on $\bar \tau_{\max}$, $\kappa_{\min}$, and $L_{\min}$.
            In practice, $\bar \tau_{\max}, L_{\min}$ can be known quite easily given the implementations of line search procedures. 
            However, the value of $\kappa_{\min}$ would require more theoretical exploration; it would heavily depend on the class of functions to which $\omega$ belongs.
            For example, we show in Section \ref{ssec:example-fxn-lin-cnvg-lp} that $\kappa_{\min}$ exists if $\omega$ is a conic polyhedral function.
        \end{remark}
    
    \subsection{Overall complexity}\label{ssec:innlp-total-cmplx}
        This section presents the major result that the total oracle complexity of our IAPG algorithm as measured by an upper bound on the total number of uses of $\hprox_{\lambda\omega^\star}$ and $\nabla f(x)$, ignoring line search and backtracking is $\mathcal O(\varepsilon^{-1/2}\ln\varepsilon^{-1})$.
        The following theorem calculates an upper bound of $\mathcal O(\varepsilon^{-1/2}\ln(\varepsilon^{-1}))$ for the total number of iterations of the inner loop needed to achieve $F(x_k) - F(\bar x) \le \varepsilon$ and stationarity; and an upper bound of $\mathcal O(\varepsilon^{-1}\ln(\varepsilon^{-1}))$ for the number of iterations needed.
        \begin{theorem}[the bounds on total number of inner iteration of IAPG]\;\label{thm:inn-lp-overall-cmplx}\\
            Under Assumption \ref{ass:opt-mmntm-seq}, let $(x_k)_{k \in \Z_+}$, $(\epsilon_k)_{k \in \Z_+}$, and $(F, f, g, L)$ be given by Definition \ref{def:inxt-apg}.
            Suppose Assumption \ref{ass:inn-cmplx} holds, and keep $J_k$ as introduced in Definition \ref{def:pgd-inner-lp}.
            Take $C_5, C_4$ as defined in Proposition \ref{prop:inner-lp-cmplx}\ref{prop:inner-lp-cmplx:cnst2}, \ref{prop:inner-lp-cmplx:cnst3}, $C_1, C_2, C_3$ as defined in Theorem \ref{thm:outerlp-itr-cmplx}\ref{thm:outerlp-itr-cmplx:cnst1}, \ref{thm:outerlp-itr-cmplx:cnst2}, \ref{thm:outerlp-itr-cmplx:cnst3}.
            Let $\bar x$ be a minimizer of objective function $F$. 
            We define in addition, the following constants: 
            \begin{enumerate}[nosep, label=(\Roman*), font=\upshape]
                \item $C_6:= \left\lceil C_2^{1/2}C_1^{-1} \right\rceil^2(4C_5)^{\frac{2}{2 + p}}$. 
                \item $C_7 := \lceil 4C_1C_2C_3 \rceil (4C_5)^{\frac{1}{2 + p}}$.  
                \item $C_8 := \left\lceil 4C_1C_2^{-1}C_3(L + L_{\max}) \right\rceil(4C_5)^{\frac{1}{2 + p}}$. 
            \end{enumerate}
            Then, the following are true. 
            \begin{enumerate}[nosep]
                \item For all $\varepsilon > 0$, there exists $k\in \Z_+$ such that $F(x_k) - F(\bar x) \le \varepsilon$. 
                In this case, the total number of inner loop iterations is bounded by $\mathcal O(\varepsilon^{-1/2}\ln(\varepsilon^{-1}))$ for small enough $\varepsilon$, and more specifically:
                {\small
                \begin{align}\label{thm:inn-lp-overall-cmplx:rslt1}\begin{split}
                    \sum_{l = 0}^{k} J_l
                    &\le\left(
                        1 + \left\lceil C_2^{1/2}\varepsilon^{-1/2}C_1^{-1}\right\rceil
                    \right)
                    \max\left(
                        1, \frac{2\ln(C_5)}{\ln(C_4)}, \frac{
                            (2 + p)\ln\left(
                                \max(1, 4\varepsilon^{-1})C_6
                            \right)
                        }{\ln(C_4)}
                    \right).
                \end{split}\end{align}
                }
                \item For all $\varepsilon > 0$, there exists $k\in \Z_+$ such that $\Vert x_k - y_k\Vert \le \varepsilon$. 
                In this case, the total number of inner loop iterations is bounded by $\mathcal O(\varepsilon^{-1}\ln(\varepsilon^{-1}))$ for small enough $\varepsilon$, and more specifically: 
                {\footnotesize
                \begin{align}\label{thm:inn-lp-overall-cmplx:rslt2}\begin{split}
                    \sum_{l = 0}^{k} J_l
                    &\le \left(
                    1 + \left\lceil 
                        \frac{4C_1C_2C_3}{\varepsilon}
                    \right\rceil
                    \right)\max\left(
                        1, \frac{2\ln(C_5)}{\ln(C_4)}, \frac{
                            2(2 + p)\ln\left(
                                \max(1, 2\varepsilon^{-1})C_7
                            \right)
                        }{\ln(C_4)}
                    \right). 
                \end{split}\end{align}}
                \item For all $\varepsilon > 0$, there exists $k \in \Z_+$ such that $\dist(\mathbf 0 | \partial_{\epsilon_k}F(x_k)) \le \varepsilon$. 
                In this case, the total number of inner loop iterations is bounded by $\mathcal O (\varepsilon^{-1}\ln(\varepsilon^{-1}))$ for small enough $\varepsilon$, and more precisely: 
                {\footnotesize\begin{align}\label{thm:inn-lp-overall-cmplx:rslt3}
                    \sum_{l = 0}^{k} J_l &\le
                    \left(
                        1 + \left\lceil 
                            \frac{4C_1 C_3(L + L_{\max})}{\varepsilon C_2}
                        \right\rceil
                    \right)\max\left(
                        1, \frac{2\ln(C_5)}{\ln(C_4)}, 
                        \frac{2(2 + p)\ln\left(
                            C_8\max\left(1, 2\varepsilon^{-1}\right)
                        \right)}{\ln(C_4)}
                    \right). 
                \end{align}}
            \end{enumerate}
        \end{theorem}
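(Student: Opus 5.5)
The plan is to combine the outer-loop iteration counts of Theorem \ref{thm:outerlp-itr-cmplx} with the globally bounded inner-loop count of Proposition \ref{prop:inner-lp-cmplx}. The only structural fact needed is the trivial bound $\sum_{l=0}^{k} J_l \le (k+1)\max_{0\le l\le k} J_l = (k+1)\bar J_k$. Each of the three items follows one template: choose the smallest admissible outer index $k$, bound $k+1$, and bound $\bar J_k$ by substituting that $k$ into \eqref{prop:inner-lp-cmplx:rslt1}.

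For item 1, set $k := \lceil C_2^{1/2}\varepsilon^{-1/2}C_1^{-1}\rceil$. By Theorem \ref{thm:outerlp-itr-cmplx}\ref{thm:outerlp-itr-cmplx:rsult1}, $F(x_k)-F(\bar x)\le\varepsilon$, which settles existence. The first factor is then $k+1$ as written.

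For the logarithm, bound $k$ as follows. Write $k\le \lceil C_2^{1/2}C_1^{-1}\rceil\max(1,\varepsilon^{-1/2})$, using $\lceil ab\rceil\le\lceil a\rceil\lceil b\rceil$ for $a,b\ge 0$, or simply $\lceil a b\rceil \le \lceil a\rceil \max(1,b)$ when $b\ge0$. This gives
\[
2(2+p)\ln\bigl(k(4C_5)^{1/(2+p)}\bigr) = (2+p)\ln\bigl(k^2(4C_5)^{2/(2+p)}\bigr) \le (2+p)\ln\bigl(\max(1,\varepsilon^{-1})C_6\bigr).
\]
The stated bound uses $\max(1,4\varepsilon^{-1})$, which only enlarges the argument, so the inequality holds a fortiori. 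Inserting this into Proposition \ref{prop:inner-lp-cmplx} gives \eqref{thm:inn-lp-overall-cmplx:rslt1}. Since $k+1=\mathcal O(\varepsilon^{-1/2})$ and the max is $\mathcal O(\ln\varepsilon^{-1})$ for small $\varepsilon$, the complexity is $\mathcal O(\varepsilon^{-1/2}\ln\varepsilon^{-1})$.

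Items 2 and 3 repeat this template. For item 2, take $k := \lceil 4C_1C_2C_3/\varepsilon\rceil$ from Theorem \ref{thm:outerlp-itr-cmplx}\ref{thm:outerlp-itr-cmplx:rsult2}. For item 3, take $k := \lceil 4C_1C_3(L+L_{\max})/(\varepsilon C_2)\rceil$ from Theorem \ref{thm:outerlp-itr-cmplx}\ref{thm:outerlp-itr-cmplx:rsult3}. In both cases bound $k\le \lceil c\rceil\max(1,\varepsilon^{-1})$, with $c$ the corresponding constant, so that
\[
k(4C_5)^{1/(2+p)}\le \max(1,\varepsilon^{-1})C_7 \quad\text{or}\quad \max(1,\varepsilon^{-1})C_8 .
\]
Monotonicity of $\ln$ then absorbs the factor $2$ in $\max(1,2\varepsilon^{-1})$. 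Both resulting bounds are $\mathcal O(\varepsilon^{-1}\ln\varepsilon^{-1})$.

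The main obstacle is not conceptual but bookkeeping in two places. First, the ceiling must be handled correctly, and $\varepsilon$ may exceed $1$, which is why $\max(1,\cdot)$ appears. Second, $k=0$ must be allowed: there the third term of \eqref{prop:inner-lp-cmplx:rslt1} is $\ln 0$, so that term should be read as vacuous and only the first two terms of the max matter. I would treat $k=0$ separately. I would also check that $J_l$ (the per-outer-iteration count, the smallest $j$ with gap $\le\epsilon_l$) is bounded termwise by $\bar J_k$, which holds by definition of the max.
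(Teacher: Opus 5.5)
Your overall route matches the paper's: bound $\sum_{l=0}^k J_l\le (k+1)\bar J_k$, take the outer index $k$ from Theorem~\ref{thm:outerlp-itr-cmplx}, and substitute it into \eqref{prop:inner-lp-cmplx:rslt1}. The problem is in the ceiling bookkeeping. The inequality $\lceil ab\rceil\le\lceil a\rceil\max(1,b)$ is false in general. Take $C_2^{1/2}C_1^{-1}=1$ and $\varepsilon=4/9$. Then $k=\lceil 3/2\rceil=2$, which exceeds $\lceil C_2^{1/2}C_1^{-1}\rceil\max(1,\varepsilon^{-1/2})=3/2$. Your intermediate bound $(2+p)\ln\bigl(k^2(4C_5)^{2/(2+p)}\bigr)\le(2+p)\ln\bigl(\max(1,\varepsilon^{-1})C_6\bigr)$ would then require $4\le 9/4$, which fails. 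The same error appears in items 2 and 3: $k\le\lceil c\rceil\max(1,\varepsilon^{-1})$ fails for $c=1$, $\varepsilon=2/3$. So the factors $4$ and $2$ in $\max(1,4\varepsilon^{-1})$ and $\max(1,2\varepsilon^{-1})$ are not slack absorbed ``a fortiori''. They are what pays for the ceiling of $\varepsilon^{-1/2}$ and $\varepsilon^{-1}$.

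The fix is what the paper does, and it is already implicit in your first inequality. Keep $\lceil ab\rceil\le\lceil a\rceil\lceil b\rceil$ for $a,b\ge0$, then use $\lceil t\rceil\le\max(1,2t)$ for $t>0$. This holds because $\lceil t\rceil=1$ on $(0,1)$, and $\lceil t\rceil<t+1\le 2t$ for $t\ge1$.
\begin{itemize}
\item For item 1, take $t=\varepsilon^{-1/2}$. This gives $\lceil\varepsilon^{-1/2}\rceil^2\le\max(1,4\varepsilon^{-1})$, hence $k^2(4C_5)^{2/(2+p)}\le\max(1,4\varepsilon^{-1})C_6$.
\item For items 2 and 3, take $t=\varepsilon^{-1}$. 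This gives $k(4C_5)^{1/(2+p)}\le\max(1,2\varepsilon^{-1})C_7$ (resp.\ $C_8$).
\end{itemize}
With this correction your argument coincides with the paper's proof. Your remark about the degenerate case $k=0$ is harmless extra care that the paper does not spell out. There the third term of \eqref{prop:inner-lp-cmplx:rslt1} is $\ln 0$, so only the first two terms of the max matter.
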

        \begin{proof}
            Let $k\in \Z_+$, recall $\bar J_k = \max_{i = 0, \ldots, k} J_i$ as defined in Proposition \ref{prop:inner-lp-cmplx}.
            Therefore, the total number of iterations of the inner loop has an upper bound of the form: $\sum_{l = 0}^{k}J_l \le \sum_{l=0}^{k} \bar J_k \le (k + 1)\bar J_k$.
            \par
            To show the first result \eqref{thm:inn-lp-overall-cmplx:rslt1}, we apply Theorem \ref{thm:outerlp-itr-cmplx}\ref{thm:outerlp-itr-cmplx:rsult1} because we assumed Assumption \ref{ass:opt-mmntm-seq}, and the fact that the algorithm of the outer loop satisfies Definition \ref{def:inxt-apg}.
            Therefore, if $k = \left\lceil \sqrt{\frac{C_2}{\varepsilon C_1^2}} \right\rceil$, then $F(x_k) - F(\bar x) \le \varepsilon$.
            Given such $k$, we apply \eqref{prop:inner-lp-cmplx:rslt1} from Proposition \ref{prop:inner-lp-cmplx} because Assumption \ref{ass:inn-cmplx} is assumed.
            To this end, we first simplify the algebra by considering:
            \begin{align}\label{thm:inn-lp-overall-cmplx:pitem1}\begin{split}
                2\ln\left(k(4C_5)^{\frac{1}{2 + p}}\right)
                &\underset{(1)} = 
                2\ln\left(
                    \left\lceil C_2^{1/2}\varepsilon^{-1/2}C_1^{-1} \right\rceil
                    (4C_5)^{\frac{1}{2 + p}}
                \right)
                \\
                &\le \ln\left(
                    \left\lceil \varepsilon^{-1/2}\right\rceil^2
                    \left\lceil C_2^{1/2} C_1^{-1}\right\rceil^2
                    (4C_5)^{\frac{2}{2 + p}}
                \right)
                \\
                &\underset{(2)}\le \ln\left(
                    \max(1, 4\varepsilon^{-1})
                    \left\lceil C_2^{1/2} C_1^{-1}\right\rceil^2
                    (4C_5)^{\frac{2}{2 + p}}
                \right)
                \\
                &\underset{(3)}= \ln\left( \max(1, 4\varepsilon^{-1})C_6 \right).
            \end{split}\end{align}
            At (1), we substituted: $k = \left\lceil C_2^{1/2}\varepsilon^{-1/2}C_1^{-1}\right\rceil$. 
            At (2), we used $\left\lceil \varepsilon^{-1/2} \right\rceil^2 \le \max(1, 4\varepsilon^{-1})$.
            This is because when $\varepsilon \ge 1$, we have $\left\lceil \varepsilon^{-1/2} \right\rceil^2 = 1$, and when $\varepsilon \in (0, 1)$ it follows that: 
            \begin{align*}
                \left\lceil \varepsilon^{-1/2} 
                \right\rceil^2 \le 
                \left(
                     \varepsilon^{-1/2} + 1
                \right)^2 \le 
                \left(
                    2\varepsilon^{-1/2}
                \right)^2 \le 4 \varepsilon^{-1}. 
            \end{align*}
            Finally, at (3), we substituted $C_6 = \left\lceil C_2^{1/2} C_1^{-1}\right\rceil^2(4C_5)^{\frac{2}{2 + p}}$ to simplify and obtain the final expression. 
            Now, combining with previously derived result \eqref{prop:inner-lp-cmplx:rslt1} from Proposition \ref{prop:inner-lp-cmplx}, the total number of iterations satisfies:
            \begin{align*}
                \sum_{l = 0}^{k} \bar J_k 
                &\le 
                (k + 1) \bar J_k
                \\[-1em]
                &\le 
                \left(
                    1 + \left\lceil C_2^{1/2}\varepsilon^{-1/2}C_1^{-1}\right\rceil
                \right)\max\left(
                    1, \frac{2\ln(C_5)}{\ln(C_4)}, \frac{
                        2(2 + p)\ln\left(
                            k(4C_5)^{\frac{1}{2 + p}}   
                        \right)
                    }{\ln(C_4)}
                \right)
                \\
                &\hspace{-0.3em}\underset{\eqref{thm:inn-lp-overall-cmplx:pitem1}}\le 
                \left(
                    1 + \left\lceil C_2^{1/2}\varepsilon^{-1/2}C_1^{-1}\right\rceil
                \right)
                \max\left(
                    1, \frac{2\ln(C_5)}{\ln(C_4)}, \frac{
                        (2 + p)\ln\left(
                            \max(1, 4\varepsilon^{-1})C_6
                        \right)
                    }{\ln(C_4)}
                \right)
                \\
                &= \mathcal O(\varepsilon^{-1/2}\ln(\varepsilon^{-1})). 
            \end{align*}
            \par
            To show \eqref{thm:inn-lp-overall-cmplx:rslt2}, we apply Theorem \ref{thm:outerlp-itr-cmplx}\ref{thm:outerlp-itr-cmplx:rsult2} because Assumption \ref{ass:opt-mmntm-seq} is assumed.  
            The result states that if $
                k = \left\lceil
                    \frac{4C_1C_2C_3}{\varepsilon}
                \right\rceil
            $, it follows that $\Vert x_k - y_k\Vert \le \varepsilon$. 
            Given such $k$, to pave the way for the derivation, we simplify the algebra by considering: 
            {\allowbreak
            \begin{align}\label{thm:inn-lp-overall-cmplx:pitem2}\begin{split}
                k(4C_5)^{\frac{1}{2 + p}}
                &= \left\lceil
                    4\varepsilon^{-1} C_1C_2C_3
                \right\rceil
                (4C_5)^{\frac{1}{2 + p}}
                \\
                &\le
                \left\lceil
                    \varepsilon^{-1}
                \right\rceil
                \left\lceil
                    4C_1C_2C_3
                \right\rceil
                (4C_5)^{\frac{1}{2 + p}}
                \\
                &\underset{(4)}\le 
                \max(1, 2\varepsilon^{-1})
                \left\lceil
                    4C_1C_2C_3
                \right\rceil
                (4C_5)^{\frac{1}{2 + p}}
                \\
                &\underset{(5)}= \max(1, 2\varepsilon^{-1})C_7.
            \end{split}\end{align}}
            At (4), we apply $\left\lceil \varepsilon^{-1}\right\rceil \le \max(1, 2\varepsilon^{-1})$.
            This is true because for all $\varepsilon \in (0, 1)$ we have $\left\lceil\varepsilon^{-1} \right\rceil \le \varepsilon^{-1} + 1 \le 2 \varepsilon^{-1}$.
            Otherwise, if $\varepsilon \ge 1$, it follows that $\left\lceil \varepsilon^{-1}\right\rceil = 1$.
            Therefore, combining the two cases gives $\left\lceil \varepsilon^{-1} \right\rceil \le \max(1, 2 \varepsilon^{-1})$. 
            At (5), we substituted the constant $C_7$ defined in the theorem statement.
            Next, we apply \eqref{prop:inner-lp-cmplx:rslt1} in Proposition \ref{prop:inner-lp-cmplx} because Assumption \ref{ass:inn-cmplx} is assumed here.
            It follows that the total number of inner loop iterations has:
            \begin{align*}
                \sum_{l = 0}^{k}J_l &\le (k + 1)\bar J_k 
                \\[-1em]
                &\le 
                \left(
                    1 + \left\lceil 
                        4\varepsilon^{-1}C_1C_2C_3
                    \right\rceil
                \right)\max\left(
                    1, \frac{2\ln(C_5)}{\ln(C_4)}, \frac{
                        2(2 + p)\ln\left(
                            k(4C_5)^{\frac{1}{2 + p}}   
                        \right)
                    }{\ln(C_4)}
                \right)
                \\
                &\hspace{-0.3em}\underset{\eqref{thm:inn-lp-overall-cmplx:pitem2}}\le 
                \left(
                    1 + \left\lceil 
                        4\varepsilon^{-1}C_1C_2C_3
                    \right\rceil
                \right)\max\left(
                    1, \frac{2\ln(C_5)}{\ln(C_4)}, \frac{
                        2(2 + p)\ln\left(
                            \max(1, 2\varepsilon^{-1})C_7
                        \right)
                    }{\ln(C_4)}
                \right). 
            \end{align*}
            \par
            Results \eqref{thm:inn-lp-overall-cmplx:rslt3} can be shown similarly. 
            We use Theorem \ref{thm:outerlp-itr-cmplx}\ref{thm:outerlp-itr-cmplx:rsult3} which states that if $k =
            \left\lceil 4\varepsilon^{-1}C_1C_2^{-1}C_3(L + L_{\max}) \right\rceil$, then $\dist(\mathbf 0 | \partial_{\epsilon_k}F(x_k)) \le \varepsilon$.
            Given such $k$, we simplify
            \begin{align}\label{thm:inn-lp-overall-cmplx:pitem3}\begin{split}
                k(4C_5)^{\frac{1}{2 + p}} 
                &= 
                \left\lceil 
                    \frac{4C_1C_3(L + L_{\max})}{2C_2}
                \right\rceil(4C_5)^{\frac{1}{2 + p}}
                \\
                &\le 
                \left\lceil \varepsilon^{-1}\right\rceil 
                \left\lceil 4C_1C_2^{-1}C_3(L + L_{\max})\right\rceil (4C_5)^{\frac{1}{2 + p}}
                \\
                &=
                \left\lceil \varepsilon^{-1}\right\rceil C_8
                \\
                &\le 
                \max(1, 2 \varepsilon^{-1})C_8
            \end{split}
            \end{align}
            Applying \eqref{prop:inner-lp-cmplx:rslt1} from Proposition \ref{prop:inner-lp-cmplx}, we have:
            \begin{align*}
                \sum_{l = 0}^{k}J_l &\le (k + 1)\bar J_k 
                \\[-1em]
                &\le 
                \left(
                    1 + \left\lceil 
                        \frac{4C_1 C_3(L + L_{\max})}{\varepsilon C_2}
                    \right\rceil
                \right)\max\left(
                    1, \frac{2\ln(C_5)}{\ln(C_4)}, 
                    \frac{2(2 + p)\ln\left(k(4C_5)^{\frac{1}{2 + p}}\right)}{\ln(C_4)}
                \right)
                \\
                &\hspace{-0.5em}\underset{\eqref{thm:inn-lp-overall-cmplx:pitem3}}\le 
                \left(
                    1 + \left\lceil 
                        \frac{4C_1 C_3(L + L_{\max})}{\varepsilon C_2}
                    \right\rceil
                \right)\max\left(
                    1, \frac{2\ln(C_5)}{\ln(C_4)}, 
                    \frac{2(2 + p)\ln\left(
                        C_8\max\left(1, 2\varepsilon^{-1}\right)
                    \right)}{\ln(C_4)}
                \right). 
            \end{align*}
        \end{proof}
        \begin{remark}
            This result is new to the best of our knowledge.  
            This complexity had never been shown in the literature in a similar context.
        \end{remark}
        \begin{corollary}[total complexity of IAPG]\label{corollary:total-iapg-cmplx}
            Suppose that Assumptions \ref{ass:opt-mmntm-seq}, \ref{ass:inn-cmplx} are true. 
            Take the outer loop iterates $(x_k)_{k \in \Z_+}$, and objective function $(F, f, g, L)$ as given by Definition \ref{def:inxt-apg}. 
            Let $\bar x$ be a minimizer of $F$.
            Then, ignoring complexity involved for line search, the total number of uses of $\hprox_{\lambda \omega^\star}, \nabla f$ in the IAPG algorithm satisfies:
            \begin{enumerate}[nosep]
                \item\label{corollary:total-iapg-cmplx-rslt1} For sufficiently small $\varepsilon > 0$, there exists $k \in \Z_+$ such that $F(x_k) - F(\bar x) \le \varepsilon$, and the total number of calls on $\nabla f, \hprox_{\lambda \omega^\star}$ is bounded by $\mathcal O(\varepsilon^{-1/2}\ln(\varepsilon^{-1}))$. 
                \item\label{corollary:total-iapg-cmplx-rslt2} For sufficiently small $\varepsilon > 0$, there exists $k \in \Z_+$ such that $\dist(\mathbf 0 | \partial_{\epsilon_k} F(x_k)) \le \varepsilon$, and the total number of calls on $\nabla f, \hprox_{\lambda \omega^\star}$ is bounded by $\mathcal O(\varepsilon^{-1}\ln(\varepsilon^{-1}))$. 
            \end{enumerate}
        \end{corollary}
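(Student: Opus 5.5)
The plan is to reduce the corollary to Theorem \ref{thm:inn-lp-overall-cmplx} by counting oracle calls per iteration of each loop.

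\textbf{Step 1: cost of one outer iteration.} Fix an outer iteration $k$. Computing $y_k$ by \eqref{def:inxt-apg:yk} and $x_k^\circ$ by \eqref{def:inxt-apg:vk} costs no oracle calls. Forming $\tilde y_k = y_k - L_k^{-1}\nabla f(y_k)$ costs exactly one call to $\nabla f$. If we ignore line search, checking \eqref{def:inxt-apg:ls} adds nothing further. By Lemma \ref{lemma:other-repr-inxt-pg}, the point $x_k$ is obtained as $z^{(k)}_{J_k}$ from the inner loop of Definition \ref{def:pgd-inner-lp}, applied with $\lambda = \lambda^{(k)}$ and $y = \tilde y_k$.

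\textbf{Step 2: cost of one inner iteration.} Each inner update \eqref{def:inxt-pgd-inner-lp-vj} uses exactly one call to $\hprox_{\tau_j^{-1}\omega^\star}$. The gradient $\lambda A A^\top v - A\tilde y_k$ (Remark \ref{remark:pgd-inner-lp}), the update $z_{j+1}$, and the gap $\mathbf G_{\lambda^{(k)}}$ use only matrix products, $\omega$, and $\omega^\star$. None of these calls $\nabla f$. The feasible initial point $v_0^{(k)}$ costs at most one extra prox call. Theorem \ref{thm:inn-loop-lin-cnvg}\ref{thm:inn-loop-lin-cnvg:item3} gives $\mathbf G \le \epsilon_k$ after $J_k$ steps, and Lemma \ref{lemma:dlty-gap-inxt-pp} then gives $x_k \approx_{\epsilon_k} T_{L_k}(y_k)$. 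So outer iteration $k$ costs at most $J_k + 2$ oracle calls.

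\textbf{Step 3: summing.} Reaching outer index $k$ therefore costs at most $\sum_{l=0}^k J_l + 2(k+1)$ calls.
\begin{itemize}
\item For item \ref{corollary:total-iapg-cmplx-rslt1}, take $k = \lceil C_2^{1/2}\varepsilon^{-1/2}C_1^{-1}\rceil$ as in Theorem \ref{thm:outerlp-itr-cmplx}\ref{thm:outerlp-itr-cmplx:rsult1}. Then $F(x_k) - F(\bar x) \le \varepsilon$. The sum is bounded by \eqref{thm:inn-lp-overall-cmplx:rslt1}, which is $\mathcal O(\varepsilon^{-1/2}\ln\varepsilon^{-1})$. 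The additive term $2(k+1) = \mathcal O(\varepsilon^{-1/2})$ is of lower order.
\item For item \ref{corollary:total-iapg-cmplx-rslt2}, use Theorem \ref{thm:outerlp-itr-cmplx}\ref{thm:outerlp-itr-cmplx:rsult3} together with \eqref{thm:inn-lp-overall-cmplx:rslt3}. This gives $\mathcal O(\varepsilon^{-1}\ln\varepsilon^{-1})$ plus $\mathcal O(\varepsilon^{-1})$.
\end{itemize}
The condition ``sufficiently small $\varepsilon$'' is needed so that the logarithmic term dominates the maxima in those bounds. That requires $\varepsilon < 1$, so $\max(1,4\varepsilon^{-1}) = 4\varepsilon^{-1}$, and $\varepsilon$ small enough that the third entry of each max exceeds the constants $1$ and $2\ln C_5/\ln C_4$.

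\textbf{Main difficulty.} The only delicate point is the bookkeeping in Steps 1 and 2. We must confirm that every quantity evaluated inside the inner loop (the gap, $\omega$, $\omega^\star$, $A$, $A^\top$) is cheap and uses no oracle. We must also confirm that the stopping index $J_k$ in Definition \ref{def:pgd-inner-lp} is the same $J_k$ bounded in Proposition \ref{prop:inner-lp-cmplx}, so the per-iteration count is $\mathcal O(J_k + 1)$. Once this is settled, the asymptotics follow directly from the explicit bounds.
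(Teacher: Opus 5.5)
Your proposal is correct and follows essentially the same route as the paper: it also takes the outer-iteration count from Theorem \ref{thm:outerlp-itr-cmplx} (one $\nabla f$ call per outer step) and adds the total inner-iteration bound from Theorem \ref{thm:inn-lp-overall-cmplx} (one $\hprox_{\lambda\omega^\star}$ call per inner step). Your bookkeeping is slightly more careful than the paper's, since you count the extra prox call that produces a feasible $v_0^{(k)}$ and you spell out how small $\varepsilon$ must be for the logarithmic term to dominate the maxima; these only add lower-order $\mathcal O(k)$ terms and do not change the argument.
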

        \begin{proof}
            Let $\varepsilon > 0$ be sufficiently small (it is sufficient to have $\varepsilon \le 1$). 
            Then, in the setting of \ref{corollary:total-iapg-cmplx-rslt1}, the outer loop iterative complexity is bounded by $\mathcal O(\varepsilon^{-1/2})$ by Theorem \ref{thm:outerlp-itr-cmplx}\ref{thm:outerlp-itr-cmplx:rsult1}, and the total iterative complexity of the inner loop is bounded by $\mathcal O(\varepsilon^{-1/2}\ln(\varepsilon^{-1}))$ by \eqref{thm:inn-lp-overall-cmplx:rslt1} from Theorem \ref{thm:inn-lp-overall-cmplx}. 
            \par
            In the setting of \ref{corollary:total-iapg-cmplx-rslt2}, the total iterative complexity of the inner loop is bounded by $\mathcal O(\varepsilon^{-1}\ln(\varepsilon^{-1}))$ by \eqref{thm:inn-lp-overall-cmplx:rslt3} in Theorem \ref{thm:inn-lp-overall-cmplx}, and the iterative complexity of the outer loop is bounded by $\mathcal O(\varepsilon^{-1})$ by Theorem \ref{thm:outerlp-itr-cmplx}\ref{thm:outerlp-itr-cmplx:rsult3}.
            \par
            Under the assumption of no line search, each inner loop iteration uses $\hprox_{\lambda \omega^\star}$ exactly once. 
            Similarly, each iteration of the outer loop uses $\nabla f$ exactly once. 
            Therefore, in the setting of \ref{corollary:total-iapg-cmplx-rslt1}, the total number of uses of $\nabla f, \hprox_{\lambda \omega^\star}$ is bounded by the complexity: $\mathcal O(\varepsilon^{-1/2} \ln(\varepsilon^{-1}))$.
            In the setting of \ref{corollary:total-iapg-cmplx-rslt2}, the total number of uses of $\nabla f, \hprox_{\lambda \omega^\star}$ is bounded by the complexity: $\mathcal O(\varepsilon^{-1} \ln(\varepsilon^{-1}))$.
        \end{proof}

\section{Algorithm implementations}\label{sec:alg-impl}
    This section presents implementation details for the inner loop (Algorithm \ref{alg:inner-loop}), and outer loop (Algorithm \ref{alg:outer-loop}) of our IAPG algorithm.
    Propositions \ref{prop:inn-lp-alg-works}, \ref{prop:outer-lp-works} below show that the implementations comply with our theories.
    \subsection{Inner loop implementations}
        Algorithm \ref{alg:inner-loop} implements the logic that find an element in $\approx_\epsilon \hprox_{\lambda \omega^\star}$ for the IAPG, i.e., it is the pseudocode for the inner loop.
        \begin{algorithm}[H]
            \begin{algorithmic}[1]
                \footnotesize\small
                \STATE{
                    \textbf{PPPGD:}
                    \begin{tabular}{|ll}
                        $\omega : \RR^m \rightarrow \RR$ & Proper closed, and convex\\ 
                        $A\in \RR^{m \times n} $  &  Matrix \\
                        $z_0 \in \RR^n$ & Initial guess \\
                        $y_k \in \RR^n$ & Iterate from the outer loop \\
                        $y^+ \in \RR^n$ & Iterate from outer loop, it should be $y^+ := y_k - L_{k}^{-1}\nabla f(y_k)$  \\
                        $\epsilon^\circ$ & $\epsilon^\circ \ge 0$, Absolute error\\
                        $\rho $ & $\rho > 0$, Relative Error\\
                        $\lambda$ & $\lambda > 0$\\
                        $\tau_0 = \lambda\Vert A^\top A\Vert$ & step size inverted \\
                        $s \in \N$ & Line search constant shrinkage half-life
                    \end{tabular}\vspace{0.5em}
                }
                \STATE{$v_0 := \hprox_{\omega^\star}(z_0)$}
                \STATE{$\Phi_\lambda(z) := \omega(Az) + (1/2)\lambda^{-1}\Vert z - y^+\Vert^2$}
                \STATE{$\Psi_\lambda(v) := (\lambda/2)\Vert A^\top v\Vert^2 - \left\langle A^\top v, y^+\right\rangle + \omega^\star(v)$}
                \FOR{$j = 0, 2, \ldots, 2^{20} - 1$}
                    \IF{$\Phi_\lambda(z_j) + \Psi_\lambda(v_j) < \epsilon^\circ + (\rho/2)\Vert z_j - y_k\Vert^2$}
                    \label{alg:inner-loop-exit-cond}
                        \STATE{\textbf{break}}
                    \ENDIF
                    \WHILE{$\tau_j \le 2^{1023}$}\label{alg:inner-loop-start-ls}
                        \STATE{$
                            v_{j + 1} := 
                            \hprox_{\tau^{-1}_j\omega^\star}
                            \left(v_j - \tau_j^{-1}A(\lambda A^\top v_j - y)\right).
                        $}\label{alg:inner-loop-primal-update}
                        \IF{$\lambda \left\Vert A^\top (v_{j + 1} - v_{j})\right\Vert^2 \le \tau_j\Vert v_{j + 1} - v_j\Vert^2$}\label{alg:inner-loop-ls-cond}
                            \STATE{\textbf{break}}
                        \ENDIF
                        \STATE{$\tau_j := 2\tau_j$}
                    \ENDWHILE\label{alg:inner-loop-end-ls}
                    \IF{$\tau_j > 2^{1023}$}
                    \STATE{\textbf{return Line Search Error}}\label{alg:inner-loop-bstrk}
                    \ENDIF
                    \STATE{$\tau_{j + 1} := 2^{-1/s}\tau_j$} \label{alg:inner-loop-rlx}
                    \STATE{$z_{j + 1} := y^+ - \lambda A^\top v_{j + 1}$} \label{alg:inner-loop-dual-update}
                \ENDFOR
                \STATE{\textbf{return } $z_j$}
                \caption{Proximal Point Problem with PGD, inner loop}\label{alg:inner-loop}
            \end{algorithmic}
        \end{algorithm}
        The line search and backtracking procedures of Algorithm \ref{alg:inner-loop} accommodate numerical stability, flexibility, and best performance practices.
        The exit condition $\tau > 2^{1023}$ (line \ref{alg:inner-loop-start-ls}) safeguards against overflow in common floating-point standard.
        It exits when the line search constant overflows.
        Line \ref{alg:inner-loop-exit-cond} includes relative error $(\rho/2)\Vert z_j - y_k\Vert^2$ on the duality gap. 
        Line \ref{alg:inner-loop-ls-cond} implements the closed-form formula suggested in the remark of Definition \ref{remark:pgd-inner-lp} to expedite line search.
        Besides that, the constant $s = 4096$ is chosen here as the rate which $\tau_j$ at which decreases.
        Observe that at Line \ref{alg:inner-loop-rlx}, $\tau_{j+1}$ decreases to $2^{-1/s}\tau_j$. 
        Therefore, if line search is never triggered, $\tau$ would halve itself every $4096$ iterations. 
        Our choice here is made conservative for best stability to prevent frequent line searches. 
        Finally, $y^+, y_k$ are given by the outer loop because they are fixed for the inner loop.
        \par
        To apply the results of previous sections, specifically the inner loop complexity in Theorem \ref{thm:inn-loop-lin-cnvg}, Algorithm \ref{alg:inner-loop} satisfies Definition \ref{def:pgd-inner-lp}.
        The proposition below demonstrates it.
        \begin{proposition}[Algorithm \ref{alg:inner-loop} is an algorithm for the inner loop]\;\label{prop:inn-lp-alg-works}\\
            Let $\lambda > 0, \epsilon > 0$, and $(\omega, A, y_k, \lambda, h_\lambda, \Phi_\lambda, \Psi_\lambda, \kappa_\lambda)$ satisfy Assumption \ref{ass:lin-cnvg-for-pp}. 
            Let initial guess $v_0\in \dom \omega^\star$ be feasible, let $z_0 = y_k - \lambda A^\top v_0$. 
            Then, Algorithm \ref{alg:inner-loop} satisfies Definition \ref{def:pgd-inner-lp} with $\bar \tau_\lambda \le 2\lambda \Vert A^\top\Vert^2$. 
        \end{proposition}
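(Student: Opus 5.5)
We verify the items of Definition \ref{def:pgd-inner-lp} one at a time: the update \eqref{def:pgd-inner-lp-vj}, the line search condition \eqref{def:pgd-inner-lp-ls}, the primal update \eqref{def:pgd-inner-lp-zj}, and the bound on $(\tau_j)$.

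\textbf{The updates.} First we identify the quantities in the pseudocode with those of the definition. With $y = y^+$ and $h_\lambda(x) = \frac{1}{2\lambda}\Vert \lambda x - y\Vert^2$, the remark after Definition \ref{def:pgd-inner-lp} gives $A\nabla h_\lambda(A^\top v) = \lambda AA^\top v - Ay$. Hence line \ref{alg:inner-loop-primal-update} is exactly \eqref{def:pgd-inner-lp-vj} evaluated at the accepted value of $\tau_j$, and line \ref{alg:inner-loop-dual-update} is exactly \eqref{def:pgd-inner-lp-zj}. Both $\Psi_\lambda$ in the algorithm and \eqref{eqn:dual-pp} differ only by the constant $\frac{1}{2\lambda}\Vert y\Vert^2$. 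Therefore the duality gap in the exit test equals $\mathbf G_\lambda$ up to this normalization, and the stopping rule matches the definition with $\epsilon$ taken as $\epsilon^\circ$ plus the nonnegative relative term.

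\textbf{The line search condition.} By the same remark, $D_{h_\lambda\circ A^\top}(v_{j+1}, v_j) = \frac{\lambda}{2}\Vert A^\top(v_{j+1}-v_j)\Vert^2$. So the acceptance test at line \ref{alg:inner-loop-ls-cond} is precisely \eqref{def:pgd-inner-lp-ls}.

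\textbf{The bound on $\bar\tau_\lambda$.} This is the main point. The key estimate is
\[
\lambda\Vert A^\top(v_{j+1}-v_j)\Vert^2 \le \lambda\Vert A^\top\Vert^2\Vert v_{j+1}-v_j\Vert^2,
\]
so the acceptance test holds automatically whenever $\tau_j \ge \lambda\Vert A^\top\Vert^2$. We then argue by induction on the iteration index that every accepted $\tau_j$ satisfies $\tau_j \le 2\lambda\Vert A^\top\Vert^2$.

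For the base case, $\tau_0 = \lambda\Vert A^\top A\Vert = \lambda\Vert A^\top\Vert^2$ already passes the test, so $\tau_0$ is accepted without doubling.

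For the inductive step, suppose the accepted $\tau_j$ satisfies $\tau_j \le 2\lambda\Vert A^\top\Vert^2$. The next trial value is $2^{-1/s}\tau_j$, which is still at most $2\lambda\Vert A^\top\Vert^2$. The inner while loop then does one of two things:
\begin{itemize}
\item If the trial value is rejected, the test failed, so the current value was below $\lambda\Vert A^\top\Vert^2$. Each doubling therefore starts from a value below $\lambda\Vert A^\top\Vert^2$, and doubling stops at the first value $\ge \lambda\Vert A^\top\Vert^2$, which is at most $2\lambda\Vert A^\top\Vert^2$.
\item If the trial value is accepted immediately, it is at most $2\lambda\Vert A^\top\Vert^2$ by the inductive hypothesis.
\end{itemize}
In particular the bound is finite (assuming $A \neq 0$, so that the line-search error at line \ref{alg:inner-loop-bstrk} is never reached). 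Hence $\sup_j \tau_j \le 2\lambda\Vert A^\top\Vert^2$.

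The inductive bookkeeping of the accepted $\tau_j$ versus the trial values is the delicate part; the remaining verifications are direct substitutions. The feasibility requirement $v_0 \in \dom\omega^\star$ holds because $v_0 = \hprox_{\omega^\star}(z_0)$ lies in $\dom\omega^\star$.
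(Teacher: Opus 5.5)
Your proof is correct and takes the same route as the paper. You identify lines \ref{alg:inner-loop-primal-update} and \ref{alg:inner-loop-dual-update} with \eqref{def:pgd-inner-lp-vj} and \eqref{def:pgd-inner-lp-zj}. You then observe that the acceptance test holds for every $\tau\ge\lambda\Vert A^\top\Vert^2$, so doubling can never overshoot $2\lambda\Vert A^\top\Vert^2$. Your induction over accepted versus trial values, using $\tau_0=\lambda\Vert A^\top A\Vert$ and the shrink factor $2^{-1/s}$, spells out what the paper's proof leaves implicit.

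There is one factual slip to fix. The algorithm's $\Psi_\lambda(v)=\frac{\lambda}{2}\Vert A^\top v\Vert^2-\langle A^\top v,y^+\rangle+\omega^\star(v)$ does not differ from \eqref{eqn:dual-pp} by a constant. Expanding $\frac{1}{2\lambda}\Vert\lambda A^\top v-y\Vert^2$ produces $+\frac{1}{2\lambda}\Vert y\Vert^2$, which cancels the subtracted term, so the two coincide exactly at $y=y^+$. This matters for your argument. A genuine constant offset would shift the exit test away from $\mathbf G_\lambda(z_j,v_j)\le\epsilon$, so ``equal up to normalization'' would not have justified the stopping rule of Definition \ref{def:pgd-inner-lp}.

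Your parenthetical about $A\neq 0$ also misattributes its role. $A\neq 0$ is what makes $\tau_0>0$, so that $\tau_j^{-1}$ is defined. Whether the overflow branch at line \ref{alg:inner-loop-bstrk} is avoided is a separate matter, guaranteed by $2\lambda\Vert A\Vert^2\le 2^{1023}$.
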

        \begin{proof}
            We verify Algorithm \ref{alg:inner-loop} against Definition \ref{def:pgd-inner-lp}. 
            Update of $v_{j + 1} = \hprox_{\tau_j^{-1}\omega^\star}(v_j - \tau_j^{-1}\lambda A(A^\top v_j - y^+))$ at line \ref{alg:inner-loop-primal-update} implements $v_{j + 1} = \hprox_{\tau_j^{-1}\omega^\star}\left(v_j - \tau_j^{-1} A\nabla h_\lambda (A^\top v_j)\right)$ from \eqref{def:pgd-inner-lp-vj}. 
            Here, it uses $h_\lambda = x \rightarrow \frac{1}{2\lambda}\Vert \lambda x - y\Vert^2$ established in Assumption \ref{ass:lin-cnvg-for-pp}, from which one can readily verify $\nabla (h_\lambda\circ A^\top)(v_j) = A(\lambda A^\top v_j - y)$.
            \par
            Together Lines \ref{alg:inner-loop-start-ls}, \ref{alg:inner-loop-end-ls}, and \ref{alg:inner-loop-rlx} perform line search and backtracking for condition \eqref{def:pgd-inner-lp-ls}. 
            Line \ref{alg:inner-loop-start-ls} - \ref{alg:inner-loop-end-ls} performs Armijo line search by doubling $\tau_j$ (or equivalently halving the step size $\tau_j^{-1}$) when the condition failed, and accepting $v_{j + 1}$ when it succeeded.
            Then, line \ref{alg:inner-loop-rlx} implements backtracking on $\tau$.
            It shrinks $\tau_{j}$ by a factor of $1/(2^{1/s})$ for $\tau_{j + 1}$ in the next iteration. 
            Observe that line search condition (line \ref{alg:inner-loop-ls-cond}) is always satisfied for all $\tau \ge \lambda \Vert A^\top\Vert^2$. 
            It ensures that the doubling of $\tau_j$ always has $\tau_j \le 2\lambda\Vert A^\top\Vert^2$. 
            Therefore, $\sup_{j\in \Z_+} \tau_j :=\bar \tau_\lambda \le 2\lambda\Vert A^\top\Vert^2$. 
            \par
            Line \ref{alg:inner-loop-dual-update} updates $z_{j + 1}$ to satisfy \eqref{def:pgd-inner-lp-zj}. 
            Exit condition at line \ref{alg:inner-loop-exit-cond} ensures that termination occurs at the smallest $j$ such that $\mathbf G_\lambda(z_j, v_j) \le \epsilon$. 
        \end{proof}
        \begin{remark}
            We remark that $\Vert A^\top\Vert = \Vert A\Vert$. 
        \end{remark}

        Proposition \ref{prop:inn-lp-alg-works} is an enormous result because under Assumption \ref{ass:lin-cnvg-for-pp}, it ensures that the linear convergence claim (Theorem \ref{thm:inn-loop-lin-cnvg}) applies to Algorithm \ref{alg:inner-loop}. 

    \subsection{Outer loop implementations}
        Next, Algorithm \ref{alg:outer-loop} highlights the details for the outer loop implementation of IAPG. 
        \begin{algorithm}[H]
            \begin{algorithmic}[1]
                \footnotesize\small
                \STATE{
                    \textbf{IAPG: }
                    \begin{tabular}{|ll}    
                        $f: \RR^n \rightarrow \RR$ & Lipschitz smooth convex. \\
                        $\omega: \RR^m \rightarrow \RR$ & Proper, closed and convex. \\
                        $A \in \RR^{m \times n}$ &   \\
                        $x_{-1} \in \RR^n$ &  Initial guess. \\
                        $B_0 > 0$ & A valid Lipschitz smoothness estimate. \\
                        $s \in \N$ & Line search constant shrinkage half-life.  \\
                        $\rho > 0$ & Over-relaxation parameter.  \\ 
                        $p > 1$ &  \\ 
                        $r \in (0, 1]$ & Ratio between minimum and maximum line search constant.
                        \\
                        $\varepsilon$ & Tolerance on the stationarity. 
                    \end{tabular}\vspace{0.5em}
                }
                \STATE{$L_0 := (1 + \rho)B_0$}
                \STATE{$L_{\max} := L_0$}
                \STATE{$\alpha_0 := 1$}
                \STATE{$x_{-1}^\circ := x_{-1}$}
                \FOR{$k = 0, 1, 2, \ldots, N$}
                    \STATE{$y_k := \alpha_k x_{k - 1}^\circ + (1 - \alpha_k)x_{k - 1}$}
                    \label{alg:outer-loop-yk}
                    \STATE{$\rho_k := \rho B_k$}
                    \WHILE{$B_k \le 2^{1023}$}\label{alg:outer-loop-lsbtrk-start}
                        \STATE{$\epsilon_k^\circ := L_kL_0^{-1}\alpha_k^{2}\mathcal E_0 k^{-p}$ if $k > 0$ else $\mathcal E_0$}\label{alg:outer-loop-epsk}
                        \STATE{$y^+ := y_k - L_k^{-1}\nabla f(y_k)$}
                        \STATE{$x_k := \textbf{PPPGD}\left(\omega, A, z_0=y_k, y_k, y^+, \epsilon_k^\circ, \rho_k, L_k^{-1}\right)$}\label{alg:outer-loop-xk}
                        \IF{$f(x_k) - f(y_k) - \langle \nabla f(y_k), x_k - y_k\rangle \le B_k/2\Vert x_k - y_k\Vert^2$}
                        \label{alg:outer-loop-descent-cond}
                            \STATE{\textbf{break}}
                        \ENDIF
                        \STATE{$B_k := 2B_k$}\label{alg:outer-loop-2Bk}
                        \STATE{$\rho_k := \rho B_k$}
                        \STATE{$L_k := (1 + \rho)B_k$}\label{alg:outer-loop-Lk}
                        \STATE{$L_{\max} := \max(L_k, L_{\max})$}
                        \ENDWHILE\label{alg:outer-loop-lsbtrk-end}
                    \IF{$\Vert x_k - y_k\Vert \le \varepsilon$}\STATE{\textbf{break}}\ENDIF
                    \IF{$B_k > 2^{1023}$}
                        \STATE{\textbf{Return Line Search Error}}\label{alg:outer-loop-error-case}
                    \ENDIF
                    \STATE{$L_{k + 1} := \max\left(2^{-1/s}L_k, rL_{\max}\right)$}\label{alg:outer-loop-Lk-shrinks}
                    \STATE{$x_{k}^\circ := x_{k - 1} + \alpha_k^{-1}(x_k - x_{k - 1})$}
                    \label{alg:outer-loop-xk-circ}
                    \STATE{$\alpha_{k + 1} := (1/2)L_{k}L_{k + 1}^{-1}\left(-\alpha_{k}^2 + (\alpha_{k}^4 + 4\alpha_{k}^2L_{k + 1}L_{k}^{-1})^{1/2}\right)$}\label{alg:outer-loop-alphak}
                \ENDFOR
            \end{algorithmic}
            \caption{The inexact accelerated proximal gradient method in the outer loop}\label{alg:outer-loop}
        \end{algorithm}
        Algorithm \ref{alg:outer-loop} places safeguards on numerical stability. 
        Lines \ref{alg:outer-loop-lsbtrk-start}, \ref{alg:outer-loop-error-case} together ensure that the line search doesn't overflow floating point standard. 
        \begin{proposition}[Algorithm \ref{alg:outer-loop} is an algorithm for the outer loop]\;\label{prop:outer-lp-works}\\
            Let $(F, f, g, L)$ satisfy Assumption \ref{ass:for-inxt-pg-ineq}, $(g, \omega, A, K_\omega)$ satisfy Assumption \ref{ass:for-inxt-prox}. 
            Then, the following are true for Algorithm \ref{alg:outer-loop}. 
            \begin{enumerate}[nosep]
                \item\label{alg:outer-loop-rslt1} Iterates $(y_k, x_k, x_k^\circ)_{k \in \Z_+}$ and line search sequences $(B_k)_{k \in \Z_+}$ satisfy Definition \ref{def:inxt-apg}. 
                \item\label{alg:outer-loop-rslt2} Sequences $(\alpha_k)_{k \in \Z_+}, (L_k)_{k \in \Z_+} (\epsilon_k)_{k \in \Z_+}$, and constant $p, L_{\min}, L_{\max}$ satisfy Assumption \ref{ass:opt-mmntm-seq}, and we have $\frac{L_{\min}}{L_{\max}} \ge r$.
                Here, $r$ is from Algorithm \ref{alg:outer-loop}. 
            \end{enumerate}
        \end{proposition}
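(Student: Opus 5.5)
The plan is to read Algorithm \ref{alg:outer-loop} line by line against Definition \ref{def:inxt-apg} for item \ref{alg:outer-loop-rslt1}, and then against Assumption \ref{ass:opt-mmntm-seq} for item \ref{alg:outer-loop-rslt2}. Throughout, the inner loop is taken to be Algorithm \ref{alg:inner-loop}, and its output is interpreted through Proposition \ref{prop:inn-lp-alg-works} and Lemma \ref{lemma:dlty-gap-inxt-pp}.

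\textbf{Item \ref{alg:outer-loop-rslt1}.} Line \ref{alg:outer-loop-yk} is exactly \eqref{def:inxt-apg:yk}, and line \ref{alg:outer-loop-xk-circ} is exactly \eqref{def:inxt-apg:vk}. The exit of the while loop at line \ref{alg:outer-loop-descent-cond} is the line search condition \eqref{def:inxt-apg:ls}, provided the loop terminates. It does terminate: once $B_k \ge L$, the test holds by Definition \ref{def:lipz-smooth}, so doubling stops after finitely many steps and gives $B_k \le 2L$ (or $B_k\le B_0$).

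For \eqref{def:inxt-apg:xk}, PPPGD exits when the duality gap satisfies $\mathbf G_{\lambda}(z_j,v_j)\le \epsilon_k^\circ + \frac{\rho_k}{2}\Vert z_j-y_k\Vert^2 =: \epsilon_k$, with $\lambda = L_k^{-1}$ and center $y^+ = y_k - L_k^{-1}\nabla f(y_k)$. Lemma \ref{lemma:dlty-gap-inxt-pp} (i)$\Rightarrow$(iii) then gives $x_k \approx_{\epsilon_k}\hprox_{L_k^{-1}g}(y^+)$. By Lemma \ref{lemma:other-repr-inxt-pg}, this is $x_k\approx_{\epsilon_k}T_{L_k}(y_k)$.

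Positivity of the parameters follows directly: $\alpha_k\in(0,1]$ by Lemma \ref{lemma:opt-mmntm-seq}, $B_k>0$, $\rho_k=\rho B_k\ge0$, and $\epsilon_k\ge\epsilon_k^\circ>0$. One consistency point needs care: the $L_k$ used inside the proximal step must equal $B_k+\rho_k=(1+\rho)B_k$. This holds after every doubling by line \ref{alg:outer-loop-Lk}. At the start of an iteration, $B_{k}$ must be read off as $L_k/(1+\rho)$ from line \ref{alg:outer-loop-Lk-shrinks}; I would state this as the implicit convention of the pseudocode.

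\textbf{Item \ref{alg:outer-loop-rslt2}.} Set $\alpha_0=1$. Line \ref{alg:outer-loop-alphak} is formula \eqref{lemma:opt-mmntm-seq-result1}, which is the positive root of $(1-\alpha_{k+1})=\alpha_{k+1}^2L_{k+1}/(\alpha_k^2L_k)$. This gives Assumption \ref{ass:opt-mmntm-seq}\ref{ass:opt-mmntm-seq:item1}, since $p>1$ is an input.

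For \ref{ass:opt-mmntm-seq:item3}, Lemma \ref{lemma:opt-mmntm-seq} gives $\beta_k=\alpha_k^2L_k/L_0$. Hence $\epsilon_k^\circ = \mathcal E_0\beta_k k^{-p}$ and $\epsilon_k=\mathcal E_0\beta_k/k^p+\frac{\rho_k}{2}\Vert x_k-y_k\Vert^2$, which is the prescribed largest schedule, with $\epsilon_0=\mathcal E_0$ (reading $z_j$ at exit as $x_k$). The same identity verifies \eqref{ass:valid-err-schedule:item2} with equality, so Assumption \ref{ass:valid-err-schedule} holds.

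For \ref{ass:opt-mmntm-seq:item2}, the bound $L_{\max}$ is finite because every $B_k\le\max(B_0,2L)$ by the termination argument above, so $L_k\le(1+\rho)\max(B_0,2L)$. The lower bound comes from line \ref{alg:outer-loop-Lk-shrinks}, which forces $L_{k+1}\ge rL_{\max}^{(k)}$, where $L_{\max}^{(k)}$ is the running maximum. Within an iteration $L_k$ only increases, and the running maximum is nondecreasing, so $L_k\ge r L_{\max}^{(k-1)}\ge rL_0$ for all $k$. Thus $L_{\min}\ge rL_0>0$. To get the sharper claim $L_{\min}/L_{\max}\ge r$ with $L_{\max}$ the final supremum, I would instead take $L_{\max}$ to be the value at termination. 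If the algorithm runs indefinitely, I would take $L_{\max}$ to be the limit of the nondecreasing, bounded running maximum.

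\textbf{Main obstacle.} The hardest step is the ratio $L_{\min}/L_{\max}\ge r$ when a late doubling raises $L_{\max}$ above earlier values. Earlier $L_i$ satisfy only $L_i\ge rL_{\max}^{(i-1)}$, not $L_i\ge rL_{\max}$, so the ratio cannot be read off iteration by iteration. I would first try to establish the weaker but sufficient statement $L_{\min}\ge rL_0>0$. If the exact ratio with the global $L_{\max}$ is needed, I would prove it for the running maximum, which is the quantity the algorithm actually stores. The secondary subtlety, checking that the exit test of PPPGD uses exactly $\epsilon_k$ as defined, I expect to be routine.
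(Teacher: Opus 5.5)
Your route is the paper's: a line-by-line match of Algorithm \ref{alg:outer-loop} against Definition \ref{def:inxt-apg} and Assumption \ref{ass:opt-mmntm-seq}. You also justify line \ref{alg:outer-loop-xk} explicitly via Lemma \ref{lemma:dlty-gap-inxt-pp} and Lemma \ref{lemma:other-repr-inxt-pg}, which the paper leaves implicit. Several of your refinements are more accurate than the paper's one-line claims: the bound $L_k\le(1+\rho)\max(B_0,2L)$, the lower bound $L_k\ge rL_0$, and your doubt about the ratio. That doubt is well founded. Line \ref{alg:outer-loop-Lk-shrinks} only yields $L_k\ge r\max_{i\le k}L_i$, and $L_{\min}/L_{\max}\ge r$ with the global extremes can fail. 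For example, a tiny $B_0$ can be accepted at $k=0$ because $f$ is nearly affine between $y_0$ and $x_0$, with doublings to order $(1+\rho)L$ later. Taking $L_{\max}$ ``at termination'' does not rescue the ratio, since that is the global maximum of the run. Fortunately, $0<rL_0\le L_k\le(1+\rho)\max(B_0,2L)$ is all that Assumption \ref{ass:opt-mmntm-seq}\ref{ass:opt-mmntm-seq:item2} needs.

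The genuine gap is the step ``line \ref{alg:outer-loop-alphak} gives Assumption \ref{ass:opt-mmntm-seq}\ref{ass:opt-mmntm-seq:item1}'', which the paper's proof asserts in the same way.

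\begin{itemize}
\item In Algorithm \ref{alg:outer-loop}, $\alpha_k$ is computed at the end of iteration $k-1$ from the tentative value $\tilde L_k$ of line \ref{alg:outer-loop-Lk-shrinks}, and $y_k$ is formed before the while loop.
\item If the descent test fails at some $k\ge1$, then $B_k$ and $L_k$ are doubled, but neither $\alpha_k$ nor $y_k$ is recomputed.
\item The $L_k$ of Definition \ref{def:inxt-apg} is the final value, since it is the one in $x_k\approx_{\epsilon_k}T_{L_k}(y_k)$ and in the accepted test.
\end{itemize}

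What actually holds is therefore
\[
\frac{\alpha_k^2L_k}{\alpha_{k-1}^2L_{k-1}}=(1-\alpha_k)\frac{L_k}{\tilde L_k}\ge 1-\alpha_k,
\]
with strict inequality whenever a doubling occurs.

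Part of your argument survives. The maximum in \eqref{ass:valid-err-schedule:betak} is always attained by the second entry, so $\beta_k=\alpha_k^2L_k/L_0$ still follows by telescoping. You should derive it this way rather than from Lemma \ref{lemma:opt-mmntm-seq}, whose hypothesis is exactly item \ref{ass:opt-mmntm-seq:item1}. Then $\epsilon_k^\circ=\mathcal E_0\beta_kk^{-p}$, and Assumption \ref{ass:valid-err-schedule} holds with equality.

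What is lost is $\beta_k/\beta_{k-1}=1-\alpha_k$, and with it the bounds \eqref{lemma:opt-mmntm-seq-result2} behind the $\mathcal O(1/k^2)$ rate. Writing $t_k=\alpha_k^{-1}$, the recursion reads $t_k^2-t_k=t_{k-1}^2\tilde L_k/L_{k-1}$, so the doubling factors $L_k/\tilde L_k$ never enter it. Suppose the $2^{-1/s}$ shrinkage keeps triggering doublings, e.g.\ $f=\frac L2\Vert\cdot\Vert^2$ with $r<2^{-1/s}$. Then $\tilde L_k/L_{k-1}=2^{-1/s}$ at every step, $t_k$ stays bounded, and $\beta_k\not\to0$.

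Item \ref{alg:outer-loop-rslt2} therefore cannot be proved for the pseudocode as written. There are two ways to repair it:
\begin{itemize}
\item recompute $\alpha_k$ from $(\alpha_{k-1},L_{k-1},L_k)$, and then $y_k$, after every doubling, as in the backtracking of \cite{calatroni_backtracking_2019}; or
\item restrict the claim to runs with no doubling at $k\ge1$.
\end{itemize}

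A smaller hole of the same kind concerns the inner loop. Lemma \ref{lemma:dlty-gap-inxt-pp} (i)$\Rightarrow$(iii) needs $z_j=y^+-\lambda A^\top v_j$. Algorithm \ref{alg:inner-loop} guarantees this only for $j\ge1$, because line \ref{alg:outer-loop-xk} passes $z_0=y_k$. An exit at $j=0$ is therefore not covered unless $z_0$ is reset to $y^+-\lambda A^\top v_0$. That reset is the initialization Proposition \ref{prop:inn-lp-alg-works} presupposes.
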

        \begin{proof}
            To verify \ref{alg:outer-loop-rslt1}, we need to verify \eqref{def:inxt-apg:yk}, \eqref{def:inxt-apg:xk}, \eqref{def:inxt-apg:vk} and \eqref{def:inxt-apg:ls} from Definition \ref{def:inxt-apg} by the implementations of Algorithm \ref{alg:outer-loop}. 
            Indeed, Line \ref{alg:outer-loop-yk} implements \eqref{def:inxt-apg:yk}, Line \ref{alg:outer-loop-xk} implements \eqref{def:inxt-apg:xk}, and Line \ref{alg:outer-loop-xk-circ} implements \eqref{def:inxt-apg:vk}. 
            The line search condition \eqref{def:inxt-apg:ls} is verified by Line \ref{alg:outer-loop-descent-cond}. 
            This is because $D_f(x_k, y_k) = f(x_k) - f(y_k) - \langle \nabla f(y_k), x_k - y_k\rangle$.
            \par
            Next, we verify \ref{alg:outer-loop-rslt2}. 
            To start, consider $(\epsilon_k)_{k\in \Z_+}$. 
            Recall that the tolerance has $\epsilon_k = \mathcal E_0 \beta_k k^{-p} + (\rho_k/2) \Vert x_k - y_k\Vert^2$ from Assumption \ref{ass:opt-mmntm-seq}\ref{ass:opt-mmntm-seq:item3}. 
            It can be seen from Line \ref{alg:outer-loop-epsk} that $\epsilon_k^\circ = L_kL_0^{-1} \mathcal E_0 k^{-p}$ when $k \in \N$ and $\epsilon_0^\circ = \mathcal E_0$ implements the first term representing the absolute tolerance. 
            The relative tolerance is implemented by Line \ref{alg:inner-loop-exit-cond} of Algorithm \ref{alg:inner-loop}. 
            Together, they form the tolerance $\epsilon_k$ which is the upper bound for the primal dual gap. 
            \par
            Next, Line \ref{alg:outer-loop-alphak} updates $(\alpha_k)_{k \in \Z_+}$ as specified in Assumption \ref{ass:opt-mmntm-seq}\ref{ass:opt-mmntm-seq:item1}. 
            Therefore, it satisfies Assumption \ref{ass:opt-mmntm-seq}. 
            The sequence $(L_k)_{k \in \Z_+}$ is managed by Line \ref{alg:outer-loop-Lk}, \ref{alg:outer-loop-Lk-shrinks}, and it forms part of the line search and back tracking routine. 
            First, we show that $L_k$ is bounded above. 
            Assumption \ref{ass:for-inxt-prox} states that $f$ is $L$-Lipschitz smooth so the line search condition (line \ref{alg:outer-loop-descent-cond}) holds for all $B_k \ge L$.
            By the doubling patterns on Line \ref{alg:outer-loop-2Bk}, we have $\sup_{k\in \Z_+}B_k \le 2L$.
            Therefore, $L_k \le 2(1 + \rho)L$ by Line \ref{alg:outer-loop-Lk}. 
            Finally, Line \ref{alg:outer-loop-Lk-shrinks} gives $L_k$ the lower bound $rL_{\max}$. 
            Therefore, we have $\frac{L_{\min}}{L_{\max}} \ge r$.
        \end{proof}
        \par
        Proposition \ref{prop:outer-lp-works} is major because it ensures that the established total complexity in Theorem \ref{thm:inn-lp-overall-cmplx} of IAPG applies.
        This is because under Assumption \ref{ass:opt-mmntm-seq} and Definition \ref{def:inxt-apg}, Theorem \ref{thm:outerlp-itr-cmplx} applies. 
        Moreover, Proposition \ref{prop:inn-lp-alg-works} establishes that the inner loop complies with the conditions needed for Theorem \ref{thm:inn-lp-overall-cmplx}.
        Therefore, the total complexity results of IAPG from Corollary \ref{corollary:total-iapg-cmplx} apply.

\section{Examples where inner loop has linear convergence}\label{sec:innlp-fxn}
    This section characterizes a class of functions of $\omega$ such that the convergence theories of IAPG from all prior sections apply.
    More specifically, we present the class of conic polyhedral functions, i.e., for every member $\omega$ of this class of functions, there exist $N \in \N$ and a finite collection of $\{w_i\}_{i = 1}^{N}\subseteq \RR^m$ such that $\omega(z) = \max_{i = 1, \ldots, N} \langle w_i, z\rangle$.
    Next, we show that if $\omega$ belongs to this class of functions, Assumptions \ref{ass:lin-cnvg-for-pp}, \ref{ass:inn-cmplx}\ref{ass:inn-cmplx:item3} are satisfied.
    Therefore, all theoretical results of the previous section apply.
    \par
    To this end, we divide this section into two subsections (Sections \ref{ssec:iloop-concrete-eb}, \ref{ssec:example-fxn-lin-cnvg-lp}).
    The first subsection presents existing results in the literature regarding the quadratic growth property of feasibility problems over a polyhedral domain.
    The second subsection establishes the fact that the convex conjugate $\omega^\star$ is the indicator function of a polytope (Lemma \ref{lemma:conic-polyfxn}), and hence it enables us to formulate the inner loop dual objective as a composite feasibility problem over a polytopic domain.
    Therefore, we can apply facts from the first subsection to show that if $\omega$ is conic polyhedral, then the dual objective $\Psi_\lambda$ satisfies all the assumptions.
    \subsection{Quadratic growth of polyhedral feasibility problem}\label{ssec:iloop-concrete-eb}
        In this section, we recall result from the literature which states that a feasibility problem with polyhedral constraints satisfies the quadratic growth condition.
        Fortunately, everything we need is in the work of Necoara et al. \cite{necoara_linear_2019}.
        We introduce quasi-strongly convex function (Definition \ref{def:q-scnvx}), along with several additional facts.
        The goal of this section is to present Fact \ref{fact:polyhedral-qscnvx-fxn}.
        It shows that a composite optimization problem in the form of $g\circ A + \delta_X$ where $X$ is a polyhedral set satisfies quadratic growth condition if $g$ is strongly convex and Lipschitz smooth.
        \begin{definition}[Quasi-strongly convex {\cite[Definition 1]{necoara_linear_2019}}]\;\label{def:q-scnvx}\\
            Let $f:\RR^n \rightarrow \RR$ be convex, differentiable, and $L$-Lipschitz smooth.
            Let $X \subseteq \RR^n$ and suppose that the set of minimizers $X^+=\argmin_{x \in X} f(x) \neq \emptyset$, and denote $f_{\min}$ to be the minimum of $f$ on $X$.
            It is quasi-strongly convex with constant $\kappa > 0$ on $X \subseteq \RR^n$ if there exists $\kappa > 0$ such that $\forall x \in X$, letting $\bar x = \Pi_{X^+} x$, we have
            \begin{align*}
                0 &\le f_{\min} - f(x) - \langle \nabla f(x), \bar x - x\rangle 
                - \frac{\kappa}{2}\Vert x - \bar x\Vert^2. 
            \end{align*}
        \end{definition}
        \begin{remark}
            This class of functions was introduced by Necoara et al. \cite{necoara_linear_2019}.
        \end{remark}
        \begin{fact}[quasi-strongly convex implies quadratic growth {\cite[Theorem 4]{necoara_linear_2019}}]\;\label{fact:qscnvx-q-growth}\\
            Let $f, X, \kappa$ be given by Definition \ref{def:q-scnvx}. 
            Then the function $F = f + \delta_X$ satisfies Definition \ref{def:q-grwth} (quadratic growth) with the same $\kappa$.
        \end{fact}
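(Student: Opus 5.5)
The plan is to prove the inequality of Definition~\ref{def:q-grwth} for $F=f+\delta_X$ directly, by combining the quasi-strong convexity inequality with the first-order optimality of the minimizer set over $X$. Off $X$ the claim is trivial: if $x\notin X$ then $F(x)=\infty$ and the inequality holds. So I fix $x\in X$. The minimizers of $F$ over $\RR^n$ are exactly $X^+=\argmin_{x\in X}f(x)$, so $S=X^+$ and $F_{\min}=f_{\min}$. I set $\bar x=\Pi_{X^+}x$, so that $\dist(x|S)=\Vert x-\bar x\Vert$.

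Rearranging Definition~\ref{def:q-scnvx} gives
\[
f(x)\ge f_{\min}+\langle \nabla f(x),x-\bar x\rangle+\frac{\kappa}{2}\Vert x-\bar x\Vert^2 .
\]
It therefore suffices to show $\langle \nabla f(x),x-\bar x\rangle\ge 0$. The tool is convexity of $f$ applied with base point $x$:
\[
f_{\min}=f(\bar x)\ge f(x)+\langle\nabla f(x),\bar x-x\rangle,
\]
hence $\langle\nabla f(x),x-\bar x\rangle\ge f(x)-f_{\min}\ge 0$, where the last step uses $x\in X$. Substituting this into the rearranged quasi-strong convexity inequality gives
\[
F(x)=f(x)\ge f_{\min}+\frac{\kappa}{2}\dist^2(x|S),
\]
which is quadratic growth with the same $\kappa$.

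There is no real obstacle here. The only point needing care is the sign of the cross term $\langle \nabla f(x),x-\bar x\rangle$. Optimality of $\bar x$ only gives $\langle\nabla f(\bar x),x-\bar x\rangle\ge0$, which involves the gradient at the wrong point. Using convexity at $x$ instead, as above, gives the sign directly. If one instead wanted the stronger conclusion $f(x)\ge f_{\min}+\kappa\Vert x-\bar x\Vert^2$, one would add the two inequalities rather than drop a term, but the weaker statement is all that is claimed.
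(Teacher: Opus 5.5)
The paper gives no proof of this fact; it imports it from Necoara et al.\ \cite[Theorem 4]{necoara_linear_2019}. Your argument breaks at its first step, because you reversed the inequality when rearranging Definition \ref{def:q-scnvx}. Moving $f(x)$ across in $0 \le f_{\min} - f(x) - \langle \nabla f(x), \bar x - x\rangle - \frac{\kappa}{2}\Vert x - \bar x\Vert^2$ gives
\[
f(x) \le f_{\min} + \langle \nabla f(x), x - \bar x\rangle - \frac{\kappa}{2}\Vert x - \bar x\Vert^2 ,
\]
that is, $\langle \nabla f(x), x - \bar x\rangle \ge f(x) - f_{\min} + \frac{\kappa}{2}\Vert x - \bar x\Vert^2$. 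This is a lower bound on a directional derivative, not on the function gap. The inequality you wrote is false in general. For $f = \frac{\kappa}{2}\Vert\cdot\Vert^2$ and $X = \RR^n$ the definition holds with equality, but your version reads $\frac{\kappa}{2}\Vert x\Vert^2 \ge \frac{3\kappa}{2}\Vert x\Vert^2$. Combined with your own (correct) convexity bound $\langle\nabla f(x), x - \bar x\rangle \ge f(x) - f_{\min}$, it would even give $0 \ge \frac{\kappa}{2}\Vert x - \bar x\Vert^2$ for all $x \in X$, i.e.\ $X = X^+$. The same example refutes the ``stronger conclusion'' $f(x) \ge f_{\min} + \kappa\Vert x - \bar x\Vert^2$ in your last remark, which should have flagged the problem. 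Once the sign is corrected, convexity at $x$ adds nothing, since the definition already bounds that cross term from below.

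The missing idea is to turn this gradient bound into a function-value bound by integrating along the segment $x_t = \bar x + t(x - \bar x)$, $t \in [0,1]$. This requires the following.
\begin{itemize}
\item $X$ must be convex, so that $x_t \in X$ and $f(x_t) \ge f_{\min}$. This is implicit in Necoara et al.\ but not stated in Definition \ref{def:q-scnvx}.
\item $X^+$ must be closed and convex, so that $\Pi_{X^+} x_t = \bar x$ for every $t$. This holds because $\langle x_t - \bar x, z - \bar x\rangle = t\langle x - \bar x, z - \bar x\rangle \le 0$ for all $z \in X^+$.
\end{itemize}
Set $\psi(t) := f(x_t) - f_{\min}$. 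The definition applied at $x_t$ gives $t\psi'(t) - \psi(t) \ge \frac{\kappa}{2}t^2\Vert x - \bar x\Vert^2$, that is, $\frac{d}{dt}\bigl(\psi(t)/t\bigr) \ge \frac{\kappa}{2}\Vert x - \bar x\Vert^2$ on $(0,1]$. Integrate over $[s,1]$ and let $s \to 0^+$. In the limit, $\psi(s)/s \to \langle \nabla f(\bar x), x - \bar x\rangle \ge 0$ by first-order optimality of $\bar x$ over $X$ — exactly the condition you set aside as ``the gradient at the wrong point.'' This yields $f(x) - f_{\min} \ge \langle \nabla f(\bar x), x - \bar x\rangle + \frac{\kappa}{2}\Vert x - \bar x\Vert^2 \ge \frac{\kappa}{2}\dist^2(x|X^+)$. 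Keeping the $\psi(t)/t$ term is what preserves the constant. If you discard it using $\psi \ge 0$ and integrate $\psi'(t) \ge \frac{\kappa}{2}t\Vert x - \bar x\Vert^2$, you only get quadratic growth with constant $\kappa/2$.
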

        The following classical result on the Hoffman error bound is paraphrased from Necoara et al.~\cite{necoara_linear_2019}.
        \begin{fact}[Hoffman error bound]\label{fact:hoffm-eb}
            Consider a nonempty polyhedral set $P = \{x \in \RR^n:Ax = b, Cx \le d\}$ defined via some $A \in \RR^{p \times n}, C \in \RR^{m\times n}, b \in \RR^p, d \in \RR^m$. 
            Then there exists a constant $\theta > 0$ depending only on $A$ and $C$: 
            \begin{align*}
                (\forall x \in \RR^n) \quad \dist(x | P) 
                \le \theta \dist\left(
                    (Ax - b, Cx - d) \;|\; 
                    \{\mathbf 0\}\times \RR^m_-
                \right). 
            \end{align*}
        \end{fact}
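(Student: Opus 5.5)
The plan is to follow the classical projection/KKT argument for Hoffman's bound and extract $\theta$ from a finite family of full-row-rank submatrices of $A$ and $C$, so that $\theta$ visibly depends only on $(A,C)$.

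\textbf{Setup.} Fix $x\in\RR^n$ and let $\bar x=\Pi_P x$, which is well defined since $P$ is nonempty, closed and convex. If $x\in P$ there is nothing to prove, so assume $x\notin P$. Write $r(x):=\dist\left((Ax-b,Cx-d)\,|\,\{\mathbf 0\}\times\RR^m_-\right)$. Then
\[
r(x)^2=\Vert Ax-b\Vert^2+\Vert (Cx-d)_+\Vert^2 ,
\]
where $(\cdot)_+$ denotes the componentwise positive part. Let $I=\{i: c_i^\top\bar x=d_i\}$ be the active inequality rows at $\bar x$.

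\textbf{KKT step.} Since $\bar x$ minimizes $\frac12\Vert z-x\Vert^2$ over the polyhedron $P$, and linear constraints need no further qualification, the KKT conditions give
\[
x-\bar x=A^\top\mu+\sum_{i\in I}\lambda_i c_i, \qquad \lambda_i\ge 0 .
\]
Each equality row $a_k$ can be treated as two generators $\pm a_k$. Then $x-\bar x$ lies in the finitely generated cone spanned by $\{\pm a_k\}\cup\{c_i\}_{i\in I}$. By the conic Carath\'eodory theorem, we can write $x-\bar x=M_J^\top\nu$ with $\nu\ge0$, where $M_J$ is a matrix whose rows are a linearly independent subfamily $J$ of these generators. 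Linear independence forces at most one of $\pm a_k$ to appear, so $M_J$ has full row rank.

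\textbf{Main estimate.} Every row of $M_J$ is active at $\bar x$, that is, $M_J\bar x=e_J$, where $e_J$ collects the corresponding entries of $\pm b$ and $d$. Therefore
\[
\Vert x-\bar x\Vert^2=\langle \nu, M_J(x-\bar x)\rangle=\langle\nu, M_Jx-e_J\rangle\le\langle \nu,(M_Jx-e_J)_+\rangle\le\Vert\nu\Vert\, r(x).
\]
The first inequality uses $\nu\ge0$. The last one uses that each entry of $(M_Jx-e_J)_+$ is bounded by $|a_k^\top x-b_k|$ or $(c_i^\top x-d_i)_+$, so that $\Vert (M_Jx-e_J)_+\Vert\le r(x)$. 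Full row rank gives $\Vert\nu\Vert\le\Vert x-\bar x\Vert/\sigma_{\min}(M_J^\top)$, where $\sigma_{\min}(M_J^\top)>0$ is the smallest singular value. Dividing through by $\Vert x-\bar x\Vert>0$ yields
\[
\dist(x|P)\le \sigma_{\min}(M_J^\top)^{-1}\, r(x).
\]

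\textbf{Choosing $\theta$ (the main obstacle).} The delicate point is uniformity: the index set $J$ depends on $x$ through $\bar x$ and the Carath\'eodory selection. This is handled by noting that $J$ always ranges over linearly independent subfamilies of the fixed finite set $\{\pm a_k\}\cup\{c_i\}$, which has finitely many such subfamilies. Define
\[
\theta:=\max_J \sigma_{\min}(M_J^\top)^{-1},
\]
with the maximum over all such subfamilies. This is finite, strictly positive, and independent of $b$, $d$ and $x$, which gives the stated inequality for all $x\in\RR^n$.
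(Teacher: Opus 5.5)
Your proof is correct. Note that the paper does not prove this statement itself: it imports it as a black-box Fact from Necoara et al.\ (ultimately Hoffman's classical theorem) and explicitly declines to give an expression for $\theta$.

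Your argument is the standard projection/normal-cone proof, and each step checks out:
\begin{itemize}
\item The representation $x-\bar x=A^\top\mu+\sum_{i\in I}\lambda_i c_i$ needs no constraint qualification because $P$ is polyhedral.
\item Conic Carath\'eodory then gives a linearly independent family of active generators.
\item Nonnegativity of $\nu$ is exactly what lets you pass to the positive part.
\item Since at most one of $\pm a_k$ appears and each $c_i$ appears at most once, you get $\Vert (M_Jx-e_J)_+\Vert\le r(x)$.
\item Full row rank of $M_J$ gives $\Vert \nu\Vert\le \Vert x-\bar x\Vert/\sigma_{\min}(M_J^\top)$.
\end{itemize}

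Compared with the citation, your route gives a self-contained proof and an explicit constant $\theta=\max_J\sigma_{\min}(M_J^\top)^{-1}$. Its independence from $b$, $d$ and $x$ is visible from the construction. This independence is what the paper actually relies on later, in Fact~\ref{fact:polyhedral-qscnvx-fxn} and Theorem~\ref{thm:conic-polyfxn-works}. There, the minimizer set of $\Psi_\lambda$ has the form $\{v\in P: A^\top v=t^\ast\}$, because all minimizers share the same value of $A^\top v$ by strict convexity of $h_\lambda$. The right-hand side $t^\ast$ changes with $\lambda$ and $\tilde y_k$ from one outer iteration to the next, so only a right-hand-side-independent $\theta$ yields the uniform lower bound $\kappa_{\min}$. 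The citation, in turn, buys brevity and points to the explicit formula for $\theta$ given in the literature.

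One cosmetic point: your maximum should range over nonempty linearly independent subfamilies. If none exists, i.e.\ all rows of $A$ and $C$ are zero, then nonemptiness forces $P=\RR^n$, and you may simply take $\theta=1$.
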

        \begin{remark}
            In the literature, estimating the smallest value of $\theta$ is an extensive research area
            An explicit formula is given in Necoara et al. \cite{necoara_linear_2019}. 
            Here, we will state only its existence without giving a precise expression. 
            \par
            We elaborate on the right-hand side of the inequality.
            Let $v \in \RR^m$ be a vector; we denote the projection of $v$ onto $\RR^m_+$ by $[v]_+$. 
            This applies $v_i \mapsto \max(v_i, 0)$ element-wise to vector $v$.
            The RHS can then be written as: 
            \begin{align*}
                \dist((Ax - b, Cx - d) \;|\; \{\mathbf 0\}\times \RR^m_-)
                &= \left\Vert
                    (Ax - b, Cx - d - \Pi_{\RR^m_-} (Cx - d))
                \right\Vert
                \\
                &= 
                \left\Vert
                    (Ax - b, \Pi_{\RR^m_+} (Cx - d))
                \right\Vert. 
            \end{align*}
            Since the distance is measured with respect to the $\ell^2$ norm, the following holds:
            \begin{align*}
                \dist((Ax - b, Cx - d) \;|\; \{\mathbf 0\}\times \RR^m_-)^2
                = \Vert Ax - b\Vert^2 + \Vert \Pi_{\RR^m_+}(Cx - d)\Vert^2. 
            \end{align*}
        \end{remark}
        \begin{fact}[quasi-strongly convex feasibility problem {\cite[Theorem 8]{necoara_linear_2019}}]\;\label{fact:polyhedral-qscnvx-fxn}\\
            Consider any $C \in \RR^{m\times n}, d \in \RR^m$ defining a nonempty polyhedral set $X = \{x : Cx \le d\}$.
            Let $h$ be $\sigma > 0$ strongly convex and $L$-Lipschitz smooth, and consider $f = h\circ A + \delta_X$ where $A \in \RR^{p \times n}$. 
            Then the following hold:
            \begin{enumerate}[nosep]
                \item The set of minimizers $X^+$ is nonempty, and it is a polyhedral set.
                \item The function $f$ is quasi-strongly convex (Definition \ref{def:q-scnvx}) with $\kappa = \sigma/\theta^2$ where $\theta$ is the Hoffman constant from Fact \ref{fact:hoffm-eb} for the polyhedral set $X^+$, and $\theta$ depends only on $A, C$. 
            \end{enumerate}
        \end{fact}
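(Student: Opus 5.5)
The plan is to reduce everything to the image variable $t = Ax$. The key structural fact is that strong convexity of $h$ forces $Ax$ to take a single value on the minimizer set. Once that is known, the Hoffman error bound (Fact \ref{fact:hoffm-eb}) turns strong convexity of $h$ in $t$ into quasi-strong convexity of $f$ in $x$.

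\textbf{Step 1 (existence and structure of $X^+$).} The set $A(X) = \{Ax : Cx \le d\}$ is a linear image of a nonempty polyhedron. It is therefore itself a nonempty polyhedron, hence closed and convex. Since $h$ is $\sigma$-strongly convex, it attains a unique minimizer $t^*$ over $A(X)$. Any $x \in X$ with $Ax = t^*$ then minimizes $h\circ A$ over $X$, so $X^+ \neq \emptyset$.

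Conversely, suppose $x_1, x_2 \in X^+$ with $Ax_1 \neq Ax_2$. Their midpoint lies in $X$, and strict convexity of $h$ gives it a strictly smaller value than the minimum, which is a contradiction. Hence
\begin{align*}
    X^+ = \{x \in \RR^n : Ax = t^*,\; Cx \le d\},
\end{align*}
which is polyhedral. This proves the first item. Its equality block uses the matrix $A$ and its inequality block uses $C$, so the Hoffman constant $\theta$ of $X^+$ from Fact \ref{fact:hoffm-eb} depends only on $A$ and $C$.

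\textbf{Step 2 (Hoffman bound on $X$).} Fix $x \in X$ and let $\bar x = \Pi_{X^+}x$. Because $Cx \le d$, we have $\Pi_{\RR^m_+}(Cx - d) = \mathbf 0$. Fact \ref{fact:hoffm-eb}, in the form given in the remark after it, then yields
\begin{align*}
    \Vert x - \bar x\Vert = \dist(x | X^+) \le \theta \Vert Ax - t^*\Vert = \theta\Vert A(x - \bar x)\Vert,
\end{align*}
where the last equality uses $A\bar x = t^*$.

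\textbf{Step 3 (strong convexity transfers).} Apply the strong convexity inequality of $h$ at the point $Ax$ in the direction of $t^* = A\bar x$:
\begin{align*}
    h(A\bar x) \ge h(Ax) + \langle \nabla h(Ax), A\bar x - Ax\rangle + \frac{\sigma}{2}\Vert A(x-\bar x)\Vert^2 .
\end{align*}
Since $\nabla(h\circ A)(x) = A^\top \nabla h(Ax)$ and $f_{\min} = h(A\bar x)$, this reads
\begin{align*}
    f_{\min} \ge f(x) + \langle \nabla f(x), \bar x - x\rangle + \frac{\sigma}{2}\Vert A(x-\bar x)\Vert^2 .
\end{align*}
Inserting Step 2 gives the bound $\frac{\sigma}{2}\Vert A(x - \bar x)\Vert^2 \ge \frac{\sigma}{2\theta^2}\Vert x - \bar x\Vert^2$. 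This is exactly Definition \ref{def:q-scnvx} with $\kappa = \sigma/\theta^2$. Here $L$-smoothness of $f$ on $X$ follows from that of $h$, with constant $L\Vert A\Vert^2$.

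\textbf{Main obstacle.} I expect Step 1 to be the delicate part. One must check that $A(X)$ is closed, so that the minimum is attained even though $h\circ A$ need not be coercive in $x$. One must also verify the exact equality description of $X^+$, since this description is what makes the Hoffman constant independent of $h$ and $d$-free in its matrices. Step 3 is routine once Step 2 is in hand.
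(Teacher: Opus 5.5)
Your proof is correct. It is essentially the standard argument behind the result the paper cites without proof (Necoara et al., Theorem 8):
\begin{itemize}
\item strong convexity of $h$ forces $Ax = t^*$ on the minimizer set, so $X^+ = \{x : Ax = t^*,\ Cx \le d\}$;
\item the Hoffman bound then gives $\dist(x | X^+) \le \theta\Vert A(x - \bar x)\Vert$ for every $x \in X$;
\item the strong convexity inequality of $h$ between $Ax$ and $A\bar x$ converts this into quasi-strong convexity with $\kappa = \sigma/\theta^2$.
\end{itemize}
You also show that $A(X)$ is a closed polyhedron, so $h$ attains its minimum on it. This additionally justifies the nonemptiness of $X^+$ claimed in item 1.
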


    \subsection{IAPG has near-optimal complexity for conic polyhedral regularizers}\label{ssec:example-fxn-lin-cnvg-lp}
        In this section, we use the theoretical results presented in the previous section to show that the dual objective $\Psi_\lambda$ from (\ref{eqn:dual-pp}) satisfies the quadratic growth condition under the assumption that $\omega$ is a conic polyhedral function.
        The following Lemma characterizes the structure of a conic polyhedral function $\omega$ and its convex conjugate.
        \begin{lemma}[convex conjugate of a max-affine function]\;\label{lemma:conic-polyfxn}\\
            Let $N\in \N$. 
            Choose $\{w_i\}_{i = 1}^N\subseteq \RR^m$. 
            Let $\mathbf \Delta^N = \{(\lambda_1, \ldots, \lambda_N) \in \RR^N: \sum_{i = 1}^{N}\lambda_i = 1\}\cap \RR^N_+$, the simplex. 
            Define $P$ to be the convex hull of the set of vectors $\{w_i\}_{i = 1}^N$, i.e., $P = \left\{\sum_{i = 1}^{N}\lambda_i w_i: (\lambda_1, \ldots, \lambda_N) \in \mathbf \Delta^N\right\}$.
            Define $\omega(v) = \max_{i = 1, \ldots, N} \langle w_i, v\rangle$.
            Then $\omega^\star = \delta_P$.
        \end{lemma}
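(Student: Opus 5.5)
The plan is to compute $\omega^\star$ directly from the definition, using the fact that $\omega$ is the support function of $P$. The first step is to rewrite $\omega$ as a maximum over the convex hull. For any $v\in\RR^m$ and any $(\lambda_1,\ldots,\lambda_N)\in\mathbf\Delta^N$ we have $\langle \sum_i\lambda_iw_i, v\rangle = \sum_i\lambda_i\langle w_i,v\rangle \le \max_i\langle w_i,v\rangle$, and equality is attained at a vertex of the simplex. Hence
\begin{align*}
\omega(v) = \max_{u\in P}\langle u, v\rangle = \sigma_P(v),
\end{align*}
the support function of the nonempty compact convex set $P$.

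The second step is to show $\omega^\star \le \delta_P$, i.e. $\omega^\star(x)\le 0$ for $x\in P$. For such $x$ and every $v$ we have $\langle x, v\rangle \le \sigma_P(v)=\omega(v)$. Therefore $\langle x,v\rangle-\omega(v)\le 0$ for all $v$, and taking $v=\mathbf 0$ shows the supremum equals $0$. So $\omega^\star(x)=0$ on $P$.

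The third step, which is the only nontrivial one, is to show $\omega^\star(x)=\infty$ for $x\notin P$. Since $P$ is a polytope, it is nonempty, closed and convex, so strict separation applies. Take $\bar u=\Pi_P(x)$ and $d = x-\bar u\neq\mathbf 0$. The projection characterization gives $\langle u-\bar u, d\rangle\le 0$ for all $u\in P$. Consequently $\sigma_P(d) = \langle \bar u, d\rangle < \langle x, d\rangle$, and the gap is $\delta := \langle x,d\rangle-\sigma_P(d)=\Vert d\Vert^2>0$. Now take $v=td$ with $t>0$. Positive homogeneity of $\sigma_P$ gives $\langle x, td\rangle - \omega(td) = t\delta\to\infty$. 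Hence $\omega^\star(x)=\infty=\delta_P(x)$.

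Combining the second and third steps yields $\omega^\star=\delta_P$. As a cross-check, $\delta_P$ is closed and convex, and $\delta_P^\star=\sigma_P=\omega$, which is consistent by Fenchel–Moreau.
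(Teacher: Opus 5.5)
Your proof is correct, and it takes a somewhat different route from the paper.

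Your first step, the identity $\omega=\sigma_P$, is exactly the computation the paper carries out; the paper phrases it as $\delta_P^\star=\omega$. From there the paper stops computing and invokes the biconjugate theorem, $\omega^\star=\delta_P^{\star\star}=\delta_P$. This is legitimate because $\delta_P$ is proper, closed and convex, since $P$ is a nonempty compact convex set. The paper words this as ``$\omega$ is proper, closed and convex,'' but the hypothesis actually needed is on $\delta_P$.

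You instead evaluate $\omega^\star$ pointwise. On $P$ you use the support-function inequality together with the choice $v=\mathbf 0$. Off $P$ you use the projection characterization $\langle u-\Pi_P(x),\, x-\Pi_P(x)\rangle\le 0$ and the ray $v=t\,(x-\Pi_P(x))$.

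In effect you reprove, for this special case, the separation argument that underlies Fenchel--Moreau. What your version buys is self-containedness and an explicit direction along which the conjugate blows up. What the paper's version buys is brevity: a two-line reduction to a standard theorem. Your closing ``cross-check'' is precisely the paper's proof.
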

        \begin{proof}
            We show that $\delta_P^\star(z) = \max_{i = 1, \ldots, N} \langle w_i, z\rangle = \omega (z)$.
            Since by definition $\omega$ is proper, closed and convex, $\omega^\star = \delta_P^{\star\star} = \delta_P$ by the biconjugate theorem.
            To demonstrate, consider:
            \begin{align*}
                \delta_P^\star(z) &=
                \sup_{v\in \RR^m}\left\lbrace
                    \langle z, v\rangle - \delta_P(v)
                \right\rbrace 
                \\
                &= 
                \sup_{v \in P}\left\lbrace
                    \langle z, v\rangle
                \right\rbrace 
                \\
                &= 
                \sup_{
                    \substack{
                        (\lambda_1, \ldots, \lambda_N)\\\in \mathbf \Delta^N
                    }
                }
                \left\lbrace
                    \left\langle z, \sum_{i = 1}^{N} \lambda_i w_i\right\rangle
                \right\rbrace 
                \\
                &= \sup_{
                    \substack{
                        (\lambda_1, \ldots, \lambda_N)\\\in \mathbf \Delta^N
                    }
                }
                \left\lbrace
                    \sum_{i = 1}^{N} \lambda_i \left\langle z,  w_i\right\rangle
                \right\rbrace 
                \\
                &= \max_{i = 1, \ldots, N} 
                \left\langle z,  w_i\right\rangle. 
            \end{align*}
        \end{proof}
        \par
        The following theorem is our main result.
        It shows that results from Section \ref{sec:total-cmplx} apply to conic polyhedral functions $\omega$.
        \begin{theorem}[near optimal complexity applies for conic polyhedral]\;\label{thm:conic-polyfxn-works}\\
            Let $\Psi_\lambda$ be given by \eqref{eqn:dual-pp}, i.e., $\Psi_\lambda = h_\lambda\circ A^\top + \omega^\star$ where $h_\lambda(v) = \frac{1}{2\lambda}\Vert \lambda v - y \Vert^2 - \frac{1}{2\lambda}\Vert y\Vert^2$. 
            Consider $\omega(z) = \max_{i = 1, \ldots, N} \langle w_i, z\rangle$ where $N \in \N$, and $\{w_i\}_{i = 1}^N \subseteq \RR^m$. 
            Then the following are true. 
            \begin{enumerate}[nosep]
                \item\label{thm:conic-polyfxn-works:item1} $\omega$ is $K_\omega = \max_{i = 1, \ldots, N}\Vert w_i\Vert$ $K_\omega$-Lipschitz continuous, and $\dom(\omega^\star)$ is a bounded set. Therefore, $\omega$ satisfies Assumption \ref{ass:for-inxt-prox}.
                \item\label{thm:conic-polyfxn-works:item2} $\Psi_\lambda$ satisfies Assumption \ref{ass:lin-cnvg-for-pp}\ref{ass:lin-cnvg-for-pp:item3} with $\kappa_\lambda = \frac{\lambda}{\theta^2}$ where $\theta$ is the Hoffman constant that only depends on matrix $A^\top$, and $\{w_i\}_{i = 1}^N$.
                \item\label{thm:conic-polyfxn-works:item3} If $\lambda$ is bounded below by $\lambda_{\min} > 0$, then $\kappa_\lambda$ is bounded below by $\frac{\lambda_{\min}}{\theta^2}$.
                Therefore, $\Psi_\lambda$ satisfies Assumption \ref{ass:inn-cmplx}\ref{ass:inn-cmplx:item3}.
            \end{enumerate}
        \end{theorem}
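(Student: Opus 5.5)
For item \ref{thm:conic-polyfxn-works:item1}, I will use Lemma \ref{lemma:conic-polyfxn}, which gives $\omega^\star=\delta_P$ with $P=\mathrm{conv}\{w_i\}_{i=1}^N$. This set is a polytope, hence bounded, and $\diam P\le 2\max_i\Vert w_i\Vert$. The Lipschitz constant can be checked directly. For any $z,z'$, pick $i$ attaining $\omega(z)$; then
$$\omega(z)-\omega(z')\le\langle w_i,z-z'\rangle\le \max_i\Vert w_i\Vert\,\Vert z-z'\Vert,$$
and the same holds with $z,z'$ exchanged. Since $\omega$ is a finite max of linear functions, it is real-valued, closed and convex with $\dom\omega=\RR^m$, so the constraint qualification is automatic. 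Its conjugate $\omega^\star=\delta_P$ has an exact proximal operator, namely $\Pi_P$, for every $\lambda$. This verifies every item of Assumption \ref{ass:for-inxt-prox}.

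For item \ref{thm:conic-polyfxn-works:item2}, I will write $P$ as a polyhedron $\{v: Cv\le d\}$ (Minkowski–Weyl). The representation $(C,d)$ depends only on $\{w_i\}$. Then $\Psi_\lambda=\tilde h_\lambda\circ A^\top+\delta_P$, where $\tilde h_\lambda(x)=\frac{1}{2\lambda}\Vert\lambda x-y\Vert^2-\frac{1}{2\lambda}\Vert y\Vert^2$. Its Hessian is $\lambda I$, so $\tilde h_\lambda$ is $\lambda$-strongly convex and $\lambda$-smooth. Consequently $\tilde h_\lambda\circ A^\top$ is $\lambda\Vert A\Vert^2$-smooth, which matches the constant $\lambda\Vert A^\top A\Vert$ in Assumption \ref{ass:lin-cnvg-for-pp}. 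The minimizer set $S$ is nonempty because $P$ is compact and $\Psi_\lambda$ is continuous on $P$. Fact \ref{fact:polyhedral-qscnvx-fxn}, applied with $h=\tilde h_\lambda$, linear map $A^\top$ and polyhedron $X=P$, gives quasi-strong convexity with $\kappa=\lambda/\theta^2$. Fact \ref{fact:qscnvx-q-growth} then converts this into quadratic growth of $\Psi_\lambda$ with $\kappa_\lambda=\lambda/\theta^2$. The remaining items of Assumption \ref{ass:pgd-qg} are immediate: convexity and smoothness are already shown, and the exact proximal gradient step is available via $\Pi_P$.

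The main obstacle is showing that $\theta$ does not depend on $\lambda$ (or on $y$). Fact \ref{fact:polyhedral-qscnvx-fxn} states that $\theta$ is the Hoffman constant of $X^+$, which depends only on $A^\top$ and $C$. I plan to justify this by recalling the structure of the minimizer set. Strong convexity of $\tilde h_\lambda$ forces $A^\top v$ to be constant, say $t^*$, on $S$, so
$$S=\{v: A^\top v=t^*,\ Cv\le d\}.$$
The Hoffman constant of such a system depends only on the coefficient matrices $(A^\top,C)$ and not on the right-hand side $(t^*,d)$, by Fact \ref{fact:hoffm-eb}. Since $t^*$ is the only quantity that varies with $\lambda$ and $y$, this gives the required uniform constant.

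Item \ref{thm:conic-polyfxn-works:item3} then follows at once: $\kappa_\lambda=\lambda/\theta^2\ge\lambda_{\min}/\theta^2>0$ for every $\lambda\ge\lambda_{\min}$. In the setting of Assumption \ref{ass:inn-cmplx}, $\lambda^{(k)}=L_k^{-1}\ge L_{\max}^{-1}$. Choosing any $\kappa_{\min}$ strictly below $L_{\max}^{-1}/\theta^2$ meets the strict inequality required in Assumption \ref{ass:inn-cmplx}\ref{ass:inn-cmplx:item3}.
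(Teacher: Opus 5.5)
Your proposal is correct and follows the paper's proof: Lemma \ref{lemma:conic-polyfxn} gives item 1; Fact \ref{fact:polyhedral-qscnvx-fxn}, applied with $h=h_\lambda$, $\sigma=\lambda$ and linear map $A^\top$, followed by Fact \ref{fact:qscnvx-q-growth}, gives item 2; and item 3 is the trivial lower bound. Your additional details only make explicit what the paper asserts without argument: the explicit Lipschitz estimate, the description $S=\{v: A^\top v=t^*,\ Cv\le d\}$ showing that $\theta$ does not depend on $\lambda$ or $y$, and the choice of $\kappa_{\min}$ strictly below $L_{\max}^{-1}/\theta^2$ to meet the strict inequality in Assumption \ref{ass:inn-cmplx}.
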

        \begin{proof}
            To verify \ref{thm:conic-polyfxn-works:item1}, by Lemma \ref{lemma:conic-polyfxn} it follows that $\omega^\star = \delta_P$ where
            \begin{align*}
                P = \left\{
                    \sum_{i = 1}^{N}\lambda_i w_i : (\lambda_1, \ldots, \lambda_N) \in \mathbf \Delta^N 
                \right\}. 
            \end{align*}
            $\dom \omega^\star = P$, and $P$ is always a bounded set. 
            Finally, $\omega$ is Lipschitz continuous with constant $K_\omega = \max_{i = 1, \ldots, N}\Vert w_i\Vert$ as follows directly from the definition of $\omega$.
            \par
            To verify \ref{thm:conic-polyfxn-works:item2}, we use Fact \ref{fact:polyhedral-qscnvx-fxn} to conclude that $\Psi_\lambda$ is quasi-strongly convex with $\kappa = \frac{\lambda}{\theta^2}$.
            Next, we will use Fact \ref{fact:qscnvx-q-growth} to conclude that $\Psi_\lambda$ satisfies the quadratic growth condition.
            Recall that $\Psi_\lambda = h_\lambda\circ A^\top + \omega^\star$ where $h_\lambda(v) = \frac{1}{2\lambda}\Vert \lambda v - y\Vert^2 - \frac{1}{2\lambda}\Vert y\Vert^2$ is a $\lambda$-strongly convex function.
            Therefore, $\Psi_\lambda$ satisfies Fact \ref{fact:polyhedral-qscnvx-fxn} with $h$ being $h_\lambda$, $\sigma = \lambda$, and $A$ being $A^\top$.
            Therefore, Fact \ref{fact:polyhedral-qscnvx-fxn} applies, and the quadratic growth constant is $\kappa_\lambda = \lambda/\theta^2$. 
            Here, $\theta$ is the Hoffman error bound constant (Fact \ref{fact:hoffm-eb}) defined via the inequality constraint system of polytope $P$, and the matrix $A^\top$.
            \par
            We now verify \ref{thm:conic-polyfxn-works:item3}. 
            From \ref{thm:conic-polyfxn-works:item2}, the quadratic growth constant of $\Psi_\lambda$ equals to $\kappa_\lambda$.
            Therefore, if $\lambda$ is bounded below by $\lambda_{\min} > 0$, $\kappa_\lambda$ is bounded below by $\frac{\lambda_{\min}}{\theta^2}$, i.e., $\kappa_\lambda \ge \frac{\lambda_{\min}}{\theta^2}$.
        \end{proof}

\section{Numerical experiments}\label{sec:numerics}
    This section presents the numerical experiments using IAPG.
    Section \ref{sec:verify-inn-cmplx} shows the linear convergence rate for a square sparse matrix $A$.
    Section \ref{sec:app-robust-recovery} presents our findings of IAPG applied to the robust signal recovery problem formulated in \eqref{eqn:robust-tvl2}.
    \subsection{Verify the complexity of the inner loop}\label{sec:verify-inn-cmplx}
        We present numerical experiments demonstrating Theorem \ref{thm:inn-loop-lin-cnvg} using Algorithm \ref{alg:inner-loop}.
        \par
        Let $m=128, n=128, \eta=2, \lambda =1$.
        Let $A \in \RR^{m\times n}$ be $A := H + I$, where $H$ is a sparse matrix whose entries are independently sampled with probability $1/\sqrt{mn}$ of being nonzero, with nonzero entries drawn uniformly from $[0, 1]$.
        We choose $\omega = \eta\Vert \cdot\Vert_1$, which is a conic polyhedral function, so Theorem \ref{thm:conic-polyfxn-works} applies and the inner loop converges linearly.
        The primal proximal problem is $\Phi_{\lambda}(u) = \eta\Vert (H + I)u \Vert_1 + \frac{1}{2\lambda}\Vert u - y\Vert^2$.
        \par
        Recall that in Definition \ref{def:pgd-inner-lp}, the inner loop (Algorithm \ref{alg:inner-loop}) performs PGD on the dual problem:
        \begin{align*}
            \Psi_\lambda(v) := \frac{\lambda}{2}\Vert A^\top v\Vert^2 - \langle A^\top v, y\rangle
            + \delta_{\{x :\Vert \eta^{-1} x \Vert_\infty \le 1\}}(v).
        \end{align*}
        Theorem \ref{thm:inn-loop-lin-cnvg} suggests that the number of iterations to achieve $\mathbf G_\lambda(z_j, v_j) \le \epsilon$ is bounded by $C\ln(\epsilon^{-1})$ for some finite constant $C$.
        \par
        For $i = 0,1, \ldots, 64$, we repeat the experiment 100 times with the following parameters.
        \begin{enumerate}[nosep]
            \item We set $\rho = 0$, and sample $y\in \RR^n$ uniformly from $\{x :\Vert \eta^{-1} x \Vert_\infty \le 1\} = \dom(\omega^\star)$.
            \item Since $\rho = 0$, $\epsilon$ contains only the absolute error, which we set to $\epsilon_i^\circ = 2^{-32 + i/4}$.
        \end{enumerate}
        Figure \ref{fig:5pt-stats-innlp} shows the five-number summary (minimum, Q1, median, Q3, maximum) of the iteration count $j$ at termination against $\epsilon_i^\circ$, over 100 trials per $i$.
        \begin{figure}[H]
            \center
            \includegraphics[width=38em]{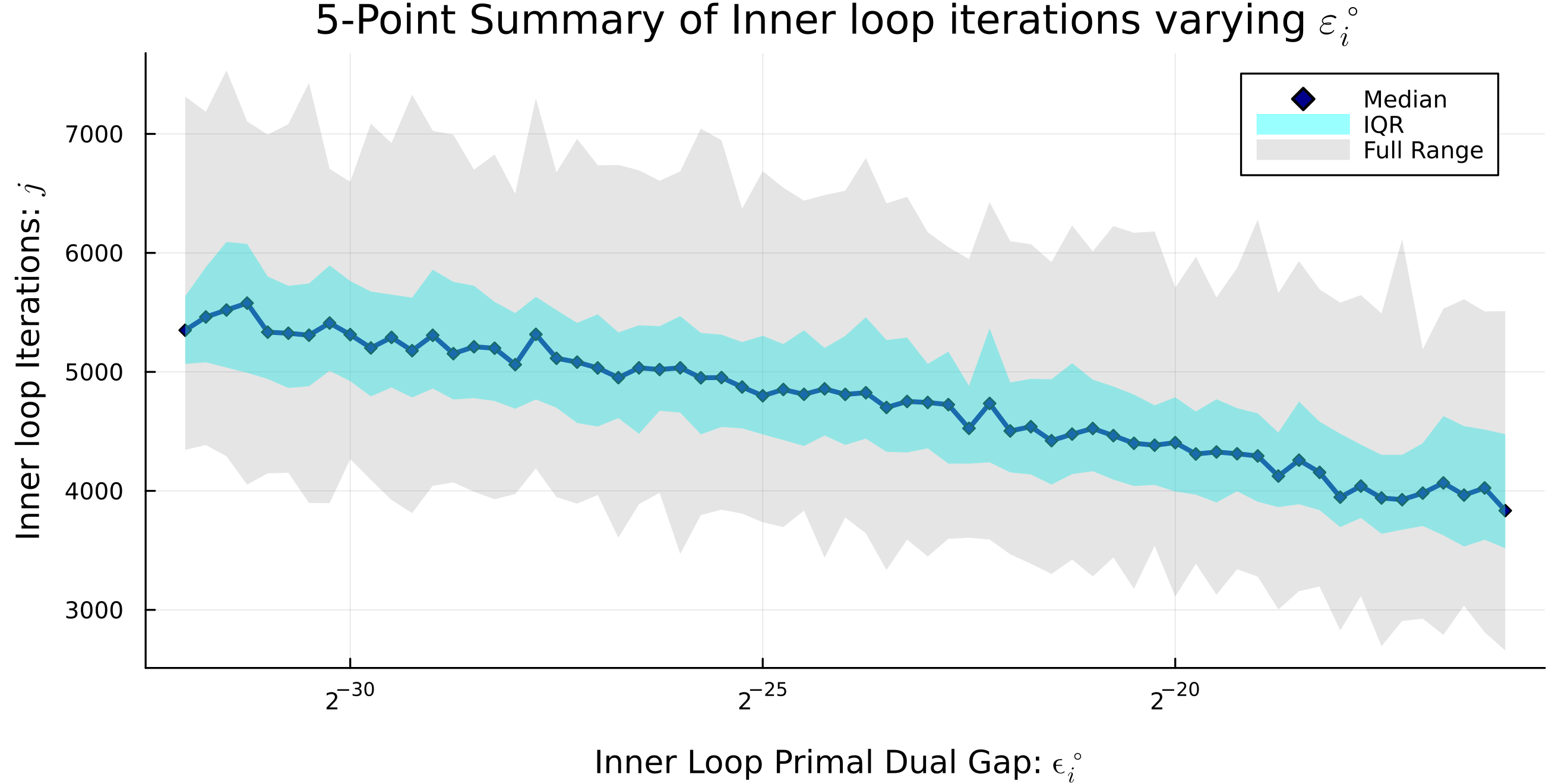}
            \caption{
                Five-number summary of the smallest inner loop iteration $j$ such that $\mathbf G_\lambda(z_j, v_j) \le \epsilon^\circ_i$, plotted against $\epsilon^\circ_i$. The linear growth of $j$ with $-\log_2(\epsilon_i^\circ)$ confirms the $\mathcal{O}(\ln(\epsilon^{-1}))$ bound.
            }
            \label{fig:5pt-stats-innlp}
        \end{figure}

    \subsection{Applications in robust signal recovery}\label{sec:app-robust-recovery}
        Let $\tilde x \in \RR^n$ denote an observed signal corrupted by noise after a linear transformation.
        We consider the following robust TV-$\ell_2$ formulation:
        \begin{align}\label{eqn:tvl2-non-traditional}
            \argmin_{x \in \RR^n} \left\lbrace
                \frac{1}{2}\dist\left(Cx - \tilde x \;|\; [-\lambda, \lambda]^n\right)^2 + \eta \Vert A x\Vert_1
            \right\rbrace.
        \end{align}
        Here, $C \in \RR^{n \times n}$ is a non-uniform box-blurring matrix and $A \in \RR^{(n-1)\times n}$ is the first-order forward difference matrix with non-circular boundary conditions.
        Specifically, $A$ is bi-diagonal with $A_{i, i} = -1$ and $A_{i, i + 1} = 1$ for all $i = 1, 2, \ldots, n - 1$.
        Matrix $C$ discretizes a non-uniform box-blur operation.
        More precisely, consider a signal $f:[0, \tau]\rightarrow \RR$; the non-uniform box-blur maps $f$ to $\tilde f: [0, \tau]\rightarrow \RR$:
        \begin{align*}
            \tilde f(t) = \int_{t - \min(t, l, \tau - t)}^{t + \min(t, l, \tau - t)}\frac{f(s)}{2\min(t, l, \tau - t)}ds.
        \end{align*}
        Here, $\tau \ge l >0$ represents the largest width of the window of the box-blur. 
        It can be seen as a box-blurring process whose kernel width shrinks near the boundary when the center is within distance $l$ of an endpoint.
        Consider the discretized ground truth signal $\bar x \in \RR^n$.
        For all $t \in \{1, \ldots, n\}$, define $w(t) := \min(t - 1, l, n - t)$. 
        Then, we can implement matrix $C\in \RR^{n\times n}$ by:
        \begin{align*}
            (\forall t \in \{1, \ldots, n\}):
            (C x)_t 
            &= 
            \sum_{i = t - w(t)}^{t + w(t)}
            \frac{x_i}{2w(t)}. 
        \end{align*}
        Consequently, $C\in \RR^{n\times n}$ is a square band matrix that is neither Toeplitz nor circular, hence challenging to numerically invert. 
        \par
        Our algorithm is well suited to the robust variant of the TV-$\ell_2$ problem in \eqref{eqn:tvl2-non-traditional}: it only requires the gradient\footnote{
            Let $E\subseteq \RR^n$, we have from the Moreau Envelope $\nabla\left(1/2\dist^2(x| E)\right) = x - \Pi_E x = x - \hprox_{\delta_E}x = \hprox_{\delta^\star_E}(x)$. We only need the proximal operator of support function $\delta_{E}^\star$ to compute the gradient. When $E = [-\lambda, \lambda]^n$, we have $\delta_E^\star = \Vert \cdot\Vert_1$.
        }
        of $\dist\left(Cx - \tilde x \;|\; [-\lambda, \lambda]^n\right)^2$.
        Note that the fidelity term imposes zero penalty on all $x$ satisfying $Cx - \tilde x \in [-\lambda, \lambda]^n$, analogous to the $\epsilon$-insensitive loss in the literature.
        The parameter $\lambda$ therefore controls the tolerance for the discrepancy between the observed signal $\tilde x$ and the blurred signal $Cx$: larger values of $\lambda$ accommodate more noise in the observations, making the formulation more robust to noise in $\tilde x$.
        \par
        Our numerical experiment is based on implementations described in Algorithm \ref{alg:inner-loop}, \ref{alg:outer-loop}.
        The discretized ground truth signal is $(\forall i = 0, \ldots, m)\; \bar x_i = \text{sign}\left(\sin\left(\frac{4\pi i}{m}\right)\right)$, which we corrupt as $\tilde x = C\bar x + 0.3z$, where $z \sim \mathcal N(0, I_m)$.
        The parameters are as follows:
        \begin{enumerate}[nosep]
            \item $n = 2048$, $m = n - 1$.
            \item The box-blurring window width is $l = 128$.
            \item The TV-$\ell_2$ regularization constant is $\eta = 2$, and $\lambda = 0.2$.
            \item For the algorithm (Algorithms \ref{alg:inner-loop}, \ref{alg:outer-loop}), we used $\mathcal E_0 = 64, p = 2$, and $\rho_k = B_k$, hence $L_k = 2B_k$. The algorithm exits when the outer loop detects $\Vert x_k - y_k\Vert \le 10^{-8}$. Other parameters are $r = 1/16$, $s = 4096$ for the inner loop, and $s = 1024$ for the outer loop.
        \end{enumerate}
        The experiment was run once, and Figure \ref{fig:signals-plots} shows the recovered signal, which is very close to the ground truth despite the heavy noise.
        A single run took approximately 12 minutes on a single CPU thread\footnote{Processor: Apple M3 Pro.}, with total inner loop iterations on the order of $2^{18}$.
        This large number of inner loop iterations is attributable to choosing $\eta = 2$, which causes $\hprox_{\lambda \omega^\star}$ to account for more of the computation.
        We empirically observed that the inner loop iteration count decreases significantly for smaller $\eta$, at the cost of weaker total variation penalization on $\eta\Vert Ax\Vert_1$, which degrades the quality of the recovered signal.
        To speed up performance in Julia \cite{bezanson_julia_2017}, we implemented the finite difference matrices $A, A^\top$ using a simple for-loop instead of Compressed Sparse Column (CSC) format, improving memory locality for the inner loop (this yields a tenfold speedup).
        \begin{figure}[H]
            \centering
            \subfloat{
                \includegraphics[width=18em]{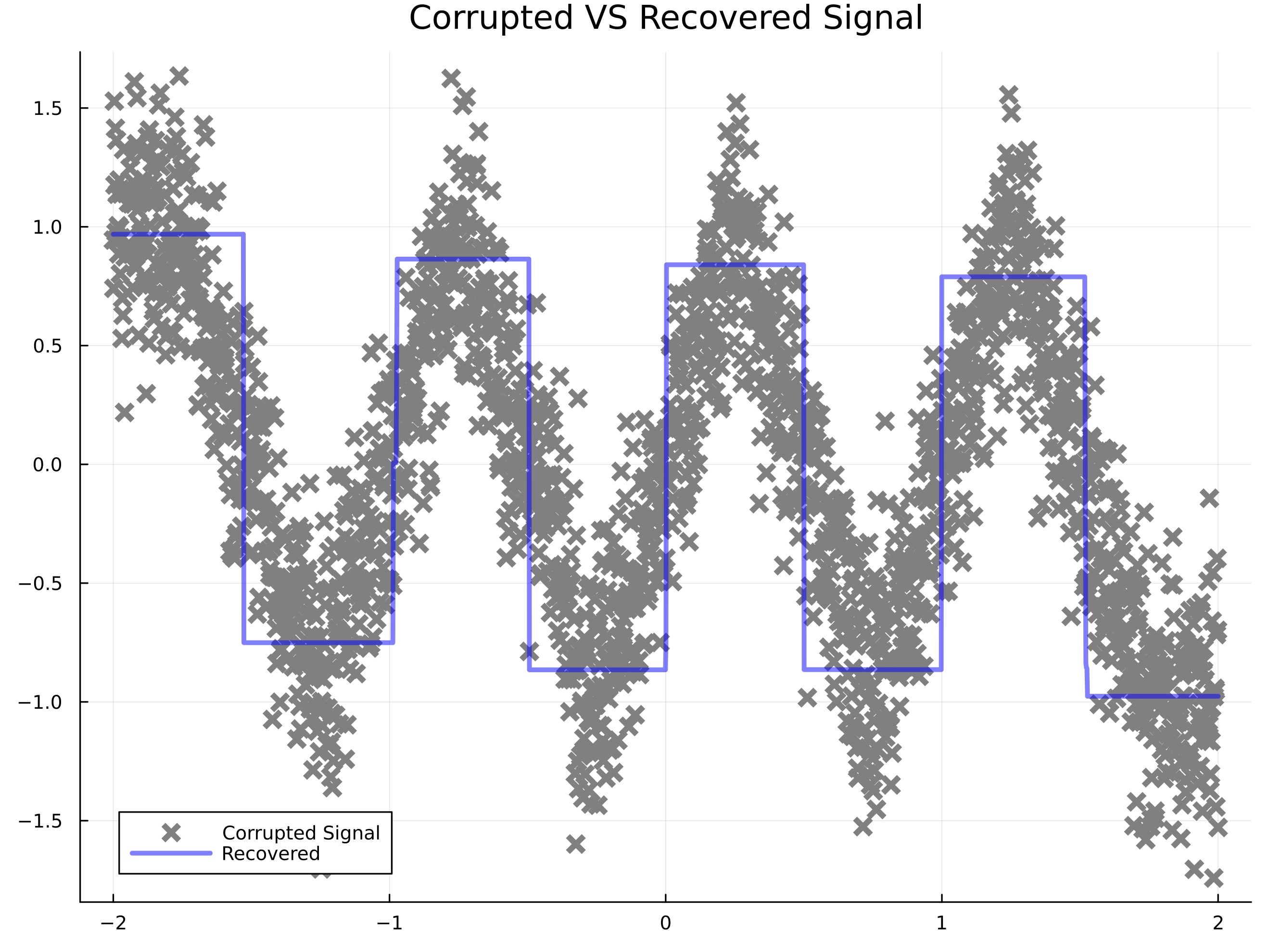}
            }
            \subfloat{
                \includegraphics[width=18em]{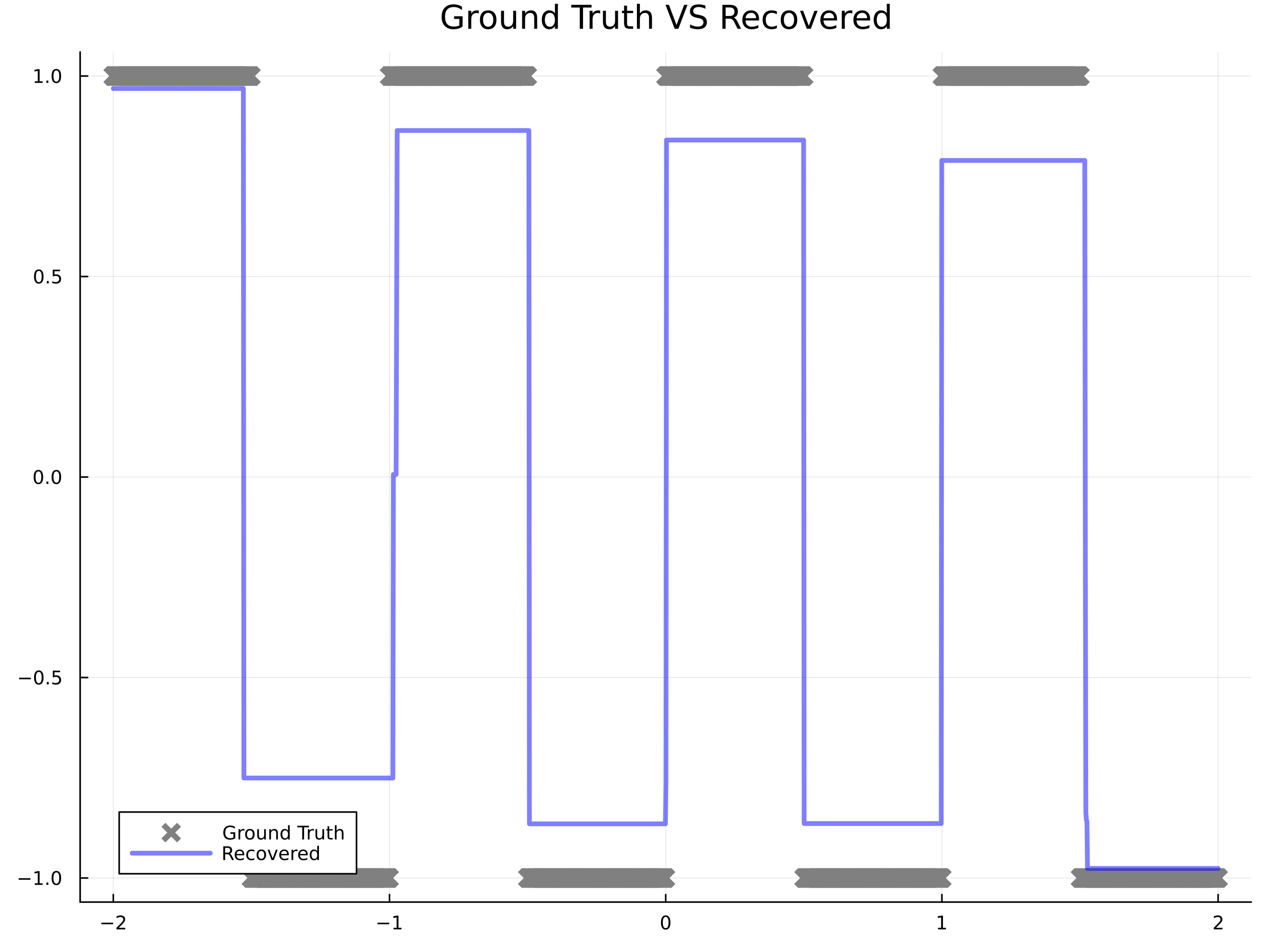}
            }
            \caption{Comparing the observed recovered signal with the observed signal $\tilde x$ and ground truth signal $\bar x$.}
            \label{fig:signals-plots}
        \end{figure}
        We now illustrate the convergence behavior of the algorithm on this experiment.
        The algorithm performs significantly better than the theoretical bound, and we discuss possible reasons for this favorable behavior.
        We track the following quantities at each outer iteration $k$:
        \begin{enumerate}[nosep]
            \item $J_k$, the total inner loop iterations until the tolerance $\epsilon_k$ is met.
            \item $\Vert x_k - y_k\Vert$, the stationarity residual, which upper bounds $\dist\left(\mathbf 0 |\partial_{\epsilon_k}F(x_k)\right)$ by Lemma \ref{lemma:pg-residual}.
            \item $\epsilon_k^\circ$, the absolute tolerance given to the outer loop.
        \end{enumerate}
        Our first set of results is shown in Figure \ref{fig:deblurring-set1}.
        Figure \ref{fig:deblurring-set1:a} illustrates a strong negative relationship between $J_k$ and $\ln(\epsilon_k^\circ)$: the inner loop iteration count $J_k$ grows proportionally to $\ln(1/\epsilon_k^\circ)$, confirming Theorem \ref{thm:inn-loop-lin-cnvg}\ref{thm:inn-loop-lin-cnvg:item3}.
        The first few outliers occur because the initial absolute tolerance is $\epsilon_0 = \mathcal E_0 = 64$; only afterwards does $\epsilon_k^\circ$ follow $\epsilon_k^\circ = L_k^{-1}L_0^{-1}\alpha_k^{2}\mathcal E_0 k^{-p}$.
        Figure \ref{fig:deblurring-set1:b} shows a strong linear relationship between $J_k$ and $\ln(k)$, verifying \eqref{prop:inner-lp-cmplx:rslt1} in Proposition \ref{prop:inner-lp-cmplx}, which states that the inner loop has a linear convergence rate.
        \begin{figure}[htp]
            \centering 
            \subfloat[$J_k$ and absolute tolerance: $\epsilon_k^\circ$]{
                \label{fig:deblurring-set1:a}
                \includegraphics[height=0.27\textheight]{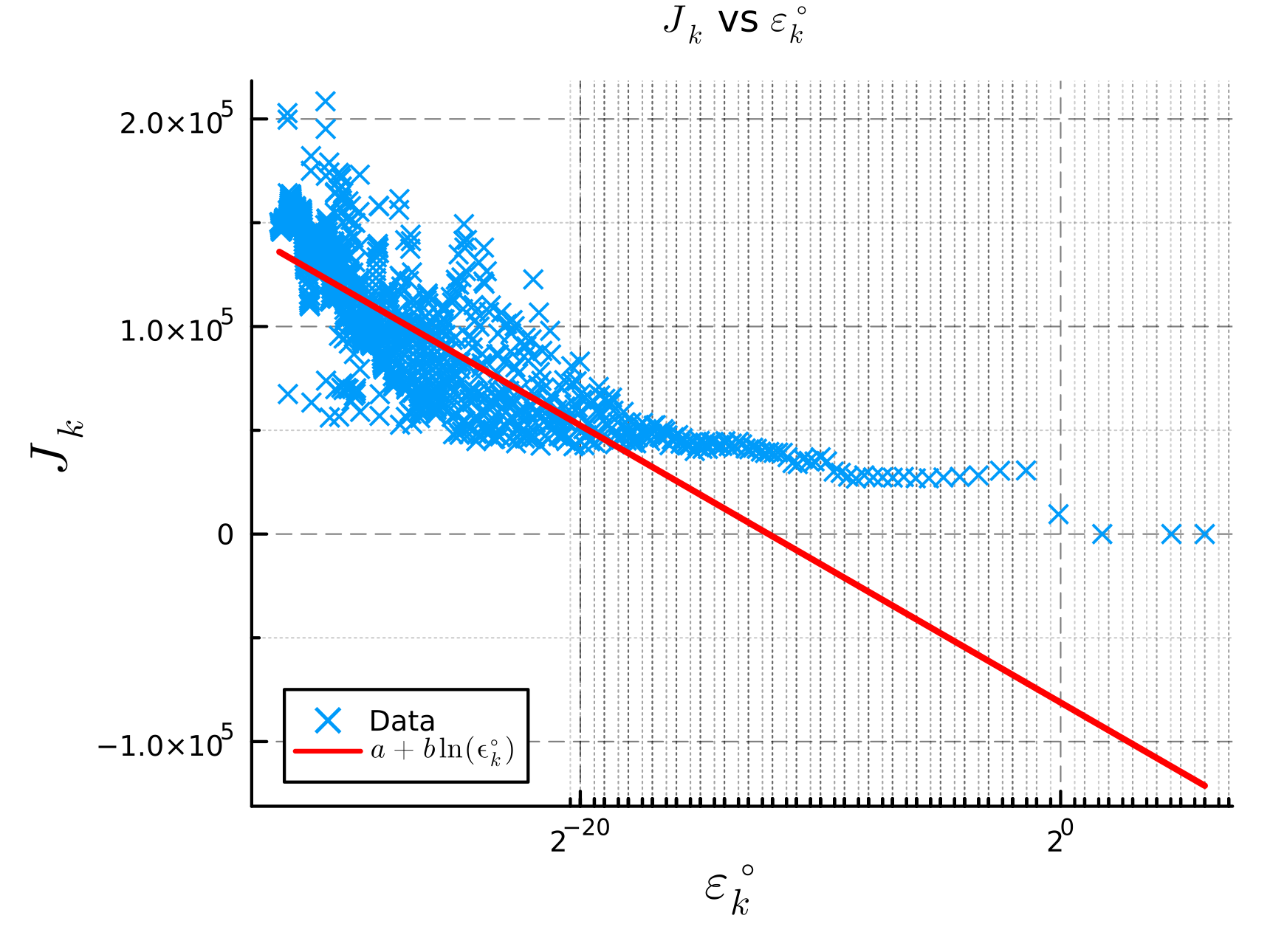}
            }\hfill
            \subfloat[$J_k$ and $k$]{
                \label{fig:deblurring-set1:b}
                \includegraphics[height=0.27\textheight]{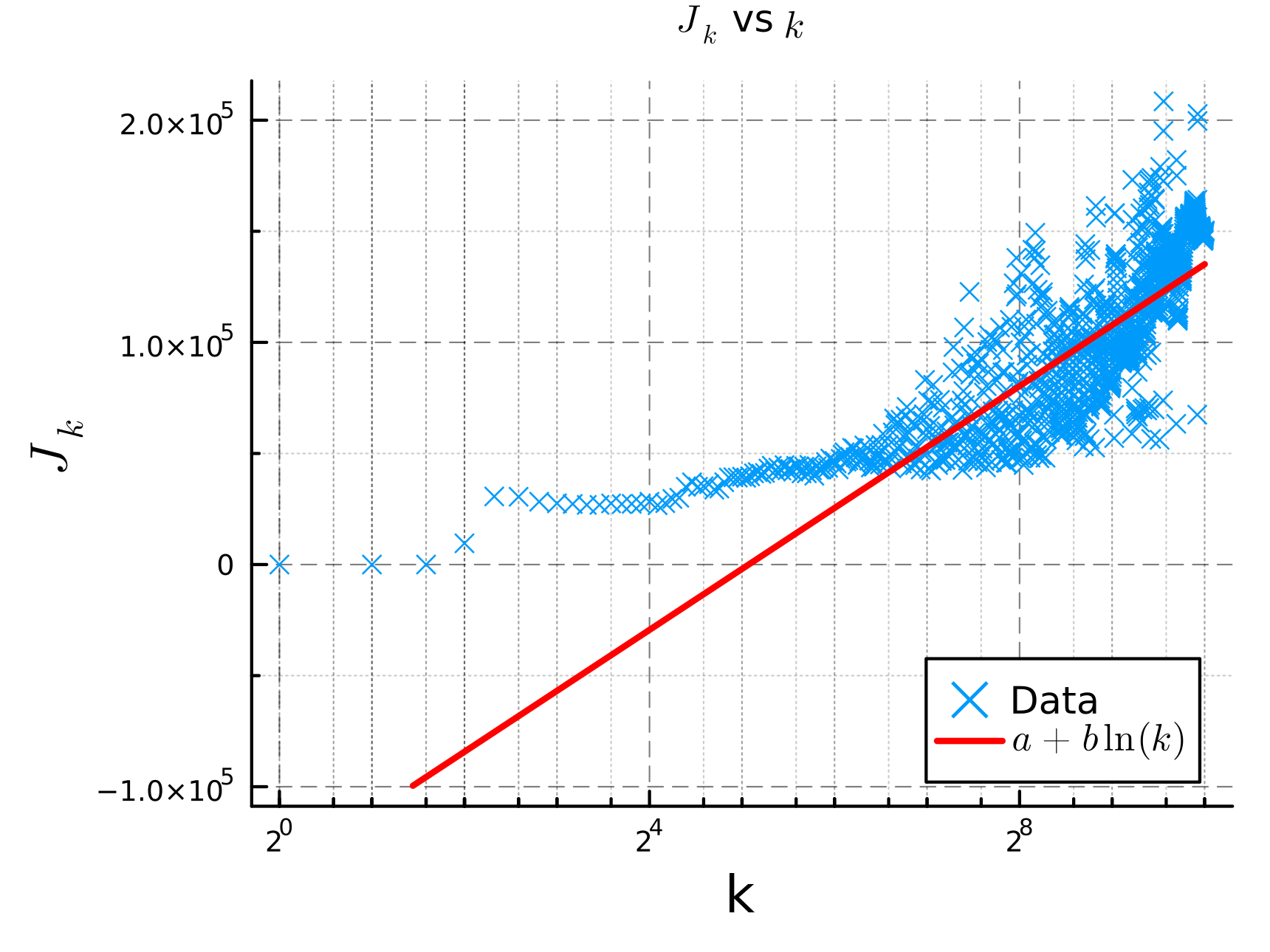}
            }
            \caption{(a): The model for the reference line is $y = a + b\ln(\epsilon_k^\circ)$ and the fitted values are: $a\approx -8.12\times 10^4, b\approx -9.63\times 10^4$. (b): The model for the reference line is $y = a + b \ln(k)$. The values are $a \approx -1.39\times 10^{5}, b \approx 3.96 \times 10^{4}$. }
            \label{fig:deblurring-set1}
        \end{figure}
        \par
        Our second set of results, shown in Figure \ref{fig:deblurring-set2}, is more revealing.
        Figure \ref{fig:deblurring-set2:a} shows the relation between the cumulative inner loop iterations $\sum_{i = 0}^{k} J_i$ and the residual $\Vert x_k - y_k\Vert$.
        On a log-log plot, we show that for positive constants $a, b, c, c_1$, the following holds for small enough $\Vert x_k - y_k\Vert$:
        \begin{align}\label{eqn:numerical-hypo}
            \Vert x_k - y_k\Vert \approx
                \frac{
                    c \max\left(
                        1, \left[\ln
                            \max\left(c_1, \sum_{i = 0}^{k} J_i
                            \right)
                        \right]^a
                    \right)
                }{
                    \max\left(c_1, \sum_{i = 0}^{k} J_i\right)^b
                }.
        \end{align}
        Taking the log on both sides of \eqref{eqn:numerical-hypo}:
        \begin{align*}
            \ln\Vert x_k - y_k\Vert \approx
            \ln c +
            a \max\left(0,
                    \ln\ln \max\left(c_1, \sum_{i = 0}^{k} J_i
                        \right)
            \right)
            - b \ln \max\left(c_1, \sum_{i = 0}^{k} J_i\right).
        \end{align*}
        We determine $c,c_1 a, b$ by multilinear regression, which yields the reference line in Figure \ref{fig:deblurring-set2:a}.
        Notably, $\Vert x_k - y_k\Vert$ decreases faster than $\mathcal O(\ln(k)/k)$ relative to $\sum_{i = 0}^{k}J_i$: we measured $b \approx 2.33$, a value much larger than $1$.
        Figure \ref{fig:deblurring-set2:b} plots the absolute error $\epsilon_k^\circ$ and the relative error $\frac{B_k}{2}\Vert x_k - y_k\Vert^2$ (by the choice $\rho_k = B_k$) on a log-log scale for each outer iteration.
        It is notable that the relative tolerance is an order of magnitude larger than the absolute tolerance, a consequence of the choice $\rho_k = B_k$.
        \begin{figure}[H]
            \centering
            \subfloat[$\sum_{i = 0}^{k}J_i$ and $\Vert x_k - y_k\Vert.$]{
                \label{fig:deblurring-set2:a}
                \includegraphics[height=0.48\textwidth]{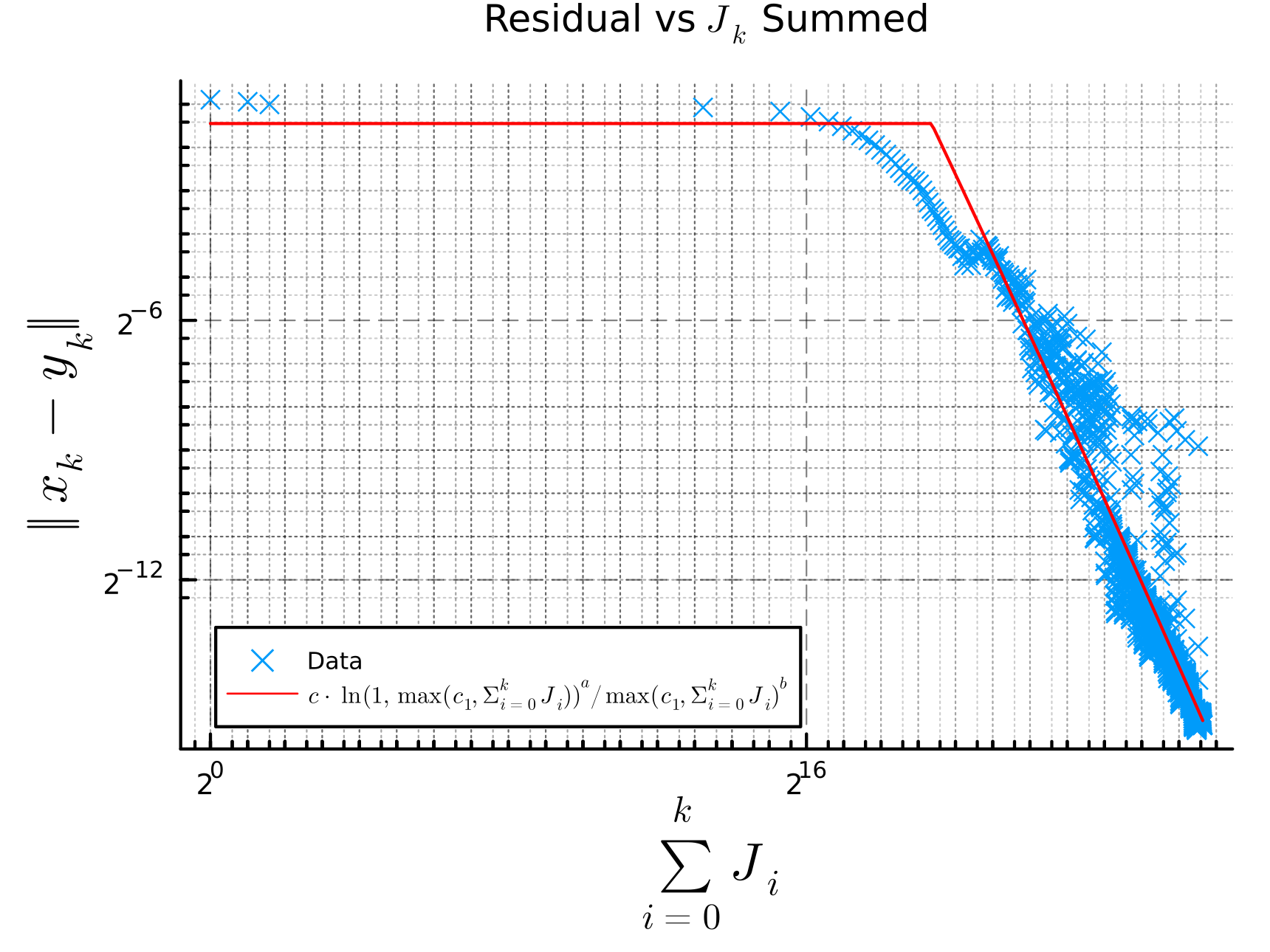}
            }\\
            \subfloat[$\epsilon^\circ_k$ and $(\rho_k/2)\Vert x_k - y_k\Vert^2$]{
                \includegraphics[height=0.45\textwidth]{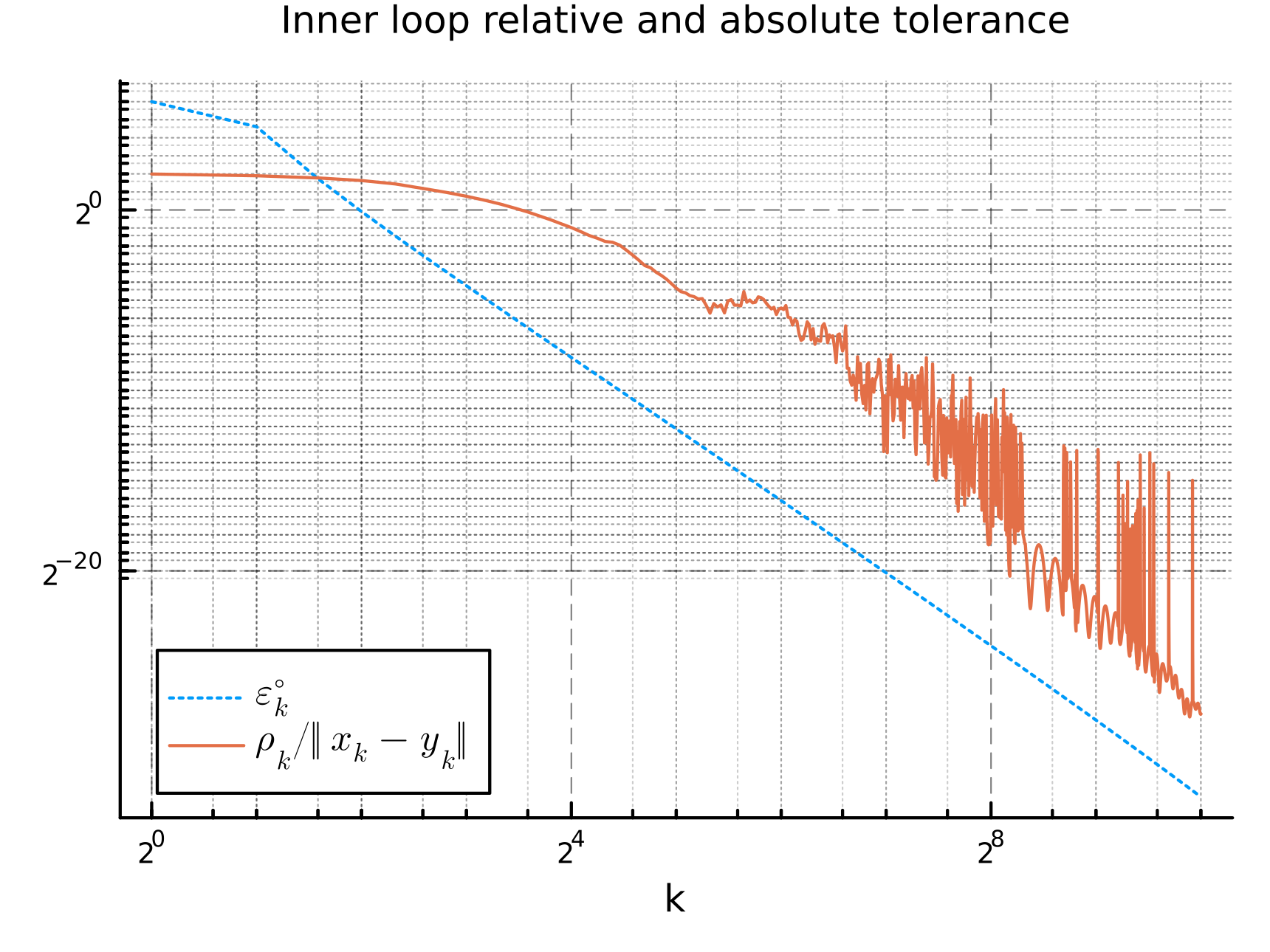}
                \label{fig:deblurring-set2:b}
            }
            \caption{
                (a): The model we fitted for the reference line is $y = \frac{c\max(1, \log(c_1, x)^{a})}{\max(c_1, x)^{b}}$ $\Vert x_k - y_k\Vert$ is on the y-axis, $\sum_{i = 0}^{k}J_i$ is on the x-axis. 
                The best fitted value is $c \approx 2.41\times 10^{5}, c_1\approx 6.72\times 10^{5}, a\approx 6.89, b\approx 2.33$. 
                (b): The figure shows relative error $\frac{\rho_k}{2}\Vert x_k - y_k\Vert^2$, and absolute error $\epsilon_k^\circ$, illustrating that relative error is relatively bigger than absolute error for the choice $\rho_k = B_k$. 
            }
            \label{fig:deblurring-set2}
        \end{figure}
        \par
        In summary, these experiments validate our theoretical contributions: our variant of IAPG enables a first-order solution to the robust TV-$\ell_2$ formulation \eqref{eqn:tvl2-non-traditional} that leverages the composite additive structure.
        Moreover, the empirical convergence exceeded our expectations, with the inner loop $J_k$ scaling at $\mathcal O(\ln(k))$ and the inner and outer loop together scaling below $\mathcal O(\ln(k)/k)$, pointing toward strong potential for even larger-scale problems where such favorable scaling is critical.
        This is a practical advantage that, to the best of our knowledge, has no precedent in the literature.

\section{Conclusions and future works}\label{sec:future-works}
% \#TODO: "future works" -> "Future Work". "Works" (plural) is non-standard; the conventional heading is "Conclusions and Future Work".
    In this paper, we study the convergence of the Inexact Accelerated Proximal Gradient (IAPG) method for \eqref{expr:major-optimization-problem}, showing that an error bound condition on the dual of the inexact proximal point problem yields faster global convergence.
    Following Villa et al. \cite{villa_accelerated_2013}, we extend their results to show that the inner loop achieves a linear convergence rate when $\omega$ is conic polyhedral.
    % \#TODO: "Following Villa et al. ..., we extend their results" — dangling construction: grammatically, the subject "we" is following Villa et al., which is fine, but "extend their results" suggests Villa et al.'s results are doing the following. Clearer: "Building on Villa et al. \cite{villa_accelerated_2013}, we extend their results to show..."
    More precisely, when $\omega$ is conic polyhedral, the dual of the inexact proximal point problem satisfies a quadratic growth condition, which yields linear convergence of the inner loop.
    By incorporating global Lipschitz continuity of $\omega$, we further show that this linear convergence rate holds uniformly over all initial points supplied by the outer loop.
    Together, these results yield a total complexity of $\mathcal{O}(\varepsilon^{-1/2}\ln(\varepsilon^{-1}))$ on the total number of evaluations of $\nabla f$ and $\hprox_{\lambda \omega^\star}$, improving upon all prior complexity results for IAPG.
    To validate our theoretical results, we formulate a robust TV-$\ell_2$ problem with a non-uniform box blur matrix and a TV penalization term with a large regularization multiplier.
    % \#TODO: "box blur matrix" -> "box-blur matrix". Hyphenate compound modifier.
    Numerical evidence confirms that the complexity, measured by the number of evaluations of $\hprox_{\lambda\omega^\star}$, scales in accordance with our theoretical predictions.
    \par
    Despite the advances made in this work, several open problems remain.
    \begin{enumerate}
        \item Can the proximal point problem \eqref{eqn:primal-pp} be solved stochastically, and what convergence guarantees would carry over?
        \item Our experiments show that the inner loop accounts for the majority of total iterations. Can the dual problem \eqref{eqn:dual-pp} be parallelized with minimal overhead, or does incorporating a proximal Quasi-Newton method or preconditioning reduce the inner loop iteration count?
        \item Can the three-operator splitting problem $\min_{x\in \RR^n}\{f(x) + \delta_C(x) + \omega(Ax)\}$ where $\delta_C$ is the indicator function of a convex set $C \subseteq \RR^n$, be addressed within our framework, and what regularity conditions on $\omega$ and $C$ would ensure efficient solution of the inexact proximal point problem?
        \item Would combining our results with those of Rasch and Chambolle \cite{rasch_inexact_2020} yield a total complexity of $\mathcal{O}(\ln(\varepsilon^{-1})\varepsilon^{-1})$ for convergence of the duality gap?
        \item Can our results be combined with adaptive restarts from Alamo et al. \cite{alamo_gradient_2019} or Hessian damping from Attouch \cite{attouch_first-order_2022} to further improve convergence?
    \end{enumerate}

\section*{Acknowledgements}
    The research of HL and XW was partially supported by the NSERC Discovery Grant of Canada.

\appendix
\section{Necessary intermediate results}
    \begin{lemma}[That conjugate for the dual of proximal problem]\;\label{lemma:chore1}\\
        Let $f:\RR^n \rightarrow \overline \RR: u\mapsto \frac{1}{2\lambda}\Vert u - v\Vert^2$. 
        Then its conjugate is given by
        \begin{align*}
            f^\star(v) = \frac{1}{2\lambda}\Vert \lambda v + y\Vert^2 - \frac{1}{2\lambda}\Vert y\Vert^2. 
        \end{align*}
    \end{lemma}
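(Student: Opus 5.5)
The plan is a direct computation of the supremum defining the conjugate. First I will fix the evident typo in the statement: the function is $f(u) = \frac{1}{2\lambda}\Vert u - y\Vert^2$, with $y \in \RR^n$ the fixed center, since that is how the lemma is used for \eqref{eqn:dual-pp}. By definition,
\begin{align*}
    f^\star(v) = \sup_{u \in \RR^n}\left\lbrace \langle v, u\rangle - \frac{1}{2\lambda}\Vert u - y\Vert^2 \right\rbrace .
\end{align*}
The function being maximized is a concave, differentiable quadratic in $u$. Hence the supremum is attained exactly at the stationary point. Setting its gradient $v - \lambda^{-1}(u - y)$ to zero gives the unique maximizer $u^* = y + \lambda v$.

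Next I substitute $u^*$ back. Since $u^* - y = \lambda v$, the value is
\begin{align*}
    \langle v, y + \lambda v\rangle - \frac{\lambda}{2}\Vert v\Vert^2
    = \langle v, y\rangle + \frac{\lambda}{2}\Vert v\Vert^2 .
\end{align*}
Finally, I expand the claimed right-hand side:
\begin{align*}
    \frac{1}{2\lambda}\left(\Vert \lambda v + y\Vert^2 - \Vert y\Vert^2\right)
    = \frac{1}{2\lambda}\left(\lambda^2\Vert v\Vert^2 + 2\lambda\langle v, y\rangle\right)
    = \frac{\lambda}{2}\Vert v\Vert^2 + \langle v, y\rangle .
\end{align*}
This agrees with the value computed above, which proves the formula.

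There is no real obstacle here. The only points needing care are that the quadratic's maximum is attained, which holds by strict concavity, and the bookkeeping of the $\lambda$ factors when completing the square. As a consistency check for the paper's use of the lemma, I will note that composing with $-A^\top$ gives $f^\star(-A^\top v) = \frac{1}{2\lambda}\Vert \lambda A^\top v - y\Vert^2 - \frac{1}{2\lambda}\Vert y\Vert^2$. This matches \eqref{eqn:dual-pp}.
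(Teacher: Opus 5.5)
Your proof is correct, but it takes a more direct route than the paper. The paper never evaluates the supremum explicitly. It rewrites $f(u)=\lambda^{-1}\bigl(\tfrac12\Vert u\Vert^2-\langle u,y\rangle\bigr)+\tfrac{1}{2\lambda}\Vert y\Vert^2$ and then chains three conjugation rules with the self-conjugacy of $\tfrac12\Vert\cdot\Vert^2$. The rules are scaling, $(\alpha f)^\star=\alpha f^\star\circ(\alpha^{-1}I)$; subtracting a constant; and a linear perturbation, which shifts the argument of the conjugate.

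You instead maximize the strictly concave quadratic $u\mapsto\langle v,u\rangle-\tfrac{1}{2\lambda}\Vert u-y\Vert^2$ at its stationary point $u^*=y+\lambda v$ and compare values. Your version is self-contained, whereas the paper quotes its three rules without proof (each is itself a one-line supremum computation). It also yields the maximizer as a by-product, namely $\nabla f^\star(v)=y+\lambda v$. Evaluated at $-A^\top v$, this is precisely the primal recovery map $z=y-\lambda A^\top v$ used in Fact~\ref{fact:primal-dual-trans} and Theorem~\ref{thm:minimizing-dual-pp}. The paper's rule-based route is more modular: it would go through unchanged for any base function with a known conjugate, not just the squared norm.

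Your correction of the typo ($\Vert u-v\Vert$ should read $\Vert u-y\Vert$, with $\lambda>0$) matches what the paper's own proof implicitly uses.
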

    \begin{proof}
        Recall the following properties for any closed, proper convex function $f: \RR^n \rightarrow \overline \RR$. 
        Let $a \in \RR^n$ be any vector, let $\alpha > 0, c \in\RR$. 
        Then, we introduce these three properties of conjugating a convex function: 
        \begin{enumerate}[nosep]
            \item $(\alpha f)^\star = \alpha f^\star \circ (\alpha^{-1}I)$. 
            \item $(f + c)^\star(y) = f^\star(y) - c$. 
            \item $\left(x \mapsto f(x) + \langle x, a\rangle\right)^\star(y) = f^\star(y - a)$. 
        \end{enumerate}
        From here we have: 
        \begin{align*}
            f^\star(v) 
            &= 
            \left(
                u \mapsto \lambda^{-1}\left(\frac{1}{2}\Vert u\Vert^2 - \langle u, y\rangle\right) 
                + \frac{1}{2\lambda} \Vert y\Vert^2
            \right)^\star(v)
            \\
            &= \left(
                u \mapsto \lambda^{-1}\left(\frac{1}{2}\Vert u\Vert^2 - \langle u, y\rangle\right) 
            \right)^\star(v)
            - \frac{1}{2\lambda} \Vert y\Vert^2
            \\
            &= 
            \left[
                \lambda^{-1}\left(
                u \mapsto \left(\frac{1}{2}\Vert u\Vert^2 - \langle u, y\rangle\right) 
            \right)^\star\circ(\lambda I)
            \right](v)
            - \frac{1}{2\lambda} \Vert y\Vert^2
            \\
            &= \left[
                \lambda^{-1}\left(
                u \mapsto \left(
                    \frac{\Vert \cdot\Vert^2}{2}
                \right)^\star(u + y)
            \right)\circ(\lambda I)
            \right](v)
            - \frac{1}{2\lambda} \Vert y\Vert^2
            \\
            &= \left[
                \lambda^{-1}\left(
                    u \mapsto \frac{\Vert u + y\Vert^2}{2}
                \right)\circ(\lambda I)
            \right](v)
            - \frac{1}{2\lambda} \Vert y\Vert^2
            \\
            &= \lambda^{-1}\left(
                \frac{1}{2}\Vert \lambda v + y\Vert^2
            \right) 
            - \frac{1}{2\lambda} \Vert y\Vert^2. 
        \end{align*}
    \end{proof}

    \begin{lemma}[Lipschitz constant of convex function]\;\label{lemma:lipz-cnvx-fxn}
        Let $f: \RR^n \rightarrow \overline \RR$ be a closed, convex, proper function. 
        Let $\partial f$ be its convex subdifferential. 
        Then, 
        \begin{enumerate}[nosep]
            \item for all $x\in \RR^n, y\in \RR^n$ it has: $|f(x) - f(y)| \le \left(\sup_{x\in \dom \partial f} \dist (\partial f(x) \;|\; \mathbf 0)\right)\Vert y - x\Vert$. 
            \item If in addition, the function is $K$ Lipschitz continuous globally on $\RR^n$, then: $(\forall y \in \RR^n)(\forall v \in \partial f(y)):\; K \ge \Vert v\Vert$. 
        \end{enumerate}
    \end{lemma}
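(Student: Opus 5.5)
The plan is to reduce item 1 to one-dimensional convex analysis along the segment joining $x$ and $y$, and to prove item 2 directly from the subgradient inequality. Throughout I work in the setting where the lemma is used, namely $\dom f = \RR^n$; Theorem \ref{thm:minimizing-dual-pp} applies it to $\omega\circ A$, and Assumption \ref{ass:for-inxt-prox} makes $\omega$ full-domain. This restriction is not optional for item 1. For example, take $f=\delta_{[0,1]}$ on $\RR$. Every $x\in[0,1]$ has $0\in\partial f(x)$, so the supremum in item 1 is $0$. Yet $|f(2)-f(0)|=\infty$. With full domain, every $x$ lies in $\dom\partial f$ and $f$ is continuous. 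Set $M:=\sup_{x}\dist(\partial f(x)\,|\,\mathbf 0)$. If $M=\infty$ there is nothing to prove, so assume $M<\infty$.

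For item 1, fix $x,y$ and put $d:=y-x$ and $\varphi(t):=f(x+td)$ for $t\in[0,1]$. Then $\varphi$ is a finite convex function of one variable, hence locally Lipschitz and absolutely continuous on $[0,1]$, so $\varphi(1)-\varphi(0)=\int_0^1\varphi'(t)\,dt$. The right and left derivatives of $\varphi$ at $t$ are the directional derivatives $f'(z_t;d)$ and $-f'(z_t;-d)$, where $z_t:=x+td$. By the max formula for finite convex functions, these equal $\max_{v\in\partial f(z_t)}\langle v,d\rangle$ and $\min_{v\in\partial f(z_t)}\langle v,d\rangle$ respectively.

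At every $t$ where $\varphi'(t)$ exists, the two one-sided derivatives coincide. Hence $\langle v,d\rangle$ takes the same value $\varphi'(t)$ for every $v\in\partial f(z_t)$. In particular we may take $v$ to be the minimal-norm element of the nonempty closed convex set $\partial f(z_t)$, which gives
\begin{align*}
|\varphi'(t)|\le \dist(\partial f(z_t)\,|\,\mathbf 0)\,\Vert d\Vert\le M\Vert y-x\Vert.
\end{align*}
This holds for almost every $t$, since a convex function of one variable is differentiable off a countable set. Integrating over $[0,1]$ then yields $|f(y)-f(x)|\le M\Vert y-x\Vert$.

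I expect this to be the main obstacle. A naive argument, such as a mean-value inclusion $f(y)-f(x)\in\langle\partial f(z),d\rangle$ or the chain $f(y)-f(x)\ge\langle v,d\rangle$, only controls the quantity through \emph{some} subgradient. It therefore naturally gives the weaker constant $\sup_x\sup_{v\in\partial f(x)}\Vert v\Vert$. The fix is the observation above: at points of differentiability of $\varphi$ the pairing $\langle\cdot,d\rangle$ is constant on all of $\partial f(z_t)$. This lets us pass to the minimal-norm subgradient at no cost.

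For item 2, let $y\in\RR^n$ and $v\in\partial f(y)$ with $v\neq\mathbf 0$, and test the subgradient inequality at $x=y+sv$ with $s>0$. This gives
\begin{align*}
s\Vert v\Vert^2\le f(y+sv)-f(y)\le Ks\Vert v\Vert .
\end{align*}
Dividing by $s\Vert v\Vert$ gives $\Vert v\Vert\le K$. The case $v=\mathbf 0$ is trivial. This is exactly the bound invoked at step (5) in the proof of Theorem \ref{thm:minimizing-dual-pp}.
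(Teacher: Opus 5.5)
Your proof is correct. Your counterexample $f=\delta_{[0,1]}$ is also valid: item 1 fails off $\dom\partial f$. The paper's proof tacitly covers only $x,y\in\dom\partial f$, since it starts by picking $v_x\in\partial f(x)$ and $v_y\in\partial f(y)$; in the full-domain setting where the lemma is used, that set is all of $\RR^n$.

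For item 1 you take a much heavier route than the paper, and the reason you give for it is mistaken. The chain $f(y)-f(x)\ge\langle v,y-x\rangle$ holds for \emph{every} $v\in\partial f(x)$, not just some, so you may take the minimal-norm one. Doing the same at $y$ gives $f(x)-f(y)\le\Vert v_x\Vert\Vert x-y\Vert$ and $f(y)-f(x)\le\Vert v_y\Vert\Vert x-y\Vert$. Hence $|f(x)-f(y)|\le\max(\Vert v_x\Vert,\Vert v_y\Vert)\Vert x-y\Vert$, which is exactly the paper's two-line proof. Only the mean-value inclusion produces an uncontrolled subgradient.

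Your one-dimensional argument is sound. It uses absolute continuity of $\varphi$, the max formula for $\varphi'_\pm$, and constancy of $\langle\cdot,d\rangle$ on $\partial f(z_t)$ at differentiability points. But it needs $f$ finite and subdifferentiable along the whole segment, plus a.e.\ differentiability and integration. The two-point argument needs only $x,y\in\dom\partial f$ and bounds the difference using only the two endpoint subgradients.

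For item 2, the paper goes through the max formula, $\sup_{v\in\partial f(y)}\langle v,d\rangle=f'(y;d)\le K\Vert d\Vert$, and $\Vert v\Vert\le K$ follows with $d=v$. Your test point $y+sv$ reaches the same conclusion directly from the subgradient inequality, with no max formula.

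In fact, the paper's displayed chain for item 2 opens with $f(x)-f(y)\le\sup_{v\in\partial f(y)}\langle v,x-y\rangle$, which is the reverse of what convexity gives. It then ends by restating the Lipschitz hypothesis instead of $\Vert v\Vert\le K$. So your version is the one that actually closes the argument.
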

    \begin{proof}
        We give a direct proof for the first result. 
        Let $x, y \in \RR^n$ be arbitrary.
        Choose $v_x \in \partial f(x)$ and $v_y \in \partial f(y)$ such that $\Vert v_x\Vert = \dist (\partial f(x) | \mathbf 0), \Vert v_y\Vert = \dist (\partial f(y) | \mathbf 0)$.
        This is possible because $\partial f(x)$ is closed for all $x \in \dom \partial f$. 
        Therefore: 
        \begin{align*}
            |f(x) - f(y) | &\le 
            \max(f(x) - f(y), f(y) - f(x))
            \\
            &\underset{(1)}\le \max(-\langle v_x, y - x\rangle, - \langle v_y, x - y\rangle)
            \\
            &\le \max(\Vert v_x\Vert, \Vert v_y\Vert)\Vert y - x\Vert
            \\
            &\le \left(
                \sup_{x \in \dom \partial f} \dist(\partial f(x) \;|\; \mathbf 0)
            \right)\Vert y - x\Vert. 
        \end{align*}
        At (1), we used the fact that $f(x) - f(y) \le - \langle v_x, y - x\rangle$ and, $f(y) - f(x) \le - \langle v_y, x - y\rangle$ which follows from the subgradient inequality of convex subgradient. 
        \par
        We now show the second result. 
        The following holds for all $x \in \RR^n$ and $y \in \RR^n$: 
        \begin{align*}
            f(x) - f(y) &\le \sup_{v \in \partial f(y)} \langle v, x - y\rangle
            \\
            &\underset{(2)}= f'(y; x - y)
            \\
            &= \lim_{\delta \searrow 0} \frac{f(y + \delta(x - y)) - f(y)}{\delta}
            \\
            &\underset{(3)}\le \lim_{\delta \searrow 0} \frac{\delta K_f\Vert x - y\Vert}{\delta}
            \\
            &= K_f\Vert x - y\Vert. 
        \end{align*}
        At (2), we used the max formula of Beck \cite[Theorem 3.26]{beck_first-order_2017}. 
        At (3), the inequality holds since $f$ is $K_f$ Lipschitz continuous. 
        Since this is true for all $x, y$, it implies that $|f(x) - f(y)|\le K_f \Vert x - y\Vert$. 

    \end{proof}

\bibliographystyle{siam}
\bibliography{references/refs.bib}
\end{document}